\documentclass[12pt,a4paper]{article}
\usepackage{amsmath}
\usepackage{amsthm}
\usepackage{amsfonts}
\usepackage{amssymb}
\usepackage{amscd}
\numberwithin{equation}{section}
\newtheorem{theorem}{Theorem}[section]
\newtheorem{cor}[theorem]{Corollary}
\newtheorem{lemma}[theorem]{Lemma}
\newtheorem{prop}[theorem]{Proposition}
\theoremstyle{definition}
\newtheorem{defi}{Definition}[section]
\newtheorem{rem}[defi]{Remark}

\newtheorem*{thma}{Theorem V}
\newtheorem*{thmb}{Theorem HP}

\newcommand\Aa{{\mathcal A}}

\renewcommand\j{{\bf j}}

\newcommand\half{\frac{1}{2}}

\newcommand\ov{\overline}

\renewcommand\({\left(}
\renewcommand\){\right)}

\newcommand\rhat{\widehat\rho}

\newcommand\be{\beta}

\newcommand\w{\wedge}
\newcommand\g{\mathfrak g}
\newcommand\ga{\widehat{\mathfrak g}}
\newcommand\h{\mathfrak h}
\newcommand\ha{\widehat{\mathfrak h}}

\newcommand\n{\mathfrak n}
\newcommand\bb{\mathfrak b}
\newcommand\D{\Delta}
\renewcommand\l{\lambda}
\newcommand\Dp{\Delta^+}

\newcommand\Da{\widehat\Delta}
\newcommand\Pia{\widehat\Pi}
\newcommand\Dap{\widehat\Delta^+}
\newcommand\Wa{\widehat{W}}
\renewcommand\d{\delta}
\renewcommand\r{\mathfrak r}

\renewcommand\a{\alpha}
\renewcommand\aa{\mathfrak a}
\renewcommand\b{\bb}

\renewcommand\k{\mathfrak k}

\renewcommand\th{\theta}

\renewcommand\i{{\bf i}}

\newcommand\nat{\mathbb N}
\newcommand\ganz{\mathbb Z}
\renewcommand\j{\mathfrak j}
\newcommand\s{\sigma}
\renewcommand\L{\Lambda}

\renewcommand\aa{\mathfrak a}
\newcommand\la{\lambda}
\renewcommand\u{{\bf u}}

\newcommand\C{\mathbb C}
\newcommand\R{\mathbb R}
\newcommand\si{\sigma}

\newcommand\va{|0\rangle}

\renewcommand\ha{\widehat{\mathfrak h}}

\newcommand\bv{{\bf v}}

\newcommand\p{\mathfrak p}

\newcommand{\rank}{{\rm rank}}

\newcommand{\sa}{\mathfrak{a}}
\newcommand{\asdim}{\text{\rm asdim}}

\begin{document}
\title{Multiplets of representations, twisted  Dirac operators and 
Vogan's conjecture in affine setting}
\author{Victor~G. Kac\\ Pierluigi M\"oseneder Frajria\\ Paolo  Papi}

\maketitle
\setcounter{tocdepth}{3}
\begin{abstract} We extend classical results of Kostant et al. on 
multiplets of representations of finite dimensional
Lie algebras and on the cubic Dirac operator to the setting of affine 
Lie algebras and twisted affine cubic Dirac operator. We
prove in this setting an analogue of Vogan's conjecture on 
infinitesimal characters of Harish--Chandra modules in terms of Dirac
cohomology. For our calculations we use the machinery of Lie 
conformal and vertex algebras.
\end{abstract}
\tableofcontents
\section{Introduction} 
Let $\g$ be a finite dimensional reductive Lie algebra over $\C$ with 
a non degenerate symmetric
invariant bilinear form $(\  ,\ )$, and let $\aa$ be a reductive 
subalgebra of $\g$, such that
the restriction of the form  $(\  ,\ )$ to $\aa$ is non-degenerate. 
For future reference, we call $(\g,\aa)$ a reductive 
pair. Then
the adjoint representation of $\aa$ in $\g$ induces a homomorphism 
$\aa\to so(\p)$, where $\p$ is the orthogonal
complement of $\aa$ in $\g$. Hence, by restriction, we obtain a 
representation of the Lie algebra
$\aa$ in the spinor representation $F$ of $so(\p)$. Assume now that 
$\rank(\aa)=\rank(\g)$. Then 
$\dim(\p)$ is even, hence $F$ decomposes in a direct sum of two 
irreducible representations $F^+$
and $F^-$ of $so(\p)$. The following ``homogeneous Weyl character 
formula" was established in 
\cite{GKRS}, as an identity in the character ring of $\aa$: 
\begin{equation}\label{in1}
V(\l)\otimes F^+-V(\l)\otimes F^-
=\sum_{w\in W'}(-1)^{\ell(w)}U(w(\l+\rho_\g)-\rho_{\sa}),
\end{equation}
where $\rho_\g,\rho_\aa$ are half the sum of the positive roots of 
$\g$ and $\aa$ respectively, $V(\l)$ is an irreducible representation 
of $\g$ with highest weight  $\l$,
 $U(\mu)$ is an irreducible representation of $\aa$ with highest 
weight  $\mu$, and
$W'$ is the set of minimal length  representatives in the  Weyl 
group  of $\g$
from each right coset of the Weyl group  of $\aa$.  In Remark~\ref{nominimal} we adjust definitions as to make \eqref{in1} meaningful even when the representatives are not minimal.

The collection of 
$\aa$-modules
in the right-hand side of \eqref{in1} is called a multiplet. It is 
clear from
 \eqref{in1} that the Casimir operator of $\aa$ has equal eigenvalues 
on all modules from this
multiplet, and also that the signed sum of dimensions of these 
modules is $0$. In the special case
of $\aa=so_9$, important for $M$-theory, these kind of multiplets 
have been earlier discovered by
physicists, and formula 
\eqref{in1} for the embedding $so_9\subset F_4$ reproduced their 
observations.\par
In fact we observe (see Proposition \ref{qdimens}) that  any multiplet satisfies a stronger property, namely the 
signed sum of
{\it quantum dimensions} of modules of a multiplet is zero. Recall 
that the quantum dimension of an $\aa$-module $L(\L)$, where $\aa$ is
a semisimple subalgebra\footnote{the definition of quantum dimension
for an arbitrary reductive subalgebra $\aa$ is given in  \eqref{qdimred}.}, 
equals
\begin{equation}\label{qdimensione}
\dim_q L(\L)=\prod\limits_{\a\in 
(\D_\aa^+)^\vee}\frac{[(\L+\rho_\aa,\a)]_q}{[(\rho_\aa,\a)]_q},\end{equation} 
where $(\D_\aa^+)^\vee$ is the set of positive coroots of $\aa$ and 
$[n]_q=\frac{q^{n/2}-q^{-n/2}}{q^{1/2}-q^{-1/2}}$ (note that 
$\lim_{q\to 1}\dim_q L(\L)=\dim L(\L)$).\par In a subsequent paper
\cite{Kold}, Kostant showed that for a ``cubic" Dirac operator
${\not\!\partial}_{\g/\aa}$, with a cubic term  associated to the 
fundamental 3-form of $\g$, the kernel 
on $V(\l)\otimes F$ decomposes precisely in a direct sum of the 
$\aa$-modules of the multiplet:
\begin{equation}\label{in2}
Ker_{V(\l)\otimes F}{\not\!\partial}_{\g/\aa}=
\bigoplus_{w\in  W'}U(w(\l+\rho_\g)-\rho_{\sa}).
\end{equation}
\vskip5pt
Both decompositions \eqref{in1} and \eqref{in2} were extended to the 
setting of affine Lie algebras
by Landweber \cite{land}. The affine analogue of the ``cubic" Dirac 
operator, used in \cite{land},
was introduced much earlier by Kac and Todorov in their work 
\cite{KacT} on unitary representations
of Neveu-Schwarz and Ramond superalgebras.\par
Another interesting application of the  Dirac operator 
${\not\!\partial}_{\g/\aa}$ was
a conjecture of Vogan  expressing the infinitesimal character of an 
irreducible
$(\g,K)$-module $X$ of a real reductive group $G$ in terms of an 
$\aa$-type of $X$ appearing in 
the cohomology defined by ${\not\!\partial}_{\g/\aa}$ (the ``Dirac 
cohomology"), where $\aa$ is the
complexification of the Lie algebra of $K$ (cf. Theorem V in the 
Introduction). This conjecture has been proved in
\cite{HP} as a consequence of an algebraic  statement (cf. Theorem HP 
below) relative to a ``quadratic" Dirac
operator associated to a symmetric pair $(\g,\aa)$ (i.e., $\aa$ is 
the fixed point set of an involution of $\g$).
Subsequently Kostant
\cite{K3} observed that using his ``cubic" Dirac operator yields this 
algebraic statement for any reductive pair.\par
In the present paper we define  a general $\sigma$-twisted ``cubic" 
Dirac operator in the affine
setting, where $\sigma$ is a semisimple automorphism (not necessarily 
of finite order) of the
reductive Lie algebra $\g$, leaving invariant the subalgebra $\aa$, 
and extend all the results
mentioned above to this setting. As in \cite{KacT} we work with the 
``superization"
$\g[\xi]$ of $\g$, where $\xi$ is an odd indeterminate with 
$\xi^2=0$, in order to treat
 the Kac-Moody and Clifford affinizations simultaneously. Also, we 
use the very convenient for
calculations machinery of Lie conformal and vertex algebras, and 
their twisted representations.
\par
In more detail, let 
$$\widehat\g^{super}=\g[\xi][t,t^{-1}]\oplus\C\mathcal 
K\oplus\C\ov{\mathcal K}$$
be the ``superization" of the affine Lie algebra 
$\widehat\g=\g[t,t^{-1}]\oplus\C\mathcal K$, where
$\mathcal K,\ov{\mathcal K}$ are even central elements, and the 
remaining commutation relations are 
($a,b\in\g$):
\begin{align*}
[at^m,bt^n]&=[a,b]t^{m+n}+m\d_{m,-n}(a,b)\mathcal K,\\
[at^m,b\xi t^n]&=[a,b]\xi t^{m+n},\\
[a\xi t^m,b\xi t^n]&=\d_{m,-n-1}(a,b)\ov{\mathcal K}.
\end{align*}
Denote by $V^{k+g,1}$ the $\widehat\g^{super}$-module, induced from 
the 1-dimensional representation
of the subalgebra $\g[\xi][t]\oplus\C\mathcal K\oplus\C\ov{\mathcal 
K}$
given by
$$\g[\xi][t]\mapsto 0,\qquad\mathcal K\mapsto k+g,\qquad\ov{\mathcal 
K}\mapsto 1.$$
We assume here that the Casimir operator $Cas$ of $\g$, acting on 
$\g$, has a unique eigenvalue
 $2g$. (If $Cas$ has several eigenvalues, we take the tensor product 
of the corresponding modules,
and below we take the sum of the corresponding ``cubic" Dirac 
operators.) The module 
$V^{k+g,1}$ has a canonical structure of a vertex algebra with vacuum 
vector
$|0\rangle=1$, translation operator $T$, induced by $-\frac{d}{dt}$, 
and generating fields
$$\{a(z)=\sum_{j\in\ganz}(at^j)z^{-j-1},\,\ov 
a(z)=\sum_{j\in\ganz}(a\xi
t^j)z^{-j-1}\}_{a\in\g}.$$
Denote by $a$ and $\ov a$ the corresponding elements under the 
field-state correspondence:
$a=(at^{-1})|0\rangle,\,\ov a=(a\xi t^{-1})|0\rangle$. Let $\{x_i\}$ 
and $\{x^i\}$ be dual bases
of $\g$, i.e. $(x_i,x^j)=\d_{i,j}$. The following element of the 
vertex algebra
$V^{k+g,1}$ is the father of all affine $\sigma$-twisted ``cubic" 
Dirac operators (cf. \cite{KacT},
\cite{KacD}):
\begin{equation*}G_\g=\sum_i:x_i\overline  
x^i:+\tfrac{1}{3}\sum_{i,j}:\overline{[x_i,
x_j]}\overline  x^i\overline x^j:.
\end{equation*} 
This element satisfies the following key $\l$-bracket relation (cf. 
\cite{KacT},
\cite{KacD}):
\begin{equation}\label{in3}[G_\g{}_\la  
G_\g]=L_\g+\frac{\la^2}{2}(k+\frac{g}{3})\dim\g,
\end{equation}
where
\begin{equation}\label{primosugawara}L_\g=\sum_i:x_ix^i:+ (k+g)\sum_i :T(\ov x_i)\ov x^i:+\sum_{i,j}:\ov 
x^i[x_i,x_j]\ov x^j:.\end{equation}
\vskip5pt
The basic object of our considerations is the element $G_\g-G_\aa$, 
which is the father of all
 affine relative
$\sigma$-twisted ``cubic" Dirac operators, in the following sense. 
First, notice that the vertex
algebra
$V^{k+g,1}$ is isomorphic to the vertex algebra $V^k(\g)\otimes 
F(\g)$, the isomorphism being 
induced by the map $x\mapsto x-\frac{1}{2}\sum_i:\ov{[x,x_i]}\ov 
x^i:,\,\ov x\mapsto \ov x\
(x\in \g)$, where $V^k(\g)$ is the affine vertex algebra of level 
$k$, isomorphic to the subalgebra
of $V^{k,1}$ generated by the fields $\{a(z)\}_{a\in\g}$, and $F(\g)$ 
is the fermionic vertex
algebra, isomorphic to the subalgebra
of $V^{k,1}$ generated by the fields $\{\ov a(z)\}_{a\in\g}$.
Since the vertex algebra $V^k(\g)$ (resp. $F(\g)$) is the affine 
analogue of the universal enveloping
algebra $U(\g)$ (resp. of the Clifford algebra generated by $\ov 
\g=\g\xi$), any (twisted)
representation of the vertex algebra $V^{k+g,1}$ gives rise to a 
representation of the affine Lie
algebra $\widehat \g$ of the form $M\otimes F(\ov\g)$, where $M$ is a 
(twisted) $\widehat \g$-module of
level $k$ and $F(\ov\g)$ is the restriction of the (twisted) spinor 
representation of
 $\widehat{so(\g)}$ to $\widehat \g$. (Here by twisted $\widehat 
\g$-module we mean a
representation of the twisted affine Lie algebra).\par
Denote by $\tau$ the automorphism of the Lie superalgebra $\g[\xi]$, 
defined by being 
$\sigma$ on $\g$ and $-\sigma$ on $\ov\g=\g\xi$. This automorphism 
induces an automorphism of the
vertex algebra $V^{k+g,1}$, also denoted by $\tau$, such that 
$\tau(G_\g)=-G_\g$. It follows that 
in any twisted representation $M\otimes F(\ov\g)$ of $V^{k+g,1}$, the 
odd field corresponding to the
element $G_\g$ is of the form
$$Y^{M\otimes 
F(\ov\g)}(G_\g,z)=\sum_{n\in\ganz}G^{(\sigma)}_{\g,n}z^{-n-\frac{3}{2}}.$$

The operator $G^{(\sigma)}_{\g,0}$ (resp. 
$G^{(\sigma)}_{\g,0}-G^{(\sigma)}_{\aa,0}$) is called the
$\sigma$-twisted (resp. relative $\sigma$-twisted) affine Dirac 
operator.\par One shows that the operators
$G^{(\sigma)}_{\g,n},\,(n\in\ganz)$ and their brackets define a 
representation of the Ramond
superalgebra in $M\otimes F(\ov\g)$ (an easy way to do it is to use 
the $\l$-bracket relation 
\eqref{in3} and similar  $\l$-bracket relations for 
$[L_\g\phantom{.}_\l G_\g]$ and 
$[L_\g\phantom{.}_\l L_\g]$). In particular, one gets the formula for
$(G^{(\sigma)}_{\g,0})^2=\frac{1}{2}[G^{(\sigma)}_{\g,0}, 
G^{(\sigma)}_{\g,0}]$, similar to Kostant's
formula in the finite dimensional setup \cite{Kold} (going to the 
affine setup one should replace $Cas$ in
Kostant's formula by the Virasoro operator
 $L^{(\sigma)}_{\g,0}$).\par Moreover, the relative operators
$G^{(\sigma)}_{\g,\aa,n}=G^{(\sigma)}_{\g,n}-G^{(\sigma)}_{\aa,n}\ 
(n\in\ganz)$ and their brackets again define
a representation of the Ramond superalgebra in $M\otimes F(\ov\g)$, 
which intertwines  the
representation of $\widehat \aa$. This construction was used in 
\cite{KacT} to describe all unitary
discrete series representations of the Ramond (and Neveu-Schwarz) 
superalgebra.
Here we use this construction to obtain a twisted representation of 
$\widehat \aa$, and to derive a formula, similar to \eqref{in2} in 
the affine setting.
This generalizes the main result of \cite{land} from $\sigma=1$ to 
arbitrary $\sigma$. 
\par 
The proof of the decomposition \eqref{in1} in \cite{GKRS} works also 
in the affine setting 
(cf. \cite{land} for $\sigma=1$). As a corollary, we obtain that the 
signed sum of asymptotic
dimensions of $\widehat \aa$-modules from a  multiplet equals zero. 
Of course, all the $\widehat\aa$-modules 
are infinite dimensional, but one can use a substitute for dimension, 
called {\it asymptotic dimension} \cite{Kac} (cf.
\eqref{dimensioneasintotica}), which is a positive real number that 
has all the basic properties of dimension. Moreover, if $\aa$ is reductive,  the sum runs over an infinite set, but a suitable use of Theta functions makes the sum finite.\vskip5pt
Next we introduce a notion of Dirac cohomology  $H((G_{\g,\aa})_0,M)$ 
of a $\sigma$-twisted $\ga$-module $M$. It
turns out that it is not difficult to prove a non-vanishing result 
for this cohomology which is an affine analogue of 
Kostant's result
\cite{K3}. In light of  this, it is natural to speculate 
 in our affine setting on results in the spirit of the following 
conjecture of Vogan \cite{V}, proved by 
Huang and Pand{\v{z}}i{\'c} \cite{HP}.\par
 Let $G$ be a connected real reductive group with a maximal
compact subgroup
$K$ and let $\tilde { K}$ be the two-fold spin cover of $K$. Let 
$\g=\k\oplus\p$ be the Cartan decomposition of the
complexified Lie algebra $\g$ of $G$. 
\begin{thma}{\it Let $X$ be an irreducible $(\g,K)$-module. Let 
$\gamma$ be the highest 
weight of an irreducible   $\tilde
{K}$-module appearing  in the
 Dirac cohomology of $X$. Then the infinitesimal character of
$X$ is given by $\gamma + \rho_\k$, where $\rho_\k$ is the half sum 
of the positive roots of
$\k$.}\end{thma}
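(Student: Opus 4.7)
The plan is to derive Theorem V from the algebraic statement Theorem HP along the lines of \cite{HP}. Theorem HP asserts that for every $z$ in the center $Z(U(\g))$ there exist $a\in Z(U(\k))$ and an odd element $b\in U(\g)\otimes C(\p)$, where $C(\p)$ is the Clifford algebra of $(\p,(\cdot,\cdot)|_\p)$, such that in $U(\g)\otimes C(\p)$
\[
z\otimes 1 \;=\; \zeta(a) + {\not\!\partial}_{\g/\k}\,b + b\,{\not\!\partial}_{\g/\k},
\]
where $\zeta\colon Z(U(\k))\hookrightarrow (U(\g)\otimes C(\p))^{\k}$ is the embedding induced by the diagonal map $\k\to U(\g)\oplus C(\p)$ (using the spin representation). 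Moreover the resulting assignment $z\mapsto a$ is an algebra homomorphism $\eta\colon Z(U(\g))\to Z(U(\k))$ which, under the Harish-Chandra isomorphisms $Z(U(\g))\cong S(\h)^W$ and $Z(U(\k))\cong S(\h)^{W_\k}$, matches the natural restriction shifted by $\rho-\rho_\k$.

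Granted this, the deduction of Theorem V is short. I would let $X$ be the given irreducible $(\g,K)$-module with infinitesimal character $\chi_\L$, so $z\otimes 1$ acts on $X\otimes F$ as the scalar $\chi_\L(z)$. Since ${\not\!\partial}_{\g/\k}\,b + b\,{\not\!\partial}_{\g/\k}$ preserves $\ker {\not\!\partial}_{\g/\k}$ and sends it into $\operatorname{im}{\not\!\partial}_{\g/\k}$, it induces the zero operator on Dirac cohomology. Consequently $\zeta(\eta(z))$ also acts on the cohomology as the scalar $\chi_\L(z)$. On an irreducible $\widetilde K$-constituent $\widetilde V_\gamma$ of highest weight $\gamma$, this same scalar equals the central character $\chi^{\k}_{\gamma+\rho_\k}(\eta(z))$. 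The equality $\chi_\L(z)=\chi^{\k}_{\gamma+\rho_\k}(\eta(z))$ for all $z\in Z(U(\g))$, combined with the Harish-Chandra compatibility of $\eta$, forces $\L$ and $\gamma+\rho_\k$ to be $W$-conjugate, which is the assertion of Theorem V.

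The main obstacle is of course proving Theorem HP itself. A first approximation is the Parthasarathy-Kostant identity
\[
\bigl({\not\!\partial}_{\g/\k}\bigr)^2 \;=\; -\operatorname{Cas}_\g\otimes 1 + (\operatorname{Cas}_\k)_\Delta + \|\rho\|^2 - \|\rho_\k\|^2,
\]
which settles the case $z=\operatorname{Cas}_\g$ and shows that, on Casimir, $\eta$ is given by the shift by $\rho-\rho_\k$. For general $z$ the natural approach is an induction on the PBW filtration; the key technical input is the direct sum decomposition
\[
(U(\g)\otimes C(\p))^{\k} \;=\; \zeta\bigl(Z(U(\k))\bigr)\;\oplus\;\bigl({\not\!\partial}_{\g/\k}\,(U(\g)\otimes C(\p))^{\k} + (U(\g)\otimes C(\p))^{\k}\,{\not\!\partial}_{\g/\k}\bigr),
\]
which I would prove by passing to the associated graded algebra $(S(\g)\otimes\Lambda\p)^{\k}$ and analyzing the principal symbol of ${\not\!\partial}_{\g/\k}$ via the Koszul complex of $\p$. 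The compatibility of $\eta$ with the Harish-Chandra isomorphisms is then verified by testing both sides on finite-dimensional modules $V(\l)$, where the Dirac cohomology is described explicitly by \eqref{in2}, and matching central characters.
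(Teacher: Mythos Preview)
The paper does not contain its own proof of Theorem~V; it is quoted in the Introduction as a background result of Huang--Pand\v{z}i\'c \cite{HP}, and the paper's actual contribution is the affine analogue (Theorem~\ref{primovogan}/\ref{Vogan1}). Your proposal is exactly the strategy the paper attributes to \cite{HP}: reduce Theorem~V to the algebraic Theorem~HP, then prove HP by passing to the associated graded $(S(\g)\otimes\Lambda\p)^{\k}$ and invoking the exactness of the Koszul complex for $\p$. So your outline is correct and coincides with the referenced approach; there is nothing further in the paper to compare against beyond that attribution.
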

\vskip5pt\noindent For more details on this statement (in particular 
for an explanation of how $\gamma
+ \rho_\k$ defines an infinitesimal character) see the discussion in 
\cite[2.3]{HP}. \par
This theorem has been proved 
in  \cite{HP} as a consequence of a purely  algebraic statement, 
which is as follows.
Consider the embeddings $\k\to \g\to U(\g),\,\k\to so(\p)\to Cl(\p)$, 
and let 
 $\k_\D$ be the associated diagonal copy of $\k$ in $U(\g)\otimes 
Cl(\p)$. Let $Z(\g),\,Z(\k_\D)$ be the centers 
of $U(\g),\,U(\k_\D),$ let $\h,\, \mathfrak t$ be Cartan subalgebras 
of $\g,\k$ and $W,\,W_{\k}$ their respective Weyl groups.
Recall the isomorphisms $S(\h)^W\cong P(\h^*)^W,
\,S(\mathfrak t)^{W_{\k}}\cong P(\mathfrak t^*)^{W_{\k}}$ (here
$P(V)$ denotes the algebra of polynomial functions on the vector 
space $V$ and $S(V)$ the symmetric algebra on $V$).

\begin{thmb}{\it Let $\{z_i\}$ be an orthonormal basis
of $\p$ with respect to  the Killing form of $\g$  and let $D=\sum_i 
z_i\otimes z_i$ be the Dirac operator.
\begin{enumerate}
\item For any $z\in Z(\g)$ there is a unique $\zeta(z)\in Z(\k_\D)$ 
and an element $a\in U(\g)\otimes Cl(\p)$
such  that $z\otimes 1=\zeta(z)+ aD+Da$.
\item The map  $\zeta:Z(\g)\to Z(\k_\D)\cong Z(\k)$
 is an algebra homomorphism which makes the following diagram 
commutative:
\[
\begin{CD} 
Z(\g) & @>\zeta>> & Z(\k)\\ @VVV & & @VVV\\ 
S(\h)^W & @>Res>> &S(\mathfrak t)^{W_{ K}}
\end{CD}
\]
\end{enumerate} 
Here the vertical arrows are Harish-Chandra isomorphisms, whereas 
$Res$ is the 
restriction of polynomials on $\h^*$ to $\mathfrak t^*$.}
\end{thmb}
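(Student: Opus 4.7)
The plan is to follow the Huang--Pand{\v z}i\'c strategy, exploiting two structural properties of the Dirac operator inside the superalgebra $\mathcal A = U(\g)\otimes Cl(\p)$. First I would verify that $D$ is $\k_\Delta$-invariant, i.e.\ that $D$ super-commutes with the diagonal image of $\k$: this is a direct computation using $[\k,\p]\subset\p$ (the symmetric pair hypothesis) and the Clifford relations, with the brackets coming from the $U(\g)$ and $Cl(\p)$ factors cancelling. Second I would establish Parthasarathy's formula
\begin{equation*}
D^{2} \;=\; -\,Cas_\g\otimes 1 \;+\; Cas_{\k_\Delta} \;+\; c\cdot 1,
\end{equation*}
with $c=(\rho_\g,\rho_\g)-(\rho_\k,\rho_\k)$, by expanding $D^2=\sum_{i,j} z_i z_j\otimes z_i z_j$ and splitting into symmetric and antisymmetric parts in the Clifford indices. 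Together these show that $D^2\in Z(\g)\otimes 1 + Z(\k_\Delta)$ and that the odd super-derivation $[D,-]$ annihilates both $Z(\g)\otimes 1$ and $Z(\k_\Delta)$.

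For part (1), I would introduce the PBW-type filtration $F_n = U_n(\g)\otimes Cl(\p)$ and argue by induction on $n$. Given $z\in Z(\g)$ of filtered degree $n$, its symbol $\mathrm{gr}(z)\in S^n(\g)^\g$ can be restricted via the Chevalley isomorphism $S(\g)^\g\cong S(\h)^W$ to $\h$ and then further to $\mathfrak t$, landing in $S(\mathfrak t)^{W_\k}$; pulling back through the Chevalley isomorphism for $\k$ produces a candidate $\zeta_0(z)\in Z(\k_\Delta)$ whose top symbol matches that of $z$. The difference $z\otimes 1-\zeta_0(z)$ then sits in $F_{n-1}$. The crucial technical step is to show that any $\k_\Delta$-invariant element of $F_{n-1}$ whose symbol restricts to $0$ is congruent, modulo an expression of the form $Da+aD$, to an element of strictly lower filtration; iterating yields the desired decomposition.

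Uniqueness of $\zeta(z)$ reduces to verifying that $Z(\k_\Delta)\cap\{Da+aD:a\in\mathcal A\}=0$. This follows from a symbol argument: the top symbol of $Da+aD$ lies in the ideal of $S(\g)\otimes Cl(\p)$ generated by $\p\otimes 1+1\otimes\p$, so it cannot coincide with the image of a nonzero element of $Z(\k_\Delta)$ under the double Chevalley restriction. Once uniqueness is secured, the algebra-homomorphism property in (2) becomes automatic: writing $z_i\otimes 1=\zeta(z_i)+(Da_i+a_iD)$ and using that $\zeta(z_i)$ super-commutes with $D$, one obtains $z_1 z_2\otimes 1=\zeta(z_1)\zeta(z_2)+(Da'+a'D)$, whence uniqueness forces $\zeta(z_1 z_2)=\zeta(z_1)\zeta(z_2)$.

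Finally, commutativity of the Harish--Chandra diagram is essentially built into the construction: by the inductive definition the top symbol of $\zeta(z)$ is precisely the restriction of the symbol of $z$ from $\h^*$ to $\mathfrak t^*$, which matches the map $Res$; the $\rho$-shifts of the two vertical Harish--Chandra isomorphisms agree because the shift by $\rho_\g-\rho_\k$ is absorbed into the constant $c$ in Parthasarathy's formula. The main obstacle I expect is the inductive existence argument in part (1): producing an explicit correcting element $a\in\mathcal A$ at each filtration level requires a genuine harmonic decomposition of $\mathcal A$ with respect to $[D,-]$, in the spirit of Kostant's harmonic theory for the cubic Dirac operator, and it is precisely this decomposition that must be generalized to the affine twisted setting later in the paper.
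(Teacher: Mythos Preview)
Theorem~HP is not proven in this paper; it is quoted in the Introduction as the main result of Huang--Pand\v{z}i\'c \cite{HP}, serving as motivation for the paper's affine analog (Theorem~\ref{Vogan1}). There is therefore no proof in the paper to compare your proposal against directly.

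That said, your sketch diverges from the actual Huang--Pand\v{z}i\'c argument in one substantive respect. You propose to construct $\zeta(z)$ by an explicit induction on the PBW filtration: produce a candidate $\zeta_0(z)$ via the Chevalley restriction on top symbols, then correct the discrepancy $z\otimes 1-\zeta_0(z)$ downward. The step you flag as ``crucial''---that any $\k_\Delta$-invariant whose symbol restricts to zero is congruent modulo $Da+aD$ to something of strictly lower filtration---is exactly the hard part, and you have not said how you would prove it. Huang--Pand\v{z}i\'c do not proceed this way. They regard $d(a)=Da-(-1)^{|a|}aD$ as an odd differential on the $\mathbb Z/2$-graded algebra of $\k_\Delta$-invariants, verify $d^2=0$ there using Parthasarathy's formula, and compute the cohomology $H(d)$ by passing to the associated graded, where $d$ becomes a Koszul differential on $S(\p)\otimes\wedge\p$ tensored with the remaining factors. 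Koszul exactness then gives $H(d)\cong Z(\k_\Delta)$, which yields existence and uniqueness in part~(1) simultaneously; part~(2) follows by tracking symbols.

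The paper's Section~8, which proves the affine version, follows this cohomological route rather than your inductive-correction scheme: it defines a differential $\mathbf d=[(G_{\g,\aa})_0,\,\cdot\,]$ on a completed algebra $\overline{\mathcal A}$, identifies the induced differential on the associated graded pieces with a Koszul-type differential (Lemma~\ref{deDp}), invokes Koszul exactness (formula~\eqref{coomologiazero} and Lemma~\ref{koszulholomorphic}), and concludes $H(\mathbf d)\cong\overline{\mathcal A_{inv}(\aa)}$ (Corollary~\ref{fr}). So if you want your finite-dimensional sketch to align with the method the paper actually generalizes, you should recast your inductive step as a cohomology computation for the differential $d$ and supply the Koszul argument on the associated graded.
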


 Kostant observed in \cite{K3} that Theorem HP holds for any  
reductive pair $(\g,\aa)$ provided
one uses the cubic Dirac operator. Since this operator is in a 
precise sense a specialization of our
$G^{(\sigma)}_{\g,\aa,0}$ (see the discussion  in \cite[\S 
3.4]{KacD}), it is  natural to ask whether it is possible to
formulate a kind of affine analogue of Theorems V or HP. 
We obtain this analogue in the following setting.\par We replace 
the $(\g,{ K})$-module  $X$ by  a highest weight twisted 
$\widehat\g$-module
$M$. 
 Let $\g^{\ov 0},\,\aa^{\ov 0}$ denote the fixed
point  subalgebras of $\sigma$ in $\g,\,\aa$ respectively. 
Let $\h_0$ denote a Cartan subalgebra of $\g^{\ov 0}$. Fix a Cartan 
subalgebra $\h_\aa$ of
$\aa^{\ov 0}$ and let $\ha_\aa$ be the corresponding Cartan 
subalgebra of $\widehat\aa$.
Let $C_\g$ denote the  Tits cone  of $\widehat\g$ (cf. \eqref{tits}) 
and let $\rhat_\sigma,\,
\rhat_{\aa\sigma}$ be as in \eqref{rhoaff}. Let $\Wa$ be the Weyl 
group of $\ga$. The following
result is our affine analog of Vogan's conjecture.

 \begin{theorem}\label{primovogan} Assume that the centralizer
 $C(\h_\aa)$ of $\h_\aa$ in $\g^{\bar0}$ equals $\h_0$.
Fix
$\L\in
\widehat
\h^*_0$ such that
$\L+\rhat_\sigma\in C_\g$ and let
$M$ be a highest weight module for $\widehat\g$ with highest weight 
$\L$. Let $f$ be a
holomorphic
$\Wa$-invariant function on $C_\g$. Suppose that a twisted highest 
weight $\widehat \aa$-module  of
highest weight
$\mu$
 occurs in the Dirac cohomology $H((G_{\g,\sa})_0, M)$. Then 
\begin{equation*}f_{|{\ha}_\sa^*}(\mu+\rhat_{\aa\si})= 
f(\L+\widehat\rho_{\si}).\end{equation*}
\end{theorem}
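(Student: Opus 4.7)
The plan is to adapt Huang--Pand\v{z}i\'c's proof of Theorem V from Theorem HP, in Kostant's formulation for arbitrary reductive pairs (cf.\ \cite{K3}), to the affine Ramond setting. The crucial input is the vertex-algebra identity \eqref{in3} applied to both $\g$ and $\aa$: combining the $\lambda$-brackets of $G_\g,G_\aa$ with the Sugawara relations for $L_\g,L_\aa$, one obtains the affine analog of Kostant's square formula
\begin{equation*}
\di^{\,2}=L^{(\si)}_{\g,0}-L^{(\si)}_{\sa,0}+c
\end{equation*}
with an explicit scalar $c$ depending on $k$, $g$ and the dimensions. Since $\di$ commutes with the $\widehat\aa$-action, a twisted highest weight $\widehat\aa$-module of highest weight $\mu$ appearing in $H(\di,M)$ is realized by a vector $v\in M\otimes F(\ov\g)$ with $\di\,v=0$ that is an $\widehat\aa$-highest weight vector of weight $\mu$, and all checks reduce to scalar equalities on $v$.

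Evaluating the square formula on such $v$ immediately gives the theorem when $f$ is the quadratic $\Wa$-invariant coming from the Sugawara construction: both sides are the corresponding Virasoro eigenvalues on highest weight vectors, which on a module of highest weight $\L$ (resp.\ $\mu$) are precisely the values at $\L+\rhat_\si$ (resp.\ $\mu+\rhat_{\aa\si}$) of that quadratic invariant. For a general holomorphic $\Wa$-invariant $f$, one would like to construct, for each such $f$, an operator $T_f$ in a suitable completion of $V^k(\g)\otimes F(\ov\g)$ acting on any highest weight $\widehat\g$-module of highest weight $\L$ by the scalar $f(\L+\rhat_\si)$, together with a companion $T'_f$ on the $\widehat\aa$ side acting by $f_{|\ha_\sa^*}(\mu+\rhat_{\aa\si})$ on highest weight $\widehat\aa$-modules of highest weight $\mu$, and then to prove the affine Kostant--HP identity
\begin{equation*}
T_f-T'_f=A\,\di+\di\,A
\end{equation*}
for a suitable $A$; evaluated on $v$ this collapses to $f(\L+\rhat_\si)\,v=f_{|\ha_\sa^*}(\mu+\rhat_{\aa\si})\,v$ modulo the image of $\di$, giving the theorem.

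The main obstacle is the last identity for non-quadratic $f$: the role of $Z(\g)$ is now played by the infinite-dimensional space of holomorphic $\Wa$-invariants on $C_\g$, and no direct affine higher Sugawara/Feigin--Frenkel machinery produces an operator $T_f$ for every such $f$. The cleanest workaround is to prove instead the sharper geometric statement that the $\Wa$-orbit of $\L+\rhat_\si$, viewed inside $\ha^*$ through the embedding dual to $\ha_\sa\hookrightarrow\ha$ furnished by the hypothesis $C(\h_\sa)=\h_0$, meets $\ha_\sa^*$ at the point $\mu+\rhat_{\aa\si}$; this would imply the required equality for every holomorphic $\Wa$-invariant $f$ at once, and is expected to follow by combining the square formula above with the $\lambda$-bracket relations between $\di$ and elements of $\ha_\sa$, together with the non-vanishing of Dirac cohomology for Verma modules established earlier in the paper as the affine analog of Kostant's result.
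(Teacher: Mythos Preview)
Your proposal correctly identifies the architecture of the Huang--Pand\v{z}i\'c argument and the role of the square formula (Proposition~\ref{ker}), but it contains a genuine gap at exactly the point you flag as the ``main obstacle,'' and your proposed workaround does not close it.

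First, on the obstacle itself: you write that ``no direct affine higher Sugawara/Feigin--Frenkel machinery produces an operator $T_f$ for every such $f$.'' In fact such machinery exists and is what the paper uses. Proposition~\ref{Kaccentro}, taken from \cite{Kacpnas}, constructs for every holomorphic $\Wa$-invariant $f$ a central element $z_\phi$ in a suitable completion $\widehat U_{\mathfrak F}$ acting on any highest weight module of highest weight $\L$ by the scalar $f(\L+\rhat_\si)$. This is precisely your $T_f$. The paper then carries out a long cohomological computation (Sections~\ref{basicsetup}--\ref{quarta}, culminating in Corollary~\ref{fr}) showing that in the quotient algebra $\ov\Aa$ the differential $\mathbf d=[(G_{\g,\aa})_0,\,\cdot\,]$ has cohomology $\ov{\Aa_{inv}(\aa)}$; applied to $\Omega(z_\phi)$ this yields $\Omega(z_\phi)=\Phi+a+\mathbf d(u)$ with $\Phi\in\mathcal F_\aa$ and $a\in\ov{\Aa^1_{inv}(\aa)}$ killing highest weight vectors. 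That is the affine Theorem~HP, and the final step is exactly the evaluation on $v$ that you describe.

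Second, your workaround does not work. The square formula gives only $\|\L+\rhat_\si\|^2=\|\mu+\rhat_{\aa\si}\|^2$; equal norms do not force two points of $C_\g$ into the same $\Wa$-orbit, so the sharper geometric statement cannot be extracted from the quadratic identity alone. Neither the $\lambda$-bracket relations with $\ha_\sa$ nor the non-vanishing result of Corollary~\ref{hw} supply the missing constraints: the former give commutation relations, not spectral identities beyond degree two, and the latter concerns existence of cohomology classes, not their weights relative to $\Wa$-orbits. In short, the passage from the quadratic case to arbitrary $f$ genuinely requires the full center of \cite{Kacpnas} together with the Koszul-type filtration argument of Section~8; there is no shortcut through the square formula.
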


\par
\vskip10pt
The content of the paper is as follows. In Sections 2 and 3 we 
introduce the basic  material
on vertex and Lie conformal algebras, and on their twisted 
representations, respectively.
Here we discuss in some detail the examples of affine, fermionic and 
super affine vertex algebras,
and their twisted representations.\par
In Section 4 we introduce (for any reductive pair) the relative 
affine Dirac operators, in the framework of twisted
representations of super affine vertex algebras. We compute their 
squares and the values of
the squares on highest weight vectors (formula \eqref{gzeronuovo} and 
Propositions \ref{azioneDeG} and
\ref{ker}). Also, we obtain  formula \eqref{masterrho}, as a 
corollary of these computations. It is
shown  in Section 6 that in the special case when $\sigma$ is  a 
finite order automorphism, formula
\eqref{masterrho} turns into the ``very strange formula" 
\cite[(13.15.4)]{Kac}.\par
In Section 5  we decompose (under the assumption $\rank\,\aa^{\ov 
0}=\rank\,\g^{\ov 0}$) the kernel of a relative
affine Dirac operator in the multiplets  (Theorem \ref{multiplets}), 
compute the (common) eigenvalue of the
affine Casimir operator on representations of each multiplet 
(Corollary \ref{casimiraref}) and show that 
the signed sum of 
 asymptotic dimensions of representations of a  multiplet is zero 
(Proposition \ref{asdim}).\par
In Section 7, in the general setting of reductive pairs, we obtain a 
non-vanishing result for affine Dirac
cohomology, similar to Kostant's
\cite{K3} in the finite dimensional setting.\par
In by far the longest Section 8 we prove  our affine analogue of 
Vogan's conjecture: the main result
is Theorem \ref{Vogan1}, which is a technically more precise 
formulation of Theorem \ref{primovogan} above. Though the flavor of
Huang-Pand{\v{z}}i{\'c}'s proof remains (notably in exploiting the  
exactness of suitable Koszul complexes), we have
to overcome several  difficulties which are  due to the completion, 
which has to be introduced in order to have a
large holomorphic center, constructed in \cite{Kacpnas}, and the 
corresponding  Harish-Chandra type homomorphism.\par
In Section 9 we give proofs, omitted in previous sections, of various 
technical results.

\section{Basic definitions and examples}\label{basic}
For background on vertex algebras, conformal Lie algebras and twisted 
vertex 
algebras see
\cite{KacV}, \cite{KacD}, \cite{KW}.\par A vector superspace  is a 
$\ganz/2\ganz=\{\ov 0,\ov 1\}$-graded vector space $V=V_{\ov 0}\oplus 
V_{\ov  1}$. If 
$\a\in\ganz/2\ganz$, then set $p(v)=\a$ if $v\in V_\a$ and call 
$p(v)$ the  parity of $v$. We also set
$p(v,w)=(-1)^{p(v)p(w)}$. Recall that an 
$End(V)$-valued quantum field  is a  series 
$$ a(z)=\sum_{n\in\ganz}a_{(n)}z^{-n-1}
$$ where $a_{(n)}\in End(V)$ all have the same parity (called the 
parity of $a(z)$)  and, for all $v\in V$,
$a_{(n)}v=0$ for
$n>>0$.
\begin{defi}
 A \emph{vertex algebra} is triple $(V,|0\rangle, Y)$, where $V$ is a 
vector superspace, $\va$ is an
even vector in $V$, and $Y: a\mapsto  
Y(a,z)=\sum\limits_{n\in\ganz}a_{(n)}z^{-n-1}$ is a parity 
preserving linear map from $V$ to the space of $End(V)$-valued quantum 
fields. These data satisfy  the
following axioms, where $Ta=a_{(-2)}\va$: 

\item{i)} $T\va=0$, $Y(a,z)\va_{|z=0}=a$,
\item{ii)} $[T,Y(a,z)]=\partial_zY(a,z)$,
\item{iii)} $(z-w)^N[Y(a,z),Y(b,w)]=0$ for some $N\in \ganz_+$.
\end{defi}

As a consequence of the axioms one deduces that, if $a,b\in V$, then, 
in $End(V)$,
\begin{equation}\label{bracket} [a_{(n)},b_{(m)}]=
\sum_{j\in\ganz_+}\binom{n}{j}(a_{(j)}b)_{(n+m-j)},\quad n,m\in\ganz.
\end{equation}

In a vertex algebra one defines a bilinear product $:\cdot:$, called 
the normal  order 
product, by 
$:ab:=a_{(-1)}b$. Letting  
$Y^+(a,z)=\sum_{n<0}a_{(n)}z^{-n-1}$ and 
$Y^-(a,z)=\sum_{n\ge0}a_{(n)}z^{-n-1}$, one defines
the normal order product of quantum fields by 
$$ :Y(a,z)Y(b,z):=Y^+(a,z)Y(b,z)+p(a,b)Y(b,z)Y^-(a,z).
$$
Then
$$ :Y(a,z)Y(b,z):=Y(:ab:,z).
$$ The normal order product is, in general, neither commutative nor
 associative, but the following ``quasi-commutativity" and
 ``quasi-associativity" relations hold:
\begin{equation}\label{qc} :ab:=p(a,b):ba:+\int_{-T}^0[a_\l b]d\l.
\end{equation}
\begin{equation}\label{qa}::ab:c:=:a:bc::+:(\int_{0}^T  d\l\,a)[b_\l 
c]:+p(a,b):(\int_{0}^T d\l\,b)[a_\l 
c]:.\end{equation}
where
\begin{equation}\label{lambdaprodotto}
[a_\l b]=\sum_{n\geq 0}\frac{\l^n}{n!}a_{(n)}b.
\end{equation}\vskip10pt\begin{defi}A {\it Lie conformal 
superalgebra}  is a $\ganz\!/2\ganz$-graded
$\C[T]$-module $R=R_{\overline{0}} \oplus  R_{\overline{1}}$, endowed 
with a parity preserving
$\C$-bilinear map $R \otimes  R \to \C [\lambda]\otimes R$, denoted 
by $[a_{\lambda}b]$, such that the 
following axioms hold:
\vspace*{-1ex}\begin{align*}%
\text{(sesquilinearity)\quad } & [T a_{\lambda}b] = -\lambda 
[a_{\lambda} b], \ %
T[a_{\lambda}b] = 
[Ta_{\lambda}b] + [a_{\lambda}Tb],\\
\text{(skewsymmetry)\quad  } &  [b_{\lambda}a] =-p(a,b)          
[a_{-\lambda -T}b],\\
\text{(Jacobi identity)\quad } & [a_{\lambda}[b_{\mu}c]] -
    p(a,b) [b_{\mu} [a_{\l}c]] =
    [[a_{\lambda}b]_{\lambda + \mu}c].
  \end{align*}

\end{defi}

A vertex algebra can be endowed with the structure of a Lie 
conformal  superalgebra by introducing the $\l$-product via 
\eqref{lambdaprodotto}. Moreover 
$[\cdot_\l\cdot]$  and $:\cdot :$  are related
by the {\it non-commutative Wick formula} :
\begin{equation}\label{Wick} [a_\l:bc:]=\, :[a_\l b]c:+p(a,b):b[a_\l 
c]:+\int_0^\l[[a_\l b]_\mu c]d\mu.
\end{equation} Combining Wick formula with skewsymmetry we get the 
``right non-commu\-tative  Wick
formula":
\begin{equation}\label{RWick} [:ab:_\l c]= :(e^{T\partial_\l}a)[b_\l 
c]:\!+p(a,b):(e^{T\partial_\l}b)[a_\l  c]:\!
+p(a,b)\int_0^\l[b_\mu [a_{\l-\mu}c]]d\mu.
\end{equation}

Given a Lie conformal superalgebra $R$  one can construct its 
universal  enveloping vertex algebra 
$V(R)$. This vertex algebra is  characterized by the following 
properties:
\begin{enumerate}
\item There is an embedding $R\to V(R)$ of Lie conformal 
superalgebras.
\item Given an ordered  basis $\{a_i\}$ of $R$, the monomials 
$:a_{i_1}a_{i_2}\cdots a_{i_n}:$ with $i_j\le i_{j+1}$ and 
$i_j<i_{j+1}$ if 
$p(a_{i_j})=\ov 1$ form a basis of $V(R)$.
\end{enumerate} Here and further, the normal ordered product of more 
than two fields is defined from
right to left, as usual.\par\vskip5pt We will be using the following 
three examples of Lie
conformal superalgebras and their  universal enveloping  vertex 
algebras.

\subsection{The affine vertex algebra}

Given a reductive finite dimensional complex Lie algebra $\g$ endowed 
with a  nondegenerate invariant
bilinear form $(\cdot,\cdot)$, one defines the  current Lie  
conformal algebra $Cur(\g)$ as 
$$ Cur(\g)=(\C[T]\otimes \g)+\C K
$$ with $T(K)=0$ and the $\l$-bracket defined for $a,b\in1\otimes 
\g$  by
$$ [a_\l b]=[a,b]+\l(a,b)K,\quad [a_\l K]=[K_\l K]=0.
$$ Let $V(\g)$ be its universal enveloping vertex algebra. Given 
$a,b\in \g$ then  it follows from
\eqref{bracket} that, in $End(V(\g))$, 
$$ [a_{(n)},b_{(m)}]=[a,b]+\d_{n,m}n(a,b)K,\quad m,n\in\ganz.
$$

The vertex algebra 
$$ V^k(\g)=V(\g)/:(K-k\va)V(\g):
$$ is called the {\it level $k$ universal affine vertex algebra}.

\subsection{The fermionic vertex algebra} Given a  vector superspace $A$  
with a nondegenerate bilinear
form $(\cdot,\cdot)$  such that 
$(a,b)=(-1)^{p(a)}(b,a)$, one can construct the  Clifford  Lie 
conformal algebra as
$$R^{Cl}(A)=(\C[T]\otimes A)\oplus \C K'$$ with $T(K')=0$ and the 
$\l$-bracket defined by 
$$ [a_\l b]=(a,b)K',\quad [a_\l K']=[K'_\l K']=0.
$$ Let $V(A)$ be its universal enveloping vertex algebra. Given 
$a,b\in A$, it  follows from
\eqref{bracket} that, in $End(V(A))$, 
$$ [a_{(n)},b_{(m)}]=\d_{n+m,-1}(a,b)K',\quad m,n\in\ganz.
$$

The vertex algebra 
$$ F(A)=V(A)/:(K'-\va)V(A):
$$ is called the {\it fermionic vertex algebra}.

\subsection{The super affine vertex algebra} Let $\g$ be a reductive 
complex  
finite dimensional Lie 
algebra endowed with a  non-degenerate symmetric bilinear invariant 
form $(\cdot,\cdot)$.
Regard $\g$ as an even superspace and let $\overline\g$ be $\g$ 
viewed as an odd  superspace.
Consider the Lie conformal superalgebra
$R^{super}=(\C[T]\otimes\g)\oplus(\C[T]\otimes\overline \g)\oplus\C 
\mathcal K\oplus\C 
\overline{\mathcal K}$  with  $T(\mathcal K)=T(\overline{\mathcal  
K})=0$, 
 $\mathcal K,\,\overline{\mathcal  K}$ being even central elements, 
the 
$\lambda$-brackets for $a,b\in 1\otimes \g$ being
\begin{equation}\label{prod} [a_\lambda b]=[a,b]+\lambda 
(a,b)\mathcal K,\quad [a_\lambda \overline
b]=[\overline a_\lambda b]=\overline{[a,b]},\quad [\overline 
a_\lambda \overline
b]=(a,b)\overline{\mathcal K}.
\end{equation}
 Denote by  $V(R^{super})$  the corresponding  universal enveloping 
vertex algebra, by 
 $V^{1}(R^{super})$  its quotient modulo   the ideal generated by  
$\overline{\mathcal K}-\va$ and
by   
$V^{k,1}(R^{super})$  the  quotient modulo the ideal generated by 
$\mathcal K-k\va$ and
$\overline{\mathcal K}-\va$.

\vskip10pt We conclude this section by showing how our three examples 
are related. First of all we
recall that a tensor product of vertex algebras $V_1$ and $V_2$  is  
the vertex algebra $V_1\otimes
V_2$ with vacuum vector  $\va_1\otimes \va_2$ and  state-field 
correspondence defined by
$Y(a\otimes b,z)=Y(a,z)\otimes Y(b,z)$.

Assume that $\g$ is semisimple or abelian.  Let $Cas$ be  the Casimir 
operator of $\g$ with respect to
$(\cdot,\cdot)$. We can and do  assume that the form is chosen so 
that $Cas=2\,g\,I_{\g}$ when acting
on $\g$, where $g$ is a positive real number.
\begin{prop}\label{isomorfi}  Let   $\{x_i\}$ be an orthonormal basis 
of $\g$.  For $x\in\g$ set  
$$\widetilde x= x-
\frac{1}{2}\sum\limits_i :\overline{[x,x_i]}\overline x_i:,\ 
\widetilde K=\mathcal K-g |0\rangle.$$ The map $x\mapsto \widetilde  x,\, 
K\mapsto \widetilde K$, $\ov
y\mapsto \ov y$ defines a Lie conformal superalgebra homomorphism 
$ Cur(\g)\oplus  R^{Cl}(\overline
\g)\to V^1(R^{super})$, which induces an isomorphism of vertex 
algebras 
$V^k(\g)\otimes F(\overline \g)\cong V^{k+g,1}(R^{super})$.
\end{prop}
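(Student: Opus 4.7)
The plan is to use the universal property of the enveloping vertex algebra: any Lie conformal superalgebra homomorphism $R \to V$ into a vertex algebra $V$ extends uniquely to a vertex algebra homomorphism $V(R) \to V$. So it suffices to verify that, in $V^{1}(R^{super})$, the images of the generators satisfy the $\l$-bracket relations of $Cur(\g) \oplus R^{Cl}(\ov \g)$, i.e.
\begin{itemize}
\item[(a)] $[\tilde x{}_\l \tilde y] = \widetilde{[x,y]} + \l(x,y)\,\widetilde K$,
\item[(b)] $[\tilde x{}_\l \ov y] = 0$,
\item[(c)] $[\ov y{}_\l \ov z] = (y,z)\va$,
\end{itemize}
for all $x,y,z\in\g$. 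Relation (c) is immediate from \eqref{prod} and the quotient $\ov{\mathcal K}=\va$, while the centrality of $\widetilde K$ is obvious.

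For (b) write $\tilde x = x - S_x$ with $S_x := \tfrac12 \sum_i :\ov{[x,x_i]}\,\ov x_i:$. The term $[x{}_\l \ov y]=\ov{[x,y]}$ is given by \eqref{prod}. To compute $[S_x{}_\l \ov y]$, apply the right-hand Wick formula \eqref{RWick} with $a=\ov{[x,x_i]}$, $b=\ov x_i$, $c=\ov y$; the integral term vanishes because $[b_\mu c]$ and $[a_{\l-\mu}c]$ are constants in $\C\va$, and $[\ov x_i{}_\mu \va]=0$. What remains evaluates, using $p(a,b)=-1$, to
\[
[S_x{}_\l \ov y] = \tfrac12\sum_i\bigl((x_i,y)\,\ov{[x,x_i]} - ([x,x_i],y)\,\ov x_i\bigr).
\]
The first sum equals $\ov{[x,y]}$ since $y=\sum_i (x_i,y)x_i$, while by invariance $([x,x_i],y) = -(x,[y,x_i])$, and a second application of the same identity gives $\sum_i ([x,x_i],y)\ov x_i = -\ov{[x,y]}$. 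Hence $[S_x{}_\l \ov y] = \ov{[x,y]}$, which cancels $[x{}_\l \ov y]$ and proves (b).

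Relation (a) is the main obstacle and is computed by expanding $[\tilde x{}_\l \tilde y]$ as $[x{}_\l y]-[x{}_\l S_y]-[S_x{}_\l y]+[S_x{}_\l S_y]$, each term treated by \eqref{Wick} and \eqref{RWick}. The first summand gives $[x,y]+\l(x,y)\mathcal K$. The two cross terms produce, by the same mechanism as in paragraph two, the term $-S_{[x,y]}$ together with fermionic bilinears of the form $:\ov u\,\ov v:$. The fourth term $[S_x{}_\l S_y]$ supplies the remaining fermionic bilinears together with Wick integral contributions; here the crucial input is the Casimir identity $\sum_i [[x,x_i],x_i]=-2g\,x$ (equivalently $\sum_i ([x,x_i],[y,x_i]) = 2g(x,y)$), which produces the single scalar $-\l g(x,y)\va$. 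Using invariance repeatedly, all fermionic bilinears cancel, and one concludes $[\tilde x{}_\l \tilde y] = [x,y]-S_{[x,y]} + \l(x,y)(\mathcal K - g\va) = \widetilde{[x,y]} + \l(x,y)\widetilde K$, as required.

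The verified brackets give a Lie conformal superalgebra homomorphism, hence by the universal property a vertex algebra homomorphism $V(Cur(\g))\otimes V(R^{Cl}(\ov\g))\to V^{1}(R^{super})$; since $K-k\va \mapsto \mathcal K - (k+g)\va$, it descends to $\Phi: V^k(\g)\otimes F(\ov\g)\to V^{k+g,1}(R^{super})$. To show $\Phi$ is an isomorphism, use the PBW-type basis of ordered normal-ordered monomials in the generators on both sides (guaranteed by property (2) in the definition of universal enveloping). Filter $V^{k+g,1}(R^{super})$ by the number of ``bosonic'' factors $x\in\g$: the substitution $x=\tilde x + S_x$ is upper triangular with identity leading part with respect to this filtration, because $S_x$ is purely fermionic. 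Hence $\Phi$ sends a basis to a basis and is an isomorphism.
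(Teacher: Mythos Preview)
Your proof is correct and follows the same overall strategy as the paper: verify the $\l$-brackets among generators, invoke the universal property, then use the bosonic-degree filtration for the isomorphism. The filtration argument is identical to the paper's.

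The one place where you diverge is the verification of (a). You propose the full four-term expansion $[x_\l y]-[x_\l S_y]-[S_x{}_\l y]+[S_x{}_\l S_y]$, which works but is laborious, and your sketch leaves the cancellation of the fermionic bilinears and the sign bookkeeping unverified (indeed your Casimir identity has the wrong sign: $\sum_i[[x,x_i],x_i]=+2gx$, and $\sum_i([x,x_i],[y,x_i])=-2g(x,y)$). The paper avoids this term entirely by a simple observation you already have the ingredients for: once (b) is known, the Wick formula gives $[\tilde x{}_\l S_y]=0$ immediately (since $S_y$ is a normal-ordered product of barred elements and $[\tilde x{}_\l\ov z]=0$), hence $[\tilde x{}_\l\tilde y]=[\tilde x{}_\l y]$. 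By skewsymmetry this equals (up to the usual $-\l-T$ substitution) $[y{}_\l\tilde x]$, which is a single Wick computation producing $\widetilde{[x,y]}+\l(x,y)\widetilde K$ directly. This is both shorter and makes the appearance of the shift $\mathcal K\mapsto\mathcal K-g$ transparent.
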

\begin{proof} We first show that for  $a, b\in \g$ we have (in 
$V^1(R^{super})$)
\begin{equation}\label{due} [a_\l\widetilde 
b]=\widetilde{[a,b]}+\l(\mathcal K-g |0\rangle)(a,b)=
[\widetilde a_\l b].
\end{equation} 
The first equality in \eqref{due} follows from the Wick formula 
\eqref{Wick} and the Jacobi identity:
\begin{align*} [a_\l\widetilde b]&=[a_\l  
b]-\tfrac{1}{2}\sum_i[a_\l:\overline{[b,x_i]}\overline x_i:]\\&=[a_\l
b]-
\tfrac{1}{2}\sum_i(:[a_\l\overline{[b,x_i]}]\overline  
x_i:+:\overline{[b,x_i]}[a_\l\overline x_i]:+\int_0^\l[[a_\l 
\overline{[b,x_i]}]_\mu \overline x_i]d \mu)\\&=[a_\l b]-
\tfrac{1}{2}\sum_i(:\overline{[a,[b,x_i]]}\overline 
x_i:+:\overline{[b,x_i]} \ \overline{[a,x_i]}:+\l([a,[b,x_i]],x_i))\\&=[a_\l 
b]-
\tfrac{1}{2}\sum_i(:\overline{[a,[b,x_i]]}\overline  
x_i:+:\overline{[b,[x_i,a]]}\overline x_i:)-\la
g(a,b) |0\rangle\\&=[a_\l  
b]-\tfrac{1}{2}\sum_i:\overline{[[a,b],x_i]}\overline x_i:-\la  
g(a,b) |0\rangle.
\end{align*} Recalling that $[a_\l b]=[a,b]+\l(a,b)\mathcal K$ we 
have 
\eqref{due}.  By skewsymmetry of the $\la$-bracket, we readily obtain 
the second equality in \eqref{due}.

Next we prove that 
\begin{equation}\label{tre} [\overline a_\l 
\widetilde b]=0.
\end{equation} Just compute, using the Wick formula 
\eqref{Wick}:\begin{align*}[\overline a_\l \widetilde b]&=[\overline 
a_\l b]-
\tfrac{1}{2}\sum_i[\overline a_\l:\overline{[b,x_i]}\overline  
x_i:]\\&=\overline{[a,b]}-\tfrac{1}{2}(\sum_i
:[\overline a_\l 
\overline{[b,x_i]}]\overline x_i:-\sum_i:\overline{[b,x_i]}[\overline 
a_\l 
\overline 
x_i]:)\\&=\overline{[a,b]}-\tfrac{1}{2}(\sum_i(a,[b,x_i])\overline 
x_i-\sum_i(a,x_i)\overline{[b,x_i]})\\&=\overline{[a,b]}-
\tfrac{1}{2}\overline{[a,b]}+\tfrac{1}{2}\overline{[b,a]}=0.\end{align*} 

Finally, using \eqref{tre} we get
$[\widetilde  a_\l\widetilde b]=[a_\l \tilde b]$, and,  using 
\eqref{due}, we  find
\begin{equation}\label{atbt}[\widetilde a_\l\widetilde  
b]=\widetilde{[a,b]}+\l (\mathcal
K-g)(a,b)=\widetilde{[a,b]}+\l \widetilde K(a,b) =\widetilde{[a_\l 
b]}.\end{equation} 
This proves the first
part of the statement. 

Clearly the ideal generated by $K-k\va$ gets mapped to the ideal of 
$V^1(R^{super})$ generated by
$\mathcal K-(k+g)\va$, so our map factors to a map from 
$V^k(\g)\otimes F(\ov \g)$ to
$V^{k+g,1}(R^{super})$. We now show that this map is an isomorphism.  
For this it suffices to show that
it maps a basis of $V^k(\g)\otimes F(\ov \g)$ to a basis of
$V^{k+g,1}(R^{super})$. A basis of $V^k(\g)\otimes F(\ov \g)$ is 
given by vectors
$$ :T^{i_1}(x_{j_1})\cdots T^{i_h}(x_{j_h}):\otimes :T^{r_1}(\ov 
x_{s_1})\cdots T^{r_t}(\ov x_{s_t}):,$$
with $j_1\leq\ldots\leq j_k,\,s_1<\ldots< s_l$. These map to 
$$ :T^{i_1}(\tilde x_{j_1})\cdots T^{i_h}(\tilde x_{j_h})T^{r_1}(\ov 
x_{s_1})\cdots T^{r_t}(\ov x_{s_t}):. $$ 
Define a filtration of $V^{k+g,1}(R^{super})$ (as a vector space) 
by setting 
$$ V^{k+g,1}(R^{super})_m=span\{:T^{i_1}(x_{j_1})\cdots 
T^{i_h}(x_{j_h})T^{r_1}(\ov x_{s_1})\cdots
T^{r_t}(\ov x_{s_t}):\mid h\le m\}.
$$ Then, since $[\tilde x_\l \ov y]=0$, we see that
\begin{align*}
        :T^{i_1}(\tilde x_{j_1})&\cdots T^{i_h}(\tilde x_{j_h})T^{r_1}(\ov 
x_{s_1})\cdots T^{r_t}(\ov
x_{s_t}):\\&=:T^{i_1}(x_{j_1})\cdots T^{i_h}(x_{j_h})T^{r_1}(\ov 
x_{s_1})\cdots T^{r_t}(\ov x_{s_t}):+ a,
\end{align*} with $a\in V^{k+g,1}(R^{super})_{h-1}$. Since the 
vectors 
$$ :T^{i_1}(x_{j_1})\cdots T^{i_h}(x_{j_h})T^{r_1}(\ov x_{s_1})\cdots 
T^{r_t}(\ov x_{s_t}):
$$ form a basis of $V^{k+g,1}(R^{super})$  we are done.

\end{proof}

\section{Representations of vertex algebras} A {\it field module over 
a Lie conformal superalgebra
$R$}  is a vector  superspace $M$  endowed with a linear map $Y^M$ 
from
$R$ to the superspace of $End(M)$-valued quantum fields,
$$ a\mapsto Y^M(a,z)=\sum_{n\in\ganz}a^M_{(n)}z^{-n-1}
$$ such that for all $a,b \in R$, $m,n \in \ganz$ one has:
\begin{align}
&[a^M_{(n)},b^M_{(m)}]=\sum_{j\in\ganz_+}\binom{n}{j}(a_{(j)}b)^M_{(n+m- 
j)},\label{rep1}\\
&Y^M(T(a),z)=\partial_z Y^M(a,z)\label{rep2}.
\end{align} Also recall  that a representation of a vertex algebra 
$V$ in a vector superspace 
$M$ is a linear map 
$Y^M:V\to End(M)[[z,z^{-1}]]$ as above, such that  
$Y^M(|0\rangle,z)=I_M$, 
$ :Y^M(a,z)Y^M(b,z): = Y^M(:ab:,z)
$ and \eqref{rep1},  \eqref{rep2} hold.  Notice that the vertex 
algebra $V$ itself is a representation of
$V$.
\vskip5pt

More generally, one has the notion of a twisted field module over a 
Lie  conformal superalgebra and of
a twisted representation of a vertex algebra.  Let $R$ be a Lie 
conformal algebra and $\si$ a
semisimple  automorphism of $R$. Assume (for simplicity) that the 
eigenvalues of $\si$ are  of modulus
one. Then $R$ decomposes in a direct sum of $\C[T]$-submodules
$R^{\ov\mu}=\{a\in R\mid \si(a)=e^{2\pi i\ov\mu}a\}$, 
$\ov\mu\in\R/\ganz$.  A $\si$-twisted field module
over $R$ is a vector  superspace 
$M$  endowed with a linear map $Y^M:a\mapsto Y^M(a,z)$, where  
$Y^M(a,z)$ with $a\in R^{\ov\mu}$ is an 
$End(M)$-valued $\si$-twisted quantum field  

$$ Y^M(a,z)=\sum_{n\in \ov\mu}a^M_{(n)}z^{-n-1},\,\, a_{(n)}^Mv=0 \, 
\text { for } n>>0,
$$ such that \eqref{rep1} and \eqref{rep2} hold. Here and further 
$\ov\mu$  stands for a coset of
$\R/\ganz$.

 Let $V$ be a vertex algebra and $\si$ a semisimple automorphism of 
$V$ with modulus one
eigenvalues.   If $M$ is a $\si$-twisted field module of $V$, viewed 
as a Lie conformal  superalgebra,
and $a\in V^{\ov\mu}$, choose $\mu\in\R$ in the coset 
$\ov\mu$ and set
$$Y^M_+(a,z)=\sum_{n<\mu}a^M_{(n)}z^{-n-1},\, Y^M_- 
(a,z)=\sum_{n\ge\mu}a^M_{(n)}z^{-n-1}.$$  Define
the normal ordered product  of $\si$-twisted fields as
$$:Y^M(a,z)Y^M(b,z):=Y^M_+(a,z)Y^M(b,z)+p(a,b)Y^M(b,z)Y_-^M(a,z),$$
(it depends on the choice of $\mu$ in $\ov \mu$).\par A 
$\sigma$-twisted
representation of a vertex algebra $V$ is a $\si$-twisted  field 
module $M$ of $V$ (viewed as a Lie
conformal superalgebra), such that
\begin{equation}\label{vac}Y^M(|0\rangle,z)=I_M,
\end{equation}
\begin{equation}\label{tensore} :Y^M(a,z)Y^M(b,z):\ =
\sum_{j\in\ganz_+}\binom{\mu}{j}Y^M(a_{(j-1)}b,z)z^{-j},\quad a\in 
V^{\ov\mu}.
\end{equation}

If $\si$ is an automorphism of a Lie conformal algebra $R$ then we 
can extend $\si$ to an
automorphism, still denoted by $\si$, of $V(R)$  by  
$$
\si(:a_{i_1}\cdots a_{i_k}:)=:\si(a_{i_1})\cdots\si(a_{i_k}):.
$$ The following lemma is a restatement of Proposition 1.1 of 
\cite{KW}, which  provides a handy way to
construct $\si$-twisted representations of 
$V(R)$ from $\si$-twisted field modules over $R$.

\begin{lemma}\label{prop1.1}Any $\si$-twisted field module over a 
Lie  conformal algebra $R$ extends
uniquely to a $\si$-twisted representation over 
$V(R)$, using \eqref{vac} and \eqref{tensore}.
\end{lemma}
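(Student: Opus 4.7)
The plan is to use the PBW-type basis of $V(R)$ together with a reconstruction principle for $\sigma$-twisted representations. Since $V(R)$ is spanned by ordered normal-ordered monomials $:T^{i_1}(a_{j_1})\cdots T^{i_h}(a_{j_h}):$ in elements of $R$, and $R$ generates $V(R)$ under $T$ and the normal ordered product, any extension $Y^M$ satisfying \eqref{vac}, \eqref{rep2} and \eqref{tensore} is completely determined by its restriction to $R$; this gives uniqueness.

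For existence, I would define $Y^M$ recursively on PBW monomials by
\[
Y^M(Ta,z)=\partial_z Y^M(a,z),\qquad Y^M(:ab:,z)=:Y^M(a,z)Y^M(b,z):-\sum_{j\ge 1}\binom{\mu}{j}Y^M(a_{(j-1)}b,z)\,z^{-j}
\]
for $a\in V(R)^{\ov\mu}$, with the twisted normal ordered product on the right-hand side computed using the chosen representative $\mu$. Together with $Y^M(\va,z)=I_M$ this yields an assignment on a spanning set of $V(R)$, and one must then check that (i) the inductively defined operator series are again $\sigma$-twisted $End(M)$-valued quantum fields, (ii) the assignment is well-defined on $V(R)$, i.e.\ it is compatible with quasi-commutativity \eqref{qc} and quasi-associativity \eqref{qa} used to rewrite PBW monomials, and (iii) the Borcherds-type commutator \eqref{rep1} propagates from $R$ to all of $V(R)$.

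The bulk of the work is (ii) and (iii). For (iii), starting from the hypothesis that \eqref{rep1} holds for $a,b\in R$, I would argue by induction on the length of the PBW monomial: writing $a=:a'a'':$ and applying the recursive formula to $Y^M(a,z)$, one expands $[Y^M(a,z),Y^M(b,w)]$ using the operator counterpart on $M$ of the non-commutative Wick formula \eqref{Wick} and its right version \eqref{RWick}; the $\binom{\mu}{j}z^{-j}$ corrections in \eqref{tensore} are designed precisely so that the twisted analogues of \eqref{Wick}, \eqref{RWick}, \eqref{qc} and \eqref{qa} remain valid on $M$, and the inductive step collapses to the already established commutator on $R$ combined with the $\la$-bracket identities in $V(R)$. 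For (ii), one similarly matches the images under $Y^M$ of the two sides of \eqref{qc} and \eqref{qa} to verify that reorderings and reassociations in $V(R)$ produce equal operator-valued series on $M$.

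The main obstacle is accounting for the $\sigma$-twisting throughout this bookkeeping: in the untwisted case, (ii) and (iii) follow cleanly from locality via Dong's lemma and Goddard uniqueness, but here one must carry the fractional mode indices $n\in\ov\mu$ and the correction terms in \eqref{tensore} through every normal ordering. The cleanest route is therefore to first establish a $\sigma$-twisted Dong lemma, showing that the family $\{Y^M(v,z)\}_{v\in V(R)}$ constructed above consists of pairwise mutually local $\sigma$-twisted quantum fields, and then invoke a twisted Goddard uniqueness theorem to identify this family with the unique $\sigma$-twisted representation of $V(R)$ on $M$ extending the given field module structure on $R$. This is the strategy of \cite{KW}, Proposition~1.1, to which the present lemma is a direct translation.
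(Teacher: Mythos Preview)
The paper does not give its own proof of this lemma: it is introduced as ``a restatement of Proposition 1.1 of \cite{KW}'' and is used without further argument. Your sketch is a reasonable outline of how that result is established and, as you note at the end, it is precisely the strategy of \cite{KW}; so there is nothing to compare --- your proposal and the paper both defer to the same source.
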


We now apply Lemma \ref{prop1.1} to our examples of vertex  algebras. 

\subsection{Twisted representations of the affine vertex 
algebra}\label{twisted affine}

Let $\sigma$ be a semisimple automorphism of $\g$ with modulus 1 
eigenvalues that keeps the bilinear  form invariant. Then $\si$ can be
viewed as  an automorphism of $Cur(\g)$ by setting $\si(K)=K$ and 
letting $\si$ and
$T$  commute. It follows that  we can extend $\si$ to an automorphism 
of $V(\g)$  that clearly stabilizes
$:(K-k\va)V(\g):$. We obtain therefore an automorphism  of 
$V^k(\g)$.\par 
Let
$L'(\g,\si)=\sum_{j\in\R}(t^j\otimes\g^{\ov j})\oplus \C K$, where
$\g^{\ov j}=\{x\in\g\mid
\si(x)=e^{2\pi i\ov j}x \}$, and,  as before,
$\ov j\in\R/\ganz$ denotes the coset containing $j$. This is a Lie 
algebra  with bracket defined by
$$ [t^m\otimes  a,t^n\otimes b ]=t^{m+n}\otimes[a,b] + 
\d_{m,n}m(a,b)K,\quad m,n\in\R,
$$ $K$ being a central element. We say  that a  $L'(\g,\si)$-module 
$M$ is {\it restricted} if, for any $v\in
M$, 
$(t^j\otimes a)(v)=0$ for $j>>0$. We say that $M$ is a representation 
of level 
$k$ if $Kv=kv$ for all $v\in M$.\par If $(\pi,M)$ is a restricted 
$L'(\g,\si)$-module of level $k$ and $a\in \g^{\ov j}$, then define 
$Y^M(a,z)=\sum_{n\in \ov j}\pi(t^n\otimes a)z^{-n-1}$ and 
$Y^M(K,z)=kI_M$.  Clearly these  fields
satisfy \eqref{rep1} and \eqref{rep2}.  Applying  Lemma \ref{prop1.1} 
we obtain a $\si$-twisted
representation of 
$V^k(\g)$ on $M$.
\par A particular example of a restricted module is given by a 
highest weight module.  Let $\h_0$ be a
Cartan subalgebra of $\g^{\ov 0}$ and $\h'=\h_0+\C K$. If 
$\mu\in (\h')^*$, we set $\ov\mu=\mu_{|\h_0}$. Denote by  $\D_0$  the 
set of roots  for the
pair
$(\g^{\ov 0},\h_0)$ and fix a subset of positive roots $\D_0^+$ in $\D_0$. 
For $\a\in\D_0$, let $(\g^{\ov 0})_\a$ denote the corresponding root space and set 
$\mathfrak n=\sum_{\a\in\D_0^+}(\g^{\ov 0})_\a,\,\mathfrak 
n'=\mathfrak  n+\sum_{j>0}t^j\otimes\g^{\ov
j}$. Fix $\L\in (\h')^*$ and set $k=\L(K)$. A 
$L'(\g,\si)$-module $M$ is called a highest weight module with 
highest weight 
$\L$ if there is a nonzero vector $v_\L\in M$ such that 
\begin{equation}\label{highest}\mathfrak n'(v_\L)=0,\,\ 
hv_\L=\L(h)v_\L \text{  for $h\in\h'$, }\
U(L'(\g,\si))v_\L=M.\end{equation}If $\mu\in\h^*_0$, we let 
$h_\mu$ be the unique element of $\h_0$ such that $(h,h_\mu)=\mu(h)$. 
Let 
$\D_{\ov j}$ be the set of $\h_0$-weights of $\g^{\ov j}$. Set 
\begin{equation}\label{rhos}
\rho_0=\half\sum_{\a\in\Dp_0}\a,\quad\rho_{\ov j}=
\half\sum_{\a\in\D_{\ov j}}(\dim\g^{\ov j}_\a)\a\quad \text{if $\ov 
j\ne\ov 0$},\quad\rho_\si=
\sum_{0\le j\le\frac{1}{2}}(1- 2j)\rho_{\ov j}.
\end{equation} Choose an orthonormal basis $\{x_i\}$  of $\g$ and set
 
\begin{align}&\label{sugaw}L^\g=\half\sum_i:x_ix_i:\in
V^k(\g),\\\label{zg}&z(\g,\si)=\half\sum_{0\le  j< 
1}\frac{j(1-j)}{2}\dim\g^{\ov j}.\end{align}
\begin{lemma}\label{xixi} If $M$ is a highest weight module over 
$L'(\g,\si)$  with highest weight $\L$,
then
\begin{align}\label{azioneasinistra}
(L^\g)^M_{(1)}(v_\L)&=\half(\ov\L+2\rho_\si,\ov\L)v_\L+kz(\g,\si)v_\L.
\end{align} 
\end{lemma}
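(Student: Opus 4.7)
The plan is to expand $Y^M(L^\g,z)$ via relation \eqref{tensore} and extract the coefficient of $z^{-2}$, which is $(L^\g)^M_{(1)}$. The first step is to replace the orthonormal basis $\{x_i\}$ by a weight basis adapted to both the $\h_0$-weight decomposition and the $\si$-eigenspace decomposition: choose $\{y_\alpha\}\subset\g$ so that each $y_\alpha\in\g^{\ov{j_\alpha}}$ with $j_\alpha\in[0,1)$ its canonical representative, and $y_\alpha$ is an $\h_0$-weight vector of weight $\beta_\alpha$. Let $\{y^\alpha\}$ be the basis dual to $\{y_\alpha\}$ via $(\cdot,\cdot)$; by $\si$-invariance of the form, $y^\alpha\in\g^{-\ov{j_\alpha}}$ has weight $-\beta_\alpha$ and $(y_\alpha,y^\alpha)=1$. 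Since $\sum_i x_i\otimes x_i=\sum_\alpha y_\alpha\otimes y^\alpha$ holds in $\g\otimes\g$, one has $L^\g=\half\sum_\alpha:y_\alpha y^\alpha:$ in $V^k(\g)$.

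Setting $\mu_\alpha=j_\alpha$ and using $[a_\la b]=[a,b]+\la(a,b)k$ in $V^k(\g)$, relation \eqref{tensore} yields
\[
Y^M(:y_\alpha y^\alpha:,z)=\,:Y^M(y_\alpha,z)Y^M(y^\alpha,z):-\mu_\alpha Y^M([y_\alpha,y^\alpha],z)z^{-1}-\tbinom{\mu_\alpha}{2}kz^{-2}.
\]
I would then extract the coefficient of $z^{-2}$, apply to $v_\L$, sum over $\alpha$ with the prefactor $\half$, and analyze three contributions using that $(a)^M_{(n)}v_\L=0$ for $n>0$. First, the normal-ordered piece survives only when $\mu_\alpha=0$: the unique surviving mode product is $(y^\alpha)^M_{(0)}(y_\alpha)^M_{(0)}$, and the sum over $y_\alpha\in\g^{\ov 0}$ reconstructs the Casimir of $\g^{\ov 0}$, acting on $v_\L$ by $(\ov\L,\ov\L+2\rho_0)$. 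Second, the linear correction $-\mu_\alpha([y_\alpha,y^\alpha])^M_{(0)}$ uses that $[y_\alpha,y^\alpha]$ has $\h_0$-weight zero and lies in $\h_0$ (the centralizer of $\h_0$ in $\g^{\ov 0}$), equaling the element $h_{\beta_\alpha}$ defined by $(h_\gamma,h)=\gamma(h)$; hence it acts as $(\ov\L,\beta_\alpha)$. Grouping the summands by pair $(\ov j,-\ov j)$ and using $\rho_{-\ov j}=-\rho_{\ov j}$, the total collapses to $(\ov\L,\rho_\si-\rho_0)$. Third, the constant correction $-\half k\sum_\alpha\tbinom{\mu_\alpha}{2}$ equals $kz(\g,\si)$ via $\tbinom{j}{2}=-\tfrac{j(1-j)}{2}$.

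Adding the three contributions gives
\[
\half(\ov\L,\ov\L+2\rho_0)+(\ov\L,\rho_\si-\rho_0)+kz(\g,\si)=\half(\ov\L+2\rho_\si,\ov\L)+kz(\g,\si),
\]
which is the desired identity. The main anticipated obstacle is the bookkeeping in the second step: one has to match the asymmetric canonical representatives $\mu_\alpha\in[0,1)$ attached to $\g^{\ov j}$ (namely $j$) and to $\g^{-\ov j}$ (namely $1-j$), and verify that the two contributions $-j\rho_{\ov j}$ and $-(1-j)(-\rho_{\ov j})$ assemble into the symmetric coefficient $(1-2j)\rho_{\ov j}$ appearing in the definition of $\rho_\si$, while the $j=\half$ case is automatically trivial because weights of $\g^{\ov{1/2}}$ come in pairs $\pm\beta$ so that $\rho_{\ov{1/2}}=0$.
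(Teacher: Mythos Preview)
Your proof is correct and follows essentially the same approach as the paper: expand via \eqref{tensore} with a weight basis compatible with the $\si$-eigenspace decomposition, then identify the three contributions (the $\g^{\ov 0}$-Casimir piece, the bracket correction, and the scalar $z(\g,\si)$ term). Your handling of the middle term via the identity $\rho_{-\ov j}=-\rho_{\ov j}$ is the same symmetry the paper invokes as $\sum_{s_i=s}[y_i,y^i]=-\sum_{s_i=1-s}[y_i,y^i]$, just phrased at the level of $\rho_{\ov j}$ rather than the individual brackets; your explicit check that $\rho_{\ov{1/2}}=0$ makes transparent what the paper leaves implicit.
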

\begin{proof} If 
$\{y_i\}$ is any basis of $\g$ and $\{y^i\}$ is its dual basis, then, 
clearly, 
$2L^\g=\sum_i:y_iy^i:.$ We can and do choose $\{y_i\}$ so that 
$y_i\in\g^{\ov s_i}$, for some $ \ov 
s_i\in\R/\ganz$. By
\eqref{tensore} we have
\begin{align}\notag
\left(\sum_i  (:y_{i}y^{i}:)\right)^M_{(1)}&=\sum_i\left(\sum_{n<s_i} 
(y_{i})^M_{(n)} 
(y^{i})^M_{(-n)}+\sum_{n\geq s_i} (y^{i})^M_{(-n)} 
(y_{i})^M_{(n)}\right)\\&- 
\sum_{r\in\ganz_+}\binom{s_i}{r+1}((y_{i})_{(r)}(y^{i}))^M_{(- 
r)}.\label{Sugawaraexpansion}
\end{align}
We choose $s_i\in [0,1)$, 
thus
$$
\left(\sum_i (:y_{i}y^{i}:)\right)^M_{(1)}(v_\L)=\sum_i((y^{i})^M_{(- 
s_i)} (y_{i})^M_{(s_i)}-
s_i[y_{i},y^{i}]^M_{(0)}-k\binom{s_i}{2})(v_\L)$$as  in  
\cite[(1.15)]{KW}.
Write\begin{align*}\sum_i((y^{i})^M_{(-s_i)}  (y_{i})^M_{(s_i)}- 
s_i&[y_{i},y^{i}]^M_{(0)}-
k\binom{s_i}{2})(v_\L)=\sum_{i:s_i=0}(y^{i})^M_{(0)}(y_{i})^M_{(0)}(v_\L)\\&-
\sum_{i:s_i>0}(s_i[y_{i},y^{i}]^M_{(0)}+k\binom{s_i}{2})(v_\L).\end{align*}\par 
Choosing an orthonormal
basis $\{h_i\}$ of $\h_0$ and writing 
$\sum\limits_{i: s_i=0}y^iy_i=2h_{\rho_0}+\sum_i 
h_i^2+2\sum_{\a\in\Dp_0}X_{-\a}X_\a$, we  find that
\begin{align*}\left(\sum_i 
(:y_{i}y^{i}:)^M\right)_{(1)}(v_\L)&=(\ov\L+2\rho_0,\ov\L)v_\L+k(\sum_{0<j<1}
\frac{j(1-j)}{2}\dim\g^j)v_\L\\ &-
\sum_{i:s_i>0}s_i[y_{i},y^{i}]^M_{(0)}(v_{\L}).\end{align*}\par In 
order to  evaluate
$\sum_{i:s_i>0}s_i[y_{i},y^{i}]^M_{(0)}(v_{\L})$, we observe  that
$$\sum_{i:s_i=s}[y_i,y^i]=\sum_{i:s_i=1-s}[y^i,y_i].$$ This relation 
is easily  derived by exchanging the roles
of $y_i$ and $y^i$.  
Hence\begin{align*}\sum_{i:s_i>0}s_i[y_{i},y^{i}]^M_{(0)}(v_{\L})&=
\sum_{i: \frac{1}{2}>s_i>0}s_i[y_{i},y^{i}]^M_{(0)}(v_{\L})+\sum_{i: 
1>s_i>\frac{1}{2}}s_i[y_{i},y
^{i}]^M_{(0)}(v_{\L})\\&=\sum_{i: 
\frac{1}{2}>s_i>0}s_i[y_{i},y^{i}]^M_{(0)}(v_{\L}
)+\sum_{i: 0<s_i<\frac{1}{2}}(s_i-1)[y_{i},y^{i}]^M_{(0)}(v_{\L})\\&=-
\sum_{i: \frac{1}{2}>s_i>0}(1- 
2s_i)[y_{i},y^{i}]^M_{(0)}(v_{\L}).\end{align*}\par We can choose
$y_i\in 
\g^{\ov s_i}_\alpha$ so  that $[y_i,y^i]=h_{\alpha}$, hence $$ 
\sum_{i:\frac{1}{2}>s_i>0}(1- 
2s_i)[y_{i},y^{i}]^M_{(0)}(v_{\L})=\sum_{i:0<s_i<\frac{1}{2}}(1-
2s_i)(2\rho_{s_i},\ov\L)v_\L. $$This completes the proof of 
\eqref{azioneasinistra}.\end{proof}
\par We  extend the Lie algebra $L'(\g,\si)$ by setting $\widehat  
L(\g,\si)=L'(\g,\si)\oplus \C d$, where
$d$ is the derivation of $L'(\g,\si)$  such that $d(K)=0$ and $d$ 
acts as $t\frac{d}{dt}$ on
$L(\g,\si)$.  Set $\ha_0=\h_0\oplus\C K\oplus 
\C d$. 
If $\L\in \ha_0^*$, a $\widehat L(\g,\si)$-module $M$ is called a   
highest weight module with highest
weight $\L$ if $M$  is a  highest weight module  for  $L'(\g,\si)$ 
with highest weight $\L_{|\h'}$ and
$d\cdot  v_{\L_{|\h'}}=\L(d)v_{\L_{|\h'}}$. We let $d^M$ be the operator on $M$ given by the action of $d$.

\begin{lemma}\label{xixibis}
 If $M$ is a  highest weight module over 
$\widehat L(\g,\si)$ with highest weight $\L$ and level $k$, then, as 
an operator  on $M$, 
\begin{equation}\label{casa} (L^\g)_{(1)}^{M}+(k+g)  
d^M=\left(\half(\ov \L+2\rho_\si,\ov
\L)+kz(\g,\si)+(k+g)\L(d)\right)I_M.
\end{equation}
\end{lemma}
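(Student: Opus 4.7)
The plan is to reduce \eqref{casa} to a scalar-action statement: since $M = U(L'(\g,\si))v_\L$ is cyclic, it suffices to verify (a) that the operator $(L^\g)^M_{(1)} + (k+g)d^M$ takes the asserted scalar value on $v_\L$, and (b) that this operator commutes with every $a^M_{(m)}$ for $a\in\g^{\ov s}$, $m\in\ov s$. These two facts together imply that the operator acts as the claimed scalar throughout $M$.

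Step (a) is immediate from what is already in place: Lemma~\ref{xixi} evaluates $(L^\g)^M_{(1)}v_\L$, and the defining property of a highest weight module over $\widehat L(\g,\si)$ gives $d^M v_\L = \L(d)v_\L$; adding yields the right-hand side of \eqref{casa}.

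For step (b), since $d$ acts on $L'(\g,\si)$ as $t\tfrac{d}{dt}$, we have $[d^M, a^M_{(m)}] = m\,a^M_{(m)}$. On the vertex algebra side, $L^\g$ lies in the $\si$-invariant subspace $V^k(\g)^{\ov 0}$ (its defining sum $\half\sum_i{:}x_ix_i{:}$ is invariant under any orthogonal automorphism of $\g$), so the commutator identity \eqref{rep1} applies in its integer-$n$ form. The standard Sugawara $\l$-bracket $[L^\g{}_\l a] = (k+g)(T+\l)a$ --- a routine Wick-formula calculation resting on the fact that the Casimir of $\g$ acts on $\g$ as $2g$ --- identifies $L^\g_{(0)}a = (k+g)Ta$ and $L^\g_{(1)}a = (k+g)a$, with $L^\g_{(j)}a = 0$ for $j\ge 2$. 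Combined with \eqref{rep2}, this gives $[(L^\g)^M_{(1)}, a^M_{(m)}] = -(k+g)m\,a^M_{(m)}$, which exactly cancels $(k+g)[d^M, a^M_{(m)}]$.

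The only point requiring attention is the use of the commutator formula \eqref{rep1} in the twisted setting, where modes carry real indices; the $\si$-invariance of $L^\g$, which forces all its modes to be integer-indexed, is exactly what makes \eqref{rep1} applicable with no twist correction. Given this, no further obstacle arises.
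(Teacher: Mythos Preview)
Your proof is correct and follows essentially the same approach as the paper: establish that $(L^\g)^M_{(1)}+(k+g)d^M$ commutes with all $a^M_{(m)}$ (the paper does this via $[x_\l L^\g]=(k+g)\l x$ and \eqref{rep1}, which is equivalent to your skew version $[L^\g{}_\l a]=(k+g)(T+\l)a$), then invoke Lemma~\ref{xixi} on $v_\L$ and cyclicity. Your extra remark on the $\si$-invariance of $L^\g$ ensuring integer-indexed modes is a welcome clarification of why \eqref{rep1} applies with $n=1$ in the twisted setting.
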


\begin{proof}It is well known (and easy to show) that if $x\in\g$,  
then $[x_\l L^\g]=(k+g)\l  x,
$ hence, by \eqref{rep1}
\begin{equation}\label{Sugawaraction} [x^M_{(n)},  
(L^\g)^M_{(1)}]=(k+g)nx^M_{(n)}.
\end{equation}
 It follows that, as  operators on $M$,
\begin{equation}\label{casimir} [t^n\otimes x, 
(L^\g)^M_{(1)}+(k+g)d^M]=0.
\end{equation} By Lemma~\ref{xixi}, 
$$((L^\g)^M_{(1)}+(k+g)d^M)\cdot  v_\L=(\half(\ov \L+2\rho_\si,\ov \L)+ 
kz(\g,\si)+(k+g)\L(d))v_\L.
$$ Since $M=U(L'(\g,\si))\cdot v_\L$, \eqref{casimir} implies the 
result.
\end{proof}

\subsection{Twisted representations of the fermionic vertex \mbox{algebra}}
\label{twisted fermionic}
Analogously to the affine vertex algebra case, if $A$ is an odd 
vector superspace with a 
non-degenerate bilinear
symmetric form $(\cdot,\cdot)$  and $\si$ is a  semisimple 
automorphism of $A$ with modulus one
eigenvalues that keeps the bilinear  form invariant, then we can 
extend $\si$ to $R^{Cl}(A)$ by letting
$T$ and $\si$ commute and setting $\si(K')=K'$. As in the affine 
case, we can extend $\si$ to $F(A)$.

We set 
$L(A,\si)=\oplus_{\mu\in\R}(t^\mu\otimes A^{\ov \mu} )$ and define 
the bilinear form 
$<\cdot,\cdot>$ on $L(A,\si)$ by setting $<t^\mu\otimes 
a,t^\nu\otimes  b>=\d_{\mu+\nu,- 1}(a,b)$. Let
$Cl(L(A,\si))$ be the corresponding Clifford algebra. We choose   a 
maximal  isotropic subspace
$L^+(A,\si)$ of $L(A,\si)$ as follows: 
 fix a $\si$-invariant maximal isotropic subspace $A^+$  of 
$A^{-\ov{\frac{1}{2}}}$, and let 
$$L^+(A,\si)= (\oplus_{\mu>-\frac{1}{2}}(t^\mu\otimes A^{\ov \mu} 
))\oplus(t^{-
\frac{1}{2}}\otimes A^+ ).$$ \par We obtain a Clifford module 
$F^\si(A)=Cl(L(A,\si))/Cl(L(A,\si))L^+(A,\si)$.   Then we can define 
fields 
$Y^\si(a,z)=\sum_{n\in\ov\mu} (t^n\otimes a)z^{-n-1}, a\in\ 
A^{\ov\mu},$ where we let 
$t^n\otimes a$ act on $F^\si(A)$ by left multiplication. Set 
$Y^\si(K',z)=I_{F^{\sigma}(A)}$.  Lemma \ref{prop1.1} now
gives a $\si$-twisted representation of $F(A)$ on 
$F^\si(A)$. If $a\in F(A)$ then we write $a^\si_{(n)}$ instead of 
$a^{F^\si(A)}_{(n)}$.
 Fix a basis $\{b_i\}$ of $A$ and let $\{b^i\}$ be its dual basis. Set
\begin{equation}\label{da}
 L^A=-\half\sum_i:T(b_i)b^i:\,\in F(A).
\end{equation} It is well known (and easy to compute) that 
\begin{equation}\label{isprimary} [L^A\phantom{.}\!\!_\l a]=-(T+\half \l)a
\end{equation}
 for $a\in A$, hence, by \eqref{rep1}, 
\begin{equation}
 [L^A_{(1)},a_{(n)}]=(n+\half)a_{(n)}
 \end{equation}
 As in \cite[(1.16)]{KW}, we have  from \eqref{tensore}:
$$ 
Y^{\si}(L^A,z)=-\half(\sum_i:Y^{\si}(T(b_i),z)Y^{\si}(b^i,z): 
+\binom{s_i}{2}z^{-2}).
$$
  
\subsection{Twisted representations of the super affine vertex 
algebra}\label{repsup}

Fix, once and for all, a semisimple automorphism $\si$  of $\g$ with 
modulus 1 eigenvalues that keeps the bilinear  form invariant. 
This automorphism extends to two automorphisms of the Lie conformal 
superalgebra $R^{super}$,
denoted by $\sigma$ and $\tau$, as follows. Both fix $\mathcal K$ and 
$\overline{\mathcal K}$ and
commute with $T$, both act on $1\otimes \g$ as $1\otimes \sigma$, and 
$\sigma$ (resp. $\tau$) acts
on $1\otimes \ov\g$ as $1\otimes \sigma$ (resp. $-1\otimes \sigma$). 
We therefore obtain two
automorphisms of $V(R^{super})$, also denoted  by $\sigma$ and
$\tau$. Clearly  $\sigma$ and $\tau$ stabilize $$:(\mathcal 
K-k\va)V(R^{super}):+:(\overline{\mathcal 
K}-\va)V(R^{super}):,$$ so we obtain  automorphisms of 
$V^{k,1}(R^{super})$, also denoted by  $\sigma$ and $\tau$.
\par Denote by  $\ov\tau$ the automorphism $\tau$   restricted  to  
$R^{Cl}(\ov \g)=\C[T]\otimes
\ov\g+\C \overline{\mathcal K}$. As above, we can  extend this 
automorphism to $F(\ov\g)$, also
denoted by $\ov\tau$.
\par Observe that $\widetilde{ \si(x)}=\tau(\tilde  x)$ for $x\in\g$.
  Indeed, 
\begin{equation}\label{fortau}
\si(x)-\half\sum_i:\overline{[\si(x),x_i]}\ov x_i:=\tau(x)-
\half\si(\sum_i:\overline{[x,\si^{-1}(x_i)]}\,\overline{\si^{-1}(  
x_i)}):=\tau(\tilde x).
\end{equation}
\par
\begin{rem}\label{actionMF} It follows from \eqref{fortau} that the 
isomorphism 
$V^{k+g,1}(R^{super})\to
V^k(\g)\otimes F(\overline\g)$ intertwines $\tau$ and 
$\si\otimes\ov\tau$. Thus, if $M$ is a level 
$k$ highest weight $L'(\g,\si)$-module, then $M\otimes F^{\ov 
\tau}(\ov \g)$ is a 
$\si\otimes \ov\tau$-twisted representation of $V^k(\g)\otimes F(\ov 
\g)$  hence a $\tau$-twisted
representation of 
$V^{k+g,1}(R^{super})$. In particular, if $b\in\g$, then $\widetilde 
b$ acts only on the first factor
of $M\otimes F^{\ov \tau}(\ov \g)$ whereas $\ov b$ acts only on the 
second factor.
\end{rem}

Next let $\sa$ be  a reductive $\sigma$-invariant subalgebra of $\g$, 
such that $(\cdot,\cdot)$ remains
nondegenerate  when restricted to $\sa$. Let $\p$ be the  orthogonal 
complement of $\sa$ in $\g$. If
$x\in\g$ we write $x=x_\sa+x_\p$ for the orthogonal decomposition of 
$x$.  We fix once and for all an 
orthonormal basis   $\{a_i\}$  of $\sa$ and an orthonormal basis 
$\{b_i\}$ of $\p$. 

 Let $Cas_\sa$ be the Casimir of $\sa$ with
 respect to 
$(\cdot,\cdot)_{|\sa}$. Write $\sa=\sum_S\sa_S$ for the eigenspace 
decomposition  of $\sa$ under the
action of $Cas_\sa$, and let $2g_S$ be the eigenvalue relative to 
$\sa_S$. In particular we let
$\sa_0$ be the  center of $\sa$, while $\sa_S$ is semisimple for 
$S>0$. 

 We construct the Lie  conformal algebra $Cur(\sa_S)$ and, given 
$k\in\C$, the corresponding vertex 
algebra $V^k(\sa_S)$ using the form $(\cdot,\cdot)$ restricted to 
$\sa_S$,  so that, if $x,y\in\sa_S$,
then $[x_\l y]=[x,y]+\l(x,y)K_S$. We abuse slightly  of  notation by 
letting
$Cur(\sa)=(\C[T]\otimes\sa)\oplus(\oplus_S \C   K_S)$.

The super affine conformal algebra $R^{super}(\sa)$ corresponding to 
$\sa$ embeds naturally in
$R^{super}$, thus we have an embedding of $V^{k+g,1}(R^{super}(\sa))$ 
in $V^{k+g,1}(R^{super})$. In
particular $M\otimes F^{\ov\tau}(\ov \g)$  turns into a 
representation of $V^{k+g,1}(R^{super}(\sa))$
by restriction.  Set, for $x\in\sa$, 
$$(\tilde x)_\sa=x-\half\sum_i:\overline{[x,a_i]}\ov  a_i:.
$$

 Since $ad(Cas_\sa)_{|\sa_S}=2g_SI_{\sa_S}$, applying  
Proposition~\ref{isomorfi} to
$R^{super}(\sa)$, we have that $x\mapsto (\tilde x)_\sa$, $\ov 
x\mapsto \ov x$ induces an
isomorphism   $(\otimes_S V^{k+g-g_S}(\sa_S))\otimes 
F^{\ov\tau}(\ov\sa)\to
V^{k+g,1}(R^{super}(\sa))$ that intertwines $\si\otimes \ov\tau$ with 
$\tau$. 

It follows that we can look
upon $M\otimes F^{\ov\tau}(\ov\g)$ as a $\si\otimes \ov\tau$-twisted 
representation of
$(\otimes_SV^{k+g-g_S}(\sa_S))\otimes F^{\ov\tau}(\ov\sa)$.

In order to understand this representation we write $N=M\otimes 
F^{\ov\tau}(\ov\g)$ as $(M\otimes
F^{\ov\tau}(\ov\p))\otimes F^{\ov\tau}(\ov\sa)$. For $x\in\sa$ set 
\begin{equation}\label{thetaref}\theta(x)=(\tilde x)_\sa-\tilde  
x.\end{equation}
Then $x$ acts on $N$ via $Y^N((\tilde x)_\sa,z)=Y^N(\tilde
x,z)+Y^N(\theta(x),z)=Y^M(x,z)\otimes 
I_{F^{\ov\tau}(\ov\g)}+I_M\otimes
Y^{F^{\ov\tau}(\ov\g)}(\theta(x),z)$. Since 
        \begin{equation}\label{theta(x)}
        \theta(x)=\half\sum_i:\overline{[x,b_i]}\ov  b^i:,
\end{equation}
we have that $\theta(x)\in F(\ov\p)$, and  in turn
$Y^{F^{\ov\tau}(\ov\g)}(\theta(x),z)=Y^{F^{\ov\tau(\ov\p)}}(\theta(x),z)\otimes
I_{F^{\ov\tau}(\ov\sa)}$. Thus, as a representation of 
$\otimes_SV^{k+g-g_S}(\sa_S)\otimes
F^{\ov\tau}(\ov\sa)$, $M\otimes F^{\ov \tau}(\ov\g)$ is $(M\otimes 
F^{\ov \tau}(\ov\p))\otimes
F^{\ov \tau}(\ov\sa)$, where the representation of  
$V^{k+g-g_S}(\sa_S)$ on $M\otimes F^{\ov
\tau}(\ov\p)$ is the one induced by the field module of $Cur(\sa)$ 
defined by 
\begin{equation}\label{actiononmtf} Y^M(x,z)\otimes 
I_{F^{\ov\tau}(\ov\p)}+I_M\otimes
Y^{F^{\ov\tau}(\ov\p)}(\theta(x),z).
\end{equation}

Recall that $\Dp_0$ is a  subset of positive roots for the set of 
$\h_0$-roots of $\g^{\ov 0}$. Let
$\b=\h_0\oplus\n$   be the corresponding Borel subalgebra. Fix a 
Cartan subalgebra $\h_\sa$ of 
$\sa^{\ov 0}=\sa\cap\g^{\ov 0}$. We can assume that $\h_\sa\subset\h_0$, so 
that  $\h_0=\h_\sa\oplus\h_\p$ is 
the orthogonal decomposition of $\h_0$. Furthermore, as  shown in 
\S~1.1 of \cite{K3}, we can assume
that 
$\n=\n\cap\sa^{\ov 0}\oplus\n\cap(\p\cap\g^{\ov 0})$ and that, if 
$\n_\sa=\n\cap\sa^{\ov 0}$,  then $\h_\sa\oplus\n_\sa$
is a Borel subalgebra of $\sa^{\ov 0}$.  Let $\Dp_\sa$  be  the 
corresponding subset of positive roots in the
set $\D_\sa$ of 
$\h_\sa$-roots of $\sa^{\ov 0}$.  Set 
$\n_-=\sum\limits_{\a\in-\Dp_0}\g^{\ov 0}_\a$.  The same  argument 
used  for
$\n$ shows that $\n_-=\n_-\cap\sa^{\ov 0}\oplus\n_-\cap(\p\cap 
\g^{\ov 0})$. We define $L'(\sa,\si)=\sum_{j\in\R}t^j\otimes\sa^j\oplus 
(\sum_S\C  K_S)$. This is a Lie algebra
with bracket defined by
$$[t^i\otimes a,t^j\otimes b ]=t^{i+j}\otimes [a,b] + 
\d_{i,j}i(a,b)K_S$$  for 
$a\in\sa_S$, the elements $K_S$ being  central. 

Set $\n'_\sa=\n'\cap L'(\sa,\si)$. If $\mu\in(\h_\sa\oplus(\sum_S\C  
K_S))^*,$ we say that a
$L'(\sa,\si)$-module $M$ is a highest weight module of  weight $\mu$  
if there is a nonzero vector
$v_\mu\in M$ such that 
\begin{equation}\mathfrak n'_\sa(v_\mu)=0,\,\ hv_\mu=\mu(h)v_\mu 
\text{ for 
$h\in\h_\sa\oplus(\sum_S\C K_S)$, }\ 
U(L'(\sa,\si))v_\mu=M.\end{equation}\par If $M$ is a highest
weight module for 
$L'(\g,\si)$ with highest weight $\L$ and $k=\L(K)$, then $M\otimes 
F^{\ov\tau}(\ov \p)$ is a
representation of
$\otimes_S  V^{k+g-g_S}(\sa_S)$, thus we  can regard  $M\otimes 
F^{\ov\tau}(\ov \p)$ as a
$L'(\sa,\si)$-module. In particular, by letting $M$  be the trivial 
representation of $L'(\g,\si)$, we have 
an action  of $L'(\sa,\si)$ on $F^{\ov\tau}(\ov\p)$.
\par Let $\L_0^S$ be the element of $(\h_\aa+\sum_S\C K_S)^*$ defined 
by 
$$\L_0^S(\h_\aa)=0,\quad\L_0^S(K_T)=\d_{ST}.$$ Define moreover 
\begin{equation}\label{rhoa}
\rho_{\aa j}=\half\sum\limits_{\a\in(\D_{\ov 
j})_{|\h_\aa}}\dim(\sa^{\ov j})_\a\a,\quad
\rho_{\aa\,\si}=\sum\limits_{0\leq j<\half} (1-2j)\rho_{\aa  j}.
\end{equation}

\begin{lemma}\label{lemmamu} Let $M$ be a level $k$ highest
weight module for 
$L'(\g,\si)$ with highest weight $\L$. Set 
$M'=U(L'(\sa,\si))(v_\L\otimes 1)$. Then $M'$ is a highest weight 
$L'(\sa,\si)$-module with highest  weight
\begin{equation}\label{mu}\mu=(\L+\rho_\sigma)_{|\h_\aa}-
\rho_{\aa\,\sigma}+\sum_S(k+g- g_S)\L_0^S.\end{equation}
\end{lemma}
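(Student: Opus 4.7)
The plan is to show $v_\L\otimes 1$ is a highest weight vector for the $L'(\aa,\si)$-action on $M'$ coming from \eqref{actiononmtf}, and read off its weight. There are three items to check: annihilation by the positive part $\n'_\aa$, the eigenvalues on $\h_\aa$, and the eigenvalues on the central elements $K_S$. The $K_S$ part is immediate: since $M\otimes F^{\ov\tau}(\ov\p)$ is a representation of each $V^{k+g-g_S}(\aa_S)$, the central $K_S$ acts as the scalar $k+g-g_S$, producing the $\sum_S(k+g-g_S)\L_0^S$ summand of $\mu$.

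For annihilation by $\n'_\aa$, I would use \eqref{actiononmtf}: on $v_\L\otimes 1$, the $n$-th mode of $x\in\aa^{\ov n}$ acts as $x^M_{(n)}(v_\L)\otimes 1+v_\L\otimes\theta(x)^{\ov\tau}_{(n)}(1)$. For $n>0$ both terms vanish (the first by the highest weight property \eqref{highest}, the second because $Y^{F^{\ov\tau}(\ov\p)}(\theta(x),z)$ has conformal weight $1$ and its strictly positive modes kill the vacuum by the polarization choice). For $n=0$ and $x\in\n_\aa\subset\n$, the first term again vanishes by \eqref{highest}, while for the second I would expand $\theta(x)=\tfrac12\sum_i:\overline{[x,b_i]}\ov b^i:$ in an $\h_\aa$- and $\si$-eigenvector dual basis $\{b_i\},\{b^i\}$ of $\p$ and apply \eqref{tensore}: the only potentially nonzero correction occurs at $j=1$ and is proportional to $([x,b_i],b^i)$, which vanishes since $x\in\n_\aa$ carries a nonzero $\h_\aa$-weight while the pairing is supported on weight-$0$ pairs; the $j=0$ normal-ordered part, split via $Y^\si_\pm$, annihilates the vacuum.

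The $\h_\aa$-weight computation runs in parallel with Lemma~\ref{xixi}. For $h\in\h_\aa$, the first summand of \eqref{actiononmtf} contributes $\L(h)$. To compute $\theta(h)^{\ov\tau}_{(0)}(1)$ using \eqref{tensore}, the $j=1$ correction now gives $s_i\,\a_i(h)$, where $\a_i$ is the $\h_\aa$-weight of $b_i$ and $s_i\in[0,1)$ is the representative of the $\ov\tau$-eigenvalue coset of $\ov b_i$, while the $j=0$ piece, split via $Y^\si_\pm$ and using that the vacuum is killed by $L^+(\ov\p,\ov\tau)$, contributes further scalar anomalies coming from the polarization. Summing over the decomposition $\p=\bigoplus_{\ov j}\bigoplus_\a\p^{\ov j}_\a$ and matching with the definitions \eqref{rhos} and \eqref{rhoa} identifies the total with $(\rho_\si-\rho_{\aa\si})|_{\h_\aa}(h)$, with the stratum $\ov j=\ov{1/2}$ dropping out because of $(1-2\cdot\tfrac12)=0$. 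The main obstacle is precisely this fermionic zero-mode computation: one has to track the $\tfrac12$-shift in $\ov\tau$-eigenvalues produced by $\tau=-\si$ on $\ov\g$ (so that $\ov b_i$ with $b_i\in\p^{\ov j_i}$ has $\ov\tau$-eigenvalue coset $\ov{j_i+1/2}$), fix consistent representatives $s_i\in[0,1)$, and handle the polarization of $L(\ov\p,\ov\tau)$ at mode $-\tfrac12$ (a maximal isotropic in $\ov\p^{-\ov{1/2}}$, corresponding via $\ov\tau=-\si$ to $\p^{\ov 0}$); once tabulated, these combine into the coefficients $(1-2j)$ appearing in \eqref{rhos}--\eqref{rhoa} and deliver $\mu=(\L+\rho_\si)|_{\h_\aa}-\rho_{\aa\si}+\sum_S(k+g-g_S)\L_0^S$.
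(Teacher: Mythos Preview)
Your strategy is exactly the paper's: verify that $v_\L\otimes 1$ is a highest weight vector by checking the $K_S$, $\n'_\aa$, and $\h_\aa$ pieces separately via \eqref{actiononmtf}, with the whole content in computing $\theta(x)^{\ov\tau}_{(n)}(1)$. Your weight argument for the $j=1$ anomaly $([x,b_i],b^i)=0$ when $x\in\n_\aa$ is in fact slicker than the paper's (which uses invariance to write $(x,[b_i,b^i])$ and then splits on $s_i$).

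There is, however, one genuine slip. For $n=0$ and $x\in\n_\aa$ you assert that ``the $j=0$ normal-ordered part, split via $Y^\si_\pm$, annihilates the vacuum.'' This is not true as stated: the same normal-ordered zero mode, for $x=h\in\h_\aa$, produces the nonzero scalar $(\rho_0-\rho_{\aa\,0})(h)$ that you yourself need later. What survives for $b_i\in\p\cap\g^{\ov 0}$ is the boundary term
\[
\tfrac12\sum_{i:\,b_i\in\p\cap\g^{\ov0}}\overline{[x,b_i]}^{\,\ov\tau}_{(-\frac12)}\,(\ov b^i)^{\,\ov\tau}_{(-\frac12)}(1),
\]
which is \emph{not} killed by the polarization alone. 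The paper disposes of it by choosing the maximal isotropic $A^+=(\n\cap\p)\oplus\h_\p^+$ and a basis $\{x_i\}\cup\{h_i\}\cup\{y_i\}$ of $\p\cap\g^{\ov 0}$ adapted to the triangular decomposition: since $x\in\n$ one has $[x,x_i],[x,h_i]\in\n\cap\p\subset A^+$, so anticommuting past reduces each surviving piece to a scalar $([x,b_i],b^i)$, which then vanishes by your weight argument (or by $(x,\h_0)=0$). Your sketch needs exactly this step; without it the $n=0$ annihilation is unproven. The same boundary analysis is what feeds the $(\rho_0-\rho_{\aa\,0})(h)$ contribution in the $\h_\aa$-weight computation you correctly flag as the main obstacle.
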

\begin{proof}
 If $x\in \sa^j$ (with $-\half\le j<\half$) then, by 
\eqref{actiononmtf}, $t^n\otimes x$ acts via 
$x^M_{(n)}\otimes I_{F^{\ov\tau}(\ov\p)} +I_M\otimes\theta(x)^{\ov \tau}_{(n)}$.  If $n>0$ 
then $x^M_{(n)}(v_\L)=0$. Moreover 
$\theta(x)^{\ov\tau}_{(n)}=\half\sum_i:\overline{[x,b_i]}\ov  
b^i:^{\ov\tau}_{(n)}$, hence, using 
\eqref{tensore},
\begin{align*}\theta(x)^{\ov 
\tau}_{(n)}(1)&=\half\sum_{i: 
j+s_i>0}\overline{[x,b_i]}^{\ov\tau}_{(j+s_i-
\frac{1}{2})}(\ov b^i)^{\ov\tau}_{(-j-s_i-\frac{1}{2}+n)}(1)\\&-
\half\sum_{i: j=s_i=-\frac{1}{2}}(\ov b^i)^{\ov\tau}_{(-
\frac{1}{2}+n)}\overline{[x,b_i]}^{\ov\tau}_{(-
\frac{1}{2})}(1)\\&=0.
\end{align*} 

If $n=0$ and $x\in \n_\sa$ then, since 
$\n_\sa\subset\n$, $x^M_{(0)}(v_\L)=0$. Moreover 
\begin{align*}\theta(x)^{\ov\tau}_{(0)}(1)&=\half\sum_{i: s_i\ge 0} 
\overline{[x,b_i]}^{\ov\tau}_{(s_i-\frac{1}{2})}(\ov 
b^i)^{\ov\tau}_{(-s_i-
\frac{1}{2})}(1)\\&=\half\sum_{i: s_i= 0} 
\overline{[x,b_i]}^{\ov\tau}_{(-
\frac{1}{2})}(\ov b^i)^{\ov\tau}_{(-\frac{1}{2})}(1)\\&+\half\sum_{i: 
s_i> 0}  ([x,b_i],
b^i).\end{align*}Since
$x\in\n$ and, if $s_i>0$,  
$[b_i,b^i]\in\h_0$, we have that $\sum_{i,s_i> 0} ([x,b_i], 
b^i)=\sum_{i,s_i>  0} (x,[b_i, b^i])=0$.  We
choose a maximal isotropic space $\h_\p^+$ of $\h_\p$, so that
$(\n\cap\p)\oplus\h_\p^+$ is a maximal isotropic space in 
$\g^{\ov 0}\cap\p$. 

We can  choose the basis $\{b_i\}$ as the union of a basis $\{x_i\}$ 
of $\n\cap\p$ with  an orthonormal
basis $\{h_i\}$ of $\h_\p$ and a basis $\{y_i\}$ of $\n_-
\cap\p$. Set $\{x^i\}$ (resp. $\{y^i\}$) be the basis of $\n_-\cap\p$ 
(resp. 
$\n\cap\p$) dual to $\{x_i\}$ (resp. $\{y_i\}$). Then 
\begin{align*}\sum_{i: s_i= 0} \overline{[x,b_i]}^{\ov\tau}_{(-
\frac{1}{2})}(\ov b^i)^{\ov\tau}_{(-\frac{1}{2})}(1)&=\sum_{i} 
\overline{[x,x_i]}^{\ov\tau}_{(-\frac{1}{2})}(\ov x^i)^{\ov\tau}_{(-
\frac{1}{2})}(1)\\&+\sum_{i} \overline{[x,h_i]}^{\ov\tau}_{(-
\frac{1}{2})}(\ov h_i)^{\ov\tau}_{(-\frac{1}{2})}(1)\\&+\sum_{i} 
\overline{[x,y_i]}^{\ov\tau}_{(-\frac{1}{2})}(\ov y^i)^{\ov\tau}_{(-
\frac{1}{2})}(1).\end{align*}Since $x\in\n$, then $[x,h]\in\n\cap\p$ 
and 
$[x,x_i]\in\n\cap\p$, thus\begin{align*}\sum_{i: s_i= 0} 
\overline{[x,b_i]}^{\ov\tau}_{(-\frac{1}{2})}(\ov b^i)^{\ov\tau}_{(-
\frac{1}{2})}(1)&=\sum_{i}(x,[x_i,x^i])+\sum_{i} (x,[h_i,  
h_i])=0.\end{align*}It remains to compute the
highest weight $\mu$. If 
$h\in\h_\sa$, then$$(h)^N_{(0)}\cdot (v_\L\otimes 1)= 
hv_\Lambda\otimes 1+  v_\L\otimes
\theta(h)^{\ov \tau}_{(0)}\cdot 1=\L(h)v_\Lambda\otimes 1+  
v_\L\otimes \theta(h)^{\ov \tau}_{(0)}\cdot
1.$$Now $$\theta(h)^{\ov 
\tau}_{(0)}\cdot 1=\sum_i:\ov{ [h,b_i]}\ov b^i:^{\ov 
\tau}_{(0)}\cdot  1.$$ Applying \eqref{tensore} we find
that 
$$ Y^{\ov\tau}(\theta(h),z)=\half\sum_i:Y^{\ov 
\tau}(\ov{[h,b_i]},z)Y^{\ov \tau}(\ov b^i,z):-
\sum_{\substack{ j>0\\ \a\in(\D_{\ov j})_{|\h_\aa}}} j \a(h) 
\dim(\g^j\cap\p)_\a z^{- 1}.
$$

Writing out  explicitly the normal order in the r.h.s of the 
previous  equation,  we 
get:\begin{align*}&\theta(h)^{\ov \tau}_{(0)}\cdot  
1=\\&\half\sum_i\ov{[h,b_i]}^{\ov \tau}_{s_i-\half}(\ov
b^i)^{\ov\tau}_{-s_i-
\half}\cdot 1-\sum_{j>0}2j(\rho_j-\rho_{\aa 
j})(h)=\\&\half\sum_{i:s_i=0} 
\ov{[h,b_i]}^{\ov \tau}_{-\half}(\ov b^i)^{\ov \tau}_{-\half}\cdot 
1+\half 
\sum_{i:s_i>0}([h,b_i],b^i)-\sum_{j>0}2j(\rho_j-\rho_{\aa  
j})(h)\\&\half\sum_{i:s_i=0}
\ov{[h,b_i]}^{\ov
\tau}_{-\half}(\ov b^i)^{\ov 
\tau}_{-\half}\cdot 1+ \sum_{j>0}(\rho_j-\rho_{\aa j})(h)-
\sum_{j>0}2j(\rho_j-\rho_{\aa j})(h).\end{align*}Choosing  bases 
$\{x_i\}$ in 
$\n\cap\p$, $\{y_i\}$ in $\n^-\cap\p$ and $\{h_i\}$ in $\h_\p$ as 
above we have 
$$\half\sum_{i:s_i=0} \ov{[h,b_i]}^{\ov \tau}_{-\half}(\ov b^i)^{\ov 
\tau}_{-
\half}\cdot 1=\half\sum_{i:s_i=0} \ov{[h,x_i]}^{\ov 
\tau}_{-\half}(\ov x^i)^{\ov 
\tau}_{-\half}\cdot 1=(\rho_0-\rho_{\aa\,0})(h).$$ The final outcome 
is that 
$$
\theta(h)^{\ov \tau}_{(0)}\cdot 1=(\rho_\sigma-
\rho_{\aa\,\sigma})(h).
$$ Since we are looking at $M\otimes F^{\ov\tau}(\ov\p)$ as a 
representation of 
$\otimes_S V^{k+g-g_S}(\aa_S)$, then $K_S$ acts as $(k+g- g_S)I$. 
\end{proof}

Similarly to what we have done with $L'(\g,\si)$,  we define 
$\widehat L(\sa,\si)$  by extending $L'(\sa,\si)$ with  $d_\sa$  and  
letting $d_\sa$ and $K_S$ commute
for all $S$. We wish to  extend the action of $L'(\sa,\si)$ on 
$F^{\ov \tau}(\ov\p)$ to $\widehat
L(\sa,\si)$. In order to do this we need the following computation. 
Recall from \eqref{da} the
definition of the element $L^A$ for the vector superspace $A$.

\begin{lemma}\label{dzero}In $V^{k,1}(R^{super})$,
$$ [\theta(x)_\l L^{\ov\p} ]=-
\l\theta(x).
$$

Consequently, due to \eqref{rep1},  for any $a\in\C$, we can extend 
the action of $L'(\sa,\si)$ on
$F^{\ov\tau}(\ov\p)$ to 
$\widehat L(\sa,\si)$ by letting $d_\sa$ act as 
$(L^{\ov\p})^{\ov\tau}_{(1)}+aI$.
\end{lemma}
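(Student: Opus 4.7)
My plan is to establish the $\l$-bracket identity by computing the ``opposite'' bracket $[L^{\ov\p}{}_\l \theta(x)]$ first and then applying skewsymmetry. Recall from \eqref{isprimary} that every $\ov a \in \ov\p$ satisfies $[L^{\ov\p}{}_\l \ov a] = -(T+\tfrac{1}{2}\l)\ov a$, i.e.\ $\ov a$ is a primary field of conformal weight $\tfrac{1}{2}$. Since $[\sa, \p] \subseteq \p$ (a basic feature of the reductive pair $(\g,\sa)$), we have $[x,b_i] \in \p$, so both factors $\overline{[x,b_i]}$ and $\ov b^i$ appearing in $\theta(x) = \tfrac{1}{2}\sum_i :\overline{[x,b_i]}\ov b^i:$ are primary of weight $\tfrac{1}{2}$.

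Applying the Wick formula \eqref{Wick} to each summand $[L^{\ov\p}{}_\l :\overline{[x,b_i]}\ov b^i:]$ produces two ``distributive'' terms where $L^{\ov\p}$ hits each factor---together yielding $-T(:\overline{[x,b_i]}\ov b^i:)-\l:\overline{[x,b_i]}\ov b^i:$ after using that $T$ is a derivation of the normal ordered product---plus an integral term
\[
\int_0^\l \bigl[[L^{\ov\p}{}_\l \overline{[x,b_i]}]_\mu\, \ov b^i\bigr]\,d\mu.
\]
By sesquilinearity the inner bracket equals $(\mu-\tfrac{1}{2}\l)[\overline{[x,b_i]}_\mu \ov b^i] = (\mu-\tfrac{1}{2}\l)([x,b_i],b^i)|0\rangle$ (using \eqref{prod} and $\ov{\mathcal K}=|0\rangle$ in $V^{k,1}$), and $\int_0^\l(\mu-\tfrac{1}{2}\l)\,d\mu=0$, so the integral vanishes term-by-term. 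Summing over $i$ with the factor $\tfrac{1}{2}$ gives
\[
[L^{\ov\p}{}_\l \theta(x)] = -(T+\l)\theta(x).
\]

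I then invoke skewsymmetry: since $\theta(x)$ is even,
\[
[\theta(x)_\l L^{\ov\p}] = -[L^{\ov\p}{}_{-\l-T}\theta(x)].
\]
Substituting $\mu=-\l-T$ (which acts on $\theta(x)$) into the formula just derived, the two $T$-contributions cancel and one is left with $-\l\,\theta(x)$, as claimed. For the final clause about extending the action to $\widehat L(\sa,\si)$, one unpacks $[L^{\ov\p}{}_\l \theta(x)] = -(T+\l)\theta(x)$ via \eqref{rep1}: combined with $(Tb)_{(m)}=-m\,b_{(m-1)}$ (from \eqref{rep2}) this yields $[(L^{\ov\p})^{\ov\tau}_{(1)},\theta(x)^{\ov\tau}_{(n)}] = n\,\theta(x)^{\ov\tau}_{(n)}$, which is exactly the commutation relation $[d_\sa, t^n\otimes x]=n(t^n\otimes x)$ required for $d_\sa$; adding a scalar multiple of the identity does not affect the commutation relations, which accounts for the free parameter $a\in\C$.

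The only mildly subtle step is the vanishing of the integral, which rests on the elementary identity $\int_0^\l(\mu-\l/2)\,d\mu=0$; the rest is routine manipulation with the Wick formula, the derivation property of $T$, and skewsymmetry.
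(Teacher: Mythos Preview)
Your proof is correct, but it takes a genuinely different route from the paper's.

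The paper computes $[\theta(x)_\l L^{\ov\p}]$ directly. It rewrites $\theta(x)=(\tilde x)_\sa-\tilde x$, uses \eqref{tre} to kill the $\tilde x$ contribution, and then applies the Wick formula with the simple bracket $[(\tilde x)_\sa{}_\l \ov b_i]=\overline{[x,b_i]}$. The main ``non-integral'' terms are $:T(\overline{[x,b_i]})\ov b_i:+:T(\ov b_i)\overline{[x,b_i]}:$, which cancel after summing over $i$ (this uses the skew-symmetry of $ad(x)$ on $\p$ with respect to the form), leaving $\l\theta(x)$; the integral term cancels separately.

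Your approach instead computes $[L^{\ov\p}{}_\l\theta(x)]$ first, exploiting the already-established primary field relation \eqref{isprimary} for each factor $\overline{[x,b_i]},\ov b^i\in\ov\p$. This is arguably cleaner: it makes transparent that $\theta(x)$, being a normal ordered product of two conformal-weight-$\tfrac12$ primaries, has conformal weight $1$ with no anomalous $\l^2$ term (your observation $\int_0^\l(\mu-\tfrac{\l}{2})\,d\mu=0$ handles this uniformly, without needing the cancellation over $i$ that the paper's route requires). Skewsymmetry then converts $-(T+\l)\theta(x)$ into $-\l\theta(x)$ exactly as you describe. The paper's route, on the other hand, avoids the skewsymmetry step and stays closer to the ``$\tilde x$ vs.\ $(\tilde x)_\sa$'' bookkeeping that pervades Section~\ref{repsup}. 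Both arguments are short; yours leans on a conceptual fact (\eqref{isprimary}) while the paper's leans on a structural one (\eqref{tre}).
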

\begin{proof} By  \eqref{tre} and Wick formula \eqref{Wick}, we have 
\begin{align*}
\half\sum_i[\theta(x)_\l :T(\ov b_i)\ov  b_i:]&=\half\sum_i[(\tilde 
x)_\sa{}_\l :T(\ov b_i)\ov b_i:]-
\half\sum_i[\tilde x_\l :T(\ov b_i)\ov b_i:]\\&=\half\sum_i[(\tilde  
x)_\sa{}_\l :T(\ov b_i)\ov
b_i:]\\&=\half\sum_i(:[(\tilde x)_\sa{}_\l   T(\ov b_i)]\ov b_i:+: 
T(\ov b_i)[(\tilde x)_\sa{}_\l 
b_i]:)\\&+\frac{1}{2}\sum_{i}\int_0^\l[(\tilde x)_\sa{}_\la T(\ov  
b_i)]_\mu\ov b_i]d\mu.
\end{align*} 
As in the proof of Proposition 
\ref{isomorfi} it can be computed easily that, if $y\in\p$ and 
$x\in\sa$, then 
$[(\tilde x)_\sa{}_\l \ov y ]=\overline{[x,y]}$. By sesquilinearity of the $\l$-bracket  we have  then $[(\tilde x)_\sa{}_\l   T(\ov 
b_i)]=T([(\tilde x)_\sa{}_\l\ov  b_i])+\l[(\tilde
x)_\sa{}_\l
\ov b_i]=T(\overline{[x, b_i]})+\l\overline{[x,b_i]}$,  hence we can 
write
\begin{align*}
\half\sum_i[\theta(x)_\l :T(\ov b_i)\ov  
b_i:]&=\half\sum_i(:T(\overline{[x,b_i]})\ov
b_i:+\l:\overline{[x,b_i]}\ov b_i:+ : T(\ov b_i)\overline{[x, 
b_i]}:)\\&+\frac{1}{2}\sum_{i}\int_0^\l[(T(\overline{[x, 
b_i]})+\l\overline{[x,b_i]})_\mu\ov b_i]d\mu\\
&=\l\theta(x)+\frac{1}{2}\sum_{i}\int_0^\l(-\l([x, 
b_i],b_i)+\l([x,b_i],  b_i)d\mu\\ &=\l\theta(x).
\end{align*}
\end{proof}

\section{Dirac operators}\label{diracoperators}The  affine Dirac 
operator was introduced by Kac and
Todorov in \cite{KacT}. It is the  following odd element  of 
$V^{k+g,1}(R^{super})$:
\begin{equation}\label{G}G_\g=\sum_i:x_i\overline  
x^i:+\tfrac{1}{3}\sum_{i,j}:\overline{[x_i,
x_j]}\overline  x^i\overline x^j:.
\end{equation} 
Here $\{x_i\},\,\{x^i\}$ is a pair of dual bases of $\g$ w.r.t. the 
invariant  form
$(\ ,\ )$. 
Then, choosing $x_i$ as  the eigenvectors of $\sigma$, say 
$\sigma(x_i)=a_ix_i$, we see
that
$\tau(\ov x_i)=-a_i\ov x_i$, $\tau(\ov x^i)=-a_i^{-1}\ov x^i$, hence
\begin{equation}\label{aztau}
\tau(G_\g)=-G_\g.\end{equation}
The element 
$G_\g$ has the following  properties:
\begin{align} &[a_\la G_\g]=\la (k+g)\overline a,\label{aG}\\  
&[\overline a_\la G_\g]=a\label{abarG}.
\end{align}
Note  that
\begin{equation}\label{diractilde} G_\g=\sum_i:\tilde x_i\overline 
x^i:-
\tfrac{1}{6}\sum_{i,j}:\overline{[x_i, x_j]}\overline  x^i\overline 
x^j:.
\end{equation}
In Section \ref{Gquadro} we will show  (cf. \cite{KacD}) that
\begin{equation}\label{[GG]}[G_\g{}_\la  G_\g]=\sum_i :\tilde 
x_i\tilde  x^i:+(k+g)\sum_i:T(\overline
x_i)\overline  x^i:+\frac{\la^2}{2}(k+\frac{g}{3})\dim\g.
\end{equation}   Identifying 
$V^{k+g,1}(R^{super})$ with $V^k(\g)\otimes F(\ov \g)$ we  have that 
\eqref{diractilde} and \eqref{Gquadro}
 can be rewritten as
\begin{equation}\label{tensordirac}G_\g=\sum_ix_i\otimes\overline x^i-
\tfrac{1}{6}\sum_{i,j}\va\otimes:\overline{[x_i, x_j]}\overline  
x^i\overline x^j:
\end{equation} and
\begin{equation}\label{gquad}[G_\g{}_\la  
G_\g]=2(L^\g\otimes\va)-2(k+g)(\va\otimes
L^{\ov\g})+\frac{\la^2}{2}(k+\frac{g}{3})\dim\g.
\end{equation} 
where $L^\g$ is defined in \eqref{sugaw} and $L^{\ov\g}$ is defined 
in \eqref{da}. Note that if $L_\g$ is as in \eqref{primosugawara}, then we have
$$L_\g=2(L^\g\otimes\va)-2(k+g)(\va\otimes
L^{\ov\g}).$$\par
We observe that $G_{\g}\in V^k(\g)\otimes F(\ov \g)$, so,  fixing a 
restricted module $M$ for $L'(\g,\si)$
of level $k$ and setting 
$N=M\otimes F^{\ov \tau}(\ov\g)$, we can consider the  twisted 
quantum field
$$ Y^N(G_{\g},z)=\sum_{n\in\tfrac{1}{2}+\ganz}G^N_{(n)} z^{-n- 1}=
\sum_{n\in\ganz}G^N_{(\tfrac{1}{2}+n)}
z^{-n-
\tfrac{3}{2}}.$$
(Recall from \eqref{aztau} that $\ov\tau(G_\g)=-G_\g$.) Let  
$G^N_n=G^N_{(\tfrac{1}{2}+n)}$. We want to calculate $(G^N_0)^2$. Using 
\eqref{gquad}  we have
\begin{align}\label{gzeroquad} 
(G^N_0)^2&=\frac{1}{2}[G^N_0,G^N_0]=\frac{1}{2}[G^N_{(\tfrac{1}{2
})},G^N_{(\tfrac{1}{2})}]=\tfrac{1}{2}(G_{\g}{}_{(0)}G_{\g})^N_{ (1)}-
\tfrac{1}{16}(k+\frac{g}{3})(\dim\g)\, I_N.
\end{align}

Combining \eqref{gzeroquad} and  \eqref{gquad} with  $\l=0$, we 
obtain 
\begin{equation}(G^N_0)^2=(L^\g)^M_{(1)}\otimes I_{F^{\ov\tau}(\ov\g)}- 
(k+g)I_M\otimes
(L^{\ov\g})^{\ov \tau}_{(1)}-
\tfrac{1}{16}(k+\frac{g}{3})(\dim\g)\, I_N\label{gzeronuovo}.
\end{equation}

\vskip 5pt We are interested in calculating 
$G^N_0(v_\Lambda\otimes 1)$, $v_\Lambda$ being a highest weight vector 
of a 
$L'(\g,\sigma)$-module $M$ with highest weight $\L$. From 
\eqref{tensordirac} we know that $G^N_0$ splits as the sum of a  
quadratic and a cubic term. We shall
calculate the action of these two sums  separately.  We assume that 
$x_i\in 
\g^{\ov s_i}$ where $-\tfrac{1}{2}\leq s_i < \tfrac{1}{2}$, so that 
$x^i\in 
\g^{-\ov s_i}$. Writing  $Y^{\overline \tau}$ for $Y^{F^{\overline 
\tau}(\overline\g)}$ and, if $a\in F(\bar\g)$, $a^{\bar\tau}_{(r)}$ 
for $a^{F^{\overline 
\tau}(\overline\g)}_{(r)}$, we have 
\begin{align}\notag&Y^{\overline 
\tau}(\sum_{i,j}:\overline{[x_i,x_j]}\overline x^i\overline 
x^j:,z)=\\  &\sum_{i,j}:Y^{\overline
\tau}(\overline{[x_i,x_j]},z)Y^{\overline 
\tau}(\overline x^i,z)Y^{\overline\tau}(\overline 
x^j,z):+3\sum_{i}(s_i+\half)Y^{\overline\tau}(\overline{[x_i,x^i]},z)z^{- 
1}.\label{Kanomaly}
\end{align} This equality follows by repeated applications  of 
\eqref{tensore}. We now simplify the
second summand of the right hand side.  We already observed that
$$
\sum_{s_i=t}[x_i,x^i]=-\sum_{s_i=- t}[x_i,x^i]
$$ hence in particular $\sum_{s_i=0}[x_i,x^i]=0$.  Thus
$$
\sum_{i}(s_i+\half)[x_i,x^i]=\sum_{0<s_i<\frac{1}{2}}2s_i[x_i,x^i].
$$

If we single out the coefficient of $\sum_{i,j}:Y^{\overline 
\tau}(\overline{[x_i,x_j]},z)Y^{\overline \tau}(\overline  
x^i,z)Y^{\overline\tau}(\overline x^j,z):$
corresponding to $z^{-\frac{1}{2}- 1}$ and apply it to 
$v_\Lambda\otimes 1$ (indeed to $1$), we  have
\begin{align} &\sum_{i,j}:Y^{\overline 
\tau}(\overline{[x_i,x_j]}_\p,z)Y^{\overline \tau}(\overline  
x^i,z)Y^{\overline\tau}(\overline 
x^j,z):_{(\frac{1}{2})}(1)=\notag\\&\sum_{i,j}(\sum_{s_i=s_j=0}(\overline{[x_i 
,x_j]})^{\ov
\tau}_{(-\half)}(\overline x^i)^{\ov \tau}_{(-\half)}(\overline 
x^j)^{\ov \tau}_{(-\half)})(1)-
3\sum_{s_j>0}(\overline{[x_j,x^j]})^{\ov \tau}_{(-
\half)}(1).\label{Kcubico}
\end{align} The first summand in \eqref{Kcubico} is  the cubic term 
in Kostant's Dirac operator for
$\g^{\ov 0}$. In  Kostant \cite{K3} it is proven that
$$
\sum_{i,j}(\sum_{s_i=s_j=0}(\overline{[x_i,x_j]})^{\ov \tau}_{(-
\half)}(\overline x^i)^{\ov \tau}_{(-\half)}(\overline x^j)^{\ov 
\tau}_{(-\half)})(1)=-6(\ov  h_{\rho_0})^{\ov
\tau}_{(-\frac{1}{2})}(1).
$$ With easy calculations  one proves that 
\begin{equation}\label{Kquadratico}
\left(\sum_{i}x_i\otimes  \overline 
x^i\right)^N_{(\half)}(v_\Lambda\otimes1)=v_\Lambda\otimes 
(h_{\ov\Lambda})^{\ov \tau}_{(-\frac{1}{2})}\cdot 1.
\end{equation}  Now we can complete the proof of 
\begin{prop}\label{azioneg0}  Let $\rho_\si$ be as in \eqref{rhos}. 
Then 
\begin{equation}\label{0} G^N_0(v_\Lambda\otimes 1)=v_\Lambda\otimes 
(\ov 
h_{\ov\Lambda+\rho_\sigma})^{\overline\tau}_{(-\frac{1}{2})}\cdot  1.
\end{equation}
\end{prop}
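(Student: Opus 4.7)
The idea is to use the tensor decomposition \eqref{tensordirac} of $G_\g$ and evaluate the action of the quadratic and cubic parts on $v_\Lambda\otimes 1$ separately, using the formulas \eqref{Kanomaly}, \eqref{Kcubico}, and \eqref{Kquadratico} that have been set up above.

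First, I would handle the quadratic piece $\sum_i x_i\otimes \ov x^i$. Since this is a tensor factored field, its $(1/2)$-mode on $v_\Lambda\otimes 1$ decomposes as a sum $\sum_{n+m=-1/2}(x_i)^M_{(n)}v_\Lambda\otimes(\ov x^i)^{\ov\tau}_{(m)}(1)$, with $n\in s_i+\ganz$ and $m\in 1/2-s_i+\ganz$. The highest weight condition $\n'(v_\Lambda)=0$ forces $(x_i)^M_{(n)}v_\Lambda=0$ for $n>0$, while the vacuum condition on $F^{\ov\tau}(\ov\g)$ forces $(\ov x^i)^{\ov\tau}_{(m)}(1)=0$ for $m>-1/2$. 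A short case analysis on $s_i\in[-1/2,1/2)$ shows that only the $s_i=0$ terms with $x_i\in\h_0$ contribute, and summing over an orthonormal basis of $\h_0$ one recovers \eqref{Kquadratico}: the quadratic part contributes precisely $v_\Lambda\otimes(\ov h_{\ov\Lambda})^{\ov\tau}_{(-1/2)}(1)$.

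Next, I would handle the cubic piece, multiplied by $-\tfrac{1}{6}$. Since this lies entirely in $F(\ov\g)$, it acts only on the second tensor factor, reducing the computation to evaluating the $(1/2)$-mode of $Y^{\ov\tau}\bigl(\sum_{i,j}:\ov{[x_i,x_j]}\ov x^i\ov x^j:,z\bigr)$ on $1$. Here I would combine the two inputs: \eqref{Kanomaly} expresses this field as the triple normal ordered product plus the anomaly $3\sum_i(s_i+\tfrac{1}{2})Y^{\ov\tau}(\ov{[x_i,x^i]},z)z^{-1}$, and \eqref{Kcubico} evaluates the $(1/2)$-mode of the triple normal ordered product on $1$ as the Kostant cubic for $\g^{\ov 0}$ plus the anomaly $-3\sum_{s_j>0}(\ov{[x_j,x^j]})^{\ov\tau}_{(-1/2)}(1)$. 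Invoking Kostant's computation \cite{K3} for the first piece gives $-6(\ov h_{\rho_0})^{\ov\tau}_{(-1/2)}(1)$.

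The main bookkeeping step is then to add the two anomaly terms, namely
\begin{equation*}
3\sum_i\bigl[(s_i+\tfrac{1}{2})-\chi_{s_i>0}\bigr]\bigl(\ov{[x_i,x^i]}\bigr)^{\ov\tau}_{(-1/2)}(1),
\end{equation*}
and identify the result. Using $\sum_{s_i=t}[x_i,x^i]=2h_{\rho_{\ov t}}$ together with the antisymmetry relation $\sum_{s_i=t}[x_i,x^i]=-\sum_{s_i=-t}[x_i,x^i]$ derived earlier, the $s_i=0$ piece vanishes, while grouping each $t\in(0,\tfrac{1}{2})$ with $-t\in(-\tfrac{1}{2},0)$ collapses the bracketed coefficient to $(2s-1)$, yielding $-6\sum_{0<s<1/2}(1-2s)h_{\rho_{\ov s}}$. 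Combined with the Kostant term one obtains $-6h_{\rho_\sigma}$ in view of \eqref{rhos}, so the cubic contribution to $G^N_0(v_\Lambda\otimes 1)$ is exactly $v_\Lambda\otimes(\ov h_{\rho_\sigma})^{\ov\tau}_{(-1/2)}(1)$. Adding the quadratic contribution and using linearity of $h\mapsto\ov h_h$ gives \eqref{0}. The main obstacle is precisely this last bookkeeping: two independent anomalies, one from expanding $Y^{\ov\tau}$ of a normal ordered cubic via \eqref{tensore} and one from reordering the triple product, must be tracked with the correct signs and range conventions on $s_i\in[-1/2,1/2)$, and it is only their combination, after exploiting the antisymmetry of $\sum[x_i,x^i]$, that assembles into the expected $\rho_\sigma$.
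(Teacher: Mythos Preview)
Your proof is correct and follows essentially the same approach as the paper: both split $G_\g$ via \eqref{tensordirac}, invoke \eqref{Kquadratico} for the quadratic part, and combine the Kostant cubic term for $\g^{\ov 0}$ with the two anomaly contributions from \eqref{Kanomaly} and \eqref{Kcubico}, then identify the sum with $h_{\rho_\sigma}$ using the antisymmetry $\sum_{s_i=t}[x_i,x^i]=-\sum_{s_i=-t}[x_i,x^i]$ and the identity $\sum_{s_i=t}[x_i,x^i]=2h_{\rho_{\ov t}}$. The only cosmetic difference is that the paper first simplifies the \eqref{Kanomaly} anomaly to $\sum_{0<s_i<1/2}2s_i[x_i,x^i]$ before entering the proof, arriving at the intermediate formula \eqref{finalee}, whereas you merge both anomalies into the single expression $3\sum_i[(s_i+\tfrac12)-\chi_{s_i>0}]\,[x_i,x^i]$ and simplify it in one step; the arithmetic is identical (and you should note that the boundary case $s_i=-\tfrac12$ also drops out since its coefficient $(s_i+\tfrac12)$ vanishes).
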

\begin{proof} Collecting all the contributions 
\eqref{Kanomaly},\eqref{Kcubico}, and \eqref{Kquadratico}, we find  
that
\begin{equation}\label{finalee} G^N_0(v_\Lambda\otimes 
1)=v_\Lambda\otimes 
\left(\ov h_{\ov\Lambda+\rho_0}+\half\sum_{j: \frac{1}{2}>s_j>0}(1-
2s_j)\overline{[x_j,x^j]}\right)^{\ov
\tau}_{(-\frac{1}{2})}\cdot  1.\end{equation}
 Now, if $\{v_i\}$ is a basis of 
$\g^{\ov t}$, then $\sum\limits_{s_i=t}(1-2s_i)[v_i,v^i]$ is 
independent of the  choice of the basis.
It follows that 
\begin{equation}\label{y}\sum_{i: s_i=t}(1- 2s_i)[x_i,x^i]=\sum_{i: 
s_i=t}\sum_{\a\in\D_{\ov
t}}(1- 2s_i)[x_{\a i},x_\a^i]
\end{equation} where $\{x_{\a i}\}$ is a basis of 
$\g_\a^{\ov t}$. Since $[x_{\a i},x_\a^i]=h_\a$, we have that  the 
l.h.s. of 
\eqref{y} equals $2(1-2t)h_{\rho_t}$. Summing over $t$ and 
substituting in 
\eqref{finalee} we get \eqref{0}.
\end{proof} Recall from \eqref{zg} the definition of $z(\g,\si)$. We 
have
\begin{prop}\label{azioneDeG} \ %
\begin{enumerate} 
\item $(L^{\ov\g})^{\ov\tau}_{(1)}\cdot 1= 
z(\g,\si)-\tfrac{1}{16}\dim\g$.
\item If $M$ is a highest weight module of $L'(\g,\si)$ with highest 
weight $\L$, then
$$ (G^N_0)^2(v_\L\otimes
1)=\half\left((\ov\L+2\rho_\si,\ov\L)+\frac{g}{12}\dim\g-2gz(\g,\si)\right)(v_\L\otimes 
1).
$$
\end{enumerate}
\end{prop}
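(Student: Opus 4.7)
The plan is to derive Part~(2) as a short algebraic consequence of Part~(1) and to concentrate the real work on Part~(1). For Part~(2), I would apply \eqref{gzeronuovo} to $v_\L\otimes 1$: Lemma~\ref{xixi} gives $(L^\g)^M_{(1)}(v_\L)=\left(\tfrac12(\ov\L+2\rho_\si,\ov\L)+k\,z(\g,\si)\right)v_\L$, while Part~(1) supplies $(L^{\ov\g})^{\ov\tau}_{(1)}\cdot 1=\left(z(\g,\si)-\tfrac{1}{16}\dim\g\right)\cdot 1$. Substituting and collecting, the $z(\g,\si)$ pieces $k\,z(\g,\si)-(k+g)z(\g,\si)$ collapse to $-g\,z(\g,\si)$, and the $\dim\g$ pieces $(k+g)\tfrac{1}{16}\dim\g-\tfrac{1}{16}(k+\tfrac{g}{3})\dim\g$ collapse to $\tfrac{g}{24}\dim\g$. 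The $k$-dependence thus drops out (as it must, since $G_\g$ does not involve $k$), yielding exactly $\tfrac12\left((\ov\L+2\rho_\si,\ov\L)+\tfrac{g}{12}\dim\g-2g\,z(\g,\si)\right)(v_\L\otimes 1)$.

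For Part~(1), the strategy is a direct computation of the $z^{-2}$-coefficient of $Y^{\ov\tau}(L^{\ov\g},z)\cdot 1$, in the spirit of the proof of Lemma~\ref{xixi} but in the fermionic setting. Writing $\mu_i=s_i+\tfrac12\in[0,1)$ for the $\ov\tau$-twist representative of $\ov x_i$ (where $x_i\in\g^{\ov s_i}$, $s_i\in[-\tfrac12,\tfrac12)$) and applying \eqref{tensore} to $L^{\ov\g}=-\tfrac12\sum_i\!:\!T\ov x_i\,\ov x^i\!:$ with $a=T\ov x_i$, $b=\ov x^i$, I use $(Ta)_{(n)}=-n\,a_{(n-1)}$ to see that $(T\ov x_i)_{(j-1)}\ov x^i$ vanishes for $j\neq 2$ and equals $-(x_i,x^i)\va$ for $j=2$. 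This produces the explicit anomaly $-\tfrac12\sum_i\binom{\mu_i}{2}(x_i,x^i)\,z^{-2}$. The remaining $z^{-2}$-coefficient of the twisted normal-ordered piece on the vacuum collapses, via the annihilation structure of $F^{\ov\tau}(\ov\g)$ (modes with $n>-\tfrac12$ kill $1$, and at $n=-\tfrac12$ one kills the isotropic part $A^+\subset\ov{\g^{\ov 0}}$), to a pure Clifford-type contribution $\tfrac12\sum_{x_i\in\g^{\ov 0}}\{(\ov x^i)^{\ov\tau}_{(-1/2)},(\ov x_i)^{\ov\tau}_{(-1/2)}\}\cdot 1$, which evaluates to $\tfrac12\dim\g^{\ov 0}$ by the usual argument decomposing $\ov{\g^{\ov 0}}$ through $A^+$ and its complementary isotropic.

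The main obstacle is the careful bookkeeping across the different $\tau$-eigensectors, in particular the handling of the ``midpoint'' sector $s_i=0$ (where $\mu_i=\tfrac12$, the Clifford zero-mode sits, and $A^+$ enters) together with the sector $s_i=-\tfrac12$ (where $\mu_i=0$ and modes sit in $\ZZ$). The key simplification tool is the identity $\sum_{s_i=t}[x_i,x^i]=-\sum_{s_i=-t}[x_i,x^i]$ (already used in Lemma~\ref{xixi}) together with the pairing symmetry $\mu_i\leftrightarrow 1-\mu_i$ induced by $x_i\leftrightarrow x^i$, which converts the raw $\binom{\mu_i}{2}$-sum, once combined with the Clifford contribution $\tfrac12\dim\g^{\ov 0}$, into the sector sum $\tfrac12\sum_{0\leq j<1}\tfrac{j(1-j)}{2}\dim\g^{\ov j}-\tfrac1{16}\dim\g$, which is exactly $z(\g,\si)-\tfrac{1}{16}\dim\g$. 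Apart from this accounting, no essentially new ideas are required beyond the pattern already established in Section~\ref{basic}.
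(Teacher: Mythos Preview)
Your treatment of Part~(2) is exactly what the paper does: combine \eqref{gzeronuovo}, Lemma~\ref{xixi}, and Part~(1), and watch the $k$-dependence cancel.

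For Part~(1), however, your route is genuinely different from the paper's, and your execution has a gap. The paper does \emph{not} compute $(L^{\ov\g})^{\ov\tau}_{(1)}\cdot 1$ directly. Instead it exploits Proposition~\ref{azioneg0}: specializing to a highest weight module with $\ov\L=-\rho_\si$ forces $G^N_0(v_\L\otimes1)=0$, hence $(G^N_0)^2(v_\L\otimes1)=0$. Plugging this into \eqref{gzeronuovo} and using Lemma~\ref{xixi} yields a linear relation in $k$ whose $k$-coefficient isolates $(L^{\ov\g})^{\ov\tau}_{(1)}\cdot 1$ immediately. This bypasses all sector-by-sector fermionic bookkeeping.

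Your direct computation can be made to work, but two of your specific claims are wrong. First, the normal-ordered piece does \emph{not} collapse to a contribution from $\g^{\ov 0}$ alone: once you correctly track that the $Y_\pm$-split for $T\ov x_i$ is at $\mu_i$ in the mode index of $T\ov x_i$ (hence at $\mu_i-1$ in the mode index of $\ov x_i$), the sectors with $0<\mu_i<\tfrac12$ each contribute $\mu_i$ to the $z^{-2}$-coefficient on the vacuum, not zero. Second, the identity $\sum_{s_i=t}[x_i,x^i]=-\sum_{s_i=-t}[x_i,x^i]$ is about Lie brackets and plays no role here, since $L^{\ov\g}$ involves only the Clifford structure $[\ov a_\l\ov b]=(a,b)$; the relevant symmetry is simply $\dim\g^{\ov j}=\dim\g^{\ov{1-j}}$, which is what finally matches your per-sector outputs to $\tfrac14 j(1-j)-\tfrac1{16}$ after pairing $j\leftrightarrow 1-j$. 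With these corrections the direct approach closes, but the paper's specialization trick is shorter and less error-prone.
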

\begin{proof}If  $M_0$ is a highest weight module with highest weight 
$-\rho_\si+k\L_0$ then, by
Proposition~\ref{azioneg0},  $G^N_0(v_{-\rho_\si+k\L_0}\otimes 1)=0$. 
Applying \eqref{gzeronuovo} and
Lemma~\ref{xixi}, we find that
$$ 
0=(-\half\Vert\rho_\si\Vert^2+kz(\g,\si)-\tfrac{1}{16}(k+\frac{g}{3})\dim\g)(v_\L\otimes
1)-(k+g)(v_\L\otimes (L^{\ov\g})^{\ov\tau}_{(1)}\cdot 1).
$$ Since this equality holds for any $k$, the coefficient of $k$ must 
vanish. This implies the first
claim of the proposition. 

Again by \eqref{gzeronuovo} and Lemma~\ref{xixi}, 
\begin{align*} (G^N_0)^2(v_\L\otimes 1)&=(\half(\ov
\L+2\rho_\si,\ov\L)+kz(\g,\si)-\tfrac{1}{16}(k+\frac{g}{3})\dim\g)(v_\L\otimes 
1)\\&-(k+g)(v_\L\otimes
(L^{\ov\g})^{\ov\tau}_{(1)}\cdot 1).
\end{align*} Using the first equality we get the second claim.
\end{proof}

We now turn to the study of the relative Dirac operator. Fix a 
subalgebra $\sa$ as in \S~\ref{repsup}
and consider
$G_\sa\in V^{k+g,1}(R^{super}(\sa))\subset V^{k+g,1}(R^{super}) $. 

Set $G_{\g,\sa}=G_\g-G_\sa$.  By \eqref{aG} and \eqref{abarG},
$$ [{G_{\g,\sa}}_\l G_{\g,\sa}]=[{G_\g}_\l  G_\g]-[{G_\sa}_\l 
{G_\sa}]. 
$$  In particular 
\begin{equation}\label{decouple} 
((G_{\g,\sa})^N_0)^2=(G^N_0)^2-((G_\sa)^N_0)^2,
\end{equation} so, by \eqref{gzeronuovo},
\begin{align}\notag  ((G_{\g,\sa})^N_0)^2&=(L^\g)^M_{(1)}\otimes 
I_{F^{\ov\tau}(\ov\g)}-
(k+g)I_M\otimes 
(L^{\ov\g}-L^{\ov\sa})^{\ov\tau}_{(1)}\\\notag&-(L^\sa)^{M\otimes
F^{\ov\tau}(\ov\p)}_{(1)}\otimes I_{F^{\ov\tau}(\ov\sa)}\\&
-\tfrac{1}{16}\left((k+\frac{g}{3})\dim\g-\sum_S(k+g-
\frac{2g_S}{3})\dim\sa_S\right)I_N\label{gzeroquadro}.
\end{align}

\begin{prop}\label{lpirhozero} If $(\ov\L+\rho_\si)_{|\h_\p}=0$ then 
$(G_{\g,\sa})^N_0(v_\L\otimes 1)=0$
and, if
$\mu$ is as in \eqref{mu}, 
\begin{align}\notag (\ov\mu+2\rho_{\aa\,\sigma},\ov\mu)&-
(\ov\L+2\rho_\si,\ov\L)\\&=\frac{g}{12}\dim\g-2gz(\g,\si)-\sum_S(\frac{g_S}{12}\dim\sa_S-2g_Sz(\sa_S,\si))\label{masternok}.
\end{align}
\end{prop}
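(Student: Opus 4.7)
The strategy is to deduce both claims from Propositions \ref{azioneg0} and \ref{azioneDeG}, applied separately to $\g$ and to the summands $\aa_S$ of the Casimir-eigenspace decomposition $\aa = \bigoplus_S \aa_S$, using Lemma \ref{lemmamu} to transfer the highest-weight data from $\L$ to $\mu$. For the first claim, Proposition \ref{azioneg0} gives
$$G^N_0(v_\L \otimes 1) = v_\L \otimes (\ov h_{\ov\L + \rho_\si})^{\ov\tau}_{(-1/2)} \cdot 1.$$
By Lemma \ref{lemmamu}, $M'' := U(L'(\aa,\si))(v_\L \otimes 1) \subset M \otimes F^{\ov\tau}(\ov\p)$ is a highest weight $L'(\aa,\si)$-module of highest weight $\mu$; writing $G_\aa = \sum_S G_{\aa_S}$ and applying Proposition \ref{azioneg0} to each $\aa_S$ (now at level $k+g-g_S$) and summing yields
$$(G_\aa)^N_0(v_\L \otimes 1) = v_\L \otimes (\ov h_{\ov\mu + \rho_{\aa\,\si}})^{\ov\tau}_{(-1/2)} \cdot 1.$$
Under the orthogonal decomposition $\h_0 = \h_\aa \oplus \h_\p$, the identity $\ov\mu + \rho_{\aa\,\si} = (\ov\L + \rho_\si)|_{\h_\aa}$ read off from Lemma \ref{lemmamu} shows that the difference of the two elements is $\ov h_{(\ov\L + \rho_\si)|_{\h_\p}}$, which vanishes by hypothesis.

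For the second claim, the vanishing just proved together with \eqref{decouple} gives $(G^N_0)^2(v_\L \otimes 1) = ((G_\aa)^N_0)^2(v_\L \otimes 1)$. The left-hand side is evaluated directly by Proposition \ref{azioneDeG}(ii). For the right-hand side, the key observation is that $G_{\aa_S}$ and $G_{\aa_T}$ live in commuting super-vertex subalgebras for $S \ne T$ (since $\aa_S \perp \aa_T$ and $[\aa_S,\aa_T] = 0$), so by \eqref{aG} and \eqref{abarG} one has $[G_{\aa_S}{}_\l G_{\aa_T}] = 0$, hence the operators $(G_{\aa_S})^N_0$ mutually anticommute and the cross terms cancel, giving $((G_\aa)^N_0)^2 = \sum_S ((G_{\aa_S})^N_0)^2$. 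Applying Proposition \ref{azioneDeG}(ii) to each $\aa_S$ (with level $k+g-g_S$ and Casimir eigenvalue $2g_S$), summing using the orthogonal additivity $(\ov\mu,\ov\mu) = \sum_S (\ov\mu_S, \ov\mu_S)$ and $\rho_{\aa\,\si} = \sum_S \rho_{\aa_S\,\si}$, and rearranging produces \eqref{masternok}.

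The main subtlety is verifying the vanishing of the cross $\l$-brackets $[G_{\aa_S}{}_\l G_{\aa_T}]$ for $S \ne T$, since this underlies both the anticommutativity of the summand Dirac operators at $n=0$ and the clean decomposition $((G_\aa)^N_0)^2 = \sum_S ((G_{\aa_S})^N_0)^2$. Once that is in place, the remaining work is additive bookkeeping of Casimir eigenvalues, $\rho$-sums, and dimensions across the summands $\aa_S$, together with the orthogonal decomposition $\h_0 = \h_\aa \oplus \h_\p$ that links the $\g$-data and the $\aa$-data.
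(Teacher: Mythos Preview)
Your proof is correct and follows essentially the same route as the paper: apply Proposition~\ref{azioneg0} to $G_\g$ and to $G_\aa$ (via Lemma~\ref{lemmamu}) to obtain the formula $(G_{\g,\aa})^N_0(v_\L\otimes 1)=v_\L\otimes(\overline{(h_{\ov\L+\rho_\si})_\p})^{\ov\tau}_{(-\half)}\cdot 1$, then use \eqref{decouple} and Proposition~\ref{azioneDeG} on both sides. The paper simply writes ``applying Proposition~\ref{azioneDeG} to $(G_\sa)^N_0$'' and records the $\sum_S$ formula; you make explicit what is implicit there, namely the splitting $G_\aa=\sum_S G_{\aa_S}$ and the vanishing of the cross $\l$-brackets $[G_{\aa_S}{}_\l G_{\aa_T}]$ for $S\ne T$ (which indeed follows immediately from $[\aa_S,\aa_T]=0$ and $\aa_S\perp\aa_T$), so that $((G_\aa)^N_0)^2=\sum_S((G_{\aa_S})^N_0)^2$ and Proposition~\ref{azioneDeG} can be applied summand by summand.
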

\begin{proof} Since $N=M\otimes F^{\ov\tau}(\ov \g)=(M\otimes 
F^{\ov\tau}(\ov\p))\otimes
F^{\ov\tau}(\ov\sa)$, applying Proposition~\ref{azioneg0} to $G_\sa$ 
and using
Lemma~\ref{lemmamu}, we find that
$$ (G_{\g,\sa})^N_0(v_\L\otimes 1)=v_\L\otimes (\ov 
h_{\ov\L+\rho_\si})^{\ov\tau}_{(-\half)}\cdot 1-
v_\L\otimes (\ov 
h_{(\L_+\rho_\si)_{|\h_\sa}})^{\ov\tau}_{(-\half)}\cdot 1.
$$ Since 
$h_{\ov\L_+\rho_\si}-h_{(\L_+\rho_\si)_{|\h_\sa}}=(h_{\ov\L_+\rho_\si})_\p$, 
we obtain
\begin{equation}\label{Grelativo} (G_{\g,\sa})^N_0(v_\L\otimes 
1)=v_\L\otimes (\overline{
(h_{\ov\L +\rho_\si})}_{\p})^{\ov\tau}_{(-\half)}\cdot 1.
\end{equation} This proves the first part of the statement.

In particular, by \eqref{decouple}, 
$((G^N_0)^2-((G_\sa)^N_0)^2)(v_\L\otimes 1)=0$. Now applying
Proposition~\ref{azioneDeG} to $(G_\sa)^N_0$ we see that
$$ ((G_\sa)_0^N)^2(v_\L\otimes
1)=\half\left((\mu_{|\h_\sa}+2\rho_{\sa\si},\mu_{|\h_\sa})+\sum_S(\frac{g_S}{12}\dim\sa_S-2g_Sz(\sa_S,\si))\right),
$$ hence 
\begin{align*}
(\mu_{|\h_\sa}+2\rho_{\sa\si},\mu_{|\h_\sa})+&\sum_S(\frac{g_S}{12}\dim\sa_S-2g_Sz(\sa_S,\si))\\&=(\ov\L+2\rho_{\si},\ov\L)+
\frac{g}{12}\dim\g-2gz(\g,\si).
\end{align*}
\end{proof}
\begin{cor}\label{strange}
\begin{equation}\label{masterrho}
\Vert\rho_\si\Vert^2-
\Vert\rho_{\aa\,\sigma}\Vert^2=\frac{g}{12}\dim\g-2gz(\g,\si)-\sum_S(\frac{g_S}{12}\dim\sa_S-2g_Sz(\sa_S,\si)).
\end{equation}
\end{cor}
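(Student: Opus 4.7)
The plan is to derive the ``very strange formula" Corollary \ref{strange} as a direct specialization of the equation \eqref{masternok} obtained in Proposition \ref{lpirhozero}. That proposition gives a relation between $(\ov\mu+2\rho_{\aa\,\sigma},\ov\mu)$ and $(\ov\L+2\rho_\si,\ov\L)$ valid under the hypothesis $(\ov\L+\rho_\si)_{|\h_\p}=0$. The right-hand side of \eqref{masternok} is completely independent of the highest weight $\L$ (and of the level $k$), so we are free to pick the most convenient $\L$.

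The natural choice is $\ov\L=-\rho_\si$. First, I need to check that such a highest weight module exists for $L'(\g,\si)$: this is immediate, since one can take the Verma module $M=M(\L)$ for any $\L\in(\h')^*$ with the prescribed restriction $\ov\L=-\rho_\si$ to $\h_0$ and any chosen value of the level $k=\L(K)$. With this choice, the assumption $(\ov\L+\rho_\si)_{|\h_\p}=0$ is trivially satisfied (in fact $\ov\L+\rho_\si=0$), so Proposition \ref{lpirhozero} applies.

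Now I compute both sides of the bilinear pairing. On the $\g$-side,
\begin{equation*}
(\ov\L+2\rho_\si,\ov\L)=(\rho_\si,-\rho_\si)=-\Vert\rho_\si\Vert^2.
\end{equation*}
On the $\aa$-side, Lemma \ref{lemmamu} gives
\begin{equation*}
\mu=(\L+\rho_\si)_{|\h_\aa}-\rho_{\aa\,\si}+\sum_S(k+g-g_S)\L_0^S,
\end{equation*}
and since the restriction of the bilinear form to $\h_\aa\oplus(\sum_S\C K_S)$ ignores the central directions $\L_0^S$, only $\ov\mu=\mu_{|\h_\aa}$ contributes. With $\ov\L=-\rho_\si$ we get $(\L+\rho_\si)_{|\h_\aa}=0$, hence $\ov\mu=-\rho_{\aa\,\si}$ and
\begin{equation*}
(\ov\mu+2\rho_{\aa\,\si},\ov\mu)=(\rho_{\aa\,\si},-\rho_{\aa\,\si})=-\Vert\rho_{\aa\,\si}\Vert^2.
\end{equation*}

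Substituting these two evaluations into \eqref{masternok} yields
\begin{equation*}
-\Vert\rho_{\aa\,\si}\Vert^2+\Vert\rho_\si\Vert^2=\frac{g}{12}\dim\g-2gz(\g,\si)-\sum_S\left(\frac{g_S}{12}\dim\sa_S-2g_Sz(\sa_S,\si)\right),
\end{equation*}
which is exactly \eqref{masterrho}. There is no real obstacle here: the entire content has been absorbed into Proposition \ref{lpirhozero}, and the only thing that could possibly go wrong is the existence of a highest weight module with the prescribed restriction of the highest weight, which is handled by invoking a Verma module. The mild subtlety worth flagging is that the right-hand side of \eqref{masternok} is manifestly independent of both $\L$ and $k$, which is what allows this clean specialization to produce a purely numerical identity relating $\rho_\si$, $\rho_{\aa\,\si}$ and the anomaly constants.
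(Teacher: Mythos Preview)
Your proof is correct and follows exactly the paper's approach: the paper's proof is the single line ``Plug $\L=-\rho_\si$ in \eqref{masternok},'' and you have simply unpacked that substitution with care. The additional remarks you make (existence via a Verma module, independence of the right-hand side from $\L$ and $k$) are reasonable sanity checks but not strictly needed.
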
\begin{proof}Plug $\L=-
\rho_\si$ in \eqref{masternok}.
\end{proof}

We now observe that $G_{\g,\sa}$ defines a twisted quantum field on 
$M\otimes F^{\ov\tau}(\ov\p)$. Clearly $M\otimes F^{\ov\tau}(\ov\p)$ 
is a twisted representation of $V^k(\g)\otimes F(\bar \p)$.
Recall from Proposition~\ref{isomorfi} that, if we set $\tilde 
x=x-\half\sum_i:\ov{[x,x_i]}\ov x^i:$ for $x\in\g$, then the map 
$x\mapsto \tilde x$, $\bar x\mapsto \bar x$ induces an isomorphism 
$V^k(\g)\otimes F(\bar \g)\simeq V^{k+g,1}(R^{super})$. In the next 
result we show explicitly that $G_{\g,\aa}$ is in the image of 
$V^k(\g)\otimes F(\bar \p)$ under this isomorphism.
\begin{lemma}\label{gsup}  Let $\{b_i\}$ be an orthonormal basis of 
$\p$. We  have
\begin{equation}\label{GG}G_{\g,\sa}=\sum_i:\tilde b_i\overline b_i:-
\tfrac {1}{6}\sum_{i,j}:\overline{[b_i, b_j]}_\p\overline  
b_i\overline b_j:.
\end{equation}
\end{lemma}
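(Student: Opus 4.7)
The plan is to start from the equivalent expression \eqref{diractilde} for $G_\g$ and the analogous formula $G_\aa=\sum_i:(\tilde a_i)_\aa\ov a_i:-\tfrac{1}{6}\sum_{i,j}:\ov{[a_i,a_j]}\ov a_i\ov a_j:$ inside $V^{k+g,1}(R^{super}(\aa))$. I would pick an orthonormal basis $\{x_i\}=\{a_i\}\cup\{b_i\}$ of $\g$ adapted to $\g=\aa\oplus\p$ and split $G_\g-G_\aa$ into quadratic and cubic contributions. Using $\tilde a_i=(\tilde a_i)_\aa-\theta(a_i)$, which follows from \eqref{thetaref}, the $\sum_i:\tilde a_i\ov a_i:$ piece of $G_\g$ partially cancels with the $(\tilde a_i)_\aa$-piece of $G_\aa$, leaving a quadratic remainder of $\sum_i:\tilde b_i\ov b_i:-\sum_i:\theta(a_i)\ov a_i:$. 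Expanding the cubic sum in $G_\g$ over the adapted basis produces four sub-sums indexed by whether each bracketed index lies in $\aa$ or in $\p$; the pure-$\aa$ sub-sum cancels the cubic part of $G_\aa$, and splitting $[b_i,b_j]=[b_i,b_j]_\aa+[b_i,b_j]_\p$ in the pure-$\p$ sub-sum isolates the target term $-\tfrac{1}{6}\sum_{i,j}:\ov{[b_i,b_j]}_\p\ov b_i\ov b_j:$.

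After these cancellations the lemma reduces to the identity
$$\sum_i:\theta(a_i)\ov a_i:+\tfrac{1}{6}\sum_{i,j}\bigl[:\ov{[a_i,b_j]}\ov a_i\ov b_j:+:\ov{[b_i,a_j]}\ov b_i\ov a_j:+:\ov{[b_i,b_j]}_\aa\ov b_i\ov b_j:\bigr]=0.$$
To verify it, I would substitute $\theta(a_i)=\tfrac{1}{2}\sum_k:\ov{[a_i,b_k]}\ov b_k:$ from \eqref{theta(x)} and use the quasi-associativity \eqref{qa} and quasi-commutativity \eqref{qc} of the NOP to rearrange each triple fermionic product into the common shape $:\ov{[a_i,b_j]}\ov a_i\ov b_j:$. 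The key point is that every $\l$-bracket $[\ov u_\l\ov v]=(u,v)$ arising in these rearrangements is zero: for pairs with $u\in\aa$, $v\in\p$ by the orthogonality $\aa\perp\p$, and for pairs like $(\ov{[a_i,b_j]},\ov b_j)$ or $(\ov{[a_i,b_j]},\ov a_i)$ by invariance of the form (e.g.\ $([a_i,b_j],b_j)=(a_i,[b_j,b_j])=0$). Consequently the integral correction terms in \eqref{qa} disappear and reassociation together with transposition of adjacent factors only produce signs. A final use of invariance, $[b_i,b_j]_\aa=\sum_k(b_i,[b_j,a_k])a_k$, converts $\sum_{i,j}:\ov{[b_i,b_j]}_\aa\ov b_i\ov b_j:$ into a triple NOP of the same shape as the cross terms. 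Setting $S=\sum_{i,j}:\ov{[a_i,b_j]}\ov a_i\ov b_j:$, the three summands in the identity become $-\tfrac{1}{2}S$, $\tfrac{1}{3}S$ and $\tfrac{1}{6}S$ respectively, which add to $0$.

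The main obstacle will be the careful bookkeeping of the signs produced by the repeated use of \eqref{qc} and \eqref{qa} on nested triple fermionic NOPs, together with the verification that every $\l$-bracket encountered actually vanishes (so that no $T$-integral residue survives). Once that is in place, the whole identity collapses to the elementary arithmetic $-\tfrac{1}{2}+\tfrac{1}{3}+\tfrac{1}{6}=0$, and the identification of $G_{\g,\aa}$ with \eqref{GG} follows.
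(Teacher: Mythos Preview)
Your argument is correct, and the bookkeeping of signs and vanishing $\l$-brackets works out exactly as you describe: every pairing that arises is either of the form $(a_i,b_j)=0$, $([a_i,b_j],a_i)=0$ (since $[a_i,b_j]\in\p$), or $([a_i,b_j],b_j)=0$ (by invariance and antisymmetry), so \eqref{qa} and \eqref{qc} introduce no residual terms and the arithmetic $-\tfrac12+\tfrac13+\tfrac16=0$ finishes the job.

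However, the route you take is genuinely different from the paper's. The paper does \emph{not} start from the tilde form \eqref{diractilde} for both operators. Instead it subtracts the original expressions \eqref{G} for $G_\g$ and $G_\sa$, rewrites the cross-cubic terms $:\ov{[a_i,b_j]}\ov a_i\ov b_j:$ and $:\ov{[b_i,a_j]}\ov b_i\ov a_j:$ as $:\ov{[b_i,b_j]}_\sa\ov b_i\ov b_j:$ (this is the lemma \eqref{parziale}), and arrives at the intermediate formula \eqref{uno}, which still involves the ``raw'' $b_i$ rather than $\tilde b_i$. Then, in a separate computation, it expands $\sum_i:\tilde b_i\ov b_i:$ via the definition of $\tilde b_i$ and matches it against \eqref{uno}. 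So the paper proves the equality by computing both sides in a common ``untilded'' normal form, whereas you stay entirely inside the tilde world and exploit the relation $\tilde a_i=(\tilde a_i)_\sa-\theta(a_i)$ to reduce everything to a single fermionic identity. Your approach is slightly more conceptual in that it isolates the role of $\theta$; the paper's is more symmetric in that it verifies each side independently. Both use the same quasi-associativity/commutativity manipulations, just organized differently.
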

\begin{proof} 
\begin{align}\notag G_\g-G_\sa&=\sum_i:b_i\overline  
b_i:+\tfrac{1}{3}(\sum_{i,j}:\overline{[a_i,
b_j]}\overline a_i\overline  
b_j:\\\label{diff}&+\sum_{i,j}:\overline{[b_i, a_j]}\overline 
b_i\overline 
a_j:+\sum_{i,j}:\overline{[b_i, b_j]}\overline b_i\overline  
b_j:).\end{align}First remark that
$$\sum_{i,j}:\overline{[a_i, b_j]}\overline  a_i\overline 
b_j:=-\sum_{i,j}:\overline{[b_j,a_i]}\overline
a_i\overline  b_j:=\sum_{i,j}:\overline{[b_j,a_i]}\overline 
b_j\overline a_i:,$$where the  second equality
follows from \eqref{qc} since$$\int^0_{-T}[\overline  
a_i\phantom{.}_\l\overline
b_j]d\l=T((a_i,b_j)\va)=0.$$Now, since $[a_i,  b_j]\in \p$, using the 
invariance of the form we get the
following  relation:\begin{align*}&\sum_{i,j}:\,\overline{[a_i, 
b_j]}\overline  a_i\overline b_j:
=\sum_{i,j,k}([a_i,b_j],b_k):\overline b_k\overline  a_i\overline 
b_j: =\\&\sum_{i,j,k}:\overline
b_k([b_j,b_k],a_i)\overline  a_i\overline b_j: =\sum_{j,k}:\overline 
b_k \overline{[b_j,b_k]}_\sa\overline 
b_j:.\end{align*} Finally 
\begin{equation}\label{parziale}
\sum_{j,k}:\overline  b_k \overline{[b_j,b_k]}_\sa\overline 
b_j:=\sum_{i,j}:\overline{[b_i, 
b_j]}_\sa\overline b_i\overline b_j:.
\end{equation} Indeed by \eqref{qc} and 
\eqref{qa}
$$
\sum_{j,k}:\overline b_k :\overline{[b_j,b_k]}_\sa\overline b_j:  
:=\sum_{i,j}::\overline{[b_i,
b_j]}_\sa\overline b_i:\overline  b_j:=\sum_{i,j}:\overline{[b_i, 
b_j]}_\sa:\overline b_i\overline b_j::
$$ (here  we use several times that $([b_i,b_j]_\sa,b_k)=0$.) The 
upshot is that 
\eqref{diff} simplifies to 
\begin{equation}\label{uno} G_\g- G_\sa=\sum_i:b_i\overline 
b_i:+\sum_{i,j}:\overline{[b_i,
b_j]}_\sa\overline  b_i\overline 
b_j:+\frac{1}{3}\sum_{i,j}:\overline{[b_i, b_j]}_\p\overline  
b_i\overline b_j:.
\end{equation} Now we look at the r.h.s. of \eqref{GG}. Using 
\eqref{qa} and \eqref{parziale} we have
\begin{align*}\sum_i :\widetilde  b_i\overline b_i: &=\sum_i: 
b_i\overline b_i: -
\tfrac{1}{2}\sum_{i,j}::\overline{[b_i,x_j]}\overline x_j: \overline 
b_i:\\&= 
\sum_i: b_i\overline b_i: -
\tfrac{1}{2}\sum_{i,j}::\overline{[b_i,a_j]}\overline a_j: \overline 
b_i:-
\tfrac{1}{2}\sum_{i,j}::\overline{[b_i,b_j]}\overline b_j :\overline 
b_i:\\&= 
\sum_i: b_i\overline b_i: +\tfrac{1}{2}\sum_{i,j}::\overline  
b_j\overline{[b_i,b_j]}_\sa: \overline 
b_i:+\tfrac{1}{2}\sum_{i,j}:\overline{[b_i,b_j]}\overline b_i 
\overline b_j: 
\\&= \sum_i: b_i\overline b_i: 
+\tfrac{1}{2}\sum_{i,j}:\overline{[b_i,b_j]}_\sa 
\overline b_i\overline  
b_j:+\tfrac{1}{2}\sum_{i,j}:\overline{[b_i,b_j]}\overline b_i 
\overline b_j: 
\\&= \sum_i: b_i\overline b_i: 
+\sum_{i,j}:\overline{[b_i,b_j]}_\sa\overline  b_i \overline
b_j:+\tfrac{1}{2}\sum_{i,j}:\overline{[b_i,b_j]}_\p\overline b_i 
\overline b_j:.
\end{align*}  hence the desired  equality \eqref{GG}.
\end{proof} Note that formula~\eqref{GG} specializes to  
\eqref{diractilde} when $\sa=0$.

\begin{rem}\label{primorem}
 Set 
$$G=\frac{G_{\g,\sa}}{\sqrt{k+g}},\quad L=\frac{1}{k+g}(\tilde 
L^\g-\tilde L^\sa)-(L^{\ov\g}-L^{\ov\sa}),$$ 
where $\tilde L^\g$ is the image of $L^\g\otimes |0\rangle$ in the 
isomorphism
$V^k(\g)\otimes F(\overline \g)\cong V^{k+g,1}(R^{super})$ of 
Proposition \ref{isomorfi},
$L^\g$ is defined in \eqref{sugaw} and $L^{\ov\g}$ is defined in 
\eqref{da}.
A direct
computation (cf. \cite{KacD}) shows that $G$ and $L$ form a 
Neveu-Schwarz Lie conformal superalgebra 
$$
NS=\C[T]L+\C[T]G+\C C,$$
$$[L_\l L]=(T+2\l)L+\frac{\l^3}{12}C,\quad [L_\l 
G]=(T+\frac{3}{2}\l)G,\quad [G_\l G]=2L+\frac{\l^2}{3}C$$
with
central charge
\begin{equation}\label{cc}
C=\half\dim(\p)-\sum_S(1-\frac{g_S}{k+g})\dim(\sa_S).
\end{equation} Set $k=0$. Then \eqref{cc} vanishes if and only if the 
pair $(\g,\sa)$ is symmetric, i.e. $\sa$ is the
algebra of fixed points of an involution of $\g$. 
Indeed, if $(\g,\sa)$ is symmetric, choosing $\s$ 
as the involutive automorphism that fixes $\aa$, then 
$\g=\g^{\bar0}\oplus\g^{\ov{1/2}}$
and $\aa=\g^{\bar0}$, $\p=\g^{\ov{1/2}}$. This implies that 
$\rho_\s=\rho_{\aa\s}=\rho_0$
and that $z(\aa_s,\s)=0$ while $z(\g,\s)=\frac{1}{16}\dim\p$. 
Substituting in 
\eqref{masterrho} we find that $C=0$ in \eqref{cc}.

The reverse implication is a consequence of the ``Symmetric Space 
Theorem" by Goddard, Nahm,
Olive \cite{GNO}. We can also derive it from our previous discussion. 
Indeed choose $\s=I$. Then the vanishing of the
central charge together with the fact that $F^{\bar\tau}(\ov\p)$ is a 
unitarizable representation of the
Ramond Lie superalgebra implies the vanishing of $G$ and $L$. In 
particular $(G_{\g,\sa})^{\C\otimes F^{\ov\tau}(\ov\p)}_0=0$. 
Writing   $G_{\g,\sa}$ explicitly as in  Lemma~\ref{gsup}, 
$$
0=(G_{\g,\sa})^{\C\otimes 
F^{\ov\tau}(\ov\p)}_0=-\frac{1}{6}\sum_{i,j}:\ov{[b_i,b_j]}_\p\bar 
b^i\bar b^j:^{\C\otimes F^{\ov\tau}(\ov\p)}_0.
$$
It is easy to check, using Wick's formula, that, if $b,b'\in\p$, then
$$
-\frac{1}{6}\sum_{i,j}[:\ov{[b_i,b_j]}_\p\bar b^i\bar b^j:_\l\bar 
b]=-\half\sum_i:\ov{[b,b_i]}_\p\bar b^i:
$$
and 
$$
-\half\sum_i[:\ov{[b,b_i]}_\p\bar b^i:_\l \bar b']=\ov{[b,b']}_\p. 
$$
This implies that, if we apply $(G_{\g,\sa})^{\C\otimes 
F^{\ov\tau}(\ov\p)}_0$ to $\bar b^{\C\otimes 
F^{\ov\tau}(\ov\p)}_r\ov{b'_s}^{\C\otimes F^{\ov\tau}(\ov\p)}\cdot 
(1\otimes 1)$,
then $(\ov{[b,b']}_\p)_{r+s}\cdot(1\otimes 1)=0$ for any $r,s$. This 
in turns implies that $[b,b']_\p=0$, hence $[\p,\p]\subset \aa$. Therefore the 
pair $(\g,\aa)$ is symmetric. 
\end{rem}

\vskip 5pt

Let $\L_0$ be the element of $\ha_0^*$ defined  setting 
$\L_0(d)=\L_0(\h_0)=0$  and $\L_0(K)=1$.
Define also 
$\d\in\ha_0^*$  setting $\d(d)=1$ and $\d(\h_0)=\d(K)=0$. Set 
$\ha_\sa=\h_\sa\oplus\C d_\sa\oplus \sum_S\C K_S$.  Let $\d_\sa$  be 
the analogous element of
$\ha_\sa^*$ defined by  $\d_\sa(K_S)=0$ for all $S$, 
$\d_\sa(\h_\sa)=0,\,\d_\sa(d_\sa)=1$.

 Extend $(\cdot,\cdot)$ to all of $\ha_0^*$ by setting $(\L_0,\d)=1$ 
and 
$(\L_0,\L_0)=(\d,\d)=(\d,\h_0)=(\L_0,\h_0)=0$. Set 
\begin{equation}\label{rhoaff}
\rhat_\si=\rho_\si+g\L_0,\quad\rhat_{
\sa\,\si}=\rho_{\sa\,\si}+\sum_Sg_S\L_0^S.
\end{equation}  Then, writing 
$\L=\ov\L+k\L_0+\L(d)\d$,  we see  that
\begin{equation}\label{lambdaquadro} (\ov \L+2\rho_\si,\ov 
\L)+2(k+g)\L(d)=\Vert \L+\widehat\rho_\si\Vert^2-
\Vert\widehat\rho_\si\Vert^2.
\end{equation} 
\vskip5pt

Consider the map 
$\varphi_\sa:\h_0\oplus(\sum_S\C K_S)\oplus\C d_\sa\to\ha_0$ 
\begin{equation}\label{vp}
\varphi_\sa(h)=h\ 
\text{if $h\in\h_0$},\quad\varphi_\sa(d_\sa)=d,\quad 
\varphi_\sa(K_S)=K\ 
\text{for all $S$}.\end{equation} Since $\varphi_\sa$ is onto, 
$\varphi_\sa^*$ is an  embedding of
$\ha_0^*$ into $(\h_0\oplus(\sum_S\C K_S)\oplus\C d_\sa)^*$. We can 
therefore view $(\cdot,\cdot)$ 
as a bilinear form on
$\varphi_\sa^*(\ha_0^*)$. If $\mu\in\h^*_\sa$ we let 
$\mu_0$ be its extension to $\h_0$ defined by setting 
$\mu_0(\h_\p)=0$. In this way we can view
$\ha_\sa^*$ as a subspace of  $(\h_0\oplus(\sum_S\C K_S)\oplus\C 
d_\sa)^*$.

In view of Lemma~\ref{dzero},  we can define  the action of $d_\sa$ 
on $F^{\ov\tau}(\ov\p)$ by letting
it act as 
\begin{equation}\label{azionednormalizzata} (L^{\ov\p})^{\bar 
\tau}_{(1)}-(z(\g,\si)-z(\sa,\si)-\tfrac{1}{16}\dim\p)I_{F^{\ov\tau}(\ov\p)}.  
\end{equation} With this normalization we have that $d_\sa\cdot 1=0$.
 The reason for this particular choice will be clear  in  
Proposition~\ref{ker}. We can then let $d_\sa$ act
on
 $M\otimes F^{\ov\tau}(\ov\p)$  via $d^M\otimes I + I\otimes  
d^{\ov\tau}_\sa$. 
Given $\nu\in
(\ha_\sa)^*$, we denote by  $(M\otimes F^{\ov\tau}(\ov\p))_\nu$ its 
$\nu$-weight space.
 If   
$(M\otimes F^{\ov\tau}(\ov\p))_\nu\ne0$, then $\nu+\rhat_{\sa \si}\in 
\varphi^*_\sa(\ha_0^*)$ (indeed 
$\nu+\rhat_{\sa\si}=\varphi_\sa^*((\nu_{|\h_\sa})_0+(\rho_{\sa\si})_0+\nu(d)\d+ 
(k+g)\L_0)$) and 
\begin{equation}\label{nuquadro}(\nu_{|\h_\sa}+2\rho_{\sa\si},\nu_{|\h_\sa})+2 
(k+g)\nu(d_\sa)=\Vert
\nu+\rhat_{\sa \si}\Vert^2-
\Vert\rho_{\sa\si}\Vert^2.\end{equation}

\noindent If $M$ is a highest weight module for $\widehat L(\g,\s)$ 
set $N'=M\otimes F^{\ov\tau}(\ov\p)$. In light of Lemma~\ref{gsup}, 
we can consider the operator
$(G_{\g,\sa})^{N'}_0$. 

\begin{prop}\label{ker}  If $v\in  (M\otimes 
F^{\ov\tau}(\ov\p))_\nu,\,{\n'_\sa}\cdot v=0$   and
$\rhat_\si,\,\rhat_{\sa \si}$ are as in \eqref{rhoaff} 
then\begin{align*}((G_{\g,\sa})^{N'}_0)^2(v)=&\half(||\L+\rhat_\si||^2- 
||\nu+\widehat\rho_{\sa\si}||^2)v.
\end{align*}
\end{prop}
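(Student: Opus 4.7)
The starting point is the operator identity \eqref{gzeroquadro} for $((G_{\g,\sa})^N_0)^2$ on $N=M\otimes F^{\ov\tau}(\ov\g)$. Since Lemma \ref{gsup} exhibits $G_{\g,\sa}$ as lying in $V^k(\g)\otimes F(\ov\p)$, the operator $(G_{\g,\sa})^N_0$ descends to $(G_{\g,\sa})^{N'}_0\otimes I$ under the identification $N=N'\otimes F^{\ov\tau}(\ov\sa)$, and \eqref{gzeroquadro} may be read as an identity on $N'$, noting moreover that $L^{\ov\g}-L^{\ov\sa}=L^{\ov\p}$ so the middle term acts only through the $F^{\ov\tau}(\ov\p)$ factor. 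It then suffices to evaluate each of the three operator terms of \eqref{gzeroquadro} on $v$ separately.

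By Lemma \ref{xixibis}, applied to the highest weight module $M$, one has the identity $(L^\g)^M_{(1)}=\big[\half(\ov\L+2\rho_\si,\ov\L)+kz(\g,\si)+(k+g)\L(d)\big]I_M-(k+g)d^M$ on all of $M$; by the normalization \eqref{azionednormalizzata}, the action of $(L^{\ov\p})^{\ov\tau}_{(1)}$ on $F^{\ov\tau}(\ov\p)$ is $d_\sa^{\ov\tau}$ shifted by the scalar $z(\g,\si)-z(\sa,\si)-\tfrac{1}{16}\dim\p$. Combining these, and using $d^M\otimes I+I\otimes d_\sa^{\ov\tau}=d_\sa^{N'}$ which acts as $\nu(d_\sa)$ on $v$, the first two terms of \eqref{gzeroquadro} collapse on $v$ to an explicit scalar. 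For the third term, the hypothesis $\n'_\sa\cdot v=0$ makes $v$ a highest weight vector for $L'(\sa,\si)$, so that Lemma \ref{xixi} applied to each ideal $\sa_S$ at its level $k+g-g_S$ (and summed) yields
\begin{equation*}
(L^\sa)^{N'}_{(1)}v=\Big[\half(\nu_{|\h_\sa}+2\rho_{\sa\si},\nu_{|\h_\sa})+\sum_S(k+g-g_S)z(\sa_S,\si)\Big]v.
\end{equation*}

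Assembling the three contributions together with the explicit constant $c=\tfrac{1}{16}\big((k+\tfrac{g}{3})\dim\g-\sum_S(k+g-\tfrac{2g_S}{3})\dim\sa_S\big)$ of \eqref{gzeroquadro}, and using $\dim\p=\dim\g-\sum_S\dim\sa_S$ to cancel the pieces linear in $k$, the weight-dependent part of the answer is exactly $\half(\ov\L+2\rho_\si,\ov\L)-\half(\nu_{|\h_\sa}+2\rho_{\sa\si},\nu_{|\h_\sa})+(k+g)(\L(d)-\nu(d_\sa))$, which by \eqref{lambdaquadro} and \eqref{nuquadro}, together with $\|\rhat_\si\|^2=\|\rho_\si\|^2$, equals $\half(\|\L+\rhat_\si\|^2-\|\nu+\rhat_{\sa\si}\|^2)-\half(\|\rho_\si\|^2-\|\rho_{\sa\si}\|^2)$. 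The remaining purely scalar piece simplifies to $-gz(\g,\si)+\sum_S g_S z(\sa_S,\si)+\tfrac{g}{24}\dim\g-\tfrac{1}{24}\sum_S g_S\dim\sa_S$, which Corollary \ref{strange} identifies with $\half(\|\rho_\si\|^2-\|\rho_{\sa\si}\|^2)$; this cancels the correction and leaves the desired $\half(\|\L+\rhat_\si\|^2-\|\nu+\rhat_{\sa\si}\|^2)v$. The real content of the argument is therefore the ``very strange formula'' \eqref{masterrho}: the main obstacle is not conceptual but rather the bookkeeping required to recognize that the messy residual scalar is precisely what the strange formula controls, so that the two scalar corrections cancel exactly.
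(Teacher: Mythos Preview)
Your proof is correct and follows essentially the same route as the paper's own argument: start from the operator identity \eqref{gzeroquadro}, feed in Lemma~\ref{xixibis} for the $(L^\g)^M_{(1)}$ term, Lemma~\ref{xixi} for the $(L^\sa)^{N'}_{(1)}$ term on the highest weight vector $v$, use the normalization \eqref{azionednormalizzata} to convert the $(L^{\ov\p})^{\ov\tau}_{(1)}$ piece into $d_\sa$-action, rewrite via \eqref{lambdaquadro} and \eqref{nuquadro}, and finally invoke Corollary~\ref{strange} to absorb the residual scalar. The paper carries out exactly these steps with slightly less commentary on the intermediate arithmetic; your explicit identification of the leftover scalar with $\half(\|\rho_\si\|^2-\|\rho_{\sa\si}\|^2)$ via \eqref{masterrho} is precisely what the paper means by ``Applying Corollary~\ref{strange} we get the result.''
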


\begin{proof} Clearly $(G_{\g,\sa})^{N'}_0\otimes 
I_{F^{\ov\tau}(\ov\sa)}=(G_{\g,\sa})^N_0$, so applying
\eqref{gzeroquadro}, Lemma~\ref{xixi}, Lemma~\ref{xixibis}, and using 
the fact that
$L^{\ov\g}-L^{\ov\sa}=L^{\ov\p}$, we obtain that 
\begin{align*} ((G_{\g,\sa})^{N'}_0)^2(v)&=\left(\half(\ov 
\L+2\rho_\si,\ov 
\L)+kz(\g,\si)+(k+g)\L(d)\right)v\\&- (k+g)(d^M\otimes I_{F^{\ov\tau}(\ov\p)} + I_M\otimes
(L^{\ov\p})^{\bar\tau}_{(1)})(v)\\&-\left(\half(\nu_{|\h_\sa}+2\rho_{\sa\si},\nu_{|\h_\sa})+
\sum_S(k+g-g_S)z(\sa_S,\si)\right)v\\&-
\tfrac{1}{16}\left((k+\frac{g}{3})\dim\g-\sum_S(k+g-
\frac{2g_S}{3})\dim\sa_S\right)v.
\end{align*} By our normalization of the action of $d_\sa$ on 
$M\otimes F^{\ov\tau}(\ov\p)$ we have
that
$$ (d^M\otimes I_{F^{\ov\tau}(\ov\p)} +I_M\otimes 
(L^{\ov\p})^{\bar\tau}_{(1)})(v)=(\nu(d_\sa)+z(\g,\si)-z(\sa,\si)-\tfrac{1}{16}\dim\p)v,
$$ hence
\begin{align*} ((G_{\g,\sa})^{N'}_0)^2(v)&=\left(\half(\ov 
\L+2\rho_\si,\ov 
\L)+kz(\g,\si)+(k+g)\L(d)\right)v\\&-
(k+g)(\nu(d_\sa)+z(\g,\si)-z(\sa,\si)-\tfrac{1}{16}\dim\p)v\\&-\left(\half(\nu_{|\h_\sa}+2\rho_{\sa\si},\nu_{|\h_\sa})+
\sum_S(k+g-g_S)z(\sa_S,\si)\right)v\\&-
\tfrac{1}{16}\left((k+\frac{g}{3})\dim\g-\sum_S(k+g-
\frac{2g_S}{3})\dim\sa_S\right)v
\end{align*} thus
\begin{align*} ((G_{\g,\sa})^{N'}_0)^2(v)&=\half(\Vert 
\L+\rhat_\si\Vert^2-\Vert\nu+\rhat_{\sa\si}\Vert^2)v-
\half(\Vert\rho_\si\Vert^2-\Vert\rho_{\sa\si}\Vert^2)v\\ &+\half
(\frac{g\dim\g}{12}-2gz(\g,\si)-\sum_S(\frac{g_S\dim\sa_S}{12}-2g_Sz(\sa_S,\si)))v.
\end{align*} Applying Corollary~\ref{strange} we get the result.
\end{proof}

\section{Multiplets of representations}
\subsection{Kostant's theorem on mutiplets in the twisted\\ affine 
setting}
First of all we study $F^{\ov\tau}(\ov\g)$ viewed as a $\widehat 
L(\g,\si)$-module.  The action of
$\widehat L(\g,\si)$ on $F^{\ov\tau}(\ov\g)$ is obtained by letting 
$t^j\otimes x$ act via
$\theta_\g(x)^{\ov\tau}_{(j)}$ where
$\theta_\g(x)=x-\tilde x=\half\sum_i:\overline{[x,x_i]}\ov x_i:$. In 
our framework this action corresponds
to the pair $(\g\oplus \g, \g)$ where $\g$ embeds diagonally in 
$\g\oplus\g$ and the automorphism of
$\g\oplus\g$ is
$\si\oplus\si$.\par
Recall from \cite[Prop.~6.3]{Kac} that the choice of a set of 
positive roots for $\D_0$ induces the choice
of a set of positive roots $\Dap$ for $\widehat L(\g,\si)$.  Let 
$\Pia=\{\a_0,\cdots,\a_n\}$ denote the
corresponding set of simple roots.

\begin{lemma}\label{pesispin}  $F^{\ov\tau}(\ov\g)$ is completely 
reducible as a
$\ha_0$-module and the set of weights of $F^{\ov\tau}(\ov\g)$ is 
$\rhat_\si-S$ where
\begin{align}\notag S=&\{\l\in\ha_0^*\mid 
\l=\sum_{\a\in\Dap}n(\a)\a,\ \text{where all but  finitely many
$n(\a)$ are zero}\\\label{esse} &\text{and each $n(\a)\le \mathrm{ 
mult}\,\a$}\}.
\end{align}
\end{lemma}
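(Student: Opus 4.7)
The plan is to exhibit $F^{\ov\tau}(\ov\g)$ as a Clifford Fock space on which $\ha_0$ acts diagonally in a PBW basis, read off the weights, and match each weight shift against a positive affine root in $\Dap$. Complete reducibility is then automatic because $\ha_0$ is abelian and the basis consists of joint eigenvectors.

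I first pin down the action. The $\l$-bracket computations \eqref{due}, \eqref{tre}, \eqref{prod} combine to give $[\theta_\g(x)_\l \theta_\g(y)] = \theta_\g([x,y]) + \l\,g\,(x,y)$, so $\theta_\g$ realizes $\widehat L(\g,\si)$ at level $g$ on $F^{\ov\tau}(\ov\g)$: $K$ acts by $g\cdot I$ and $h\in\h_0$ by $\theta_\g(h)^{\ov\tau}_{(0)}$. I let $d$ act as $(L^{\ov\g})^{\ov\tau}_{(1)}-(z(\g,\si)-\tfrac{1}{16}\dim\g)I$, which kills $1$ by Proposition~\ref{azioneDeG}(1); a Wick-formula computation parallel to Lemma~\ref{dzero} (with $\theta_\g$ in place of $\theta$) gives the required derivation property $[d,\theta_\g(x)^{\ov\tau}_{(n)}]=n\,\theta_\g(x)^{\ov\tau}_{(n)}$, and on single fermions \eqref{isprimary} yields $[d,(\ov x)^{\ov\tau}_{(\mu)}]=(\mu+\tfrac12)(\ov x)^{\ov\tau}_{(\mu)}$.

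Next I construct the basis. Choose a basis of $\g$ consisting of $\h_0$-weight vectors adapted to the $\sigma$-eigenspace decomposition, so that each $t^\mu\otimes\ov x$ in $L(\ov\g,\ov\tau)=\bigoplus_\mu t^\mu\otimes\ov{\g^{\ov{\mu+\tfrac12}}}$ is a joint $\ha_0$-eigenvector. Pick the Lagrangian $A^+\subset\ov{\g^{\ov0}}$ in the form $\ov{\mathfrak n}\oplus\ov{\h_0^+}$ for some maximal isotropic $\h_0^+\subset\h_0$, so its dual $A^-$ has basis in $\ov{\mathfrak n_-}\oplus\ov{\h_0^-}$. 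By Clifford PBW, $F^{\ov\tau}(\ov\g)$ has a basis of ordered wedge-monomials in the resulting basis of $L^-$ applied to $1$, each a joint $\ha_0$-eigenvector, which proves complete reducibility. A direct computation along the lines of Lemma~\ref{lemmamu} (with the role of $\aa$ played by $\g$ itself and $\theta_\g$ throughout) gives $\theta_\g(h)^{\ov\tau}_{(0)}\cdot 1=\rho_\si(h)$, so the vacuum weight is $\rhat_\si$. For a creation operator $(\ov x_\alpha)^{\ov\tau}_{(\mu)}$ with $x_\alpha\in\g^{\ov{\mu+\tfrac12}}_\alpha$, the identities $[\theta_\g(h)^{\ov\tau}_{(0)},(\ov x_\alpha)^{\ov\tau}_{(\mu)}]=\alpha(h)(\ov x_\alpha)^{\ov\tau}_{(\mu)}$ and the $d$-commutator above show that it subtracts from the weight $\beta:=-\alpha+n\d$, where $n=-\mu-\tfrac12\ge 0$. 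For $\mu<-\tfrac12$, $n>0$ and $-\alpha\in\D_{\ov n}$, so $\beta\in\Dap$; for $\mu=-\tfrac12$ with $x_\alpha\in\mathfrak n_-$, $\beta=-\alpha\in\Dp_0\subset\Dap$; for $\mu=-\tfrac12$ with $x_\alpha\in\h_0^-$, $\beta=0$. Fermi anticommutativity ensures each basis creation operator appears at most once in a monomial, and the dimensional counts match the number of $L^-$-basis vectors subtracting a given $\beta\in\Dap$ with $\mathrm{mult}\,\beta$. Hence the set of weights of $F^{\ov\tau}(\ov\g)$ is exactly $\rhat_\si-S$.

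The main technical obstacle is the bookkeeping at the boundary $\mu=-\tfrac12$: one must verify that the Lagrangian choice $A^+=\ov{\mathfrak n}\oplus\ov{\h_0^+}$ sends exactly the positive-level-zero generators to annihilation operators (so that the vacuum sits at weight $\rhat_\si$ rather than being shifted by some subset of $\Dp_0$), and separately that the halving of $\h_0$ into $\h_0^\pm$ is consistent with the full multiplicity $\dim\g^{\ov n}_0$ of each imaginary root $n\d\in\Dap$ — this multiplicity must be supplied entirely by modes $\mu=-n-\tfrac12<-\tfrac12$, while the $\mu=-\tfrac12$ level contributes only $\ov{\h_0^-}$-states at weight $\rhat_\si$ itself.
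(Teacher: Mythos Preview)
Your proof is correct and follows essentially the same route as the paper's: build a PBW basis of the Clifford Fock space out of $\h_0$-weight and $\sigma$-eigenvector fermions, identify the vacuum weight as $\rhat_\si$ via the computation of Lemma~\ref{lemmamu}, and read off each creation operator's weight shift as a positive affine root. The paper makes the same Lagrangian choice $A^+=\ov{\mathfrak n}\oplus\ov{\h_0^+}$ implicitly through its basis \eqref{vectorbasis}, and your multiplicity bookkeeping at the end is exactly what the paper's closing sentence ``Since $(j_p+\half)\delta+\beta_{j_p}$ can only occur $\dim\g_{\beta_{j_p}}$ times in the sum'' is doing; your final paragraph of worry about the $\mu=-\tfrac12$ level is unfounded, since as you yourself note the $\ov{\h_0^-}$ creation operators contribute $\beta=0$ and the imaginary-root multiplicities are supplied entirely by modes $\mu<-\tfrac12$.
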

\begin{proof}Fix a basis $\{h_i\}$ of $\h_0$ such that 
$(h_i,h_{n-j+1})=\d_{ij}$. Choose for any 
$\a\in\h^*_0$ a basis $\{x_{i\a}\}$ of $\g_\a$. If $\gamma\in-\Dp_0$, 
let  $y_\gamma$ be a root
vector in
$\g^{\ov 0}_{\gamma}$.  Fix any order in $\D_0$ and in the set 
$\{(i,\a)\mid \g_\a\ne0,\,1\le i\le
\dim\g_\a\}$. Then a basis of
$F^{\ov\tau}(\ov\g)$ is given by the set of vectors
\begin{equation}\label{vectorbasis} (\ov 
h_{i_1})^{\ov\tau}_{(-\half)}\cdots (\ov
h_{i_r})^{\ov\tau}_{(-\half)}(\ov 
y_{\gamma_1})^{\ov\tau}_{(-\half)}\cdots (\ov
y_{\gamma_s})^{\ov\tau}_{(-\half)}(\ov 
x_{h_1\be_1})^{\ov\tau}_{(j_1)}\cdots (\ov
x_{h_t\be_t})^{\ov\tau}_{(j_t)}( 1)
\end{equation} where
$
\left[\frac{n}{2}\right]<i_1<i_2<\cdots<i_r\le n$, 
$\gamma_1<\cdots<\gamma_s$,
$j_p<-\half$ for any $p$, and 
$ j_1\le\cdots\le j_t$  with $(h_p,\be_p)<(h_{p+1},\be_{p+1})$ when 
$j_p=j_{p+1}$. Moreover
$\be_{j_p}\in\D_{j_p+\half}$.

If $h\in\h_0$, by \eqref{thetalambdabar}, 
$[(\theta_\g(h))^{\ov\tau}_{(0)},(\ov
a)^{\ov\tau}_{(n)}]=(\overline{[h,a]})^{\ov\tau}_{(n)}$. By 
Lemma~\ref{lemmamu} the vector in
\eqref{vectorbasis} is therefore a weight vector  for $\h_0$ with 
weight
$$
\rho_\si+\sum_i \gamma_i+\sum_p \be_{j_p}.
$$

By \eqref{azionednormalizzata} the action of $d$ is given by
$(L^{\ov\g})^{\ov\tau}_{(1)}-(z(\g,\si)-\tfrac{1}{16}\dim\g)I_{F^{\ov \tau}(\ov\g)}$ and, 
by \eqref{isprimary},
$[(L^{\ov\g})^{\ov\tau}_{(1)},(\ov x)^{\ov\tau}_{(n)}]=(n+\half)(\ov 
x)^{\ov\tau}_{(n)}$. Since $d\cdot
1=0$ we obtain that the vector in \eqref{vectorbasis} is an 
eigenvector for the action of $d$ with
eigenvalue 
$
\sum_p(j_p+\half).
$

Finally, by Lemma~\ref{lemmamu}, $K$ acts by $2g-g=g$. Summarizing we 
have that the vector in
\eqref{vectorbasis} is a weight vector for $\ha_0$ whose weight is
$$
\rho_\si+g\L_0+\sum_i \gamma_i+\sum_p 
((j_p+\half)\d+\be_{j_p})=\rhat_\si-\eta,
$$ with $\eta=-\sum_i \gamma_i-\sum_p ((j_p+\half)\d+\be_{j_p})$. 
Since $(j_p+\half)\d+\be_{j_p}$ can
only occur
$\dim\g_{\be_{j_p}}$ times in the sum, we have that $\eta\in S$.
\end{proof}

\begin{lemma}\label{fazero} Choose a simple root $\a_i=s_i\d+\ov\a_i$ 
for $\widehat L(\g,\si)$ and
$x_i\in\g^{-s_i}_{-\ov\a_i}$. Then
$$ (G_{\g,{\h_0}})^{N'}_0(\ov x_{i})^{\ov\tau}_{(-s_i-\half)}(1)=0.
$$
\end{lemma}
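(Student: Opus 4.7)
The plan is to use a parity (Koszul) argument based on the $\mathbb{Z}/2$-grading of $F^{\ov\tau}(\ov\p)$: since $G_{\g,\h_0}$ is an odd element of the super vertex algebra $V^{k+g,1}(R^{super})$ (immediate from \eqref{GG}, whose terms $:\tilde b_i\ov b_i:$ and $:\ov{[b_i,b_j]}_\p\ov b_i\ov b_j:$ are both odd), its mode $(G_{\g,\h_0})^{N'}_0$ interchanges the even and odd parts of $N'=F^{\ov\tau}(\ov\p)$ (take $M=\C$ trivial, consistent with the $\widehat L(\g,\si)$-module structure on $F^{\ov\tau}(\ov\g)$ discussed at the opening of Section~5). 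The vector $v:=(\ov x_i)^{\ov\tau}_{(-s_i-\half)}(1)$ is obtained by applying a single fermionic mode to the even vacuum, hence is odd, so $(G_{\g,\h_0})^{N'}_0 v$ is even; it therefore suffices to show that the $\ha_0$-weight space in which this image lies contains only odd vectors.

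First I would determine the weight of $v$. The Wick-formula computation used in the proof of Lemma~\ref{lemmamu} yields $[\theta(h)_{(0)},(\ov x_i)^{\ov\tau}_{(n)}]=-\ov\a_i(h)(\ov x_i)^{\ov\tau}_{(n)}$ for $h\in\h_0$, giving $\h_0$-weight $\rho_\si-\ov\a_i$. Combining the normalization $d_\sa\cdot 1=0$ from \eqref{azionednormalizzata} with the primary-field commutator $[L^{\ov\p}_{(1)},(\ov x_i)^{\ov\tau}_{(n)}]=(n+\half)(\ov x_i)^{\ov\tau}_{(n)}$ obtained from \eqref{isprimary}, the $d_\sa$-eigenvalue of $v$ is $-s_i$. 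Under $\varphi_\sa^*$ these combine to the weight $\rhat_\si-\a_i$.

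Next I analyze the $\rhat_\si-\a_i$ weight space using the $\p$-analogue of Lemma~\ref{pesispin}: a basis of $F^{\ov\tau}(\ov\p)$ consists of products of creation modes $(\ov b)^{\ov\tau}_{(n)}$ with $b\in\p$ a root vector, and a basis vector of weight $\rhat_\si-\eta$ is indexed by a decomposition $\eta=\sum n(\a)\a$ into positive affine roots of $\widehat L(\g,\si)$ with $n(\a)\le\mathrm{mult}\,\a$. The simplicity of $\a_i$ forces the unique decomposition $n(\a_i)=1$: indeed any putative non-trivial splitting $\a_i=\a^{(1)}+\a^{(2)}$ into two positive affine roots is excluded because $\a_i$ is simple, and combinations with imaginary roots $k\d$ fail since $\a_i-k\d$ is not a positive affine root for $k>0$. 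Hence every basis vector of weight $\rhat_\si-\a_i$ has exactly one fermionic mode and is odd, the even part of the weight space is zero, and we conclude $(G_{\g,\h_0})^{N'}_0 v=0$.

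The main technical obstacle is the $\p$-analogue of Lemma~\ref{pesispin}: one must check that the ``zero-contribution'' $\ov h$-modes at $n=-\half$ with $h\in\h_0$, which in $F^{\ov\tau}(\ov\g)$ change parity without altering the weight and could otherwise inject even vectors into the weight space $\rhat_\si-\a_i$, are simply absent from $F^{\ov\tau}(\ov\p)$; this uses essentially that $\p$ and $\h_0$ are orthogonal complements. As a sanity check, one can alternatively combine Proposition~\ref{ker} (which via the simple-root identity $2(\rhat_\si,\a_i)=\|\a_i\|^2$ gives $((G_{\g,\h_0})^{N'}_0)^2 v=0$) with the parity argument to upgrade $G^2v=0$ to $Gv=0$.
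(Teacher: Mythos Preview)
Your argument is correct and takes a genuinely different route from the paper's proof. The paper computes the $\lambda$-bracket $[{G_{\g,\h_0}}_\lambda \ov x_i]$ explicitly via Wick's formula (obtaining $\tilde x_i$ plus a sum of quadratic fermion terms indexed by root spaces), extracts the commutator $[(G_{\g,\h_0})^{N'}_0,(\ov x_i)^{\ov\tau}_{(-s_i-\half)}]$, applies it to the vacuum, and then checks term-by-term that every summand vanishes; the simplicity of $\a_i$ enters through the observation that for each positive root $(r+1)\d+\a$ with $(r,\a)\ne(s_i,\a_i)$ either $[x_i,y_t^{\a,r}]=0$ or $(r+1-s_i)\d+\a-\ov\a_i$ remains positive, so the corresponding creation/annihilation operator kills~$1$. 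Your approach replaces this case analysis by a structural observation: $(G_{\g,\h_0})^{N'}_0$ is odd and preserves the $\ha_0$-weight, the vector $v$ is odd of weight $\rhat_\si-\a_i$, and because $\h_\p=0$ and $\a_i$ is simple the entire weight space $(F^{\ov\tau}(\ov\p))_{\rhat_\si-\a_i}$ consists of single-fermion states and is therefore purely odd. Both proofs ultimately exploit the simplicity of $\a_i$, but yours converts the information into a one-line dimension/parity count rather than a multi-case vanishing argument. Your approach is cleaner; the paper's gives the explicit commutator formula \eqref{perkoszul}, which is independently useful elsewhere.

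Two small points. First, you use but do not justify that $(G_{\g,\h_0})^{N'}_0$ has $\ha_0$-weight zero; this follows either from the intertwining property of $G_{\g,\aa}$ with $\widehat L(\aa,\si)$ noted in the Introduction, or directly from the fact that each summand of \eqref{GG} has $\h_0$-weight zero together with $[L_0,G_0]=0$ from Remark~\ref{primorem}. Second, your proposed ``sanity check'' via Proposition~\ref{ker} is circular: it requires the identity $2(\rhat_\si,\a_i)=\|\a_i\|^2$, which is precisely Lemma~\ref{rhorho}, and that lemma is proved in the paper by invoking Lemma~\ref{fazero}. This does not affect your main argument, but you should drop the sanity check or rephrase it as a consistency observation to be verified \emph{after} both lemmas are in hand.
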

\begin{proof} We start by computing $[{G_{\g,{\h_0}}}_\l\ov x_{i}]$. 
By \eqref{abarG} and
skewsymmetry of the $\l$-bracket, we have $[{G_{\g}}_\l\ov 
x_{i}]=x_i$. On the other hand, since
${\h_0}$ is commutative, $G_{\h_0}=\sum_j:h_j\ov h_j:$, where 
$\{h_j\}$ is an orthonormal basis of
$\h_0$. It follows from Wick's formula and skewsymmetry that
$[{G_{\h_0}}_\l \ov x_i]=\sum_j:\overline{[x_i,h_j]}\ov h_j:$. Thus
$$ [{G_{\g,{\h_0}}}_\l\ov x_{i}]=x_i-\sum_j:\overline{[x_i,h_j]}\ov 
h_j:,
$$ or, by writing $x_i=\tilde x_i+(x_i-\tilde x_i)$,
$$ [{G_{\g,{\h_0}}}_\l\ov x_{i}]=\tilde x_i+\half 
\sum_t:\overline{[x_i,y_t]}\ov
y^t:-\sum_j:\overline{[x_i,h_j]}\ov h_j:,
$$ where, as usual, $\{y_t\}$ and $\{y^t\}$ are a pair of dual basis 
for $\g$. Choosing a suitable  basis
$\{y_i^{\a,j}\}$ of $\p\cap\g^j_\a$, we can assume that
$(y_i^{\a,r},y_j^{-\a,-r})=\d_{ij}$. We choose as basis of $\g$ the 
set $(\cup_{\a,t,r}\{y_t^{\a,r}\})\cup
\{h_j\}$. With this particular choice of basis we can write
\begin{equation}\label{perkoszul} [{G_{\g,{\h_0}}}_\l\ov 
x_{i}]=\tilde x_i+\half
\sum_{\a,t,r}:\overline{[x_i,y_t^{\a,r}]}\ov 
y_t^{-\a,-r}:-\half\sum_j:\overline{[x_i,h_j]}\ov h_j:.
\end{equation} Since $\a_i$ is a real root we have that 
$\dim\g^{s_i}_{\a_i}=1$, hence
$$ [{G_{\g,{\h_0}}}_\l\ov x_{i}]=\tilde x_i+\half 
\sum_{(r,\a)\ne(s_i,\a_i)}:\overline{[x_i,y_t^{\a,r}]}\ov
y_t^{-\a,-r}:.
$$  We have used the fact that $:\overline{[x_i,y_1^{(\a_i,s_i)}]}\ov 
y_1^{-\a_i,-s_i}:=-:\ov h_{\a_i}\ov
x_i:=\sum_j:\overline{[x_i,h_j]}\ov h_j:$. In particular 
$$[(G_{\g,{\h_0}})^{N'}_0,(\ov x_{i})^{\ov\tau}_{(-s_i-\half)}]=\half
\sum_{(r,\a)\ne(s_i,\a_i)}:\overline{[x_i,y_t^{\a,r}]}\ov 
y_t^{-\a,-r}:^{\ov\tau}_{(-s_i)}.
$$ 
 Since $\h_\p=0$, by applying \eqref{Grelativo}, we have that 
$(G_{\g,{\h_0}})^{N'}_0(1)=0$, thus we
are left with showing that
$$
\half 
\sum_t\sum_{(r,\a)\ne(s_i,\a_i)}\left(:\overline{[x_i,y_t^{\a,r}]}\ov
y_t^{-\a,-r}:\right)^{\ov\tau}_{(-s_i)}(1)=0.
$$ Since $[\overline{[x_i,y_t^{\a,r}]}_\l\ov y_t^{-\a,-r}]=0$, we 
have that
$$ Y^{\ov\tau}(:\overline{[x_i,y_t^{\a,r}]}\ov y_t^{-\a,-r}:,z)=
:Y^{\ov\tau}(\overline{[x_i,y_t^{\a,r}]},z)Y^{\ov\tau}(\ov 
y_t^{-\a,-r},z):.
$$ By expanding the r.h.s. of the previous equation and picking the 
coefficient of $z^{s_i-1}$ we find that
\begin{align*}
\left(:\overline{[x_i,y_t^{\a,r}]}\ov
y_t^{-\a,-r}:\right)^{\ov\tau}_{(-s_i)}&=\sum_{n<0}\overline{[x_i,y_t^{\a,r}]}^{\ov\tau}_{(-s_i+r+\half+n)}(\ov
y_t^{-\a,-r})^{\ov\tau}_{(-n-r-1-\half)}\\&-\sum_{n\ge0}(\ov
y_t^{-\a,-r})^{\ov\tau}_{(-n-r-1-\half)}\overline{[x_i,y_t^{\a,r}]}^{\ov\tau}_{(-s_i+r+\half+n)}.
\end{align*}
 If $r\ne0$, we can choose $r\in(-1,0)$ so $(\ov 
y_t^{-\a,-r})^{\ov\tau}_{(-n-r-1-\half)}(1)=0$ for $n<0$,
and 
 $\overline{[x_i,y_t^{\a,r}]}^{\ov\tau}_{(-s_i+r+\half+n)}(1)=0$ for 
$n>0$. It follows that
 $$
 \left(:\overline{[x_i,y_t^{\a,r}]}\ov 
y_t^{-\a,-r}:\right)^{\ov\tau}_{(-s_i)}(1)=-(\ov
y_t^{-\a,-r})^{\ov\tau}_{(-r-1-\half)}\overline{[x_i,y_t^{\a,r}]}^{\ov\tau}_{(-s_i+r+\half)}(1).
$$ Since $(r+1)\d+\a$ is a positive root and $\a_i$ is simple, we 
have that either $[x_i,y_t^{\a,r}]=0$ or
$(r+1-s_i)\d+\a-\ov\a_i$ is still positive. In both cases
$\overline{[x_i,y_t^{\a,r}]}^{\ov\tau}_{(-s_i+r+\half)}(1)=0$. 

It remains to deal with the case $r=0$, i.e. $\a\in\D_0$. If 
$\a\in\Dp_0$ then either $[x_i,y_t^{\a,0}]=0$
or $s_i=0$, otherwise
$-s_i\d+\a-\ov\a_i$ would be a positive root. In both cases
$\overline{[x_i,y_t^{\a,0}]}^{\ov\tau}_{(-s_i+\half)}(1)=0$.
 If $\a\in-\Dp_0$ then 
$$ (\ov
y_t^{-\a,0})^{\ov\tau}_{(-\half)}\overline{[x_i,y_t^{\a,0}]}^{\ov\tau}_{(-s_i-\half)}=-\overline{[x_i,y_t^{\a,0}]}^{\ov\tau}_{(-s_i-\half)}(\ov
y_t^{-\a,0})^{\ov\tau}_{(-\half)}
$$ and $(\ov y_t^{-\a,0})^{\ov\tau}_{(-\half)}(1)=0$.
\end{proof}

\begin{lemma}\label{rhorho}
$$2\frac{(\rhat_\si,\a_i)}{(\a_i,\a_i)}=1.
$$
\end{lemma}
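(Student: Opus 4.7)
The plan is to deduce the lemma from Proposition~\ref{ker} (the square formula for the relative Dirac operator) combined with Lemma~\ref{fazero} (which furnishes a kernel vector), applied to the pair $(\g,\h_0)$. Take $M$ to be the trivial $\widehat L(\g,\si)$-module (so $\L=0$) and set $v=(\ov x_i)^{\ov\tau}_{(-s_i-\half)}(1)\in F^{\ov\tau}(\ov\p)$, where $\aa=\h_0$ and $\p=\h_0^\perp$. Lemma~\ref{fazero} asserts $(G_{\g,\h_0})^{N'}_0 v=0$, hence $((G_{\g,\h_0})^{N'}_0)^2 v=0$. Since $\h_0\subset\g^{\ov0}$ is abelian, all Casimir eigenvalues $g_S$ on $\h_0$ vanish and $\rho_{\h_0\,\si}=0$, so $\rhat_{\h_0\,\si}=0$. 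Proposition~\ref{ker} then reduces to $\|\rhat_\si\|^2=\|\nu\|^2$, where $\nu$ is the $\ha_{\h_0}$-weight of $v$.

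The next step is to identify $\nu$ with $\rhat_\si-\a_i$. For $h\in\h_0$, abelianness kills the $\aa$-summand of $\theta_\g(h)=\tfrac{1}{2}\sum_i:\overline{[h,x_i]}\ov x^i:$, so $\theta_\g(h)=\theta(h)$ on $F^{\ov\tau}(\ov\g)=F^{\ov\tau}(\ov\p)\otimes F^{\ov\tau}(\ov\h_0)$; consequently the $\h_0$-action on the first factor via the pair $(\g,\h_0)$ coincides with the restriction to $\h_0$ of the diagonal $\widehat L(\g,\si)$-action described before Lemma~\ref{pesispin}. A short check using $(L^{\ov\g})^{\ov\tau}_{(1)}=(L^{\ov\p})^{\ov\tau}_{(1)}+(L^{\ov\h_0})^{\ov\tau}_{(1)}$ together with Proposition~\ref{azioneDeG}(1) shows that the $d_\aa$-normalization in \eqref{azionednormalizzata} matches the $d$-normalization used in Lemma~\ref{pesispin}. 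Hence Lemma~\ref{pesispin} applies directly to $v\otimes 1\in F^{\ov\tau}(\ov\g)$: since $\ov x_i$ lies in the root space of the simple affine root $\a_i=s_i\d+\ov\a_i$, the weight of $v\otimes 1$ is $\rhat_\si-\a_i$, and under $\varphi^*_{\h_0}$ this coincides with $\nu$.

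To legitimately invoke Proposition~\ref{ker} we still need the hypothesis $\n'_{\h_0}\cdot v=0$. Since $\h_0$ has no positive roots, $\n'_{\h_0}=\sum_{j>0,\,j\in\ganz}t^j\otimes\h_0$, and the action of $t^j\otimes h$ is $\theta(h)^{\ov\tau}_{(j)}$. The $n>0$ case of the computation in the proof of Lemma~\ref{lemmamu} gives $\theta(h)^{\ov\tau}_{(j)}(1)=0$, while the commutator $[\theta(h)^{\ov\tau}_{(j)},(\ov x_i)^{\ov\tau}_{(-s_i-\half)}]$, expanded via \eqref{rep1}, is a finite sum of modes $(\theta(h)_{(r)}\ov x_i)^{\ov\tau}_{(j-s_i-\half-r)}$ whose indices all lie above the vacuum cutoff of $L^+(\ov\p,\ov\tau)$, and so annihilate $1$.

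Combining the three steps, the identity $\|\rhat_\si\|^2=\|\rhat_\si-\a_i\|^2$ expands immediately to $2(\rhat_\si,\a_i)=(\a_i,\a_i)$, which is the claim. The main obstacle I anticipate is the bookkeeping that glues together the different identifications—in particular matching the $d$-action on $F^{\ov\tau}(\ov\g)$ used in Lemma~\ref{pesispin} with the $d_\aa$-action on $F^{\ov\tau}(\ov\p)$ dictated by \eqref{azionednormalizzata}—so that the weight read off from Lemma~\ref{pesispin} really is the $\nu+\rhat_{\h_0\,\si}$ appearing in Proposition~\ref{ker}.
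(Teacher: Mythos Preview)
Your proposal is correct and follows essentially the same route as the paper: apply Proposition~\ref{ker} with $\aa=\h_0$, $M=\C$, $\L=0$ to the vector $(\ov x_i)^{\ov\tau}_{(-s_i-\half)}\cdot 1$, use Lemma~\ref{fazero} to see it lies in the kernel, identify its weight as $\rhat_\si-\a_i$ via Lemma~\ref{pesispin}, and read off $2(\rhat_\si,\a_i)=(\a_i,\a_i)$ from $\|\rhat_\si\|^2=\|\rhat_\si-\a_i\|^2$. Your write-up is in fact more careful than the paper's about verifying the hypothesis $\n'_{\h_0}\cdot v=0$ and about matching the $d$- and $d_\aa$-normalizations; the paper simply asserts the former and handles the latter implicitly.
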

\begin{proof}
 Suppose that $\a_i=s_i\d+\ov\a_i$. Observe that
$(\ov x_{-\ov\a_i})^{\ov\tau}_{(-s_i-\half)}\cdot 1\in F^{\ov 
\tau}(\ov\p)$ and that this vector is annihilated by $\n'_{\h_0}$. From Lemma
\ref{fazero} and   Proposition
\ref{ker} (applied with $\sa=\h_0,\,M=\C,\,\L=0$) we deduce that
$||\rhat_\si||^2=||\nu+\rhat_{{\h_0}\,\si}||^2$, $\nu$ being the 
weight of $(\ov
x_{-\ov\a_i})^{\ov\tau}_{(-s_i-\half)}\cdot 1$. 

Since $\h_0$ is commutative, $(\tilde h)_{\h_0}=h$ hence 
$(\tilde h)_{\h_0}-\tilde h=\theta_\g(h)$.
 Observe that $(\ov
x_{-\ov\a_i})^{\ov\tau}_{(-s_i-\half)}\cdot 1$ is a vector of the 
form given in \eqref{vectorbasis}. Thus, arguing as in 
Lemma~\ref{pesispin}, its weight is 
$\nu=\rhat_\si-s_i\d-\ov\a_i$. We therefore obtain
$$||\rho_\si||^2=||\rho_\si||^2+||\a_i||^2-2(\rho_\si\,\ov\a_i)-2g 
s_i$$ or
\begin{equation}\label{rhoalfai} 2(\rho_\si,\ov\a_i)=(\a_i,\a_i)-2g 
s_i.
\end{equation} It follows that
$$ 
2\frac{(\rhat_\si,\a_i)}{(\a_i,\a_i)}=1+\frac{2gs_i}{(\a_i,\a_i)}-\frac{2gs_i}{(\a_i,\a_i)}
$$ hence the result.
\end{proof}

In the rest of this section we assume that $\sa^{\ov 0}$ is an equal rank 
subalgebra of $\g^{\ov 0}$. If  $\a$ is a
root of 
 $\widehat L( \sa,\si)$, then $\a=k\d_\sa+\ov\a$. Thus
$\a=\varphi^*_\sa(k\d+\ov\a)$ ($\varphi_\sa$ is as in \eqref{vp}) and 
a root vector in
$\widehat L(\sa,\si)$ for
$\a$ is a root vector for
$k\d+\ov\a$ in $\widehat L(\g,\si)$. It follows that 
$(\varphi_\sa^*)^{-1}$ maps the set of roots
$\Da(\sa)$ of $\widehat L( \sa,\si)$ into the set of roots $\Da$ of 
$\widehat L(\g,\si)$. For simplicity we
identify $\Da(\sa)$ and
$(\varphi_\sa^*)^{-1}(\Da(\sa))$, thus viewing  $\Da(\sa)$ as a 
subset of $\Da$.  Let $\Wa$ be the Weyl 
group of 
$\widehat L(\g,\sigma)$ and let $\Wa_\sa$ be the subgroup generated 
by the reflections $s_\a$ with
$\a\in\Da(\sa)$. Denote by 
$\widehat W'$  the set of minimal right coset representatives of 
$\Wa_\sa$ in $\Wa$.

If $\L\in\ha_0^*$ is dominant and integral, we let $L(\L)$ denote the 
irreducible highest weight module
for $\widehat L(\g,\si)$ with highest weight $\L$.  We set 
$\Dap_\sa=\Dap\cap\Da(\sa)$. If $\xi\in(\h_0\oplus(\sum_S\C 
K_S)\oplus\C d_\sa)^*$ is dominant and
integral, denote by
$V(\xi)$ the irreducible $\widehat L(\sa,\si)$-module with highest 
weight $\xi$.
\par The following result  is a generalization of  Theorem 16 in 
Landweber's paper
 \cite{land}, where the case $\sigma=I_\g$ is treated.
\vskip5pt
\begin{theorem}\label{multiplets}  Assume that $\sa^{\ov 0}$ is an 
equal rank subalgebra of $\g^{\ov 0}$
and that 
$\L$ is a dominant integral weight for $\widehat L(\g,\si)$. Set
$X=L(\L)\otimes  F^{\ov\tau}(\ov\p)$. Then 
\begin{equation}\label{formulona}Ker\,(G_{\g,\sa})_0^{X}=\bigoplus_{w\in
\widehat 
W'}V(\varphi^*_\sa(w(\L+\rhat_\si))-\rhat_{\sa\si}).\end{equation}
\end{theorem}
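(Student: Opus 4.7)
The plan is to adapt the strategy of \cite{land} (which handles the case $\sigma = I$) to the twisted setting, using Proposition~\ref{ker} as the engine. Since $(G_{\g,\sa})_0^X$ commutes with the action of $\widehat L(\sa,\si)$ (a key feature of the \emph{relative} Dirac operator, as discussed in the introduction), $\operatorname{Ker}(G_{\g,\sa})_0^X$ is an $\widehat L(\sa,\si)$-submodule of $X$, and it suffices to identify the $\sa$-singular (i.e.\ $\n'_\sa$-annihilated) vectors it contains, together with their $\ha_\sa$-weights.

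For any $\n'_\sa$-singular vector $v \in X$ of $\ha_\sa$-weight $\mu$, Proposition~\ref{ker} gives
\begin{equation*}
((G_{\g,\sa})_0^X)^2 v \;=\; \tfrac{1}{2}\bigl(\|\L+\rhat_\sigma\|^2 - \|\mu+\rhat_{\sa\sigma}\|^2\bigr)\,v.
\end{equation*}
Because $(G_{\g,\sa})_0^X$ preserves $\ha_\sa$-weight spaces and squares to this scalar, standard self-adjointness arguments (or, equivalently, unitarity of $L(\L)$ and of the spinor module after a suitable real form is chosen) yield $\operatorname{Ker}(G_{\g,\sa})_0^X = \operatorname{Ker}((G_{\g,\sa})_0^X)^2$ on singular vectors, so the condition $v \in \operatorname{Ker}(G_{\g,\sa})_0^X$ is equivalent to the norm equality $\|\mu+\rhat_{\sa\sigma}\|^2 = \|\L+\rhat_\sigma\|^2$.

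The next step is to classify such $\mu$. By a $\p$-analogue of Lemma~\ref{pesispin}, the $\ha_0$-weights of $F^{\ov\tau}(\ov\p)$ are of the form $\rhat_\sigma - \rhat_{\sa\sigma} - \eta_\p$ with $\eta_\p$ in a set analogous to $S$ in \eqref{esse} but indexed by roots of $\p$. Combining with the fact that weights of $L(\L)$ are of the form $\L - \eta_\g$ for $\eta_\g \in Q_+$, every $\ha_\sa$-weight $\mu$ of $X$ satisfies $\mu + \rhat_{\sa\sigma} = \varphi^*_\sa(\nu)$ for some $\nu = \L + \rhat_\sigma - \eta$ with $\eta$ a non-negative integral combination of positive roots of $\ga$. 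The norm condition $\|\nu\|^2 = \|\L+\rhat_\sigma\|^2$, combined with $\L+\rhat_\sigma$ lying in the interior of the Tits cone (which holds since $\L$ is dominant integral), forces $\nu \in \What(\L+\rhat_\sigma)$ by a standard convexity/strict dominance argument; $\sa$-dominance of $\mu$ then selects exactly the minimal coset representatives, i.e.\ $\nu = w(\L+\rhat_\sigma)$ for a unique $w \in \What'$. By Lemma~\ref{rhorho} (and its $\sa$-analogue) $\rhat_\sigma$ and $\rhat_{\sa\sigma}$ are strictly dominant with respect to the appropriate simple roots, which ensures distinct $w \in \What'$ give distinct weights $\mu_w$.

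It remains to show that for each $w \in \What'$ there is exactly one (up to scalar) $\sa$-singular vector of weight $\mu_w$ in $\operatorname{Ker}(G_{\g,\sa})_0^X$, yielding one irreducible summand $V(\mu_w)$. Existence follows by exhibiting a concrete weight vector: the equal rank assumption $\operatorname{rank}\sa^{\ov 0} = \operatorname{rank}\g^{\ov 0}$ allows one to build, from a weight vector in $L(\L)$ of weight the $\g$-part of $w*\L$ and a ``pure spinor'' in $F^{\ov\tau}(\ov\p)$ of the complementary weight, a nonzero $\n'_\sa$-singular vector of weight $\mu_w$; Proposition~\ref{ker} then places it in $\operatorname{Ker}(G_{\g,\sa})_0^X$. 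Multiplicity one, and the fact that no other isotypic components appear, is verified via a character identity: computing the super-character $\operatorname{ch}(L(\L)\otimes F^{\ov\tau}(\ov\p)^+) - \operatorname{ch}(L(\L)\otimes F^{\ov\tau}(\ov\p)^-)$ and using the Weyl--Kac denominator identity to reduce it to $\sum_{w\in\What'}(-1)^{\ell(w)}\operatorname{ch} V(\mu_w)$, the affine twisted analogue of \eqref{in1}. Combined with the classification of $\sa$-singular vectors above, this forces the decomposition \eqref{formulona}. \textbf{Main obstacle:} the character-theoretic step, since $\What'$ is infinite and the modules are infinite-dimensional, so the identity must be interpreted either as an equality of asymptotic dimensions (as in Proposition~\ref{asdim}) or, for the full module identity, via theta-function regularization to make the signed sum convergent; handling this properly in the twisted setting, where the denominator identity takes a more elaborate form, is the technical heart of the argument.
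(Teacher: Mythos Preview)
Your overall strategy---use Proposition~\ref{ker} together with self-adjointness to reduce the kernel to a norm condition, then invoke Lemma~\ref{pesispin} and Lemma~\ref{rhorho} to conclude that any $\sa$-singular weight $\mu$ in the kernel satisfies $\mu+\rhat_{\sa\si}=\varphi^*_\sa(w(\L+\rhat_\si))$ for some $w\in\Wa$, and that $\sa$-dominance forces $w\in\Wa'$---matches the paper's route exactly; the paper packages the ``convexity/strict dominance'' step as a citation of \cite[Lemma~3.2.4]{Kumar}.

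The gap is in your multiplicity-one argument. The character identity you propose gives only the \emph{signed} multiplicity $m^+(\mu_w)-m^-(\mu_w)=(-1)^{\ell(w)}$ of $V(\mu_w)$ in $L(\L)\otimes F^{\ov\tau}(\ov\p)^+$ minus $L(\L)\otimes F^{\ov\tau}(\ov\p)^-$. What you need is the \emph{total} multiplicity $m^+(\mu_w)+m^-(\mu_w)$ in $X$: since $(G_{\g,\sa})_0^X$ is self-adjoint and squares to zero on the $\mu_w$-isotypic, it vanishes identically there, so the entire isotypic lies in the kernel. Nothing in the signed identity rules out, e.g., $m^+=2$, $m^-=1$. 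The paper instead follows Kostant's original argument as extended to Kac--Moody algebras by Kumar \cite[Theorem~3.2.7]{Kumar}: one shows directly that $\varphi^*_\sa(w(\L+\rhat_\si))-\rhat_{\sa\si}$ occurs with multiplicity exactly one already as a \emph{weight} of $X$, via a combinatorial count of the ways to split it as a weight of $L(\L)$ plus a weight of $F^{\ov\tau}(\ov\p)$; the only subtlety the paper flags is that $\Wa_\sa$ is a reflection subgroup rather than a parabolic one, and Kumar's description of $\Wa'$ still applies.

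Your ``main obstacle'' is misdiagnosed. The formal character identity \eqref{hwk} is an equality in the completed representation ring, proved directly from the Weyl--Kac denominator and Lemma~\ref{pesispin}; no asymptotic dimensions or theta-regularization are needed for Theorem~\ref{multiplets}. Those enter only later, in Proposition~\ref{asdim}, which is a \emph{consequence} of the theorem rather than an ingredient in its proof.
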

\begin{proof}  Suppose that $V(\xi)$ occurs in 
$Ker\,((G_{\g,\aa})_0^X)^2$.
  Then $\xi=\gamma+\beta$  where $\gamma$ is a weight of 
$F^{\ov\tau}(\ov\p)$ and $\beta$ is a
weight of $L(\L)$. By  Lemma \ref{pesispin}, we know the form of the 
weights of $F^{\ov\tau}(\ov\g)$.
Since we are assuming that $\rank(\sa^{\ov 0})=\rank(\g^{\ov 0})$  the 
weights of $F^{\ov\tau}(\ov\p)\otimes 1\subset F^{\bar\tau}(\g)$ are also of
this form. Hence we can write
$\xi+\rhat_{\sa\,\si}=\varphi^*_\sa(-
\nu+\beta+\rhat_\si)$ where $\nu\in S$ (cf. \eqref{esse}). By 
Proposition \ref{ker} we have that  $||-
\nu+\beta+\rhat_\si||^2=||\L+\rhat_\si||^2$.  Lemma \ref{rhorho} 
tells us that $\rhat_\si$ is what is usually
denoted by $\rho$ for
$\widehat L(\g,\si)$. Hence we  can use \cite[Lemma 3.2.4]{Kumar}  to 
deduce the existence of 
 $w\in\Wa$ such that 
$\xi=\varphi^*_\sa(w(\L+\rhat_\si))-\rhat_{\sa\,\si}$. We claim that 
$w\in \widehat W'$ and
that for any 
$w\in  \widehat W'$ the corresponding submodule occurs with 
multiplicity one in 
$Ker\,((G_{\g,\aa})_0^X)^2$. The proof of all these statements can be 
done  along the lines of Kostant's argument
in the finite dimensional case, as  extended to the affine case by 
Kumar in  \cite[Theorem
3.2.7]{Kumar}. There is   only one difference with Kumar's setting:  
$\widehat W_{\sa}$ is a reflection
subgroup of
$\widehat W$ and not, in general, a standard parabolic subgroup. But  
Kumar's proof relies on a
description of $\widehat W'$ (see \cite[Exercise  1.3.E]{Kumar}) 
which holds in our weaker
hypothesis too. This concludes the proof since, by Proposition 
\ref{unitarity} below,
$(G_{\g,\aa})_0^X$ is self-adjoint (in our hypothesis), hence
$Ker\,(G_{\g,\aa})_0^X=Ker\,((G_{\g,\aa})_0^X)^2$.  
\end{proof}
\subsection{Applications}\label{applications}
We want to discuss in our setting some consequences of 
Theorem~\ref{multiplets} which
are the analogues of Theorems 4.17 and 4.24 of \cite{Kold} in the 
finite dimensional
case. As in the finite dimensional case, we name  {\it  multiplet}
the set of $\widehat L(\aa,\s)$-modules occurring in the 
decomposition \eqref{formulona}. As discussed in the Introduction,
these were discovered in \cite{GKRS}  (in the finite dimensional 
equal rank case), where it is shown that
they possess remarkable properties. First, the Casimir element acts 
by the same scalar on all the
representations in the multiplets. This fact has a direct analogue in 
the affine case. Indeed 
$C(\aa)=(L^\aa_0)^{X}+(k+g)d^{X}_\aa$ can be considered as  (one 
half of the) Casimir element  
for $\widehat L(\aa,\s)$ acting on $X=L(\L)\otimes 
F^{\bar\tau}(\bar\p)$: this follows e.g. from the formula displayed 
in 
\cite[Exercise 7.16]{Kac}, noting that in our context the central 
elements $K_S$
specialize to the levels $k+g-g_S$.
We shall deduce the above remarkable property by a formula for the square of the Dirac operator 
acting on $L(\L)\otimes 
F^{\bar\tau}(\bar\p)$, which holds in the framework of Section 4 (i.e., $(\g,\aa)$ a reductive pair, $\L$ any weight of $\widehat L(\g,\si)$). The following result is a
twisted affine analog of  \cite[Theorem 2.13]{Kold}.
\begin{prop}\label{quadro} Set 
$N=M\otimes F$, where $M$ is any level $k$ highest 
weight module  for $\widehat L(\g,\s)$ and $F=F^{\ov\tau}(\ov \p)$. Set $C(\g)=(L^\g_0)^{M}+(k+g)d^{M}$, 
$C(\aa)=(L^\aa_0)^{N}+(k+g)d^{N}_\aa.$ Then 
\begin{equation}
\label{masterG}((G_{\g,\sa})^N_0)^2=C(\g)\otimes I_F-C(\aa)+\left[\half(\Vert\rho_\s\Vert^2-\Vert\rho_{\aa\s}\Vert^2)+c(k)\right]I_N,\end{equation}
 where $c(k)=-kz(\g,\s)+\sum_S(k+g-g_S)z(\aa_S,\s)$. \end{prop}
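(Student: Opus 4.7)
The plan is to start from the known expression \eqref{gzeroquadro} for $((G_{\g,\sa})^N_0)^2$ and massage the right-hand side into the desired form by (i) recognizing the Casimir combinations $C(\g)$ and $C(\aa)$ via the definitions of $d^M$ and $d_\aa^N$, and (ii) applying the ``very strange formula'' \eqref{masterrho} of Corollary~\ref{strange} to the leftover scalar.

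First I would note that, by Lemma~\ref{gsup}, $G_{\g,\aa}$ already lies in $V^k(\g)\otimes F(\ov\p)$ under the isomorphism of Proposition~\ref{isomorfi}, so the operator $(G_{\g,\sa})^N_0$ is well defined on $N=M\otimes F^{\ov\tau}(\ov\p)$, and the identity \eqref{gzeroquadro} descends to $N$ after dropping the $F^{\ov\tau}(\ov\sa)$ factor. Writing $F=F^{\ov\tau}(\ov\p)$, this yields
\begin{equation*}
((G_{\g,\sa})^N_0)^2 \;=\; (L^\g)^M_{(1)}\otimes I_F \;-\; (k+g)\,I_M\otimes (L^{\ov\p})^{\ov\tau}_{(1)} \;-\; (L^\sa)^N_{(1)} \;+\; c_0\, I_N,
\end{equation*}
where $c_0=-\tfrac1{16}\bigl[(k+\tfrac g3)\dim\g - \sum_S(k+g-\tfrac{2g_S}{3})\dim\sa_S\bigr]$.

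Next I would expand $C(\g)\otimes I_F - C(\aa)$. By definition $C(\g)\otimes I_F = (L^\g)^M_{(1)}\otimes I_F + (k+g)\,d^M\otimes I_F$, and $C(\aa)=(L^\aa)^N_{(1)} + (k+g)d^N_\aa$ with $d_\aa^N = d^M\otimes I_F + I_M\otimes d^{\ov\tau}_\aa$. Using the normalization \eqref{azionednormalizzata} of $d_\aa^{\ov\tau}$, the $(k+g)\,d^M\otimes I_F$ contributions cancel and we get
\begin{equation*}
C(\g)\otimes I_F - C(\aa) \;=\; (L^\g)^M_{(1)}\otimes I_F - (L^\aa)^N_{(1)} - (k+g)\,I_M\otimes(L^{\ov\p})^{\ov\tau}_{(1)} + (k+g)\bigl(z(\g,\si)-z(\sa,\si)-\tfrac1{16}\dim\p\bigr)I_N.
\end{equation*}

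Subtracting the two displays, everything depending on $M$ or $F$ vanishes and what remains is a purely scalar discrepancy
\begin{equation*}
((G_{\g,\sa})^N_0)^2 - C(\g)\otimes I_F + C(\aa) \;=\; \Bigl[c_0 - (k+g)\bigl(z(\g,\si)-z(\sa,\si)-\tfrac1{16}\dim\p\bigr)\Bigr]I_N.
\end{equation*}
The main task, then, is a bookkeeping exercise: show this bracketed scalar equals $\tfrac12(\|\rho_\si\|^2-\|\rho_{\aa\si}\|^2) + c(k)$. Using $\dim\g=\dim\p+\sum_S\dim\sa_S$ the terms linear in $k$ collapse, leaving $\tfrac{g}{24}\dim\g - \sum_S\tfrac{g_S}{24}\dim\sa_S - (k+g)z(\g,\si) + (k+g)\sum_S z(\sa_S,\si)$. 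Halving \eqref{masterrho} rewrites $\tfrac12(\|\rho_\si\|^2-\|\rho_{\aa\si}\|^2)$ in exactly this shape up to the $z$-terms, which then combine with $c(k)=-kz(\g,\si)+\sum_S(k+g-g_S)z(\sa_S,\si)$ to complete the match. The only delicate step is keeping the central charges straight for each simple (or abelian) summand $\sa_S$ of $\sa$, noting that for $\sa_S$ the level on $N$ is $k+g-g_S$ so the appropriate Sugawara shift is precisely $(k+g-g_S)+g_S=k+g$, justifying the uniform factor $(k+g)$ in $C(\aa)$; everything else is algebra.
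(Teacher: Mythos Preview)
Your proof is correct and follows exactly the approach the paper takes: the paper's own proof is the single line ``Combine formulas \eqref{gzeroquadro}, \eqref{azionednormalizzata}, and \eqref{masterrho},'' and you have simply carried out that combination explicitly, including the scalar bookkeeping that verifies the match with $\tfrac12(\|\rho_\si\|^2-\|\rho_{\aa\si}\|^2)+c(k)$.
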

\begin{proof}ÊCombine formulas \eqref{gzeroquadro}, \eqref{azionednormalizzata}, and \eqref{masterrho}.\end{proof}
\begin{cor}\label{casimiraref} Under the hypothesis of Theorem \ref{multiplets}, $C(\aa)$ acts on all $\widehat
L(\aa,\sigma)$-modules 
$V(\varphi^*_\sa(w(\L+\rhat_\si))-\rhat_{\sa\si})$ of the multiplet
\eqref{formulona} by
the scalar
\begin{equation}\label{casimira}
\half\left(\Vert\L+\rhat_\si\Vert^2-\Vert\rho_{\sa\si}\Vert^2\right)+\sum_S(k+g-g_S)z(\aa_S,\sigma).
\end{equation}
\end{cor}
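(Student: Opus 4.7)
The plan is to deduce the corollary directly from Proposition \ref{quadro} together with Theorem \ref{multiplets} and Lemma \ref{xixibis}, after identifying the scalar by which $C(\g)$ acts on the highest weight module $L(\L)$.

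First I would apply Proposition \ref{quadro} to the module $X = L(\L)\otimes F^{\ov\tau}(\ov\p)$, which gives
\begin{equation*}
((G_{\g,\sa})^X_0)^2 = C(\g)\otimes I_F - C(\aa) + \left[\tfrac{1}{2}(\Vert\rho_\s\Vert^2-\Vert\rho_{\aa\s}\Vert^2)+c(k)\right] I_X.
\end{equation*}
By Theorem \ref{multiplets}, each summand $V(\varphi^*_\sa(w(\L+\rhat_\si))-\rhat_{\sa\si})$ sits inside $\mathrm{Ker}\,(G_{\g,\sa})^X_0$, hence is annihilated by the square of the Dirac operator. On such a summand the displayed identity becomes
\begin{equation*}
C(\aa) = C(\g)\otimes I_F + \left[\tfrac{1}{2}(\Vert\rho_\s\Vert^2-\Vert\rho_{\aa\s}\Vert^2)+c(k)\right] I.
\end{equation*}

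Next I would compute the scalar by which $C(\g)=(L^\g_0)^M + (k+g)d^M$ acts on $L(\L)$. Lemma \ref{xixibis} gives
\begin{equation*}
C(\g)\cdot v_\L = \left(\tfrac{1}{2}(\ov\L+2\rho_\si,\ov\L) + kz(\g,\si) + (k+g)\L(d)\right) v_\L,
\end{equation*}
and by \eqref{lambdaquadro} the first and third terms combine to $\tfrac{1}{2}\bigl(\Vert\L+\rhat_\si\Vert^2 - \Vert\rhat_\si\Vert^2\bigr)$. Since $(\L_0,\L_0)=0=(\L_0,\rho_\si)$, we have $\Vert\rhat_\si\Vert^2 = \Vert\rho_\si\Vert^2$, and similarly $\Vert\rhat_{\aa\si}\Vert^2 = \Vert\rho_{\aa\si}\Vert^2$. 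Thus
\begin{equation*}
C(\g)_{|L(\L)} = \tfrac{1}{2}\bigl(\Vert\L+\rhat_\si\Vert^2 - \Vert\rho_\si\Vert^2\bigr) + kz(\g,\si).
\end{equation*}

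Substituting this into the formula for $C(\aa)$ and using $c(k) = -kz(\g,\si) + \sum_S(k+g-g_S)z(\aa_S,\si)$, the $\pm kz(\g,\si)$ terms cancel and $\tfrac{1}{2}\Vert\rho_\si\Vert^2$ also cancels, leaving exactly
\begin{equation*}
C(\aa) = \tfrac{1}{2}\bigl(\Vert\L+\rhat_\si\Vert^2 - \Vert\rho_{\aa\si}\Vert^2\bigr) + \sum_S(k+g-g_S)z(\aa_S,\si),
\end{equation*}
which is \eqref{casimira}. There is no real obstacle here: the whole argument is arithmetic manipulation, and the only point requiring attention is keeping track that the $\rhat$-squared norms coincide with the $\rho$-squared norms, so that the cancellations line up.
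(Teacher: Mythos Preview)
Your proof is correct and follows essentially the same route as the paper's: both apply Proposition~\ref{quadro} (formula \eqref{masterG}) together with Theorem~\ref{multiplets} to reduce to computing the action of $C(\g)$, then invoke Lemma~\ref{xixibis} and \eqref{lambdaquadro} for that computation. Your write-up simply makes the arithmetic explicit; the remark that $\Vert\rhat_{\aa\si}\Vert^2 = \Vert\rho_{\aa\si}\Vert^2$ is harmless but not actually needed, since $\Vert\rho_{\aa\si}\Vert^2$ already appears directly in \eqref{masterG}.
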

\begin{proof} By Theorem \ref{multiplets} and formula \eqref{masterG}, $C(\aa)$ acts as
$$
C(\g)\otimes I_F+\left[\half(\Vert\rho_\s\Vert^2-\Vert\rho_{\aa\s}\Vert^2)+c(k)\right]I_N
$$
on the multiplet. Combine Lemma \ref{xixibis} and formula \eqref{lambdaquadro} to compute the action of $C(\g)$.
\end{proof}
\vskip10pt
The second property discovered in \cite{GKRS} involves the dimensions 
of the representations in 
the multiplets. In the finite dimensional case the multiplets are 
canonically indexed by the 
set $W'$ of
minimal length  representatives of (right) cosets of the Weyl group of $\aa$ 
(a reductive subalgebra 
of $\g$ of the same rank) in the Weyl 
group $W$ of $\g$.
 If we
let $V_w$ denote the $\g$-module indexed by $w\in W'$, then  
\begin{equation}\label{altdim}
\sum_{w\in  W'} (-1)^{\ell(w)}\dim V_w=0,
\end{equation}
where $\ell(\cdot)$ is the length function on $W$.
 
Obviously this result cannot hold true in the affine case for the 
representations involved
 are infinite dimensional. However in Proposition~\ref{asdim} below 
we obtain an analog of \eqref{altdim}
involving the asymptotic dimensions of the representations
$V(\varphi^*_\sa(w(\L+\rhat_\si))-\rhat_{\sa\si})$ as defined in 
\cite[Ch. 13]{Kac}. \par\noindent
\vskip5pt
To prove this fact we need several preliminary considerations.
Let $V$ be  a complex vector space endowed with a symmetric bilinear 
form $(\ ,\ )$. Fix $\sigma\in O(V)$ and assume that $\sigma$ is 
diagonalizable with modulus $1$ eigenvalues. Suppose also that the 
set of $\sigma$-fixed points is even dimensional. Set $\tilde V = V 
\oplus \C$ and extend $(\ ,\ )$ to $\tilde V$ by setting 
$(v,1)=0,\,(1,1)=1$. Then
$so(V)$ embeds in $so(\tilde V)$ and $(so(\tilde V),so( V))$  is a 
reductive pair. Indeed, for $v\in V$ define $X_v\in so(\tilde V)$ 
by $X_v(w+c)=cv-(v,w)$. If we endow $so(\tilde V)$ with the invariant 
form
$\langle X,Y\rangle =\half tr(XY)$, then we have that 
$so(V)^\perp=\{X_v\mid v\in V\}$. Note that, if $A\in so(V)$ and 
$v\in V$, then $[A,X_v]=X_{A(v)}$.
Thus, identifying $V$ with $\{X_v\mid v\in V\}$, we see that the 
adjoint action of $so(V)$ on its orthogonal complement gets 
identified with the natural action of $so(V)$ on $V$.\par
Extend $\si$ to an automorphism of $\tilde V$ by letting $\si(1)=1$. 
Then $\si X_v \si^{-1}=X_{\si(v)}$. Let $\ov V$ be the space $V$ 
viewed as an odd space, and set $\ov\tau=-\si$. By applying
our machinery to the reductive pair $(so(\tilde V),so( V))$ we can 
turn
$F^{\bar\tau}(\ov V)$ into a $\widehat L(so(V),Ad(\si))$-module. Let
$\si_0\in End(\tilde V)$ be defined by $\si_0(v)=v,\,\si_0(1)=-1$. 
Then the decomposition $so(\tilde V)=so(V)\oplus V$ is precisely the 
eigenspace decomposition of $\si_0$. Since
$\si_0^2=I$, the pair $(so(\tilde V),so( V))$ is actually symmetric. 
As observed in Remark
\ref{primorem}, we have that $(G_{so(\tilde V),so( V)})_0$ acts 
trivially on  $L(\ov V,\ov\tau)$. Thus
$L(\ov V,\ov\tau)$ decomposes as a  $\widehat L(so(V),Ad(\si))$-module
as prescribed by Theorem \ref{multiplets}. Hence we need to find the 
set of minimal 
right coset representatives of the Weyl group of  $\widehat 
L(so(V),Ad(\si))$ in the Weyl 
group of  $\widehat L(so(\tilde V),Ad(\si))$-module. For this we need 
to distinguish two cases.
Suppose first that $\det\si=1$. It 
follows that $Ad(\si)$ is an automorphism of $so(V)$ of inner 
type. 
Choose a Cartan subalgebra $\h_{so(V)}$ of $so(V)$ which is fixed by 
$Ad(\si)$. Since $\det\s=1$, $\dim(V)$ is even (recall that we are assuming that the set of $\s$-fixed vectors in $V$ is even dimensional), hence  
$\h_{so(V)}$  is a Cartan subalgebra of  $so(\tilde V)$.
Choose $h\in \h_{so(V)}$ such that $Ad(\si)=e^{2\pi i\,ad(h)}$. Let 
$\{\be_1,\ldots,\be_n\}$ be a set of simple roots for $so(\tilde V)$. 
Let $\th$ be the corresponding highest root for 
$so(\tilde V)$. Then we can assume that  $\be_i(h)\geq 
0\  i=1,\ldots,n,\, \th(h)\leq 1$. It follows that
the map $n\d+\a\mapsto (n+\a(h))\d+\a$ is a bijection between the set 
of real roots of
$\widehat L(so(\tilde V),I)$ and the set of real roots of $\widehat 
L(so(\tilde V),Ad(\si))$ that maps
$\{\d-\th,\be_1,\ldots,\be_n\}$ to the set $\{\a_0,\dots,\a_n\}$ of 
simple roots for $\widehat L(so(\tilde V),Ad(\si))$
and the set $\{\d-\th,\be_1,\ldots,\be_{n-1},s_{\be_n}(\be_{n-1})\}$ 
of simple roots of $\widehat L(so( V),I)$ to the  set of simple 
roots of $\widehat L(so(V),Ad(\si))$. Then the Weyl groups of 
$\widehat L(so(\tilde V),Ad(\si))$ and $\widehat L(so(V),Ad(\si))$ 
are isomorphic to the Weyl groups of $\widehat L(so(\tilde V), I)$ 
and $\widehat L(so(V), I)$, respectively,
the index of the latter in the former is two, and the set of minimal 
length coset representatives is $\{1,s_{\a_n}\}$. Note finally that, 
in this case,  $\varphi_{so(V)}$ is the identity. Thus, according to 
Theorem~\ref{multiplets},
\begin{equation}\label{decfermionicinner}
F^{\bar\tau}(\ov V)=V(\rhat_{Ad(\s)}-\rhat_{so(V),Ad(\s)})\oplus 
V(s_{\a_n}(\rhat_{Ad(\s)}-\rhat_{so(V),Ad(\s)})).
\end{equation}
(see \eqref{rhos}, \eqref{rhoa} for notation). Here we used the fact that 
$s_{\a_n}(\rhat_{so(V),Ad(\s)})=\rhat_{so(V),Ad(\s)}$.

If instead $\det\s=-1$ then $\dim V$ is odd and $Ad(\s)$ is of inner 
type for $so(V)$ but not for $so(\tilde V)$.  Let, as above,  
$\h_{so(V)}$ be a Cartan subalgebra of $so(V)$ fixed pointwise by 
$\s$. Then there is an element $h$ of $\h_{so(V)}$ such that 
$Ad(\s)=Ad(\s_0)e^{2\pi i ad(h)}$. 
Arguing as in the $\det\s=1$ case, we find that the index of the Weyl 
group of $\widehat L(so(V),Ad(\s))$ in the Weyl group of $\widehat 
L(so(\tilde V),Ad(\s))$ equals the index of the Weyl group of 
$\widehat L(so(V),I)$ in the Weyl group of $\widehat L(so(\tilde 
V),Ad(\s_0))$ and that the set of minimal length representatives is 
$\{I,s_{\a_0}\}$.
Hence, in this case 
\begin{equation}\label{decfermionicouter}
F^{\bar\tau}(\ov V)=V(\rhat_{Ad(\s)}-\rhat_{so(V),Ad(\s)})\oplus 
V(s_{\a_0}(\rhat_{Ad(\s)}-\rhat_{so(V),Ad(\s)})).
\end{equation}
On the algebra $Cl(L(\ov V,\bar\tau))$ there is a unique  involutive 
automorphism such that  $x\mapsto-x$ for $x\in L(\ov V,\s)$.  Then, 
denoting by $Cl(L(\ov V,\bar\tau))^\pm$ the $\pm1$ eigenspace  for 
this
automorphism, we can write $$Cl(L(\ov V,\bar\tau))=Cl(L(\ov 
V,\bar\tau))^+\oplus Cl(L(\ov V,\bar\tau))^-.$$ 
Recall from \S~\ref{twisted fermionic} that
$$F^{\bar\tau}(\ov V)=Cl(L(\ov V,\bar\tau))/Cl(L(\ov 
V,\bar\tau))L^+(\ov V,\bar\tau).$$
It follows
that 
$$
F^{\bar\tau}(\ov V)=F^{\bar\tau}(\ov V)^+\oplus F^{\bar\tau}(\ov V)^-,
$$
where
$F^{\bar\tau}(\ov V)^\pm=Cl(L(\ov V,\bar\tau))^\pm/(Cl(L(\ov 
V,\bar\tau))L^+(\ov V,\bar\tau)\cap
Cl(L(\ov V,\bar\tau))^\pm$. Moreover $Cl(L(\ov V,\bar\tau))^+$ acts 
naturally on
$F^{\bar\tau}(\ov V)^\pm$.  

Set $\theta_{so(V)}(X)=\half\sum :\ov{X(b_i)}\bar b^i:$, where 
$\{b_i\}$, $\{b^i\}$ are  bases of $V$ dual to each other.
We note that the action of $X_r\in\widehat L(so(V),Ad(\s))$ is given by 
$(\theta_{so(V)}(X))^{F^{\bar\tau}(\ov V)}_{r}$. It follows that 
$F^{\bar\tau}(\ov V)^\pm$ are stable under the action of $\widehat 
L(so(V),Ad(\s))$. Thus, by the decompositions 
\eqref{decfermionicinner}, \eqref{decfermionicouter}, we obtain that 
$F^{\bar\tau}(\ov V)^\pm$ are both irreducible $\widehat 
L(so(V),Ad(\s))$-modules whose highest weights are switched by an 
involution $s$ of the Dynkin diagram
of $\widehat L(so(V),Ad(\s))$.

In particular, if $(\g,\aa)$ is any reductive pair and $\s$ is an automorphism such that $\rank\, \aa^{\bar0}=\rank\, \g^{\bar 0}$, we can apply the 
above discussion to $F^{\bar\tau}(\bar\p)$, turning it into a $\widehat 
L(so(\p),Ad(\s))$-module. Note that we can see $
L'(\aa,\s)$ as a subalgebra of $
L'(so(\p),Ad(\s))$ by embedding $\aa$ in $so(\p)$ via $ad_{\p}$ and 
that the action of $
L'(\aa,\s)$ on $F^{\bar\tau}(\bar\p)$ is just the restriction of the 
action of  $L'(so(\p),Ad(\s))$.
Since, by Wick's formula,
$$
[\theta_{so(\p)}(X)_\l L^{\ov\p} ]=-\l \theta_{so(V)}(X),
$$
letting $d_{so(V)}$ act as $(L^{\ov\p})^{\bar 
\tau}_{0}-(z(\g,\si)-z(\sa,\si)-\tfrac{1}{16}\dim\p)I$, we can 
extend this action to $\widehat L(so(\p),Ad(\s))$ in such a way that the 
action of $d_{so(\p)}$ equals the action of $d_\aa$. 
\vskip10pt
Now we observe that in our setting the so-called ``homogeneous 
Weyl-Kac" formula holds
(recall that $\aa^{\ov 0}$ is assumed to have the same rank of $\g$).
Indeed, in the (completed)
representation ring of
$\widehat L(\aa,\s)$, we have
\begin{equation}\label{hwk}
L(\L)\otimes F^{\bar \tau}(\bar \p)^+-L(\L)\otimes F^{\bar 
\tau}(\bar\p)^-=
\sum_{w\in \widehat 
W'}(-1)^{\ell(w)}V(\varphi^*_\sa(w(\L+\rhat_\si))-\rhat_{\sa\si}).
\end{equation}
This relation can be proved exactly as in the finite dimensional case 
(or affine $\sigma=I$ case, 
see \cite[Theorem 4]{land}), using Lemma~\ref{pesispin} to evaluate
$F^{\ov\tau}(\ov\p)^+- F^{\ov\tau}(\ov\p)^-$.\par

\begin{rem}\label{nominimal}
If $\mu\in\ha_\aa^*$ is dominant integral for $\Dap_\aa$ and $w\in\Wa_\aa$, we set $V(w(\mu+\rhat_{\aa,\s})-\rhat_{\aa,\s})=(-1)^{\ell(w)}V(\mu)$. Then, with this definition, we can rewrite \eqref{hwk} as
\begin{equation}\label{hwknominimal}
L(\L)\otimes F^{\bar \tau}(\bar \p)^+-L(\L)\otimes F^{\bar 
\tau}(\bar\p)^-=
\sum_{x\in \Wa_\aa\backslash\Wa}(-1)^{\ell(w_x)}V(\varphi^*_\sa(w_x(\L+\rhat_\si))-\rhat_{\sa\si}),
\end{equation}
where $\Wa_\aa\backslash\Wa$ is any set of right coset representatives and $w_x$ is any element from the coset  $x$.
\end{rem}
Following \cite[Ch. 13]{Kac} or  \cite[(2.2.1)]{KacW}, we 
recall the asymptotics of the character
of an integrable highest weight module $V$ over the affine algebra 
$\widehat L(\aa,\s)$, where $\aa$ is a
simple or abelian Lie algebra. Recall that  the series
$$ch_V(\tau,h)=tr_V e^{2\pi i (-\tau d_\aa+h)}$$
converges to an analytic function of the complex variable $\tau$, if 
$Im\,\tau>0$, for each $h\in\h_\aa$.
The asymptotics of this function, as $\tau\downarrow 0$ (i.e. $\tau=it,\,t\in \R^+,t\to 0$), is as 
follows:
\begin{equation}\label{cinquenove}ch_V(\tau,h)\approx 
a(\L)e^{\frac{\pi i c(k)}{12\tau}},\end{equation}
where $V=V(\L)$ is a highest weight module with highest weight $\L$,
  $c(k)$ is the conformal anomaly (= Sugawara central charge,
see \cite[(1.4.2)]{KacW}), which depends only on the level $k=\L(K)$ 
of $V$, and 
\begin{equation}\label{cinquedieci} a(\L)=b(k)\prod_{\a\in
R^+} \sin\pi\frac{(\bar\L+\rho_\aa,\a)}{k+g_\aa}.\end{equation}

Here $b(k)$ is a positive constant, depending only on $k$
(one can find in \cite{Kac} a simple formula for $b(k)$, which is unimportant
for the present paper) and  $R^+$ denotes the set of positive
roots (resp. coroots) of $\aa$ if $\widehat L(\aa,\s)$ is of type 
$X_n^{(1)}$ or $A_{2n}^{(2)}$ (resp. all
other types). Moreover,  $\rho_\aa=\half\sum\limits_{\a\in R^+}\a$ and
$g_\aa$ is  half of the value of the Casimir operator  on $\aa$.\par Now
let
$\aa$ be a reductive Lie algebra and let  $\aa=\bigoplus\limits_{j=0}^s
\aa_j$ be the decomposition of $\aa$  in the direct sum of an abelian Lie
algebra $\aa_0$ and simple components $\aa_j$, $j \geq 1$. Let $V$ be an
integrable $\widehat  L(\aa,\sigma)$-module form the category 
$\mathcal O$ of level $k=(k_0,\ldots,k_s)$ (i.e., $K_j\in\widehat 
L(\aa_j,\sigma)$ acts on $V$ via the scalar
$k_j$). Motivated by the above discussion, we define the {\it 
asymptotic dimension} of $V$ by
\begin{equation}\label{dimensioneasintotica}
\asdim(V)=\lim_{\tau\downarrow 0}e^{-\frac{\pi 
i}{12\tau}\sum\limits_{j=0}^s
c(k_j)}ch_V(\tau,h).\end{equation} If $V$ is irreducible, then it is 
an outer tensor product of irreducible $\widehat
L(\aa_j,\sigma)$-modules with highest weights $\L^j$ of level 
$k_j,\,j=0,\ldots,s,$ and it follows from \eqref{cinquenove} that 
$$\asdim(V)=\prod_{j=1}^s a(\L^j),$$
where $a(\L)$ is given by \eqref{cinquedieci}. We stipulate that $\asdim(V)=1$ if $s=0$, i.e. if $\aa$ is abelian.\par

\begin{rem}\label{extendasdim}
 Note that the asymptotic dimension is a positive
real number, which has all properties of the usual dimension. Namely, 
the asymptotic dimension
of the tensor product of modules equals the product of asymptotic 
dimensions of the factors, and the
asymptotic dimension of a finite direct sum of modules of the same 
level equals the sum of asymptotic
dimensions of summands. In particular we can extend the asymptotic dimension by linearity to the subring of the representation ring of $\widehat L(\aa,\s)$ generated by the integrable highest weight modules. 
\end{rem} 
\vskip5pt
Note that $\widehat W_\aa$ has finite index in $\Wa$ iff $\aa^{\ov 
0}$ is a semisimple equal rank Lie subalgebra
of $\g^{\ov 0}$.  To deal with the reductive case we need some preliminaries and more notation.
Let  $M\subset \h_0$
be  the lattice which indexes  the translations in the Weyl group of $\widehat L(\g,\s)$ (see
\cite[(6.5.8)]{Kac})  and set $M_0=\aa^{\ov 0}_0\cap M$ (recall that $\aa^{\ov 0}_0$ is the center of
$\aa^{\ov 0}$). Let $P_0$ be the lattice in $\aa^{\ov 0}_0$ dual to $M_0$ and let $L_0$ the lattice
corresponding to $M_0$ in $\h_0^*$ under the identification induced by the bilinear form. Let $T_{L_0}=\{t_\a\mid\a\in L_0\}$ be the set
of translations by elements of $L_0$ (see \cite[(6.5.2)]{Kac}).  Let $\Wa'_{fin}$ be a set of representatives for the right cosets of $T_{L_0}\times \Wa_\aa$ in $\Wa$. Note that in the semisimple case $\Wa'_{fin}$ is a set of 
right coset representatives for $\Wa_\aa$ in $\Wa$.

 Set 
$$\ha_c^*=(\sa_0^{\ov
0})^*\oplus\C\L^0_0,\quad\ha_{ss}^*=(\h_0\cap
\sum_{S>0}\aa_S)^*\oplus\sum_{S>0}\C\L_0^S,$$
so that any $\l\in\ha_\sa^*$ can be uniquely written as
$\l=\l_c+\l_{ss}+a\d_\aa,\,\l_c\in\ha_c^*,\l_{ss}\in\ha_{ss}^*,a\in\C$.
Set  $r=\dim\aa^{\ov 0}_0$.

\begin{prop}\label{asdim} If $\aa^{\bar0}$ is a reductive 
equal rank subalgebra of
$\g^{\bar0}$ such that $\aa^{\ov 0}_0=Span_{\C}M_0$, then $\Wa'_{fin}$ is finite and 
 $$
\sum_{w\in \widehat
W'_{fin}}(-1)^{\ell(w)}\asdim(V(\varphi^*_\sa(w(\L+\rhat_\si))-\rhat_{\sa\si}))=0.
$$
(here $\asdim(V(\varphi^*_\sa(w(\L+\rhat_\si))-\rhat_{\sa\si}))$ is defined as in Remark~\ref{extendasdim}).
\end{prop}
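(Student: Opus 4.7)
The plan is to apply the homogeneous Weyl--Kac formula \eqref{hwknominimal} as an identity of $\widehat L(\aa,\si)$--characters, and extract the leading asymptotic as $\tau\downarrow 0$ at $h=0$ from both sides. The crucial input is that $F^{\ov\tau}(\ov\p)^+$ and $F^{\ov\tau}(\ov\p)^-$, viewed as irreducible $\widehat L(so(\p),Ad(\si))$--modules, have highest weights exchanged by an involution of the Dynkin diagram; by the formula \eqref{cinquedieci} they have equal asymptotic dimension.

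First I would verify $|\Wa'_{fin}|<\infty$. The equal--rank hypothesis makes the finite horizontal Weyl group of $\aa^{\ov 0}$ of finite index in that of $\g^{\ov 0}$, while the condition $\aa_0^{\ov 0}=\mathrm{Span}_\C M_0$ gives that $L_0$ has full rank in $\aa_0^{\ov 0}$; standard arguments on translation sublattices of affine Weyl groups then imply $[\Wa:T_{L_0}\Wa_\aa]<\infty$.

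Next I would regroup the RHS of \eqref{hwknominimal} into $T_{L_0}$--orbits of cosets of $\Wa_\aa$ in $\Wa$, labelled by $\Wa'_{fin}$. Since translations lie in the kernel of the sign character of the Coxeter group, $(-1)^{\ell(tw)}=(-1)^{\ell(w)}$ for every $t\in T_{L_0}$, and since $t$ shifts $\varphi^*_\sa(w(\L+\rhat_\si))$ only along the center $\aa_0^{\ov 0}$, the character of the orbit of $w$, evaluated at $h=0$, factors as $\Theta_w(\tau)\cdot\chi^{\mathrm{ss}}_w(\tau)$. Here $\Theta_w$ is a lattice theta function of $L_0$ attached to the coset of $\varphi^*_\sa(w(\L+\rhat_\si))|_{\aa_0^{\ov 0}}$ modulo $L_0$, while $\chi^{\mathrm{ss}}_w(\tau)\sim \asdim\bigl(V(\varphi^*_\sa(w(\L+\rhat_\si))-\rhat_{\sa\si})\bigr)\,e^{\pi i c_\aa/(12\tau)}$ by the standard asymptotics \eqref{cinquenove}--\eqref{cinquedieci}. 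By Poisson summation one finds $\Theta_w(\tau)\sim C\,\tau^{-r/2}$ with $C$ independent of $w$: only the trivial class in $L_0^\ast/L_0$ dominates the Poisson dual as $\tau\downarrow 0$, which kills all $w$--dependent phases.

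Hence the RHS of \eqref{hwknominimal} has leading asymptotic
$$C\,\tau^{-r/2}\,e^{\pi i c_\aa/(12\tau)}\!\!\!\sum_{w\in\Wa'_{fin}}\!\!\!(-1)^{\ell(w)}\asdim\bigl(V(\varphi^*_\sa(w(\L+\rhat_\si))-\rhat_{\sa\si})\bigr).$$
This must equal the corresponding leading asymptotic of the LHS, which vanishes because, by the Dynkin--diagram symmetry recalled above, the characters $\mathrm{ch}\bigl(L(\L)\otimes F^{\ov\tau}(\ov\p)^{\pm}\bigr)$ share the same leading $\tau\downarrow 0$ coefficient. Since $C\ne 0$, the alternating sum must vanish. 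The main obstacle is the detailed handling of the theta function asymptotics: one must verify that the Poisson--dual leading coefficient $C$ is genuinely $w$--independent and that the Virasoro central charges of the two sides of \eqref{hwknominimal} agree, so that the vanishing of the LHS leading coefficient translates directly into the desired identity.
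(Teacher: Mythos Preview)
Your proposal is correct and follows essentially the same strategy as the paper: reorganize the right-hand side of \eqref{hwknominimal} over $T_{L_0}$-orbits, recognize the abelian factor as a theta function whose $\tau\downarrow 0$ asymptotics (the paper cites \cite[(2.4.3)]{KW} rather than spelling out Poisson summation) are $w$-independent, and conclude from the vanishing of the leading term on the left. The one small difference is in how you kill the left-hand side: you argue directly that the Dynkin involution forces $a(\L^+)=a(\L^-)$ via \eqref{cinquedieci}, whereas the paper instead evaluates $ch^+-ch^-$ at the specific point $d_{so(\p)}+h$ fixed by the diagram involution (where the difference is identically zero), and then invokes $h$-independence of the limit. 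Both routes give $\asdim(F^+)=\asdim(F^-)$; yours is a bit more direct. Your worry about matching central charges is unnecessary: \eqref{hwknominimal} is an exact identity of $\widehat L(\aa,\si)$-characters at a fixed common level, so both sides carry the same $e^{\pi i c_\aa/(12\tau)}$ and no separate matching is needed.
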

\begin{proof} Let $M_\aa$ 
be  the lattice which indexes  the translations in the Weyl group of $\widehat L(\aa,\s)$. Since $\h_0\subset\aa$,   $\rank( M_0\oplus M_\aa)=\rank\, \aa=\dim \h_0=\rank\, M$, hence $T_{L_0}\times\Wa_\sa$ has finite index in $\Wa$. This proves the first claim.

Let $\L^\pm$
be the highest weights of 
$F^{\bar\tau}(\bar\p)^\pm$ and let $s$ be the 
involution of the Dynkin diagram of $\widehat L(so(\p),Ad(\s))$ such 
that $s\L^+=\L^-$. 
Let  $h$ be an element in a Cartan subalgebra $\h_{so(\p)}$ of $so(\p)$  such 
that $d_{so(\p)}+h$ is the unique element
of $\C d+\h_{so(\p)}$ such that $\gamma(d_{so(\p)}+h)=1$ for any 
simple root $\gamma$ of $\widehat
L(so(\p),Ad(\s))$.  Let $ch^\pm$ be the character of  
$F^{\bar\tau}(\bar\p)^\pm$ as $\widehat
L(so(\p),Ad(\s))$-modules. Since $s(d_{so(\p)}+h)=d_{so(\p)}+h$ we 
see that 
$$
(ch^+-ch^-)(\tau,h)=0.
$$
 In particular we see that 
$$\lim_{\tau\downarrow0}(ch^+-ch^-)(\tau,h)=0.
$$
This limit is independent of $h$,  thus $(ch^+-ch^-)(\tau,0)\approx 0$ 
as $\tau\downarrow 0$.
Since $d_{so(\p)}$ acts as $d_\aa$ we get 
for the $\widehat L(\aa,\s)$-modules:
\begin{equation}\label{passaggio}
(ch_{L(\L)\otimes F^{\bar \tau}(\bar \p)^+}-ch_{L(\L)\otimes F^{\bar 
\tau}(\bar\p)^-})
(\tau,0)\approx0,
\end{equation}
as $\tau\downarrow0$. \par

Define $m_w=w(\L+\rhat_\si)(d)$ and $\L^w=w(\L+\rhat_\si)$. 
  Observe
that, for $\a\in L_0$ there exists  constants $c_1,c_2$ such that 
$t_\a(\l)=\l+c_1\l(K)\a-c_2((\l,K)+\half|\a|^2)\d$. Hence  we have
\begin{align*}\varphi^*_\sa(t_\a \L^w)&=
\varphi^*_\sa(\L^w)_c+c_1(k+g)\a-c_2
((\varphi^*_\sa(\L^w)_c,\a)+\half|\a|^2)\d_\aa
\\&+\varphi^*_\sa(\L^w)_{ss}+m_w\d_\aa.\end{align*}
Setting, for $\mu\in\ha^*_c$,  $\dot{t}_\a(\mu)=
\mu+c_1(k+g)\a-c_2((\mu,\a)+\half|\a|^2)\d_\aa$, 
we can write

$$ch(V(\varphi^*_\sa(t_\a \L^w)-\rhat_{\sa\si})=
\frac{e^{\dot{t}_\a(\varphi^*_\sa(\L^w)_c))}}{\eta(\tau)^r} ch
V(\varphi^*_\sa(\L^w)_{ss}+m_w\d_\aa-\rhat_{\aa\s}),
$$
$\eta$ being the Dedekind $\eta$-function. 

Since $T_{L_0}\Wa'_{fin}$ is a set of coset representatives for $\Wa_\aa$ in $\Wa$ and observing that multiplying any element $w\in\Wa$ by a translation does not change
the parity of $\ell(w)$, we can rewrite 
\eqref{hwknominimal} as
\begin{align*}
&ch(L(\L)\otimes F^{\bar \tau}(\bar \p)^+)-ch(L(\L)\otimes F^{\bar 
\tau}(\bar\p)^-)=\\
&\sum_{w\in \widehat 
W'_{fin}}\sum_{\a\in L_0} (-1)^{\ell(t_\a w)}ch(V(\varphi^*_\sa(t_\a \L^w)-\rhat_{\sa\si}))=\\
&\sum_{w\in \widehat 
W'_{fin}} (-1)^{\ell(w)}\frac{\Theta(\L^w_c)}{\eta(\tau)^r} ch(V(\varphi^*_\sa(\L^w)_{ss}-\rhat_{\sa\si}+m_w\d_\sa))
\end{align*}
where $\Theta(\mu)=\sum\limits_{\a\in L_0}e^{\dot{t}_\a(\mu)}$. Hence, by the asympotics 
given in \cite[(2.4.3)]{KW} and \eqref{passaggio}, we find that, taking the limit as $\tau\downarrow 0$, 
$$0=\sum_{w\in\Wa'_{fin}}(-1)^{\ell(w)}|P_0/L_0|^{-\half}(k+g)^{-\frac{r}{2}}
\asdim(V(\varphi^*_\sa(\L^w)_{ss}+m_w\d_\sa-\rhat_{\sa\si}).$$
Since, by definition, $\asdim(V(\varphi^*_\sa(\L^w)_{ss}+m_w\d_\sa-\rhat_{\sa\si}))=
\asdim(V(\varphi^*_\sa(\L^w)-\rhat_{\sa\si}))$, simplifying the constant  we are done.
\end{proof}

Turning to the finite dimensional case, if  $\aa$ is an equal rank 
reductive  subalgebra
 of a finite dimensional reductive Lie algebra $\g$, then  
\eqref{altdim} can be extended to $q$-dimensions. If $V$ is a finite
dimensional representation of $\aa$ and $ch(V)$ is its character, 
then the $q$-dimension of $V$ is 
\begin{equation}\label{qdimred}\dim_q
V=ch(V)(e^{r^\vee_\aa}),\end{equation}
where $r^\vee_\aa$ is an element of $\h$ such that $\a(r^\vee_\aa)=1$ for any simple 
root $\a$ of $\aa$. Note that $r^\vee_\aa$ coincides with
$\rho^\vee_\aa$ (the half  sum of the positive coroots of 
$\aa$) if $\aa$ is semisimple. In this case formula \eqref{qdimensione} gives 
an explicit expression for $\dim_q V$.
For a general reductive $\aa$ the element $r^\vee_\aa$ is not unique and the definition of $q$-dimension depends on this choice. We shall choose $r^\vee_\aa$ as in the following lemma.
\begin{lemma}\label{tecnico} Let $\g$ be a simple Lie algebra  and let
$\aa$ be a reductive equal rank subalgebra of $\g$. Let $\h$ be a common Cartan subalgebra and
let $\D$ (resp $\D_\aa$) be the set of roots of $\g$ (resp. $\aa$). 
Then there exists an element $r\in\h$, for which $\a(r)=1$ for each  simple root $\a$
of $\aa$ and $\beta(r)\in\ganz$ for some $\beta\in \D\setminus \D_\aa$.

\end{lemma}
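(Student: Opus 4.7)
The plan is to distinguish two cases according to whether the toral centre $\aa_0 := Z(\aa)\cap\h$ is trivial. The condition $\a(r)=1$ for all $\a$ simple in $\aa$ determines the component of $r$ in $\aa^{ss}\cap\h$ uniquely (as $\rho^\vee_{\aa^{ss}}$) but leaves the $\aa_0$-component of $r$ free, so the two cases behave very differently.

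In the non-semisimple case $\aa_0\ne 0$: since $\g$ is simple and $Z(\g)=0$, the subspace $\aa_0$ cannot be central in $\g$, hence there exists $\beta\in\D$ with $\beta|_{\aa_0}\ne 0$. Any such $\beta$ lies automatically in $\D\setminus\D_\aa$, because every root of $\aa$ annihilates its centre. One then selects $c\in\aa_0$ so that $\beta(c)+\beta(\rho^\vee_{\aa^{ss}})\in\ganz$, which is possible because $\beta|_{\aa_0}$ is a non-zero linear functional on $\aa_0$, and sets $r:=\rho^\vee_{\aa^{ss}}+c$.

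In the semisimple case $\aa_0=0$, $r$ is forced to be $\rho^\vee_\aa$, and the analysis is more delicate. I would first reduce to the case that $\aa$ is a \emph{maximal} equal-rank semisimple subalgebra of $\g$ and then invoke Borel--de Siebenthal: the simple roots of $\aa$ may be taken as $(\Pi\cup\{-\theta\})\setminus\{\a_i\}$ for some $i$, with $m_i$ the mark of $\a_i$ in the highest root $\theta$. Using $\a_j(\rho^\vee_\aa)=1$ for $j\ne i$ together with $\theta(\rho^\vee_\aa)=-1$, a direct calculation yields, for any $\beta=\sum_j n_j\a_j\in\D$,
\[
\beta(\rho^\vee_\aa)=\mathrm{ht}(\beta)-\frac{n_i\,h}{m_i},
\]
where $h$ is the Coxeter number of $\g$. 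Because $\D_\aa$ consists precisely of those roots whose coefficient $n_i$ lies in the congruence class determined by the Borel--de Siebenthal sub-rootsystem, the task becomes exhibiting a root with $m_i\mid n_i h$ but $\beta\notin\D_\aa$. The case $m_i=1$ is automatic; for $m_i>1$ it reduces to the arithmetic fact $\gcd(m_i,h)>1$, which holds for every mark of every simple Lie algebra and can be verified using the classification of affine Dynkin diagrams together with $h=\sum_j m_j$ (sum over the extended Dynkin diagram).

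The hard parts will be, on one hand, the reduction from arbitrary semisimple equal-rank $\aa$ to the Borel--de Siebenthal maximal case — since $\rho^\vee_\aa$ changes as we enlarge $\aa$, this cannot be done by a naive ``pass to a larger subalgebra'' argument and requires tracking $\beta(\rho^\vee_\aa)$ through the chain — and, on the other hand, the number-theoretic input $\gcd(m_i,h)>1$, which I expect to require case-by-case inspection of the simple Lie algebras rather than a single conceptual argument.
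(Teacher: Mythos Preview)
Your approach is essentially the paper's. Both split into the same two cases, handle the non-semisimple case by adjusting along the centre (your argument is the paper's, just spelled out), and in the semisimple case reduce to a maximal $\aa$ via Borel--de Siebenthal and then invoke a divisibility relation between the removed mark and the Coxeter number.

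Two remarks on where you are making your life harder than necessary. First, the paper takes the simplest possible $\beta$: the removed simple root $\a_{i_0}$ itself, for which your general formula specialises (with $n_{i_0}=1$) to $\a_{i_0}(\rho^\vee_\aa)=1-h/m_{i_0}$. Since $m_{i_0}$ is \emph{prime} in the maximal semisimple case, your condition $\gcd(m_{i_0},h)>1$ is equivalent to $m_{i_0}\mid h$, and this is exactly what the paper asserts and what one checks type by type. Second, the reduction to the maximal case that you flag as hard is in fact straightforward, and your concern that $\rho^\vee_\aa$ changes is misplaced: one inducts on $\dim\g$, not by enlarging $\aa$ but by shrinking $\g$. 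If $\aa\subsetneq\aa_1\subset\g$ with $\aa_1$ maximal semisimple, decompose $\aa_1=\prod_j\aa_1^{(j)}$ into simple factors and correspondingly $\aa=\prod_j\aa^{(j)}$ (each $\aa^{(j)}$ is semisimple since it is an ideal of the semisimple $\aa$). Pick $j$ with $\aa^{(j)}\subsetneq\aa_1^{(j)}$ and apply the inductive hypothesis to that pair. The root $\beta\in\D_{\aa_1^{(j)}}\setminus\D_{\aa^{(j)}}\subset\D\setminus\D_\aa$ you obtain satisfies $\beta(\rho^\vee_\aa)=\beta(\rho^\vee_{\aa^{(j)}})\in\ganz$, because $\beta$ vanishes on the Cartan pieces of the other factors.
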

\begin{proof} Assume $\aa$ is semisimple. Then we have to show that 
$\a(\rho^\vee_\aa)\in\ganz$ for some $\a$ in $\D\setminus \D_\aa$. 
Let $\a_1,\ldots,\a_n$ be a set of simple roots for 
$\D$ and let
$\theta=\sum_{i=1}^n a_i\a_i$ be the highest root. Fix an index 
$i_0,\,1\leq i_0\leq n$;
then the set $\{-\th,\a_i,\ldots,\a_{i_0-1},$ 
$\a_{i_0+1},\ldots,\a_n\}$ is a set of simple roots
of a semisimple subalgebra $\aa^{i_0}$ of $\g$. It is a Theorem  of 
Dynkin and Borel --  de Siebenthal that any semisimple $\aa$ can be 
obtained in this way by repeating the procedure several times, and 
the maximal (equal rank)  subalgebras are exactly
the $\aa^{i_0}$ 
with $a_{i_0}$  a prime number. We may assume that $\aa=\aa^{i_0}$ 
with $a_{i_0}$ prime. Then we have
$$\rho^\vee_\aa(\a_{i_0})=1-\frac{h}{a_{i_0}},$$
where  $h= 1+\sum_{i=1}^n a_i$ is the Coxeter number. Since  $a_{i_0}$ 
always divides $h$ if it is prime, we can take 
$\a=\a_{i_0}$.\par
If $\aa$ is reductive, start from $\rho_\aa^\vee+th$ with $t\in\C$ and $h$ in the center of $\aa$, and choose $t$ is such a way that  $\rho_\aa^\vee+th$ is integer valued on some root in 
$\D\setminus \D_\aa$.\end{proof}
Now we can prove the following
\begin{prop}\label{qdimens} Let $L$ be a finite dimensional irreducible
module with\break  highest weight $\lambda$ over a reductive Lie algebra $\g$.
Let $\aa$ be a reductive subalgebra\break  of $\g$, and $r=r_\aa^\vee$
be as in Lemma \ref{tecnico}. Let $V_w$ denote the irreducible $\aa$-module
with highest weight $ w(\lambda + \rho_\g)-\rho_\aa$, where $w \in W'$
and $W'$ is as in \eqref {in1}. Then 
\begin{equation}\label{altqdim}
\sum_{w\in  W'} (-1)^{\ell(w)}\dim_qV_w=0.
\end{equation}
\end{prop}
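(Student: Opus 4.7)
The plan is to deduce the identity by taking $\aa$-characters of the homogeneous Weyl formula \eqref{in1} and specializing at $e^{r_\aa^\vee}$.

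First, I would pass \eqref{in1} to characters. Under the equal-rank assumption $\rank\,\aa=\rank\,\g$, the $\h$-weights of $\p$ are precisely the elements of $\Delta\setminus\Delta_\aa$, occurring in pairs $\{\alpha,-\alpha\}$, so the standard half-spin computation for $so(\p)$ restricted to $\aa$ gives
\[
\mathrm{ch}\,F^+-\mathrm{ch}\,F^- \;=\; \prod_{\alpha\in\Delta^+\setminus\Delta_\aa^+}\bigl(e^{\alpha/2}-e^{-\alpha/2}\bigr).
\]
Consequently \eqref{in1} becomes the $\aa$-character identity
\[
\mathrm{ch}\,V(\lambda)\cdot\prod_{\alpha\in\Delta^+\setminus\Delta_\aa^+}\bigl(e^{\alpha/2}-e^{-\alpha/2}\bigr) \;=\; \sum_{w\in W'}(-1)^{\ell(w)}\mathrm{ch}\,V_w.
\]

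Second, I would apply the evaluation $\mathrm{ch}(\cdot)\mapsto\mathrm{ch}(\cdot)(e^r)$ with $r=r_\aa^\vee$ provided by Lemma~\ref{tecnico}; by the very definition \eqref{qdimred} of $\dim_q$ for reductive $\aa$ this yields
\[
\dim_q V(\lambda)\cdot\prod_{\alpha\in\Delta^+\setminus\Delta_\aa^+}\bigl(e^{\alpha(r)/2}-e^{-\alpha(r)/2}\bigr) \;=\; \sum_{w\in W'}(-1)^{\ell(w)}\dim_q V_w.
\]

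Third, I would establish \eqref{altqdim} by showing the product on the left vanishes. For $\aa=\aa^{i_0}$ semisimple maximal with $a_{i_0}$ prime and $r=\rho_\aa^\vee$, one computes $\alpha(r)=\mathrm{ht}(\alpha)-n_{i_0}\,h/a_{i_0}$ for any positive root $\alpha=\sum_k n_k\alpha_k$ of $\g$, using the identity $\alpha_{i_0}(\rho_\aa^\vee)=1-h/a_{i_0}$ from the proof of Lemma~\ref{tecnico}. Since every integer in $\{1,\ldots,h-1\}$ is the height of some positive root of $\g$, one can select $\alpha$ with $n_{i_0}=1$ and $\mathrm{ht}(\alpha)=h/a_{i_0}$; such $\alpha$ lies in $\Delta^+\setminus\Delta_\aa^+$ and satisfies $\alpha(r)=0$, so the corresponding factor kills the whole product. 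The general equal-rank reductive case reduces to the semisimple maximal one by iterating the Dynkin-Borel-de Siebenthal procedure and adjusting $r$ on the center of $\aa$ as in the proof of Lemma~\ref{tecnico}.

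The main obstacle is this last step, i.e., bootstrapping from the integrality assertion $\beta(r)\in\ganz$ furnished by Lemma~\ref{tecnico} to the genuine vanishing $\alpha(r)=0$ for some $\alpha\in\Delta^+\setminus\Delta_\aa^+$. The first two steps are formal and essentially follow from \eqref{in1}.
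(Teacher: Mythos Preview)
Your first two steps are correct and match the paper's argument exactly: one passes \eqref{in1} to characters, identifies $\mathrm{ch}\,F^+-\mathrm{ch}\,F^-$ with the product $\prod_{\alpha\in\Delta^+\setminus\Delta_\aa^+}(e^{\alpha/2}-e^{-\alpha/2})$, and evaluates at $e^{r}$. You are also right that the whole difficulty is upgrading the integrality $\beta(r)\in\ganz$ from Lemma~\ref{tecnico} to an actual vanishing $\alpha(r)=0$ for some $\alpha\in\Delta\setminus\Delta_\aa$.

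Your proposed resolution of this obstacle has two gaps. First, the inference ``every integer in $\{1,\dots,h-1\}$ is the height of some positive root, so one can select $\alpha$ with $n_{i_0}=1$ and $\mathrm{ht}(\alpha)=h/a_{i_0}$'' is a non sequitur: surjectivity of the height function says nothing about which roots of a given height have $\alpha_{i_0}$-coefficient equal to~$1$. Second, the reduction of the general reductive equal-rank case to the maximal semisimple one is not spelled out; the element $r$ is attached to $\aa$ itself, not to intermediate subalgebras in a Borel--de~Siebenthal chain, and your explicit formula $\alpha(r)=\mathrm{ht}(\alpha)-n_{i_0}h/a_{i_0}$ breaks down once $r$ is perturbed by a central element.

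The paper's argument avoids all of this with a one-line descent that works uniformly. Take $\alpha\in\Delta\setminus\Delta_\aa$ with $\alpha(r)\in\ganz$ from Lemma~\ref{tecnico}; replacing $\alpha$ by $-\alpha$ if necessary, assume $\alpha(r)>0$. Then some simple root $\beta$ of $\aa$ satisfies $(\alpha,\beta)>0$, so $\alpha-\beta$ is again a root; since $[\aa,\p]\subset\p$ it still lies in $\Delta\setminus\Delta_\aa$, and $(\alpha-\beta)(r)=\alpha(r)-1$ because $\beta(r)=1$. Iterating drops the integer value by one at each step until it hits~$0$. This is precisely the bootstrap you were looking for, and it uses only the defining property of $r$ on simple $\aa$-roots together with the reductive condition $[\aa,\p]\subset\p$, with no case analysis on the type of $\aa$.
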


\begin{proof}
Let $F^+$ and $F^-$ be the even and odd components of the spin 
representation of $\p$. (Recall that, since $\aa$ is equal rank in
$\g$, $\p$ is even dimensional). Then, by \eqref{in1},
 $$
 ch(L\otimes F^+)-ch(L\otimes F^-)=\sum_{w\in  W'} 
(-1)^{\ell(w)}ch(V_w),
 $$
hence, to prove our claim, we need only to check that 
$$(ch(F^+)-ch(F^-))(e^{r^\vee_\aa})=0.$$ Let $\D$ be the set of 
roots of $\g$ and 
choose a positive set of roots $\Dp$ for $\g$. Then  $\D$ is the 
disjoint union of $\D_\aa$ and $\D_\p$, where $\D_\aa$ (resp. 
$\D_\p$) is the set of roots in $\D$ such that $\g_\a\subset\aa$ 
(resp. $\g_\a\subset\p$). Let $\Dp_\p=\Dp\cap\D_\p$ and set 
$\p^\pm=\sum_{\pm \a\in\Dp_\p}\g_\a$.  If $\a\in\D_\p$, choose 
$x_\a\in\g_\a$  in such a way that $(x_\a,x_\be)=\d_{\a,-\beta}$. 
Define  $E_{\a,\be}\in End(\p)$  by 
$E_{\a,\be}(x_\gamma)=\d_{\a,\gamma}x_\be$. A Cartan subalgebra of 
$so(\p)$ is $\h_{so(\p)}=\sum_{\a\in\Dp_\p}(E_{\a,\a}-E_{-\a,-\a})$. 
Define
$\epsilon_\a\in(\h_{so(\p)})^*$  by 
$\epsilon_\a(E_{\be,\be}-E_{-\be,-\be})=\d_{\a,\be}$. The $so(\p)$ 
character of $F^+-F^-$ is 
$$
e^{\half\sum_{\a\in\Dp_\p}\epsilon_\a}\prod_{\a\in\Dp_\p}(1-e^{-\epsilon_\a}).
$$
The Cartan subalgebra $\h$ of $\g$ is a Cartan subalgebra of $\aa$, 
hence it embeds in $so(\p)$ via $ad_{|\p}$. It follows that $h\in\h$ 
embeds as $\sum_{\a\in\Dp_\p}\a(h)(E_{\a,\a}-E_{-\a,-\a})$. In 
particular, to check that 
$ch(F^+-F^-)(e^{r^\vee_\aa})=0$, it is enough to find a root 
$\a\in\D_\p$ such that $\a(r^\vee_\aa)=0$. 
This root can be found as follows. By Lemma \ref{tecnico} we have that 
$\a(r^\vee_\aa)\in\ganz$ for some $\a\in\D_\p$. We may assume that 
$\a(r^\vee_\aa)>0$. Then there is a simple root $\be$ in $\Dp_\aa$ 
such that $(\a,\be)>0$, hence $\a-\be$ is a root.
Since the pair is reductive $[\aa,\p]\subset\p$, hence 
$\a-\be\in\D_\p$. Now $(\a-\be)(r^\vee_\aa)=\a(r^\vee_\aa)-1$.
If $(\a-\be)(r^\vee_\aa)=0$ we are done. Otherwise repeat the 
argument substituting $\a$ with
$\a-\be$.
\end{proof}

\section{Interlude: the very strange formula}
 We want to show that formula \eqref{masterrho} becomes the  ``very 
strange formula" (cf.
\cite[(13.15.4)]{Kac}) when $\si$ is an automorphism of order
$m$ (hence it affords a generalization of it, holding for infinite 
order automorphisms too). Let $\g$ be a
simple finite dimensional Lie algebra and $\h$ a Cartan subalgebra. 
Let $\Pi=\{\eta_1,\ldots,\eta_n\}$
be a set of simple roots for $\g$, $\Dp$ the associated subset of 
positive roots and $\rho$ the corresponding
half sum of positive roots. 
\begin{prop} Let $\kappa(\cdot,\cdot)$ denote the Killing form on $\g$
 and $\sigma$ be an automorphism of order $m$ of type 
$(s_0,s_1,\ldots,s_n;1)$ (\cite[Chapter 8]{Kac}). Let 
$\g=\oplus_{\bar j\in\ganz/m\ganz}\,\g^{\ov j}$ be the eigenspace 
decomposition with respect to $\s$. Define  $\l_s\in\h^*$
by $\kappa(\l_s,\eta_i)=\frac{s_i}{2m},\,1\leq i\leq n$. Then 
\begin{equation}\label{vsf} \kappa(\rho-\l_s,\rho-\l_s)=
\frac{\dim\g}{24}-\frac{1}{4m^2}\sum_{j=1}^{m-1}j(m-j)\dim \g^{\ov j}.
\end{equation}
\end{prop}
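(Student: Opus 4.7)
The plan is to derive \eqref{vsf} from Corollary~\ref{strange} by specializing to the trivial reductive pair $(\g,0)$, evaluating $z(\g,\si)$ explicitly in the finite order setting, and translating the resulting identity from the invariant form $(\cdot,\cdot)$ (normalized so that $Cas$ acts on $\g$ by $2g$) to the Killing form, using $\kappa=2g(\cdot,\cdot)$ on $\g$, equivalently $\kappa=\frac{1}{2g}(\cdot,\cdot)$ on $\h^*$. Setting $\aa=0$ in \eqref{masterrho} kills $\rho_{\aa\si}$ and the sum over $S$, giving $\|\rho_\si\|^2=\frac{g}{12}\dim\g-2gz(\g,\si)$, and after dividing by $2g$ this reads
\begin{equation*}
\kappa(\rho_\si,\rho_\si)=\frac{\dim\g}{24}-z(\g,\si).
\end{equation*}
Since the eigenvalues of $\sigma$ on $\g$ are $e^{2\pi ik/m}$, the definition \eqref{zg} specializes to $z(\g,\si)=\frac{1}{4m^2}\sum_{k=1}^{m-1}k(m-k)\dim\g^{\ov k}$, which reproduces the subtracted term of \eqref{vsf}.

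It remains to identify $\rho_\si$ with $\rho-\l_s$. Since the trailing $1$ in the type indicates that $\sigma$ is inner, I write $\sigma=\exp(2\pi i\,\ad h)$ with $h\in\h$ determined by $\eta_i(h)=s_i/m$ and chosen in the fundamental alcove, so that $\a(h)\in[0,1)$ for each $\a\in\Dp$. Splitting $\Dp$ according to the four ranges $\a(h)\in\{0\},(0,\tfrac12),\{\tfrac12\},(\tfrac12,1)$ and exploiting the symmetry $\rho_{\ov{1-j}}=-\rho_{\ov j}$ (which accounts for positive roots with $\a(h)>\tfrac12$, whose negatives contribute to $\rho_{\ov{1-\a(h)}}$), all four cases collapse to the single contribution $\a(h)\,\a$; hence
\begin{equation*}
\rho-\rho_\si=\sum_{\a\in\Dp}\a(h)\,\a.
\end{equation*}
Paired with any $x\in\h$, the right-hand side equals $\tfrac12\sum_{\a\in\D}\a(h)\a(x)=\tfrac12\kappa(h,x)=g(h,x)$ by the trace formula for $\kappa$. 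On the other hand $\kappa(\l_s,\eta_i)=s_i/(2m)=\eta_i(h)/2$ combined with $\kappa=\frac{1}{2g}(\cdot,\cdot)$ on $\h^*$ gives $(\l_s,\eta_i)=g\eta_i(h)$, so $\l_s$ corresponds to $gh\in\h$ under the $(\cdot,\cdot)$-isomorphism $\h\simeq\h^*$, and $\l_s(x)=g(h,x)$ as well. Therefore $\rho-\rho_\si=\l_s$, and substituting $\rho_\si=\rho-\l_s$ in the boxed identity of the first paragraph produces \eqref{vsf}.

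The main obstacle is the case analysis yielding $\rho-\rho_\si=\sum_{\a\in\Dp}\a(h)\,\a$: although elementary, it requires tracking precisely which positive roots contribute to $\rho_{\ov j}$ for $j\in(0,\tfrac12]$ versus those whose negatives do, and handling correctly the vanishing of the factor $1-2j$ at $j=\tfrac12$. The subsequent translation between $\l_s$ and $gh$ is routine but easy to botch through a misplaced factor of $2g$ if one is not meticulous about whether the normalization $\kappa=2g(\cdot,\cdot)$ is being read on $\h$ or on $\h^*$.
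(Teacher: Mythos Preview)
Your overall strategy matches the paper's: both specialize Corollary~\ref{strange} to $\aa=0$ and then identify $\rho_\si$ with $\rho-\l_s$. The difference lies in how that identification is carried out, and in the normalization. The paper takes the invariant form to \emph{be} the Killing form from the outset (so $g=\tfrac12$, avoiding your $2g$ bookkeeping), observes that $\{\beta_i=\tfrac{s_i}{m}\d+\eta_i\}_{0\le i\le n}$ is a set of simple roots for $\widehat L(\g,\si)$, and then reads off $\kappa(\rho_\si,\eta_i)$ directly from formula~\eqref{rhoalfai}, which had already been established via the Dirac operator machinery (Lemma~\ref{fazero} and Proposition~\ref{ker}). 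Your route instead unwinds the definition~\eqref{rhos} of $\rho_\si$ through a case analysis on $\a(h)$, arriving at $\rho-\rho_\si=\sum_{\a\in\Dp}\a(h)\,\a$, and then matches this with $\l_s$ via the trace description of~$\kappa$. This is more elementary in that it bypasses~\eqref{rhoalfai} entirely, at the cost of the case analysis you already flag as delicate.

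One small gap worth patching: your hypothesis $\a(h)\in[0,1)$ for all $\a\in\Dp$ fails when $s_0=0$, since then $\theta(h)=1$. The paper addresses this by taking $\Pi_0=\{\eta_i\mid s_i=0\}\cup\{-\theta\}$ in that case. With this choice of positive system for $\g^{\ov 0}$ your formula $\rho-\rho_\si=\sum_{\a\in\Dp}\a(h)\,\a$ is still correct (the extra $-\tfrac12\theta$ appearing in $\rho_0$ precisely accounts for the lone $\a(h)=1$ contribution), so the fix is immediate, but your argument as written does not cover it.
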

\begin{proof} Plug $\sa=0$ in formula \eqref{masterrho} and choose 
$\kappa(\cdot,\cdot)$ as 
invariant form on $\g$. Then we have $g=\half$ and the r.h.s. of 
\eqref{masterrho} becomes the r.h.s. of
\eqref{vsf}. Let  $\th$ denote the highest root of $\D$. We claim now 
that if we choose as simple roots in
$\g^{\ov 0}$ the set
$$
\Pi_0=\begin{cases}\{\eta_i\mid s_i=0\}\cup\{-\th\}\quad&\text{if 
$s_0=0$},\\
\{\eta_i\mid s_i=0\} \quad&\text{otherwise.}\end{cases}$$ then 
$\l_s=\rho-\rho_\si$.  To prove this we
use the following fact, which is not difficult to prove: if we put
$\beta_0=\frac{s_0}{m}\d-\theta,\,\beta_i=\frac{s_i}{m}\d+\eta_i$, 
then 
$\{\beta_0,\ldots,\beta_n\}$ is a set of simple roots for $\widehat 
L(\g,\sigma)$.  Note that
$\overline{\beta_i}=\eta_i$; using \eqref{rhoalfai} in our context  
we have
$$\kappa(\rho-\rho_\sigma,\eta_i)=\frac{\kappa(\eta_i,\eta_i)}{2}-\frac{\kappa(\eta_i,\eta_i)}{2}+\frac{s_i}{2m}$$
as desired.
\end{proof}
\section{Non-vanishing Dirac cohomology} 

If $M$ is a highest weight module of $\widehat L(\g,\s)$ and 
$N'=M\otimes F^{\bar\tau}(\bar\p)$, the Dirac cohomology of the 
affine Dirac operator is
\begin{equation}\label{Diraccohomology} H((G_{\g,\sa})_0^{N'})=Ker 
(G_{\g,\sa})_0^{N'}\slash (Im
(G_{\g,\sa})_0^{N'}\cap Ker (G_{\g,\sa})_0^{N'}).
\end{equation}
In this section we obtain a non-vanishing result for 
$H((G_{\g,\sa})_0^{N'})$ similar to Theorem
3.15 of \cite{K3}. Although the general strategy is parallel to 
Kostant's one, we obtain the crucial
result (Lemma \ref{fund})  using vertex algebra techniques. The 
setting is as in Section 4;  in particular,
when we consider the module $L(\L)\otimes F^{\ov\tau}(\ov\p)$,  the 
highest weight
$\L$ is  not assumed to be dominant integral. Note that if 
$\L$ is  dominant integral we have already determined the cohomology: 
in this case $N'=X=L(\L)\otimes F^{\bar\tau}(\bar\p)$ and 
$H((G_{\g,\sa})_0^X)=Ker (G_{\g,\sa})_0^X$. 
Indeed, by Proposition \ref{unitarity}, $Im
(G_{\g,\sa})_0^X\cap Ker (G_{\g,\sa})_0^X=0$. 
\vskip5pt 
Suppose that $M$ is a highest weight module for $\widehat 
L(\g,\si)$. Recall that
$\mathfrak n'_-=\mathfrak  n_-+\sum_{j<0}t^j\otimes\g^j\subset 
\widehat L(\g,\si)$. We set
$\overline{\mathfrak n'}_-=t^{-\half}\otimes\overline{\mathfrak 
n_-\cap\p}+\sum_{j<0}t^{j-\half}\otimes\overline{\g^j\cap\p}\subset 
L(\ov\p,\ov\tau)$. Set
$$ \mathcal M=\mathfrak n'_-M\otimes F^{\ov\tau}(\ov\p)+M\otimes 
\overline{\mathfrak
n'}_-F^{\ov\tau}(\ov\p).
$$
\begin{lemma}\label{fund}
$$ (G_{\g,\sa})^{N'}_0(\mathcal M)\subset \mathcal M.
$$
\end{lemma}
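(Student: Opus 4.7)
The plan is to verify the inclusion on each of the two summands of $\mathcal M$ separately, by a commutator calculation. For $x=t^r\otimes y\in\mathfrak n'_-$ (resp.\ $\ov y\in\ov{\mathfrak n'}_-$) and $w\in N'$, write
\[
(G_{\g,\sa})^{N'}_0\bigl((x^M_{(r)}\otimes I)w\bigr)=[(G_{\g,\sa})^{N'}_0,x^M_{(r)}\otimes I]w+(x^M_{(r)}\otimes I)(G_{\g,\sa})^{N'}_0 w,
\]
and analogously (using the super-commutator) in the fermionic case. The second summand automatically lies in $\pi_M(\mathfrak n'_-)M\otimes F^{\ov\tau}(\ov\p)$ (resp.\ $M\otimes \pi_F(\ov{\mathfrak n'}_-)F^{\ov\tau}(\ov\p)$), so matters reduce to showing that each commutator sends $N'$ into $\mathcal M$.

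To compute these, I would use Lemma~\ref{gsup} to view $G_{\g,\sa}\in V^k(\g)\otimes F(\ov\p)$ as $G_{\g,\sa}=\sum_i b_i\otimes\ov b_i+1\otimes G^{\text{cub}}$, where $G^{\text{cub}}=-\tfrac16\sum_{i,j}:\ov{[b_i,b_j]}_\p\ov b_i\ov b_j:$. For $y\otimes 1$, the $\lambda$-bracket with the cubic piece vanishes (different tensor factors), so $[y\otimes 1_\la G_{\g,\sa}]=\sum_i[y,b_i]\otimes\ov b_i+\la k\,(1\otimes\ov{y_\p})$, and \eqref{rep1} gives
\[
[x^M_{(r)}\otimes I,(G_{\g,\sa})^{N'}_0]=\Bigl(\sum_i[y,b_i]\otimes\ov b_i\Bigr)^{\!N'}_{(r+\tfrac12)}+rk\,I\otimes(\ov{y_\p})^{\ov\tau}_{(r-\tfrac12)}.
\]
For $\ov y\in\ov{\mathfrak n'}_-$ with $y\in\p\cap\g^{\ov{s+1/2}}$, a direct Wick-formula computation produces $[\ov y_\la G_{\g,\sa}]=y\otimes 1+1\otimes\theta^{\ov\p}(y)$ with $\theta^{\ov\p}(y)=\tfrac12\sum_i:\ov{[y,b_i]_\p}\ov b_i:$, and hence
\[
[(G_{\g,\sa})^{N'}_0,I\otimes\ov y^{\ov\tau}_{(s)}]_+=y^M_{(s+\tfrac12)}\otimes I+I\otimes\theta^{\ov\p}(y)^{\ov\tau}_{(s+\tfrac12)}.
\]

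The final step is to verify that each of these four operators sends $N'$ into $\mathcal M$. The central piece $rk\,I\otimes(\ov{y_\p})^{\ov\tau}_{(r-\tfrac12)}$ and the piece $y^M_{(s+\tfrac12)}\otimes I$ land in $M\otimes\ov{\mathfrak n'}_-F^{\ov\tau}(\ov\p)$ and $\mathfrak n'_-M\otimes F^{\ov\tau}(\ov\p)$ respectively, directly from the definitions of $\mathfrak n'_-$ and $\ov{\mathfrak n'}_-$ together with the constraints $r\le 0$ and $s\le-\tfrac12$. For the structural mode $\bigl(\sum_i[y,b_i]\otimes\ov b_i\bigr)^{\!N'}_{(r+\tfrac12)}$, expand as a normal-ordered sum of products $[y,b_i]^M_{(m_1)}\otimes(\ov b_i)^{\ov\tau}_{(m_2)}$ with $m_1+m_2=r-\tfrac12$; for $r<0$ one has $m_1+m_2<-\tfrac12$, forcing either $m_1<0$ (so that $[y,b_i]^M_{(m_1)}v\in\pi_M(\mathfrak n'_-)M$) or $m_2<-\tfrac12$ (so that $(\ov b_i)^{\ov\tau}_{(m_2)}f\in\pi_F(\ov{\mathfrak n'}_-)F^{\ov\tau}(\ov\p)$). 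The sole boundary case $r=0$, $m_1=0$, $m_2=-\tfrac12$ (so $y\in\mathfrak n_-\cap\g^{\ov0}$ and $b_i\in\g^{\ov0}\cap\p$) is handled by choosing a basis of $\g^{\ov0}\cap\p$ dual to the triangular decomposition $\g^{\ov0}\cap\p=(\mathfrak n\cap\p)\oplus\mathfrak h_\p\oplus(\mathfrak n_-\cap\p)$: the $\mathfrak n\cap\p$-summands pair with dual $\ov{\mathfrak n_-\cap\p}$-fermionic modes (which lie in $\ov{\mathfrak n'}_-$), while for the $\mathfrak h_\p$ and $\mathfrak n_-\cap\p$ summands one invokes the elementary containments $[\mathfrak n_-,\mathfrak h]\subset\mathfrak n_-$ and $[\mathfrak n_-,\mathfrak n_-]\subset\mathfrak n_-$ to conclude that $[y,b_i]^M_{(0)}v\in\pi_M(\mathfrak n'_-)M$. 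The quadratic fermionic mode $\theta^{\ov\p}(y)^{\ov\tau}_{(s+\tfrac12)}$ is treated in the same spirit: expanding it as a sum of products of two $\ov{\phantom{.}}$-modes and using that the scalar anticommutator corrections involve $([y,b_i]_\p,b_i)=(y,[b_i,b_i])=0$, one may freely reorder so that a mode in $\ov{\mathfrak n'}_-$ stands on the left, whence the operator preserves $M\otimes\ov{\mathfrak n'}_-F^{\ov\tau}(\ov\p)$. The main obstacle is precisely this boundary-case bookkeeping; outside the boundary the inclusion is forced by elementary mode-counting.
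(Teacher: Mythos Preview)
Your overall strategy---reduce to commutator computations via the formulas for $[\tilde x_\l G_{\g,\sa}]$ and $[\ov x_\l G_{\g,\sa}]$, then verify that each resulting operator sends $N'$ into $\mathcal M$---is exactly the paper's approach; your commutator formulas match \eqref{conxtilde} and \eqref{conxbar} under the identification $\tilde x\leftrightarrow x\otimes 1$. The bosonic piece and the terms $y^M_{(s+\frac12)}\otimes I$ and $rk\,I\otimes(\ov y_\p)^{\ov\tau}_{(r-\frac12)}$ are handled correctly, including the boundary case $r=0$.

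The gap is in your treatment of the fermionic quadratic mode $\theta^{\ov\p}(y)^{\ov\tau}_{(s+\frac12)}$. Your vanishing argument $([y,b_i]_\p,b_i)=(y,[b_i,b_i])=0$ presupposes an \emph{orthonormal} basis, but with such a basis the boundary term $m_1=m_2=-\tfrac12$ need not have either factor in $\ov{\mathfrak n'}_-$, since the $b_i$ are not adapted to the triangular decomposition of $\g^{\ov 0}\cap\p$. If instead you use a triangular basis, individual anticommutators $([y,b_i]_\p,b^i)=(y,[b_i,b^i])$ need not vanish; what is true is that the relevant \emph{sums} vanish because each $[b_i,b^i]$ (for $b_i$ a weight vector in a fixed $\g^{\ov s}\cap\p$) lies in $\h_0$ and $y\in\mathfrak n_-$ is orthogonal to $\h_0$. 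You also omit the normal-ordering correction from \eqref{tensore}, which contributes $-\tfrac12\sum_i\mu_i([y,b_i]_\p,b^i)$ to the zero mode and must be shown to vanish by the same mechanism. The paper avoids this operator-level bookkeeping altogether: it proves $\gamma(x)^{\ov\tau}_{(n+\frac12)}(u)\in\ov{\mathfrak n'}_-F^{\ov\tau}(\ov\p)$ by induction on the length of $u$ as a product of creation operators (using $[\ov y_\l\gamma(x)]=\ov{[y,x]}_\p$ to peel off one factor at a time), reducing to the explicit computation for $u=1$. Finally, a wording issue: you say the operator ``preserves $M\otimes\ov{\mathfrak n'}_-F^{\ov\tau}(\ov\p)$'', but what is needed is that it sends all of $N'$ there.
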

\begin{proof} We need the following formulae: if $x\in\g$ then, in 
$V^{k+g,1}(R^{super})$,
$$ [\tilde x_\l G_{\g,\sa}]=\sum_i:\widetilde{[x,b_i]}\ov 
b^i:+\l\,k\ov x_\p,
$$ while, if $x\in\p$,
$$ [\ov x_\l G_{\g,\sa}]=\tilde 
x+\half\sum_i:\overline{[x,b_i]}_\p\ov b^i:.
$$ These formulae are easily computed using Wick formula and the 
explicit form for $G_{\g,\sa}$ given
in Lemma~\ref{gsup}. Using \eqref{rep2}, we deduce that, on  
$M\otimes F^{\ov\tau}(\ov\p)$, we have, if $x\in\g$,
\begin{equation}\label{conxtilde} [\tilde x_{(n)}, 
(G_{\g,\sa})^{N'}_0]=\sum_i:\widetilde{[x,b_i]}
\ov b^i:^{\tau}_{(n+\half)}+n\,k\,(\ov x_\p)^{\tau}_{(n-\half)}
\end{equation} and, if $x\in\p$,
\begin{equation}\label{conxbar} [\ov x_{(n)}, 
(G_{\g,\sa})^{N'}_0]=\tilde
x^\tau_{(n+\half)}+\half\sum_i:\overline{[x,b_i]}_\p\ov 
b^i:^{\tau}_{(n+\half)}.
\end{equation} If we compute explicitly the normal order using 
\eqref{tensore}, then
$$ 
\sum_i:\widetilde{[x,b_i]}\ov 
b^i:^{\tau}_{(n+\half)}=\sum_{i,r}\widetilde{[x,b_i]}^\tau_{(r)} (\ov
b^i)^{\tau}_{(-r+n-\half)}
$$

Suppose now that $v=(t^n\otimes x)(w)\otimes u\in \mathcal M$ with 
$t^n\otimes x\in \mathfrak n'_-$. Then
either $n<0$ or $n=0$ and $x\in \mathfrak n_-$.

Since  $v=\tilde x^\tau_{(n)}(w\otimes u)$, we have that
$$ (G_{\g,\sa})^{N'}_0(v)=-[\tilde 
x^\tau_{(n)},(G_{\g,\sa})^{N'}_{0}](v)+\tilde
x_{(n)}(G_{\g,\sa})^{N'}_{0}(v),
$$
 so we need only to check that
$\sum_{i,r}\widetilde{[x,b_i]}^\tau_{(r)} (\ov 
b^i)^{\tau}_{(-r+n-\half)}(w\otimes u)\in \mathcal M$ and that $(\ov
x_\p)^{\tau}_{(n-\half)}(w\otimes u)\in \mathcal M$. This is obvious 
if $n<0$. For $n= 0$ then $x_\p\in \mathfrak
n_-\cap\p$ so $t^{-\half}\otimes\ov x_\p\in\overline{\mathfrak 
n'}_-$. Moreover $t^r\otimes[x,b_i]\in
\mathfrak n'_-$ if $r<0$ and $t^{-r-\half}\otimes\ov b^i\in 
\overline{\mathfrak n'}_-$ if $r>0$. It remains
only to check the term $r=0$. If $b_i\in \mathfrak n_-\cap\p\oplus 
\h_\p$ then $[x,b_i]\in \mathfrak n_-$,
while if $b_i\in \mathfrak n\cap \p$ then $ t^{-\half}\otimes\ov 
b^i\in\overline{\mathfrak n'}_-$. 

Suppose now $v=w\otimes \ov x_{(n)}u$ with either $n<-\half$ or 
$n=-\half$ and
$x\in\overline{\mathfrak n_-\cap\p}$. Arguing as above we need only 
to check that
$$ (\tilde x)^\tau_{(n+\half)}(w\otimes 
u)+\sum_i:\overline{[x,b_i]}_\p\ov b^i:^{\tau}_{(n+\half)}(w\otimes
u)\in \mathcal M,
$$ or, equivalently, that $\half\sum_i:\overline{[x,b_i]}_\p\ov 
b^i:^{\tau}_{(n+\half)}(u)\in
\overline{\mathfrak n'}_-F^{\ov\tau}(\p)$.

Write $u=(\Pi_{i=1}^mt^{r_i}\otimes\ov y_i)\cdot 1$ with either 
$r_i<-\half$ or $r_i=-\half$ and $y_i\in \mathfrak n_-\cap\p+\h_\p$.
. We introduce the following notation: 
if $x\in\p$, write
$$\gamma(x)=\half\sum_i:\overline{[x,b_i]}_\p\ov b^i:,$$ 
$\{b_i\},\{b^i\}$ being dual bases of $\p$. 
If
$y\in\p$, by Wick formula, 
$$[\ov y_\l\gamma(x)]=\overline{[y,x]}_\p,
$$
 thus, if $u'=(\Pi_{i=2}^mt^{r_i}\otimes\ov y_i)\cdot 1$, then
$\gamma(x)^\tau_{(n+\half)}(u)=(\ov
y)^{\tau}_{(r)}\gamma(x)^\tau_{(n+\half)}(u')+(\overline{[y,x]}_\p)^\tau_{(r+n+\half)}(u')$.  
Since $[(\ov
y)^\tau_{(r)},(\ov a)^\tau_{(s)}]=0$ if  $t^s\otimes\ov 
a\in\overline{\mathfrak n'}_-$, we have that $(\ov
y)^{\tau}_{(r)}\mathcal M\subset \mathcal M$. Hence we are left with 
checking that $\gamma(x)^\tau_{(n+\half)}(u')\in \mathcal
M$.  By an obvious induction on $m$, we reduce ourselves to check 
that 
$
\gamma(x)^\tau_{(n+\half)}(1)\in \mathcal M$.
Computing explicitly the normal order and using the fact that, if 
$\{b_i\}$ is a basis of $\g^j\cap \p$ then
$\sum_i([x,b_i],b^i)=0$ if $x\in \mathfrak n_-\cap\p$, we find that
\begin{align*}
        \sum_{i}:(\overline{[x,b_i]}_\p)\ov
b^i:^{\tau}_{(n+\half)}&=\sum_{i,r<n}(\overline{[x,b_i]}_\p)^\tau_{(r)}(\ov
b^i)^{\tau}_{(-r+n-\half)}\\&-\sum_{i,r\ge n}(\ov 
b^i)^{\tau}_{(-r+n-\half)}(\overline{[x,b_i]}_\p)^\tau_{(r)}.
\end{align*} Thus $\sum_{i}:(\overline{[x,b_i]}_\p)\ov 
b^i:^{\tau}_{(n+\half)}(1)=-\sum_{i,r\ge n}(\ov
b^i)^{\tau}_{(-r+n-\half)}(\overline{[x,b_i]}_\p)^\tau_{(r)}(1)$. The 
terms that are not obviously in $\mathcal M$ are
those with $n=r$ and $b^i\in\mathfrak n\cap\p+\h_\p$. But in this 
case $[x,b_i]_\p\in\mathfrak
n_-\cap\p$ and we are done.
\end{proof}

\begin{cor}\label{hw}Fix $\L\in\ha^*_0$ and let $M$ be a highest weight 
module for $\widehat L(\g,\si)$
of highest weight $\L$. Set $N'=L(\L)\otimes F^{\bar\tau}(\bar\p)$. 
If $(\L+\rho_\si)_{|\h_\p}=0$ then 
$H((G_{\g,\sa})_0^{N'})\ne 0$.
\end{cor}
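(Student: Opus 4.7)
The plan is to exhibit $v_\L \otimes 1 \in N'$ as a nonzero class in Dirac cohomology, following the pattern of Kostant's argument in \cite[Thm.~3.15]{K3}.

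For the cocycle condition, by Lemma \ref{gsup} the element $G_{\g,\sa}$ lies in $V^k(\g)\otimes F(\ov\p)$, so its zero mode acts naturally on $N' = M\otimes F^{\ov\tau}(\ov\p)$; the computation producing \eqref{Grelativo} in the proof of Proposition \ref{lpirhozero} then carries over unchanged to give
\[
(G_{\g,\sa})_0^{N'}(v_\L\otimes 1) = v_\L \otimes (\overline{(h_{\ov\L+\rho_\si})}_\p)^{\ov\tau}_{(-\half)}\cdot 1.
\]
The hypothesis $(\L+\rho_\si)_{|\h_\p}=0$ says precisely that $h_{\ov\L+\rho_\si}\in\h_\aa$, so $(h_{\ov\L+\rho_\si})_\p=0$ and the right-hand side vanishes.

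For the non-coboundary condition, the plan is to reduce matters to Lemma \ref{fund}. Since $M$ is generated over $\widehat L(\g,\si)$ by $v_\L$, PBW yields $M = \C v_\L \oplus \mathfrak n'_-M$, the sum being direct because $\mathfrak n'_-$ strictly lowers $\ha_0$-weights. A parallel decomposition $F^{\ov\tau}(\ov\p) = \C\cdot 1 + \overline{\mathfrak n'}_- F^{\ov\tau}(\ov\p)$ is read off from the Clifford creation operators acting on the vacuum, after choosing the maximal isotropic $A^+\subset\ov\p^{-\ov{1/2}}$ compatibly with the triangular decomposition of $\p\cap\g^{\ov 0}$. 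Tensoring gives $N' = \C(v_\L\otimes 1) + \mathcal M$, with $v_\L\otimes 1\notin\mathcal M$ by weight considerations ($\mathcal M$ sits in $\ha_0$-weight spaces strictly below that of $v_\L\otimes 1$). Lemma \ref{fund} supplies $(G_{\g,\sa})_0^{N'}(\mathcal M)\subseteq \mathcal M$; together with the vanishing above, this forces $\mathrm{Im}(G_{\g,\sa})_0^{N'}\subseteq\mathcal M$. Hence $v_\L\otimes 1\in\ker(G_{\g,\sa})_0^{N'}\setminus\mathrm{Im}(G_{\g,\sa})_0^{N'}$, and its class in $H((G_{\g,\sa})_0^{N'})$ is nonzero.

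The main obstacle is the fermionic bookkeeping: one must verify that a suitable choice of $A^+$ makes $\overline{\mathfrak n'}_-$ (as defined in the text), together with $\C\cdot 1$, span all of $F^{\ov\tau}(\ov\p)$, so that the complement of $\C(v_\L\otimes 1)$ in $N'$ is contained in $\mathcal M$. All other ingredients---Lemma \ref{fund}, the action formula from the proof of Proposition \ref{lpirhozero}, and the PBW decomposition of $M$---are already in place.
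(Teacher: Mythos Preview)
Your overall strategy is right and matches the paper's, but the ``fermionic bookkeeping'' you flag as an obstacle is in fact a genuine gap that cannot be closed the way you suggest. The decomposition $F^{\ov\tau}(\ov\p)=\C\cdot 1+\overline{\mathfrak n'}_- F^{\ov\tau}(\ov\p)$ is simply false whenever $\h_\p\ne 0$, regardless of how $A^+$ is chosen. By definition $\overline{\mathfrak n'}_-=t^{-\half}\otimes\overline{\mathfrak n_-\cap\p}+\sum_{j<0}t^{j-\half}\otimes\overline{\g^j\cap\p}$, so the creation operators $(\ov h)^{\ov\tau}_{(-\half)}$ for $h\in\h_\p$ are never in $\overline{\mathfrak n'}_-$. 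The correct complement of $\mathcal M$ in $N'$ is $v_\L\otimes Cl(t^{-\half}\otimes\ov\h_\p)\cdot 1$, which has dimension $2^{\lfloor\dim\h_\p/2\rfloor}$, not $1$.

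This matters for your coboundary argument: from Lemma~\ref{fund} alone you only know $(G_{\g,\sa})_0^{N'}(\mathcal M)\subset\mathcal M$, so to conclude $\mathrm{Im}(G_{\g,\sa})_0^{N'}\subset\mathcal M$ you must also control $(G_{\g,\sa})_0^{N'}$ on the extra piece $v_\L\otimes Cl(t^{-\half}\otimes\ov\h_\p)\cdot 1$. The paper does this by an induction on the length $r$ of a monomial $(\ov h_{i_1})^{\ov\tau}_{(-\half)}\cdots(\ov h_{i_r})^{\ov\tau}_{(-\half)}(v_\L\otimes 1)$, using \eqref{conxbar} together with $[\gamma(h)_\l\ov h']=0$ and $\gamma(h)_0^\tau\cdot 1=\rho_\si(h)$ to obtain
\[
(G_{\g,\sa})_0^{N'}\bigl((\ov h_{i_1})^{\ov\tau}_{(-\half)}\cdots(\ov h_{i_r})^{\ov\tau}_{(-\half)}(v_\L\otimes 1)\bigr)=(\L+\rho_\si)(h_{i_1})\,(\ov h_{i_2})^{\ov\tau}_{(-\half)}\cdots(\ov h_{i_r})^{\ov\tau}_{(-\half)}(v_\L\otimes 1),
\]
which vanishes by the hypothesis $(\L+\rho_\si)_{|\h_\p}=0$. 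Thus the Dirac operator kills the entire complement (not just $v_\L\otimes 1$), and only then does your conclusion $\mathrm{Im}\subset\mathcal M$ follow. Note that the hypothesis is used here for every $h\in\h_\p$, not just once; your single application to $v_\L\otimes 1$ does not suffice.
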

\begin{proof} From the explicit description of the basis of 
$F^{\ov\tau}(\ov\p)$ given in
Lemma~\ref{pesispin}, we see that
$F^{\ov\tau}(\ov\p)=\mathcal M\oplus (v_\L\otimes 
Cl(t^{-\half}\otimes \ov\h_\p)\cdot 1)$. To conclude the proof
we need only to show that $(G_{\g,\sa})^{N'}_0(v_\L\otimes 
Cl(t^{-\half}\otimes \ov\h_\p)\cdot 1)=0$.
For this, it suffices to show that
$$ (G_{\g,\sa})^{N'}_0(\ov h_{i_1})^{\ov\tau}_{(-\half)}\cdots (\ov
h_{i_r})^{\ov\tau}_{(-\half)}(v_\L\otimes 1)=0
$$ with $h_{i_j}$ chosen as in \eqref{vectorbasis}. We prove this by 
induction on $r$. If $r=0$, then, by
Proposition~\ref{lpirhozero} and our assumption that 
$(\L+\rho_\si)_{|\h_\p}=0$, we have $(G_{\g,\sa})^{N'}_0(v_\L\otimes 
1)=0$. If $r>0$ then, by the induction hypothesis and \eqref{conxbar},
\begin{align*}
        (G_{\g,\sa})^{N'}_0(\ov h_{i_1})^{\ov\tau}_{(-\half)}&\cdots (\ov
h_{i_r})^{\ov\tau}_{(-\half)}(v_\L\otimes 1)=\\&=((\tilde
h_{i_1})^{\ov\tau}_{(0)}+\gamma(h_{i_1})^\tau_{(0)})(\ov 
h_{i_2})^{\ov\tau}_{(-\half)}\cdots (\ov
h_{i_r})^{\ov\tau}_{(-\half)}(v_\L\otimes 1).
\end{align*} Since $[\gamma(h_{i_1})_\l \ov h]=\ov{[h_{i_1},h]}=0$ 
for any $h\in\h_\p$, we can rewrite the
last formula as
\begin{align*}
        (G_{\g,\sa})^{N'}_0(\ov h_{i_1})^{\ov\tau}_{(-\half)}\cdots (\ov
h_{i_r})&^{\ov\tau}_{(-\half)}(v_\L\otimes 1)=\\&=\L(h_{i_1})(\ov 
h_{i_2})^{\ov\tau}_{(-\half)}\cdots
(\ov h_{i_r})^{\ov\tau}_{(-\half)}(v_\L\otimes 1)\\&+(\ov 
h_{i_2})^{\ov\tau}_{(-\half)}\cdots (\ov
h_{i_r})^{\ov\tau}_{(-\half)}(v_\L\otimes 
\gamma(h_{i_1})^\tau_{(0)}(1)).
\end{align*} But  $\gamma(h)^\tau_{(0)}(1)=\rho_\si(h)\cdot1$ (see 
formula \eqref{prima} further on)
whence 
\begin{align*}
(G_{\g,\sa})^{N'}_0(\ov h_{i_1})^{\ov\tau}_{(-\half)}&\cdots (\ov
h_{i_r})^{\ov\tau}_{(-\half)}(v_\L\otimes 
1)=\\&=(\L+\rho_\s)(h_{i_1})\left((\ov 
h_{i_2})^{\ov\tau}_{(-\half)}\cdots
(\ov h_{i_r})^{\ov\tau}_{(-\half)}(v_\L\otimes 1)\right)=0
\end{align*} 
by our assumption that $(\L+\rho_\s)_{|\h_\p}=0$.
\end{proof}
\section{An analogue of a conjecture of Vogan in affine setting}
\subsection{The ``Vogan conjecture" and its generalization}
Recall from \eqref{vp}   the function
$\varphi_\aa:\h_0\oplus\sum_S\C K_S\oplus\C d_\aa\to\ha_0$ and that, 
extending functionals by zero
on $\h_\p$, we can view $\ha_\aa^*$ as a subspace of 
$(\h_0\oplus\sum_S\C K_S\oplus\C d_\aa)^*$. 
Recall that   the complexified Tits cone of an affine algebra 
$\widehat L(\g,\si)$ is the set
\begin{equation}\label{tits}C_\g=\{\l\in\ha^*_0\mid 
Re\,\l(K)>0\}.\end{equation}
If
$f$ is a function on $C_\g$ we denote by $f_{|\ha_\aa^*}$  the 
function on
$\varphi_\aa^*(C_\g)\cap\ha_\aa^*$ defined by 
$f_{|\ha_\aa^*}(\l)=(f\circ(\varphi_\aa^*)^{-1})(\l)$.
Suppose that $M$ is a highest weight module  for $\widehat L(\g,\si)$ 
with highest weight $\L\in C_\g$.
We already observed in
\S~\ref{diracoperators} that, given $\nu\in (\ha_\sa)^*$ such that
$(M\otimes F^{\ov\tau}(\ov\p))_\nu\ne0$, then $\nu+\rhat_{\sa \si}\in 
\varphi^*_\sa(C_\g)$ so
$f_{|\ha_\aa^*}(\nu+\rhat_{\sa \si})$ makes sense. \par
\vskip5pt
The following result is our affine analog of Vogan's conjecture. It 
appears, in a slightly different formulation,  as Theorem 
\ref{primovogan} in the Introduction. In order
to use Kac's results from 
\cite{Kacpnas} on the
holomorphic center of a suitable completion of $U(\widehat 
L(\g,\sigma))$, we need a technical hypothesis  on the
pair $(\g,\aa)$ (which is satisfied in  important cases, e.g. when
$\aa^{\ov 0}$ is an equal rank  subalgebra of $\g^{\ov 0}$ or the set 
of fixed
points of a diagonalizable automorphism of $\g$). 
\begin{theorem}\label{Vogan1} Assume that the centralizer $C(\h_\aa)$ 
of $\h_\aa$ in $\g^{\bar0}$ equals $\h_0$. Fix
$\L\in
\widehat
\h^*_0$ such that
$\L+\rhat_\si\in C_\g$ and let
$M$ be a highest weight module for $\widehat L(\g,\si)$ with highest 
weight $\L$. Let $f$ be a
holomorphic
$\Wa$-invariant function on $C_\g$. Suppose that a highest weight 
$\widehat L(\sa,\si)$-module $Y$ of
highest weight
$\mu$
 occurs in the Dirac cohomology $H((G_{\g,\sa})_0^{N'})$ of
$N'=M\otimes  F^{\ov\tau}(\ov\p)$. Then 
\begin{equation}\label{fine}f_{|{\ha}_\sa^*}(\mu+\rhat_{\aa\si})= 
f(\L+\widehat\rho_{\si}).\end{equation}
\end{theorem}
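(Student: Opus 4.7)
The plan is to adapt the Huang--Pand{\v{z}}i{\'c} argument to the affine setting by constructing an affine analog of the map $\zeta$ from Theorem HP at the level of the large holomorphic centers introduced in \cite{Kacpnas}. First, I would pass to a suitable completion $\widetilde U$ of $U(\widehat L(\g,\si))$ (tensored with a Clifford completion for the fermionic factor) in which holomorphic $\Wa$-invariant functions on $C_\g$ are realized as central elements via an affine Harish--Chandra isomorphism: to $f$ corresponds $z_f\in \widetilde Z(\widehat L(\g,\si))$ acting on any highest weight module of highest weight $\L$ (with $\L+\rhat_\si\in C_\g$) by the scalar $f(\L+\rhat_\si)$. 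The same construction applied to $\aa$, composed with the map $\varphi_\aa^*$, produces a completed center $\widetilde Z(\widehat L(\aa,\si))$ whose elements corresponding to holomorphic $\Wa_\aa$-invariant functions $g$ on $\varphi_\aa^*(C_\g)\cap \ha_\aa^*$ act on a highest weight $\widehat L(\aa,\si)$-module of weight $\mu$ by $g(\mu+\rhat_{\aa\si})$.

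The key algebraic step, an affine analog of part (1) of Theorem HP, is to prove: for every $z\in \widetilde Z(\widehat L(\g,\si))$ there exist an element $\zeta(z)$ in a suitable completion of the diagonal copy of $U(\widehat L(\aa,\si))$ inside $\widetilde U$ and an odd element $a$ such that
\begin{equation*}
z - \zeta(z) = (G_{\g,\sa})_0\, a + a\, (G_{\g,\sa})_0,
\end{equation*}
with $\zeta(z)$ lying in the completed affine center of $\widehat L(\aa,\si)$, and with $z\mapsto \zeta(z)$ an algebra homomorphism compatible with both affine Harish--Chandra isomorphisms through restriction along $\varphi_\aa^*$. The hypothesis $C(\h_\aa)=\h_0$ is exactly what ensures that restriction sends $\Wa$-invariants to $\Wa_\aa$-invariants, so that $\zeta$ corresponds on the level of symbols to the geometric restriction $f\mapsto f_{|\ha_\aa^*}$. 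Following the strategy of \cite{HP} and Kostant \cite{K3}, I would construct $\zeta$ by filtering $\widetilde U$ by the number of generators of $\p$ used, noting that the principal symbol of $(G_{\g,\sa})_0$ is essentially the Koszul differential $\sum_i b_i\otimes\ov b^i$, and iteratively applying the exactness of the corresponding Koszul complex on the associated graded object. The equivariance of this symbol under the diagonal $\widehat L(\aa,\si)$, combined with Proposition~\ref{quadro}, which expresses $((G_{\g,\sa})_0)^2$ as the difference of the two affine Casimirs up to a scalar, is what forces the corrections produced at each step to land in the diagonal $\widehat L(\aa,\si)$.

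Given this algebraic statement, the theorem is immediate. Let $v\in N'$ be a highest weight vector of weight $\mu$ for the diagonal $\widehat L(\aa,\si)$ action, representing a nonzero class $[v]\in H((G_{\g,\sa})_0^{N'})$. For $f$ holomorphic and $\Wa$-invariant, the element $z_f$ acts on $M$, hence on $v$, as the scalar $f(\L+\rhat_\si)$, while $\zeta(z_f)$ acts on the highest weight vector $v$ of $Y$ by $f_{|\ha_\aa^*}(\mu+\rhat_{\aa\si})$. Applying the displayed identity above to $v$ and projecting to the cohomology quotient kills the anticommutator term (the summand $a\,(G_{\g,\sa})_0\, v$ vanishes because $v\in\ker(G_{\g,\sa})_0$, while $(G_{\g,\sa})_0\,a\, v$ lies in the image), yielding
\begin{equation*}
\bigl(f(\L+\rhat_\si)-f_{|\ha_\aa^*}(\mu+\rhat_{\aa\si})\bigr)[v]=0,
\end{equation*}
which is \eqref{fine}.

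The main obstacle, as flagged in the introduction, is the second step: pushing the Koszul/filtered construction through at the level of the \emph{completed} enveloping algebra. The holomorphic center is not concentrated in any finite-degree piece of a natural PBW filtration, so the construction of the homotopy $a$ produces a formally infinite series of successive corrections; one must choose the topology on $\widetilde U$ (following \cite{Kacpnas}) so that both the Koszul filtration and the homotopy operator are continuous, and then verify convergence with uniform degree-by-degree bounds. A further subtlety is to show that $\zeta(z)$ not only exists in the completion but is \emph{central} in the completed $\widehat L(\aa,\si)$, which requires a parallel argument relating the Dirac-commutator ambiguity to $\widehat L(\aa,\si)$-invariance; this is where the detailed properties of the affine Harish--Chandra homomorphism for $\aa$ enter, and where the bulk of the technical work of Section~8 is expected to reside.
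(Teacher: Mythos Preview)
Your overall strategy matches the paper's: realize $f$ as a central element $z_\phi$ via Kac's holomorphic center \cite{Kacpnas}, prove a homological identity of the form $z_\phi=(\text{something in the }\aa\text{-side})+\mathbf d(u)$ with $\mathbf d=[(G_{\g,\aa})_0,\,\cdot\,]$, then evaluate on a highest weight vector representing a Dirac cohomology class. The paper carries this out by computing the full cohomology $H(\mathbf d)\simeq \ov{\Aa_{inv}(\aa)}$ (Corollary~\ref{fr}) via a two-step filtration --- first a topological filtration $\ov{\Aa^p}$ coming from a principal-type $\R$-grading by an element $\hat f_\aa$, and then on each quotient $\ov{\Aa^p}/\ov{\Aa^{p+1}}$ a PBW filtration by the number of $\tilde{\phantom{x}}$-factors --- so that on the associated graded the induced differential becomes the Koszul differential on $\p$-variables only (Lemma~\ref{deDp}), plus a holomorphic Koszul step on $\h_\p$ (Lemma~\ref{koszulholomorphic}).

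Two points in your plan would lead you astray if followed literally. First, the hypothesis $C(\h_\aa)=\h_0$ is not used to make restrictions of $\Wa$-invariants $\Wa_\aa$-invariant; it is used at the very start (\S\ref{basicsetup}) to choose $f_\aa\in\h_\aa$ whose centralizer in $\g^{\bar 0}$ is exactly $\h_0$, so that the grading by $\hat f_\aa$ isolates the Cartan in degree zero. This is what makes Lemma~\ref{gradtoCdiag} and Lemma~\ref{dizero} work and hence what makes the Koszul reduction go through. Second, you anticipate having to prove that $\zeta(z)$ is \emph{central} in a completed $U(\widehat L(\aa,\si))$; the paper never does this and does not need it. What Corollary~\ref{fr} yields is only $\Omega(z_\phi)=\Phi+a+\mathbf d(u)$ with $\Phi\in\mathcal F_\aa$ a holomorphic function and $a\in\ov{\Aa^1_{inv}(\aa)}$ a strictly positive-degree element of the $\aa$-subalgebra. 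On a highest weight vector $v$ of $Y$ one has $a\cdot v=0$ simply because $a$ lies in positive degree (Lemma~\ref{restriction}), and $\Phi$ acts by its value at $\mu$; no $\Wa_\aa$-invariance or centrality is invoked. Dropping the centrality goal both removes a step that may not even be true in this setup and matches exactly what the final computation in \S\ref{quarta} requires.
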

\begin{rem}  Assume that $\aa^{\bar 0}$ is an equal rank subalgebra 
of $\g^{\bar 0}$ and that $\L$ is dominant integral.
Then \eqref{fine} is a fairly trivial consequence of Theorem 
\ref{multiplets}. Indeed, in the integral dominant
case, we have
$H((G_{\g,\sa})_0^X)=Ker((G_{\g,\sa})_0^X)$, and formula 
\eqref{formulona} tells us
  that the only weights appearing in the decomposition  are  
precisely those in the orbit of $\L+\rhat_\sigma$.
Since
$f$ is
$\Wa$-invariant, the claim follows. 
\par
\vskip10pt
The following  sections are devoted to the proof of 
Theorem~\ref{Vogan1}. 
\end{rem}
\vskip10pt
\subsection{The basic setup} \label{basicsetup}
Consider the  algebra $\mathcal 
W=U(L'(\g,\si))\otimes
Cl(L(\ov\g,\ov\tau))$ and its quotient algebra $\mathcal W^k=\mathcal 
W/\mathcal W(K-k)$. We view $L(\g,\s)\oplus L(\bar\g,\bar\tau)$ as a 
subspace of $L'(\g,\si))\oplus L(\ov\g,\ov\tau)$ and identify it with 
its image in $\mathcal W^k$.

Assume that the centralizer of $\h_\aa$ in $\g^{\bar 0}$ is the Cartan subalgebra $\h_0$. Therefore we can fix  $f_\aa\in\h_\aa$ such that the centralizer 
of 
$f_\aa$ in $\g^{\bar0}$ is  $\h_0$. We can
choose $\Dp_0$ so that  $\a(f_\aa)>0$ if $\a\in\Dp_0$.  Choose 
$c\in\R$ such that
$\a_i(cd+f_\aa)>0$ for all $i$. 
 Set $\hat
f_\aa=cd+f_\aa$. With this choice, we have that if $\a\in\Dap$, then 
$\a(\hat f_\aa)> 0$.\par
We define a $\R$-grading $\deg$
on
$L'(\g,\si)$ and on
$L(\bar
\g,\bar\tau)$ by setting,  for $x\in\g^{\bar r}_{\a}$,
\begin{equation}\label{gradoprincipale}\deg(t^r\otimes 
x)=\deg(t^{r-\half}\otimes
x)=(r\d+\a)(\hat f_\aa),\quad \deg(K)=0\end{equation} and write
$L'(\g,\s)=\oplus_jL'(\g,\s)_j$, 
$L(\bar\g,\bar\tau)=\oplus_jL(\bar\g,\bar\tau)_j$ for the
corresponding decomposition into homogeneous components.

We
can extend $\deg$ to $\mathcal W$ thus defining a $\R$-grading  
$\mathcal W=\oplus_j\mathcal W_j$.
Since $K-k$ is homogeneous, $\deg$ induces a $\R$-grading on 
$\mathcal W^k$ that we 
again denote
by $\deg$.

Observe that $\g^{\bar r}=\oplus_{\mu\in\h_\aa}\g^{\mu,\bar r}$ 
where  $\g^{\mu,\bar r}$ is the $\h_\aa$-weight space of weight 
$\mu$. 
Note that $\g^{\mu,\bar r}$ is homogeneous with respect to $\deg$ and 
$\g^{\mu,\bar r}=\aa^{\mu,\bar r}\oplus\p^{\mu,\bar r}$,
where $\aa^{\mu,\bar r}=\g^{\mu,\bar r}\cap\aa$ and $\p^{\mu,\bar 
r}=\g^{\mu,\bar r}\cap\p$. It follows that  $L(\aa,\s)$,
$L(\p,\s)$, $L(\bar\aa,\bar\tau)$, and $L(\bar\p,\bar\tau)$ all 
inherit a grading $\deg$ from the grading $\deg$
 on $L(\g,\s)$,
$L(\bar\g,\bar\tau)$ and
\begin{equation}\label{ass}
L(\g,\s)_j=L(\aa,\s)_j\oplus L(\p,\s)_j\quad L(\bar 
\g,\bar\tau)_j=L(\bar\aa,\bar\tau)_j\oplus L(\bar\p,\bar\tau)_j.
\end{equation}
 Recall the triangular decompositions
$L'(\g,\si)=\n'_-\oplus\h'_0\oplus\n'$ and $L(\ov 
\g,\ov\tau)=\ov\n'_-\oplus\ov\h_0\oplus\ov \n'$. Set
$$\mathcal W^+=U(\n')\otimes Cl(\ov \n')$$
and observe that $\mathcal W^+=\bigoplus\limits_{n\in\R^+}\mathcal 
W^+_n$.\par
Let $\mathcal F$ be the algebra of holomorphic functions on 
$(\h_0^*\oplus\C\d)\times(\h_\aa^*\oplus\C\d_\aa)$. 
 Observe that one can define a product on $\mathcal W\otimes \mathcal 
F$ by setting, for $x\in\g^{\ov
r}_\a$, $y\in\p^{\mu,\ov r}$, $z\in\aa^{\mu,\ov r}$,  and 
$f,g\in\mathcal F$,
\begin{equation}\label{prodotto}
f (t^r\otimes x)= (t^r\otimes x)f_{\a+r\d, 
\a_{|\h_\aa}+r\d_\aa},\quad f(t^{r-\half}\otimes\ov
y)=(t^{r-\half}\otimes\ov y)f_{0,\mu+r\d_\aa}
\end{equation}
and 
\begin{equation}\label{prodottoa}
[(t^{r-\half}\otimes\ov z),f]=[f,g]=[f,K]=0,
\end{equation}
 where $f_{\a,\be}(\l,\mu)=f(\l+\a,\mu+\be)$. We will refer to 
$f_{\a,\be}$ as a {\it translate} of $f$.
Since $(1\otimes f)\mathcal W(K-k)\subset\mathcal W(K-k)\otimes 
\mathcal F$ the product on
$\mathcal W\otimes\mathcal F$ factors to define a product on 
$\mathcal W^k\otimes \mathcal F$.

Set $j_0=0$ and define  recursively, for $N\in\nat$, 
\begin{equation}\label{defjp}j_N=\min(j>j_{N-1}\mid \mathcal 
W^+_j\ne\{0\}).\end{equation} To see that  $j_N$ is well
defined we argue as follows:
 if $x\in\mathcal W^+_j$ then $x$ is a sum of products $x^1\cdots 
x^m$ with $x^i= t^{r_i}\otimes y^i$ or 
$x^i=t^{r_i-\frac{1}{2}}\otimes\ov y^i$ with
$y^i\in
\g^{\bar r_i}_{\gamma_i}$. Then $j=\l(\hat f_\aa)$ where $\l=\sum 
(r_i\d+\gamma_i)$. Since $\l=\sum n_i\a_i$ with $n_i\in\nat$
and since $\a_i(\hat f_\aa)\ge0$, we see that, fixing $R>0$, then the 
set $\{j<R\mid \mathcal W^+_j\ne\{0\}\}$ is finite. This in
turn implies that $\{j>j_{N-1}\mid \mathcal W^+_j\ne\{0\}\}$ has a 
minimum.

If $N\in\nat$ set 
\begin{align*} F^N=\sum_{n\in \R,\,n\ge j_N}\mathcal W^+_n.
\end{align*} 
The set $\{(\mathcal W^k\otimes\mathcal F)(F^N\otimes\mathcal 
F)\}_{N\in\mathbb N}$ is, by PBW
theorem, a fundamental system of neighborhoods of $0$ in $\mathcal 
W^k\otimes\mathcal F$. Let
$(\mathcal W^k)_{\mathcal F}^{com}$ be the corresponding completion. 
Set $$\ov{\mathcal
W^k_{\mathcal F}}=\oplus_j\ov{\mathcal W^k_j\otimes \mathcal F}.$$ 

The product we just defined on $\mathcal W^k\otimes \mathcal F$ can 
be extended to a product on
$\ov{\mathcal W^k_{\mathcal F}}$.  Indeed, suppose that $u\in 
\ov{\mathcal W^k_i\otimes \mathcal F}$
and $w\in\ov{\mathcal W^k_j\otimes \mathcal F}$. 
 Writing explicitly $u,w$ as a sequence $u^n\otimes f_n$ and 
$w^n\otimes g_n$, we now check that 
$(u^n\otimes f_n)(w^n\otimes g_n)$ converges (i.e. is a Cauchy 
sequence). We need to check that for each $N$ there is $p$ such
that, if $n,m>p$ then $(u^n\otimes f_n)(w^n\otimes g_n)-(u^m\otimes 
f_m)(w^m\otimes g_m)\in
(\mathcal W^k\otimes\mathcal F)(F^N\otimes\mathcal F)$. Observe that 
\begin{align*}
        (u^n\otimes f_n)(w^n\otimes g_n)-(u^m\otimes f_m)&(w^m\otimes 
g_m)=(u^n\otimes f_n-u^m\otimes
f_m)(w^n\otimes g_n)\\&+(u^m\otimes f_m)(w^n\otimes g_n-w^m\otimes 
g_m).
\end{align*} Since $w$ is a Cauchy sequence, there is $N_1$ such 
that, for $n,m>N_1$, $(w^n\otimes
g_n-w^m\otimes g_m)\in(\mathcal W^k\otimes\mathcal 
F)(F^N\otimes\mathcal F)$.  Since $u$ is a
Cauchy sequence, there is $N_2$ such that, for $n,m>N_2$, 
$(u^n\otimes f_n-u^m\otimes
f_m)\in(\mathcal W^k\otimes\mathcal F)(F^{M}\otimes\mathcal F)$ with 
$j_M>j_N-j$. By \eqref{ass}  we have that 
\begin{equation}\label{degfezero} (1\otimes\mathcal F)(\mathcal 
W^k_j)=\mathcal W^k_j\otimes\mathcal F,
\end{equation} hence $(u^n\otimes f_n-u^m\otimes f_m)(w^m\otimes 
g_m)\in(\mathcal
W^k\otimes\mathcal F)(F^{M}\mathcal W^k_j\otimes\mathcal F)$. By PBW 
theorem, $F^{M}\mathcal
W^k_j\subset\sum_{\substack{r\le 0\\r+s\ge j_N}}\mathcal 
W^k_r\mathcal W^+_s\subset \mathcal W^kF^N$.
Thus $(u^n\otimes f_n-u^m\otimes f_m)(w^m\otimes g_m)\in(\mathcal 
W^k\otimes\mathcal
F)(F^{N}\otimes\mathcal F)$, for $n,m>N_2$. Setting 
$p\ge\max(N_1,N_2)$ we are done.

 Note that \eqref{degfezero} implies that defining $\deg(\mathcal 
F)=0$ we can extend $\deg$ to a grading on
$\ov{\mathcal W^k_{\mathcal F}}$, by declaring $\deg(u)=j$ if $u\in 
\ov{\mathcal W^k_j\otimes \mathcal F}$.
\vskip 5pt

Assume that $k+g>0$ and choose $\L\in\ha_0^*$ a weight of level $k$.  
If $M$ is a highest weight module for $\widehat
L(\g,\si)$ of highest weight $\L$, then $N=M\otimes F^{\ov \tau}(\ov 
\g)$ is a representation of 
$\mathcal W^k$. Since, for any $v\in N$, there is $p>0$ such that 
$F^M\cdot v=0$  for $M\ge p$, we can extend the action of $\mathcal W^k$ to $\ov{\mathcal 
W^k\otimes
1}\subset\ov{\mathcal W^k_{\mathcal F}}$.
\begin{lemma}\label{faithful}Let $u\in \ov{\mathcal W^k\otimes 1}$ be 
such that, for any highest weight
module $M$ of level $k$ and any $v\in N=M\otimes F^{\ov\tau}(\ov 
\g)$,  relation $u\cdot v=0$ holds. Then $u=0$.
\end{lemma}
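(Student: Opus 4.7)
My plan is to exploit the topological structure of the completion: for any $v\in N$, the action of $u$ on $v$ is determined by the image of $u$ in a finite-level quotient $\mathcal W^k/\mathcal W^kF^M$, so the lemma reduces to a PBW-type faithfulness statement at each truncation. Since $\deg$ extends to $\overline{\mathcal W^k}=\bigoplus_j\overline{\mathcal W^k_j}$ and $N=M\otimes F^{\ov\tau}(\ov\g)$ carries a compatible grading, the hypothesis forces each homogeneous component of $u$ to annihilate every $v$; hence one may assume $u\in\overline{\mathcal W^k_j}$ is homogeneous of degree $j$.

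For any $v\in N$ there exists $M_v\in\NN$ with $F^{M_v}\cdot v=0$, whence $\mathcal W^kF^{M_v}\cdot v=0$. Writing $\overline u_M$ for the image of $u$ in $\mathcal W^k_j/(\mathcal W^k_j\cap\mathcal W^kF^M)$, one has $u\cdot v=\overline u_{M_v}\cdot v$; since the completion identifies $u$ with the compatible system $\{\overline u_M\}$, vanishing of $u$ is equivalent to $\overline u_M=0$ for every $M$. It therefore suffices to prove: for every $M$ and every nonzero class $\overline u_M$, there exist a Verma module $M(\Lambda)$ of level $k$ and a vector $v\in M(\Lambda)\otimes F^{\ov\tau}(\ov\g)$ with $F^M\cdot v=0$ and $\overline u_M\cdot v\neq0$.

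For this finite-level faithfulness, fix a PBW basis of $\mathcal W^k$ adapted to the triangular decomposition $L'(\g,\si)=\n'_-\oplus\h'\oplus\n'$ and to a polarization $L(\ov\g,\ov\tau)=\ov L_-\oplus\ov L_+$ with $\ov L_+\supseteq\ov{\n'}$, so that elements of $\mathcal W^k_j$ take normal form $m^-m^0m^+$ and the truncation $\mathcal W^k_j/(\mathcal W^k_j\cap\mathcal W^kF^M)$ has as basis the triples with $\deg(m^+)<j_M$. Given a nonzero class $\overline u_M=\sum_\alpha c_\alpha m^-_\alpha m^0_\alpha m^+_\alpha$, pick an index $\alpha_0$ with $c_{\alpha_0}\neq0$ such that $m^+_{\alpha_0}$ is maximal in a chosen PBW ordering. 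Take $\Lambda$ sufficiently generic and apply $\overline u_M$ to a test vector $v=w\cdot(v_\Lambda\otimes v_F)$, where $w\in U(\n'_-)\otimes Cl(\ov L_-)$ is a PBW ``dual'' to $m^+_{\alpha_0}$ in the sense that commuting $m^+_{\alpha_0}$ past $w$ recovers $v_\Lambda\otimes v_F$ up to a nonzero scalar modulo lower-order corrections; the relations $\n'\cdot v_\Lambda=0$ and $\ov L_+\cdot v_F=0$ kill the leading contribution of every other $m^+_\alpha$, while genericity of $\Lambda$ separates the scalars by which the Cartan factors $m^0_\alpha$ act. Together these ingredients isolate $c_{\alpha_0}\neq0$ in the PBW expansion of $\overline u_M\cdot v$.

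The main obstacle is the construction of the dual $w$ and the induction on PBW ordering that guarantees no cancellation between competing $\alpha$'s destroys the coefficient $c_{\alpha_0}$. This is the standard PBW-faithfulness argument for enveloping and Clifford algebras acting on Verma and Fock modules, adapted here to the mixed enveloping--Clifford structure of $\mathcal W^k$ and to the twisted polarization induced by $\sigma$.
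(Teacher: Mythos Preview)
Your strategy is the same as the paper's: reduce to a finite truncation, write the truncated element in PBW normal form relative to the triangular decomposition, and isolate each coefficient by applying it to a test vector in a Verma-times-Fock module with generic $\Lambda$. The reduction steps you outline (homogeneity, passage to $\mathcal W^k_j/(\mathcal W^k_j\cap\mathcal W^kF^M)$, the role of generic $\Lambda$ for the Cartan part) are all sound and mirror the paper.

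The step you yourself flag as the obstacle --- constructing a ``dual'' $w$ so that $m^+_{\alpha_0}w\cdot(v_\Lambda\otimes v_F)$ singles out one nonzero scalar without interference --- is exactly where the content lies, and your sketch does not close it. In the naive PBW bases of $U(\n')$ and $U(\n'_-)$ the pairing $(x^I,y^J)\mapsto(\text{coefficient of }v_\Lambda\text{ in }x^Iy^Jv_\Lambda)$ is \emph{not} diagonal, so maximality of $m^+_{\alpha_0}$ in a PBW total order does not kill the contributions of the other $m^+_\alpha$ of the same weight; the ``lower-order corrections'' live in the same weight space and can cancel. The paper's remedy is to invoke the Shapovalov form: for $\Lambda$ off the degeneracy locus it is nondegenerate on each $M(\Lambda)_{\Lambda-\mu}$, so one chooses bases $\{\mathbf y^i_\mu\}$ of $U(\n'_-)_{-\mu}$ making $\{\mathbf y^i_\mu v_\Lambda\}$ orthonormal, sets $\mathbf x^i_\mu=\omega(\mathbf y^i_\mu)$ via the Chevalley antiautomorphism, and obtains the \emph{exact} duality $\mathbf x^i_\mu\mathbf y^j_\nu\cdot v_\Lambda=\delta_{\mu,\nu}\delta_{i,j}v_\Lambda$ whenever $ht(\mu)=ht(\nu)$. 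The Clifford side admits an elementary analogue. With these exact pairings one runs an induction on $ht(\nu)+s$ rather than on a PBW order, reads off the coefficients one height level at a time, and then uses density of the admissible $\overline\Lambda$ in $\h_0^*$ to conclude that each coefficient in $U(\h'_0)$ is divisible by $K-k$. That Shapovalov/Chevalley-antiautomorphism orthonormalization is the missing ingredient in your argument.
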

The proof is postponed to Section~\ref{lemma85}.
 \vskip10pt

If $\ov r\in \R/\ganz$ and $r\in\ov r$, let ${}^{\ov 
r}V^{k+g,1}(R^{super})$ be the $e^{2\pi i
r}$-eigenspace of $\tau$ in $V^{k+g,1}(R^{super})$.  Set $H=\frac{1}{k+g}\tilde 
L^\g_{(1)}-L^{\ov \g}_{(1)}$.  Since $\tau$  fixes
$\frac{1}{k+g}\tilde L^\g -L^{\ov \g}$, the 
$\R/\ganz$-grading induced on  $V^{k+g,1}(R^{super})$ by $\tau$ is 
compatible with $H$, i.\ e.{} we can
write  $$V^{k+g}(R^{super})=\bigoplus_{\substack{\ov r\in\R/\ganz\\ 
\D\in\R}}{}^{\ov r}V^{k+g,1}(R^{super})[\D],$$ where ${}^{\ov 
r}V^{k+g}(R^{super})[\D]$ are the joint
eigenspaces for $\tau$ and
$H$. If $N$ is a $\tau$-twisted representation of 
$V^{k+g,1}(R^{super})$ and $a\in {}^{\ov
r}V^{k+g}(R^{super})[\D]$ then, for
$r\in\ov r-\D$, we set $a_r^N=a^N_{(r+\D-1)}$. If $a\in  {}^{\ov 
r}V^{k+g}(R^{super})[\D]$ we set $\ov
r_a=\ov r$ and
$\D_a=\D$. $\D_a$ is called the conformal weight of $a$.

 Set  $\si_{f_\aa}=e^{ad(f_\aa)}$. 
 As in \S~\ref{repsup} we can extend   $\si_{f_\aa}$ to  an 
automorphism $\si_{f_\aa}$ of
$V^{k+g,1}(R^{super})$. If $v\in V^{k+g,1}(R^{super})$ is an 
eigenvector for  $ \si_{f_\aa}$ with
eigenvalue $s$ we set $\deg(v)=\log(s)$.

Since $f_\aa\in\h_0$, $\si_{f_\aa}$ commutes with $\tau$  and clearly 
fixes
$\frac{1}{k+g}\tilde L^\g -L^{\ov \g}$, thus   ${}^{\ov 
r}V^{k+g}(R^{super})[\D]$ is spanned by elements that
 are homogeneous with respect to $\deg$.

\begin{lemma}\label{fieldsinW} For any $a\in {}^{\ov 
r}V^{k+g}(R^{super})[\D]$ and $r\in\ov r-\D$,
there is a unique element $a_r\in\ov{\mathcal W^k\otimes 1}$ such 
that, for any highest weight module
$M$ of level $k$ and any $v\in N=M\otimes F^{\ov\tau}(\ov \g)$, 
$$a_r\cdot v=a^N_r\cdot v.$$

Moreover 
\begin{equation}\label{degrepres}
\deg(a_r)=cr+\deg(a).
\end{equation}
\end{lemma}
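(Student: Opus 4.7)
The plan is to construct $a_r$ by induction on the depth of $a$ as a normally ordered polynomial in the generating elements of $V^{k+g,1}(R^{super})$, starting from the generating fields and assembling more complex elements via the twisted mode expansion of the normal ordered product.

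For the base case, when $a = x \in \g$ or $a = \bar x \in \bar\g$, the operator $a^N_r$ is, by construction of the $\tau$-twisted action on $N = M \otimes F^{\bar\tau}(\bar\g)$, left multiplication by the element $t^r \otimes x$, respectively $t^{r-\frac{1}{2}} \otimes \bar x$, of $\mathcal W^k$. I define $a_r$ to be this element inside $\mathcal W^k \otimes 1 \subset \overline{\mathcal W^k \otimes 1}$. Derivatives $T^n a$ are handled via \eqref{rep2}. The degree identity \eqref{degrepres} follows at once from \eqref{gradoprincipale}: for $x \in \g^{\bar s}_\alpha$ one has $\deg(t^r \otimes x) = (r\delta + \alpha)(\hat f_\aa) = cr + \alpha(f_\aa) = cr + \deg(x)$, and similarly for $\bar x$.

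For the inductive step, write $a = :bc:$ with $b_s, c_s$ already constructed. The twisted normal ordering identity \eqref{tensore}, spelled out at the level of modes, expresses $(:bc:)^N_r$ as
$$(:bc:)^N_r \;=\; \sum_{\substack{p+q=r\\ p\ \text{``creation''}}} b^N_p\, c^N_q \;\pm\; \sum_{\substack{p+q=r\\ p\ \text{``annihilation''}}} c^N_q\, b^N_p \;+\; (\text{finite anomaly}),$$
where the split of indices $p \in \bar r_b - \Delta_b$ into ``creation'' and ``annihilation'' ranges is governed by the choice of $\mu_b$ in $\bar\mu_b$, and the anomaly collects the binomial contributions in \eqref{tensore}. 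I define $a_r \in \overline{\mathcal W^k \otimes 1}$ by the same formal expression, with $b^N_p, c^N_q$ replaced by the already-constructed $b_p, c_q$ and the sum interpreted in the completion topology. Uniqueness of $a_r$ is then automatic from Lemma~\ref{faithful}.

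The crux of the argument, and the main obstacle, is verifying that this formal sum converges in $\overline{\mathcal W^k \otimes 1}$, meaning that its partial sums form a Cauchy sequence modulo $\mathcal W^k F^N$ for every $N \in \nat$. This rests on the strict positivity $\alpha_i(\hat f_\aa) > 0$ for all simple roots of $\widehat L(\g,\sigma)$, which forces $\deg(b_p) \to +\infty$ as $p \to +\infty$ and hence $b_p \in F^N$ for $p$ sufficiently large; the PBW-based convergence argument already used in constructing the product on $\overline{\mathcal W^k_{\mathcal F}}$ then applies mutatis mutandis. The agreement $a_r \cdot v = a^N_r v$ on any $v \in N$ is then immediate, since the sum terminates on each fixed $v$. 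Finally, every summand $b_p c_q$ with $p + q = r$ has degree $(cp + \deg b) + (cq + \deg c) = cr + \deg(:bc:)$ by the inductive hypothesis, and the finite anomaly carries the same degree; hence $a_r \in \overline{\mathcal W^k_{\,cr + \deg a} \otimes 1}$, establishing \eqref{degrepres}.
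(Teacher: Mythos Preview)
Your plan is essentially the same as the paper's proof (Section~\ref{lemma86}): induction on the length of the normally ordered monomial, with the base case given by generators and their derivatives, and the inductive step assembling $(:bc:)_r$ from the twisted mode expansion \eqref{tensore}.

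One point deserves sharpening. For the infinite sum in the inductive step to converge, you need not just that the generator modes $b_p$ eventually lie in $F^N$ as $p\to+\infty$, but also that the \emph{compound} factor $c_q$ lies deep in the filtration as $q\to+\infty$. Your stated induction hypothesis (that $c_q\in\ov{\mathcal W^k\otimes 1}$ exists) does not give this; you must carry the stronger statement $c_q\in\ov{\mathcal W^kF^{N(c,q)}}$ with $N(c,q)=cq+\deg(c)$ through the induction. The paper makes this explicit by setting $U(a,r)=\ov{\mathcal W^kF^{cr+\deg(a)}}$ and proving $a_r\in U(a,r)$ at each step, using the commutator identity \eqref{brachkett} to verify that individual summands $T^{i_1}(x^1)_sb_{r-s}$ and $b_{r-s}T^{i_1}(x^1)_s$ land in $U(a,r)$. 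Your degree computation at the end is exactly this strengthened hypothesis, so you have the ingredient---just make sure it is fed back into the convergence argument rather than recorded only as a conclusion.
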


Uniqueness follows at once from Lemma~\ref{faithful}; the existence 
part of the proof as well as the proof of \eqref{degrepres} is found 
in 
Section~\ref{lemma86}. 
\vskip10pt
Recall that, for  $x\in\aa$, $\theta(x)=(\tilde x)_\aa-\tilde x$. 
\begin{lemma}\label{gradtoCdiag}If $h\in\h_\aa$,
we have
$\theta(h)_0-(\rho_\s-\rho_{\aa\s})(h)\in\ov{\mathcal 
W^kF^1\otimes1}$.
Moreover, if  
$D=L^{\ov\p}_0-(z(\g,\si)-z(\sa,\si)-\tfrac{1}{16}\dim\p)I$ (cf.
\eqref{azionednormalizzata}) then $D\in\ov{\mathcal W^kF^1\otimes1}$.
\end{lemma}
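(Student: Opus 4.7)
The plan is to prove both assertions by the same strategy: expand each operator in $\ov{\mathcal W^k_{\mathcal F}}$ using the twisted mode--expansion formula \eqref{tensore} for the normal ordered product, isolate the scalar (conformal anomaly) piece, and show that what remains is a (convergent) sum of bilinear mode products every one of which lies in the left ideal generated by $F^1$.

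For $\theta(h)_0$ with $h\in\h_\aa$, I would choose the basis $\{b_i\}$ of $\p$ to consist of $\h_0$--weight vectors with $b_i\in\p^{\gamma_i,\ov{s_i}}$, $s_i\in[0,1)$. Then $\overline{[h,b_i]}=\gamma_i(h)\ov b_i$, and by \eqref{tensore} the zero mode
$\theta(h)_0=\half\sum_i :\overline{[h,b_i]}\ov b^i:_0$
decomposes as the scalar $-\sum_{j>0}2j(\rho_j-\rho_{\aa j})(h)$ coming from the $\binom{s_i}{j+1}z^{-j-1}$ correction, plus bilinear normal--ordered modes $\half\sum_i \gamma_i(h)(\ov b_i)_{(r)}(\ov b^i)_{(-1-r)}$. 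The same arithmetic that appears in the proof of Lemma~\ref{lemmamu} (separating the $s_i=0$ contributions from the $s_i>0$ ones, and using that $\sum_{i:s_i=t}[b_i,b^i]=-\sum_{i:s_i=-t}[b_i,b^i]$) shows that the only bilinear terms with non--positive principal degree are precisely those whose contraction contributes the remaining $(\rho_\s-\rho_{\aa\s})(h)+\sum_{j>0}2j(\rho_j-\rho_{\aa j})(h)$; subtracting the scalar correction leaves $(\rho_\s-\rho_{\aa\s})(h)\cdot I$. Every other bilinear summand has at least one factor of strictly positive principal degree by \eqref{degrepres} and the positivity of $\a(\hat f_\aa)$ on $\Dap$, hence lies in $\mathcal W^k \mathcal W^+ \subset \mathcal W^k F^1$.

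For $D = L^{\ov\p}_0 - (z(\g,\si)-z(\sa,\si)-\tfrac{1}{16}\dim\p)I$, I would apply the identical expansion to $L^{\ov\p}=-\half\sum_i :T(\ov b_i)\ov b^i:$ (see \eqref{da}). The scalar part of its mode expansion equals $z(\g,\si)-z(\sa,\si)-\tfrac{1}{16}\dim\p$; this is exactly Proposition~\ref{azioneDeG}(1) applied to $\p$ in place of $\g$, obtained by subtracting the identity for $L^{\ov\aa}$ from that for $L^{\ov\g}$ using $L^{\ov\g}=L^{\ov\aa}+L^{\ov\p}$. After subtracting this constant the residual bilinear modes, once normal--ordered, lie in $\mathcal W^k F^1$ by the same principal--degree argument. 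The main obstacle in both cases is the careful bookkeeping of the boundary modes: one must check that the only source of principal--degree--zero contributions from the bilinear sum is the single contraction that rebuilds the asserted scalar, so that after subtraction every surviving summand genuinely lands in the positive--degree part $\mathcal W^+$ and hence in $F^1$.
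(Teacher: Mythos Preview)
Your proposal is correct and follows essentially the same route as the paper. For $\theta(h)_0$ the paper likewise expands via Lemma~\ref{lemmamu} and then isolates the degree--zero bilinears, observing that they come from $\n\cap C(\h_\aa)$ and are therefore killed by $[h,\cdot]$ since $h\in\h_\aa$; your phrase ``positivity of $\a(\hat f_\aa)$ on $\Dap$'' is precisely the standing hypothesis $C(\h_\aa)=\h_0$ that forces this. For $D$ the paper's one extra device is to take an \emph{orthonormal} basis of $\p\cap\g^{\bar 0}$, so that the boundary bilinears $\bar b^i_0(\bar b_i)_0=(\bar b_i)_0^2=\tfrac12$ are literally scalars, and then to read off $d_0=0$ from the normalization $D\cdot 1=0$; your appeal to Proposition~\ref{azioneDeG}(1) via $L^{\ov\g}=L^{\ov\aa}+L^{\ov\p}$ is an equivalent way to pin down the same constant.
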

\begin{proof}
Arguing as in the proof of Lemma~\ref{lemmamu}, if $\{x_i\}$ is a 
basis of $\n$ and $\{x^i\}$ is the basis of $\n_-$ dual to $\{x_i\}$, 
then
$$\theta(h)_0= 
\sum_{i} \ov{[h,x^i]}_0(\ov x_i)_0+ (\rho_\s-\rho_{\aa\s})(h)+u$$ 
with $u\in\ov{\mathcal W^kF^1\otimes1}$.
We can choose $x_i,x^i$ as root vectors for $\g^{\bar0}$. Recall  
that we have chosen $\deg$ in such a way that  $\deg(x_i)\ge0$ and 
$\deg(x_i)=0$ if and only if $x_i\in\n\cap C(\h_\aa)$. Hence
$$
\theta(h)_0= 
\sum_{i:x_i\in C(\h_\aa)} \ov{[h,x^i]}_0(\ov x_i)_0+ 
(\rho_\s-\rho_{\aa\s})(h)+u'
$$ 
with $u'\in\ov{\mathcal W^kF^1\otimes1}$.
If $\a$ is a root that occurs in $C(\h_\aa)$, then $\a_{|\h_\aa}=0$. 
Thus 
$$\theta(h)_0= 
 (\rho_\s-\rho_{\aa\s})(h)+u'$$
  as wished.
  
In order to show that $D\in\ov{\mathcal W^kF^1\otimes1}$, we first 
show that there is a constant $d_0\in\C$ such that $D=d_0+u$ with 
$u\in\ov{\mathcal W^kF^1\otimes1}$. Recall  that, up to a constant, 
$D=L^{\bar \p}_0$. Writing explicitly $L^{\bar\p}_0=-\half\sum 
:T(\bar{b_i})\bar{b^i}:_0$, we see that 
$$
D=\half\(\sum_i(\sum_{r<s_i}(r+\half)(\bar b_i)_r\bar 
b^i_{-r}-\sum_{r\ge s_i}(r+\half)\bar b^i_{-r}(\bar b_i)_r)\)+const.
$$
Choosing $s_i\in[0,1)$ we see that 
$$
D=-\frac{1}{4}\(\sum_i(\sum_{i: s_i=0}\bar b^i_{0}(\bar 
b_i)_0)\)+u+const
$$
with $u\in\ov{\mathcal W^kF^1\otimes1}$. We can choose $\{b_i\mid 
s_i=0\}$ to be an orthonormal basis of $\p\cap\g^{\bar 0}$ so, since 
$((\bar b_i)_0)^2=\half$,
 $D=u+d_0$
with $d_0\in\C$ and $u\in\ov{\mathcal W^kF^1\otimes1}$. To conclude 
the proof it is enough to recall that the normalization in 
\eqref{azionednormalizzata} was chosen precisely to obtain that 
$D^{\bar \tau}\cdot1=0$, hence $d_0=0$.
\end{proof}

 We can define a linear
map from $\h_0\times\h_\aa\times\C(d-d_\aa)$ to $\ov{\mathcal 
W^k_{\mathcal F}}$ by mapping 
 $h\in\h_0$ to $\tilde h_0$,  $h\in\h_\sa$ to $(\tilde h)_\sa{}_0$,
and $d_\aa-d$ to $D= 
L^{\ov\p}_0-(z(\g,\si)-z(\sa,\si)-\tfrac{1}{16}\dim\p)I$ (cf.
\eqref{azionednormalizzata}).

Remark that formula \eqref{rep1} says that, if $a,b\in 
V^{k+g,1}(R^{super})$, 
\begin{equation}\label{brakett} [a^N_s,b^N_r]=\sum_{j\ge 
0}\binom{s+\D_{a}-1}{j}(a_{(j)}b)^N_{s+r}.
\end{equation}
Hence, by 
Lemma~\ref{faithful},
 \begin{equation}\label{brachkett} [a_s,b_r]=\sum_{j\ge 
0}\binom{s+\D_{a}-1}{j}(a_{(j)}b)_{s+r}.
\end{equation}
In view of Lemma~\ref{dzero} and \eqref{brachkett}, $\tilde h_0, 
(\tilde h)_{\aa 0}$ and $D$ commute with each other, hence we can
extend the map defined above to an algebra map  
$L:S(\h_0\times\h_\aa\times\C(d-d_\aa))\to \ov{\mathcal W^k_{\mathcal 
F}}$.

 Clearly we can look upon $S(\h_0\times\h_\aa\times\C(d-d_\aa))$ as a 
subset of $\mathcal F$ by
setting $h(\l,\mu)=\l(h)$ for $h\in\h_0$, $h(\l,\mu)=\mu(h)$ for 
$h\in\h_\aa$ and
$(d-d_\aa)(\l,\mu)=\l(d)-\mu(d_\aa)$. This embedding induces an 
algebra map
$R:S(\h_0\times\h_\aa\times\C(d-d_\aa))\to \ov{\mathcal W^k_{\mathcal 
F}}$. 
 
Let $\mathcal I$ be the (two-sided) ideal in $\ov{\mathcal 
W^k_{\mathcal F}}$ generated by $\{L(f)-R(f)\mid f\in
S(\h_0\times\h_\aa\times\C(d-d_\aa))\}$.  We need to describe the 
ideal $\mathcal I$ more carefully.
Note that the product in $\mathcal W^k_{\mathcal F}$ has been devised 
in such a way that, if $x\in\g^{\ov r}_\a$, then 
\begin{equation}\label{tildeandideal}
\tilde x_r(L(f)-R(f))=(L(f')-R(f'))\tilde x_r
\end{equation}
 and 
 \begin{equation}\label{barandideal}
 \ov x_r(L(f)-R(f))=(L(f'')-R(f''))(\ov x_\p)_r+(L(f)-R(f))(\ov 
x_\aa)_r
 \end{equation}
 with $f'$ and $f''$ suitable translates of $f$. Moreover, if $u\in 
U(\h_0)\otimes Cl(\bar\h_0)\otimes\mathcal F$, 
 \begin{equation}\label{cartanandideal}
 [u,L(f)-R(f)]=0.
 \end{equation}
 
 Observe that \eqref{tildeandideal} and \eqref{barandideal} imply 
that, if $u\in \mathcal W^+$, then 
$u(L(f)-R(f)=\sum_i(L(f_i)-R(f_i))u_i$ with $u_i\in\mathcal W^+$. 
Since $\deg(L(f))=\deg(R(f))=0$, if $u\in F^N\otimes\mathcal F$, then
 \begin{equation}\label{FNandideal}
 u(L(f)-R(f))=\sum_i(L(f_i)-R(f_i))u_i
 \end{equation}
 with $u_i\in F^N\otimes\mathcal F$.
 
 It follows that, if $u\in\ov{\mathcal W^k_{\mathcal F}}$, then, for 
each $N\in\nat$, we can find $u'\in\mathcal W^k_{\mathcal F}$ such 
that 
 $$
 u(L(f)-R(f))=u'(L(f)-R(f))+w
 $$
 with $w\in\ov{\mathcal W^k_{\mathcal F}(F^N\otimes\mathcal F)}$. 
Writing $u'=\sum_iu^-_iu^+_i$ with $u^-_i\in U(\n'_-\oplus 
\h_0)\otimes Cl(\bar\n'_-)$ and $u^+_i\in (1\otimes\mathcal 
F)U(\n')\otimes Cl(\bar\n'\oplus\bar\h_0)$, then
 $$
 u(L(f)-R(f))=\sum_iu^-_i(L(f_i)-R(f_i))u^+_i+w.
 $$
 Observe finally that \eqref{tildeandideal}, \eqref{barandideal}, and 
\eqref{cartanandideal} imply that $\mathcal I$ is the left ideal 
generated by $\{L(f)-R(f)\mid f\in
S(\h_0\times\h_\aa\times\C(d-d_\aa))\}$. Summarizing, we can conclude 
that, given $u\in\mathcal I$ and $N\in\nat$, we can always write $u$ 
as
\begin{equation}\label{idealandtriangular}
u=\sum_iu^-_i(L(f_i)-R(f_i))u^+_i+w
\end{equation}
 with $u^-_i\in U(\n'_-\oplus \h_0)\otimes Cl(\bar\n'_-)$, $u^+_i\in 
(1\otimes\mathcal F)U(\n')\otimes Cl(\bar\n'\oplus\bar\h_0)$, and 
$w\in\ov{\mathcal W^k_{\mathcal F}(F^N\otimes\mathcal F)}$.

\subsection{The main filtration}\label{mainfiltration}

Define 
$$
\Aa=\ov{\mathcal W^k_{\mathcal F}}/\mathcal I.
$$

By \eqref{degrepres}, $\mathcal I$ is generated by homogeneous 
elements, hence we can define a
grading 
$$
\Aa=\oplus_j\Aa_j
$$  where $\Aa_j=\ov{\mathcal W^k_j\otimes \mathcal F}/(\mathcal 
I\cap \ov{\mathcal W^k_j\otimes
\mathcal F})$.
Set, for $p\in\nat$
 $$
 \Aa^p=\left(\ov{\mathcal W^k_{\mathcal F}(F^p\otimes\mathcal 
F)}+\mathcal I\right)/\mathcal I.
 $$

We use $\{\Aa^p\}_{p\in\mathbb N}$ as a fundamental system of 
neighborhoods of $0$, and let $\Aa^{com}$
 be the corresponding completion. Set
$$
\ov\Aa=\oplus_j(\ov{\Aa_j}).
$$

Similarly to what we did with $\mathcal W$, the product on $\Aa$ can 
be extended to $\ov\Aa$.
\vskip20pt
\noindent We now start the construction of a sort of PBW-basis for 
$\Aa$.
If  $\{x^1,x^2,\ldots\},$ $\{y^1,y^2,\ldots\}$ are bases of 
$\n',\,\n'_-$
respectively and  $I=\{i_1,i_2,\ldots\}$ is a multi-index with a 
finite number of non zero
elements, we set 
\begin{equation}\label{xy}x^I=(x^1)^{i_1}(x^2)^{i_2}\cdots,\quad
y^I=(y^1)^{i_1}(y^2)^{i_2}\cdots.\end{equation}
  Here $x^I,y^I$ are viewed
as elements of the symmetric algebra of $L(\g,\si)$.

For $I$  a multi-index with $i_j\leq 1$ for all $j$ and a finite 
number of non zero elements, set
$$\wedge^I x=(x^1)^{i_1}\wedge(x^2)^{i_2}\cdots,\qquad \wedge^I 
y=(y^1)^{i_1}\wedge(
y^2)^{i_2}\cdots.$$
Here $\wedge^Ix,\wedge^Iy$ are viewed as elements of the exterior 
algebra $\wedge(L(\g,\si))$.

 Finally, fix a  basis $\{h^i\}$ of $\h_0$. If
 $l=\dim(\h_0)$ and 
  $S=(s_1,\ldots,s_{l})\in\nat^{l}$, we set $h^S=(h^1)^{s_1}\cdots 
(h^l)^{s_l}$ and, if $s_i\le 1$ for all $i$,  $(\wedge^S h)=( 
h^1)^{s_1}\wedge\cdots\wedge( h^l)^{s_l}$. Here $h^S$ is an element 
of the symmetric algebra $S(\h_0)$ and
$\wedge^Sh$ is an element of the exterior algebra $\wedge\h_0$.

Clearly we can choose $x^i=t^r\otimes x$ with $x\in  \g^{\bar r}$ and 
$y^i=t^{-r}\otimes y$ with $y\in  \g^{-\bar r}$. Moreover we can 
assume that $x^i,y^i$ are homogeneous with respect to $\deg$ and that 
$\deg(y^i)=-\deg(x^i)$. 
If $x^j=t^r\otimes x$ with $x\in  \g^{\bar r}$, then we set $\tilde 
x^j=\tilde x_r$ and $\ov x^j=\ov x_r$. Similarly we
define $\tilde y^j$, $\ov y^j$. We also set $\ov h^i=\ov 
h^i_0,\,\tilde h^i=\tilde h^i_0$.

For $I$  a multi-index with a finite number of non zero elements and 
$S\in\nat^l$, define, as in \eqref{xy},
$
\tilde x^I=(\tilde x^1)^{i_1}(\tilde x^2)^{i_2}\cdots,\quad \tilde 
y^I=(\tilde y^1)^{i_1}(\tilde
y^2)^{i_2}\cdots\quad \tilde h^S=(\tilde h^1)^{s_1}\cdots (\tilde
h^l)^{s_l}.$ If  $i_j\leq 1$, similarly define
$\ov x^I,\,\ov y^I$.
If $s_i\le 1$ for $i=1,\dots,l$, set  $\ov h^S=(\ov 
h^1)^{s_1}\cdots(\ov h^l)^{s_l}$.
Here $\tilde x^I$, $\tilde y^I,\ldots$ etc. are seen as elements of 
$\ov{\mathcal W^k_{\mathcal F}}$ or of $\ov \Aa$.
Finally define
$$\deg(I)=\sum_j i_j\deg(x^j),\qquad |I|=\sum_j i_j.
$$
\vskip5pt
If $u\otimes f\in\mathcal W^k_{\mathcal F}$ and $a\otimes g\in 
F^N\otimes \mathcal F$, then, by the triangular decomposition, we can 
write $u$ as a sum of terms of type $u^-u^+$ with $u^-\in 
U(\n'_-\oplus\h_0)\otimes Cl(\bar\n'_-\oplus \bar\h_0)$ and 
$u^+\in\mathcal W^+$. Note also that, by \eqref{prodotto}, $(1\otimes 
\mathcal F)\mathcal W^+=\mathcal W^+\otimes \mathcal F$, hence, by 
\eqref{degfezero}, $a\otimes g\in(1\otimes \mathcal F)F^N$. It 
follows that we can write $(u\otimes f)(a\otimes g)$ as a sum of 
terms of type $u^-f'u^+a^+$ with $u^+\in\mathcal W^+$ and $a^+\in 
F^N$. By applying PBW-theorem, we can write any element of $\mathcal 
W^k_{\mathcal F}(F^N\otimes \mathcal F)$ as a sum of terms of type
\begin{equation}\label{monomials}
\tilde y^I\bar y^L\tilde h^S f\bar h^T\tilde x^H\bar x^K
\end{equation}
with $\deg(H+K)\ge j_N$ and $f\in\mathcal F$. Therefore the elements 
of $\ov{\mathcal W^k_{\mathcal F}(F^N\otimes \mathcal F)}$ are series 
$\sum_{n=N}^\infty u_n$ with $\deg(u_n)$ bounded and $u_n$ a finite 
sum of monomials as in \eqref{monomials} with $\deg(H+K)\ge j_n$.

Using the fact that $\tilde h^S=L(h^S)$, we see that an element of 
$\Aa^p$ is a series $\sum_{n=p}^\infty u_n$ with $\deg(u_n)$ bounded 
and $u_n$ a finite sum
\begin{equation}\label{termofdegn}
u_n=\sum_{\substack{I,L,T,H,K\\ \deg(H+K)\ge j_n}}\tilde y^I\bar y^L 
f_{I,L,T,H,K}\bar h^T\tilde x^H\bar x^K.
\end{equation}
This writing is, however, not unique. To afford uniqueness we need to 
restrict the functions to a suitable subspace of 
$(\h_0^*\oplus\C\d)\times(\h_\aa^*\oplus\C\d_\aa)$.
Define
\begin{align*}
C_{diag}=\{(\L,\l)\in 
&(\h_0^*\oplus\C\d)\times(\h_\aa^*\oplus\C\d_\aa),\mid\\
&(\L+\rho_\s)(h)=(\l+\rho_{\aa\s})(h)\,\forall\,h\in\h_\aa\oplus\C 
d_\aa\},\end{align*}
\begin{align*}I_{diag}=\{f\in
\mathcal F\mid f_{|C_{diag}}=0\}.\end{align*} 
Define $K^p\subset   S(\n'\oplus \n'_-)\otimes
\wedge(\n'\oplus
        \n'_-)$ as
$$K^p=Span\left(y^Ix^H\otimes \wedge^Ly\wedge^Kx\mid  
\deg(H+K)=j_p\right)
        $$ 
($j_p$ is defined in \eqref{defjp}).

The following Lemma gives a sort of PBW-theorem for $\ov{\Aa}$.
 \begin{lemma}\label{Kpisom} The
 map
$$
        y^Ix^H\otimes\wedge^Ly\wedge^Kx\otimes 
f\otimes\wedge^Th\mapsto\tilde y^I \ov y^Lf\ov h^T
\tilde x^H \ov x^K+\ov{\Aa^{p+1}}
$$ extends to
 a linear onto map 
\begin{equation}\mathcal S: K^p \otimes \mathcal F\otimes 
\wedge(\h_0)\to \ov{\mathcal
A^p}/\ov{\mathcal A^{p+1}}.
\end{equation} Moreover $Ker\,\mathcal S=K^p\otimes 
I_{diag}\otimes\wedge(\h_0)$
thus $\mathcal S$ induces a linear isomorphism, still denoted by 
$\mathcal S$,
\begin{equation}\label{S}K^p \otimes \mathcal F_{|C_{diag}}\otimes
\wedge(\h_0)\longrightarrow\ov{\mathcal A^p}/\ov{\mathcal 
A^{p+1}}.\end{equation}
\end{lemma}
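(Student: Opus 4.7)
The strategy is to prove surjectivity first, then establish $Ker\,\mathcal S \supseteq K^p \otimes I_{diag} \otimes \wedge(\h_0)$ using Lemma \ref{gradtoCdiag}, and finally the reverse inclusion via the faithfulness Lemma \ref{faithful} applied to Verma modules. The main obstacle is controlling the completion in this last step.

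For surjectivity, I use PBW for the triangular decomposition $L'(\g,\si) = \n'_- \oplus \h'_0 \oplus \n'$ (and its Clifford analog): every element of $\ov{\mathcal W^k_{\mathcal F}(F^p \otimes \mathcal F)}$ is a convergent series in monomials \eqref{monomials} with $\deg(H+K)\ge j_p$, and modulo $\ov{\Aa^{p+1}}$ only the terms with $\deg(H+K)=j_p$ survive. Since $\tilde h^S = L(h^S) \equiv R(h^S)$ modulo $\mathcal I$, the $\tilde h^S$-factor can be absorbed into $f$, yielding an element in the image of $\mathcal S$.

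For the inclusion $K^p \otimes I_{diag} \otimes \wedge(\h_0) \subseteq Ker\,\mathcal S$, I apply the analytic Nullstellensatz on the affine subspace $C_{diag}$ to express $I_{diag}$ as the $\mathcal F$-module generated by $g_h(\L,\mu) = \L(h) - \mu(h) + (\rho_\si - \rho_{\aa\si})(h)$ for $h \in \h_\aa$ together with an analogous generator built from $d$ and $d_\aa$. Using the two presentations $L(h_{\h_0}) = \tilde h_0$ and $L(h_{\h_\aa}) = (\tilde h)_{\sa\,0}$ together with Lemma \ref{gradtoCdiag}, one has
$$
g_h \equiv \tilde h_0 - (\tilde h)_{\sa\,0} + (\rho_\si - \rho_{\aa\si})(h) = -\theta(h)_0 + (\rho_\si - \rho_{\aa\si})(h) \in \ov{\mathcal W^k F^1 \otimes 1}
$$
modulo $\mathcal I$. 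Inserting such a representative into $\tilde y^I \ov y^L (\cdot) \ov h^T \tilde x^H \ov x^K$ with $\deg(H+K) = j_p$ and commuting the $F^1$-part to the right via \eqref{tildeandideal}--\eqref{cartanandideal} lands in $\ov{\mathcal W^k_{\mathcal F}(F^{p+1} \otimes \mathcal F)}$, using the obvious inclusion $F^p \cdot F^1 \subseteq F^{p+1}$ from degree-counting in $\mathcal W^+$ (since $j_p + j_1 > j_p$, so $j_p + j_1 \ge j_{p+1}$).

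The hard part is the reverse inclusion. Fix $(\L_0, \mu_0) \in C_{diag}$, let $M = M(\L_0)$ be a Verma $\widehat L(\g,\si)$-module, and set $N = M \otimes F^{\ov\tau}(\ov\g)$; by Lemma \ref{lemmamu}, the vector $v_{\L_0} \otimes 1$ has $(\ha_\aa)^*$-weight exactly $\mu_0$. Given an assumed relation $\sum_\alpha \tilde y^{I_\alpha} \ov y^{L_\alpha} f_\alpha \ov h^{T_\alpha} \tilde x^{H_\alpha} \ov x^{K_\alpha} \in \ov{\Aa^{p+1}}$ (with distinct tuples $(I,L,T,H,K)$), I would evaluate on $N$ using Lemma \ref{faithful} and combine (a) the PBW freeness of $M$, (b) the Fock freeness of $F^{\ov\tau}(\ov\g)$, and (c) weight- and Clifford-grading considerations (under $\h_\aa \oplus \C d_\aa$) to separate the distinct tuples, thereby isolating each coefficient to conclude $f_\alpha(\L_0, \mu_0) = 0$ for all $\alpha$. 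Since $(\L_0, \mu_0)$ ranges over $C_{diag}$, each $f_\alpha \in I_{diag}$. The subtlety is that elements of $\ov{\Aa^{p+1}}$ are infinite series, so lower-order commutator corrections may a priori contribute to the action on any fixed vector; this must be controlled by selecting test vectors on which only finitely many terms act nontrivially, and by checking that correction terms arising from permuting $\tilde x^H, \ov x^K$ past $\tilde y^{I_\alpha} \ov y^{L_\alpha} f_\alpha$ have strictly higher $j$-filtration, hence vanish in $\ov{\Aa^p}/\ov{\Aa^{p+1}}$ and do not spoil the leading-order analysis.
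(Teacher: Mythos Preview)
Your treatment of surjectivity and of the inclusion $K^p\otimes I_{diag}\otimes\wedge(\h_0)\subseteq \operatorname{Ker}\mathcal S$ matches the paper's: both use the PBW form \eqref{monomials} for the quotient $\ov{\Aa^p}/\ov{\Aa^{p+1}}$ and Lemma~\ref{gradtoCdiag} to see that the generators of $I_{diag}$, viewed through $L(\cdot)-R(\cdot)$, land in higher filtration.

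For the reverse inclusion your route diverges sharply from the paper's. The paper does \emph{not} evaluate on modules at all; its argument is purely algebraic inside $\ov{\mathcal W^k_{\mathcal F}}$. One lifts an element of $\ov{\Aa^{p+1}}$ to $u+w''$ with $u\in\mathcal I$ and $w''\in\ov{\mathcal W^k_{\mathcal F}(F^{p+1}\otimes\mathcal F)}$, then uses \eqref{idealandtriangular} to write $u$ as $\sum u_i^-(L(p_i)-R(p_i))u_i^+$. The key step is an affine change of variables on $S(\h_0\times\h_\aa\times\C(d-d_\aa))$ producing coordinates $x_j$ with $R(x_j)$ generating $I_{diag}$ and, by Lemma~\ref{gradtoCdiag}, $L(x_j)\in\ov{\mathcal W^k F^1}$. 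Writing each $p_i=p_i^0+\sum_j x_j P_j$ and using that $L(p_i^0)$ is a constant while $L(x_j)$ raises filtration, an induction on the minimal degree $p'$ occurring, together with a Koszul-type syzygy manipulation when $\sum_j R(x_j)f'_j=0$, forces each coefficient $f_{I,L,T,H,K}$ into $I_{diag}$ by direct PBW comparison in $\mathcal W^k_{\mathcal F}$.

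Your representation-theoretic plan is a natural alternative but, as written, has a real gap. Lemma~\ref{faithful} concerns $\ov{\mathcal W^k\otimes 1}$, not elements with $\mathcal F$-coefficients; you would need to set up the $\ov\Aa$-action on $N$ (this is only done later, in \S\ref{quarta}) and prove an analogue there. More seriously, the value you extract is $f_\alpha$ at the bi-weight of the vector \emph{after} $\ov h^{T_\alpha}\tilde x^{H_\alpha}\ov x^{K_\alpha}$ has acted. By \eqref{prodotto}--\eqref{prodottoa}, $\tilde x_r$ shifts $(\l,\mu)$ by $(\a+r\d,\a_{|\h_\aa}+r\d_\aa)$, which preserves $C_{diag}$, but $\ov x_r$ with $x\in\p$ shifts by $(0,\a_{|\h_\aa}+r\d_\aa)$, which does \emph{not}. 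So for a generic test vector the evaluation point of $f_\alpha$ is off $C_{diag}$, and your conclusion ``$f_\alpha(\L_0,\mu_0)=0$'' does not follow without further argument. One can try to arrange, as in the proof of Lemma~\ref{faithful}, that the surviving ``diagonal'' term brings the weight back to $(\L_0,\mu_0)$, but making this precise (and showing that off-diagonal contributions and the infinite-series corrections from $\ov{\Aa^{p+1}}$ do not interfere) is exactly the substantial work the paper's algebraic argument sidesteps.
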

\begin{proof}
The first statement amounts to proving that the set $$\{\tilde y^I 
\ov y^Lf\ov h^T
\tilde x^H \ov x^K+\ov{\Aa^{p+1}}\mid \deg(H+K)=j_p\}$$
spans $\ov{\Aa^p}/\ov{\Aa^{p+1}}$, so assume that 
$a+\ov{\Aa^{p+1}}\in\ov{\Aa^p}/\ov{\Aa^{p+1}}$. We can assume that 
$a\in\Aa^p$, hence $a=\sum_{n=p}^\infty u_n$ with $u_n$ as in 
\eqref{termofdegn}. Thus 
$$
a+\ov{\Aa^{p+1}}=u_p+\ov{\Aa^{p+1}}=\sum_{\substack{I,L,T,H,K\\ 
\deg(H+K)= j_p}}\tilde y^I\bar y^L f_{I,L,T,H,K}\bar h^T\tilde 
x^H\bar x^K+\ov{\Aa^{p+1}}.
$$
as desired.

In order to prove the second statement suppose that $$a=\sum \tilde 
y^I\ov y^Lf_{I,L,T,H,K}\ov h^T\tilde x^H\ov x^H\in\ov{\Aa^{p+1}}.$$
Then  $a=\lim a_i$ with $a_i\in\Aa^{p+1}$, hence, since $a\in\Aa$, we 
have that $a\in\Aa^{p+1}$. This says that $a\in(\ov{\mathcal 
W^{k}_{\mathcal F}(F^{p+1}\otimes \mathcal F)}+\mathcal I)/\mathcal 
I$, 
 hence  we can write
$$\tilde y^I\bar y^L f_{I,L,T,H,K}\bar h^T\tilde x^H\bar x^K=u+w'$$ 
with $u\in\mathcal I$
 and $w'\in\ov{\mathcal W^{k}_{\mathcal F}(F^{p+1}\otimes \mathcal 
F)}$. By  \eqref{idealandtriangular} we can write 
$$
u=\sum_iu^-_i(L(f_i)-R(f_i))u^+_i+w
$$
 with $u^-_i\in U(\n'_-\oplus \h_0)\otimes Cl(\bar\n'_-)$, $u^+_i\in 
(1\otimes\mathcal F)U(\n')\otimes Cl(\bar\n'\oplus\bar\h_0)$, and 
$w\in\ov{\mathcal W^k_{\mathcal F}(F^{p+1}\otimes\mathcal F)}$.
 Writing $u^-_i$ as a linear combination of terms $\tilde y^I\ov 
y^L\tilde h^S$, $u^+_i$ as a linear combination of terms $f_i\ov 
h^T\tilde x^H\ov x^K$ and setting $w''=w+w'$ we can write
\begin{align}
        \sum &\tilde y^I\ov y^Lf_{I,L,T,H,K}\ov h^T\tilde x^H\ov x^K\notag
\\&=\sum \tilde y^I\ov y^L\tilde 
h^S(L(p_{I,L,S,T,H,K,i})-R(p_{I,L,S,T,H,K,i}) )
f_i\ov h^T\tilde x^H\ov x^K+w''\label{inI},
\end{align}
with $w''\in\ov{\mathcal W^{k}_{\mathcal F}(F^{p+1}\otimes \mathcal 
F)}$. 

Since $\tilde h^S=L(h^S)$, by substituting $\tilde h^S(L(p)-R(p))$ 
with
$L(h^Sp)-R(h^Sp))$ $-(L(h^S)-R(h^S))R(p)
$, we can assume that $S=0$, thus \eqref{inI} 
simplifies to 
\begin{align}
        \sum &\tilde y^I\ov y^Lf_{I,L,T,H,K}\ov h^T\tilde x^H\ov x^K\notag
\\&=\sum \tilde y^I\ov y^L(L(p_{I,K,J,H,S,i})-R(p_{I,K,J,H,S,i}) )
f_i\ov h^T\tilde x^H\ov x^K+w''\label{inImodificata}.
\end{align}

Fix a basis $\{v_i\}$ of $\h_\aa$. By performing the affine change of 
variables
$(u,v,d-d_\aa)\to (u',v',d-d_\aa)$, where 
$$
\begin{cases}
        u'=u&\text{if $u\in\h_0$,}\\
        v'=(-v,v,0)-(\rho_\s-\rho_{\s\aa})(v)&\text{if $v\in\h_\aa$,}   
\end{cases}
$$
and setting $x_0=d-d_\aa$ and $x_i=v'_i$ for $i>0$,
we can write a polynomial $P\in S(\h_0\times\h_\aa\times\C(d-d_\aa))$ 
as
\begin{equation}\label{decomposeP}      P=P^0+\sum_i x_i{} P_i
\end{equation}
with $P^0,P_i$ suitable polynomials and $P^0=P^0(u',0,0)$ depending 
only on $(u',0,0)$.
By Lemma~\ref{gradtoCdiag}, $L(x_i)\in\ov{\mathcal W^k_{\mathcal 
F}F^{1}}$ for all $i$. Hence 
$$
L(P)=L(P^0)+w^{>0}$$ 
with $w^{>0}\in\ov{\mathcal W^k_{\mathcal F}F^{1}}$. 
We can therefore write that
\begin{align*}
\sum &\tilde y^I\ov y^L(w^{>0}_{I,L,T,H,K,i} )f_i\ov h^T\tilde x^H\ov 
x^K+w''
\\&=\sum \tilde y^I\ov y^Lf_{I,L,T,H,K}\ov h^T\tilde x^H\ov x^K-\sum 
\tilde y^I\ov y^L(L(p^0_{I,L,T,H,K,i}) )f_i\ov h^T\tilde x^H\ov 
x^K\\&+\sum \tilde y^I\ov y^LR(p_{I,L,T,H,K,i})f_i\ov h^T\tilde 
x^H\ov x^K.
\end{align*}
Let $LHS$ denote  the left hand side of the above equation and $RHS$ 
the right hand side. Set
$p'=\inf\{q\mid j_q=\deg(H+K),\ p_{I,L,T,H,K,i}\ne 0\}$. If $p'<p$ 
then $LHS\in\ov{\mathcal
W^k_{\mathcal F}(F^{p'+1})}$. Since $RHS\in\mathcal W^k_{\mathcal F}$ 
we have that
$RHS\in\mathcal W^k_{\mathcal F}F^{p'+1}$. If $\deg(H+K)=j_{p'}$, 
using PBW and comparing
terms, we find that $L(p^0_{I,L,T,H,K,i})$ must be a constant. We can 
clearly assume
this constant 
to be zero,  obtaining  that  $p^0_{I,L,T,H,K,i}=0$, or, 
equivalently, that
$R(p_{I,L,T,H,K,i})\in I_{diag}$. Thus, if we set $P=p_{I,L,T,H,K,i}$ 
in \eqref{decomposeP}
we can write $p_{I,L,T,H,K,i}=\sum_j x_jP_j$ with $P_j$ suitable 
polynomials.
Now remark  that 
 $L(x_jP_j)-R(x_jP_j)=(L(P_j)-R(P_j))L(x_j)+(L(x_j)-R(x_j))R(P_j)$ 
and that $L(x_j)\in\ov{\mathcal W^k_{\mathcal F}F^{1}}$, so, if 
$\deg(H+K)=j_{p'}$,
\begin{align*}
\tilde y^I&\ov y^L(L(P)-R(P) )
f_i\ov h^T\tilde x^H=\sum_j\tilde y^I\ov y^L(L(x_j)-R(x_j))f'_j\ov 
h^T\tilde x^H\\&+
\sum_{\deg(H+K)>j_{p'}}\tilde y^I\ov 
y^L(L(p_{I,K,J,H,S,i})-R(p_{I,K,J,H,S,i}) )
f_i\ov h^T\tilde x^H.
\end{align*}
Collecting terms, \eqref{inImodificata} gets rewritten as
\begin{align}
        \sum &\tilde y^I\ov y^Kf_{I,K,J,H,S}\ov h^S\tilde x^J\ov x^H\notag
\\&=\sum_{\deg(J+K)=j_{p'}}\!\!\sum_j\tilde y^I\ov y^K(L(x_j)-R(x_j) 
)f'_j \ov h^S\tilde x^J\ov x^H\label{inImodificatabis}\\&
+\sum_{\deg(J+K)>j_{p'}} \tilde y^I\ov y^K(L(p_{I,K,J,H,S,i})
-R(p_{I,K,J,H,S,i}) )g_i\ov h^S\tilde x^J\ov x^H+w''.\notag
\end{align}
Moreover degree considerations imply that  we must have that
\begin{equation*}\label{epoi}
\sum_jR(x_j)f'_j=0.
\end{equation*} 
For $j>0$ write $f'_j=p_j+R(x_0)q_j$ with $p_j,q_j\in\mathcal F$ and 
$p_j$ independent of $R(x_0)$.
Then $\sum_{j>0}R(x_j)p_j=0$ and $f'_0=-\sum_{i>0}R(x_j)q_j$. 
Substituting, we find that
\begin{align*}  
\sum_j(&L(x_j)-R(x_j))f'_j\\&=\sum_{j>0}(L(x_j)-R(x_j))R(x_0)
q_j-\sum_{j>0}(L(x_0)-R(x_0))R(x_j)q_j\\&+\sum_{j>0}(L(x_j)-R(x_j))p_j\\        
&=\sum_{j>0}(L(x_j)R(x_0)-L(x_0)R(x_j))q_j+\sum_{j>0}(L(x_j)-R(x_j))p_j\\       
&=\sum_{j>0}L(x_0)(L(x_j)-R(x_j))q_j-\sum_{j>0}L(x_j)(L(x_0)-R(x_0))q_j\\       
&+\sum_{j>0}(L(x_j)-R(x_j))p_j.
\end{align*}

Since $L(x_j)\in\ov{\mathcal W^k_{\mathcal F}F^{1}}$ we obtain that 
\eqref{inImodificatabis} becomes 
\begin{align*}
        \sum &\tilde y^I\ov y^Kf_{I,K,J,H,S}\ov h^S\tilde x^J\ov x^H\notag
\\&=\sum_{\deg(J+K)=j_{p'}}\, \sum_{j>0}\tilde y^I\ov 
y^K(L(x_j)-R(x_j) )p_j\ov h^S\tilde x^J\ov x^H\\&
+\sum_{\deg(J+K)>j_{p'}} \tilde y^I\ov 
y^K(L(p'_{I,K,J,H,S,i})-R(p'_{I,K,J,H,S,i}) )g'_i\ov h^S\tilde x^J\ov 
x^H+w''.\notag
\end{align*}
Repeating the argument for all the variables $x_j$ we can rewrite 
\eqref{inImodificatabis} as
\begin{align}
        &\sum \tilde y^I\ov y^Kf_{I,K,J,H,S}\ov h^S\tilde x^J\ov x^H\notag
\\&=
\sum_{\deg(J+K)>j_{p'}} \tilde y^I\ov 
y^K(L(p''_{I,K,J,H,S,i})-R(p''_{I,K,J,H,S,i}) )g''_i\ov h^S\tilde 
x^J\ov x^H+w''.\notag
\end{align}
 We can therefore assume that $p'\ge p$.
Again we deduce that $RHS\in\mathcal W^k_{\mathcal F}F^{p+1}$. 

Comparing terms we find that, if $\deg(H+K)=j_p$, then 
$R(p_{I,K,J,H,S,i})\in I_{diag}$ and 
$$
f_{I,K,J,H,S}=\sum_iR(p_{I,K,J,H,S,i})f_i\in I_{diag}
$$
as desired.
\end{proof}

We
introduce an increasing filtration on $\ov{\mathcal A^p}/\ov{\mathcal 
A^{p+1}}$
by
\begin{align}\label{filtr2}(\ov{\mathcal A^p}/\ov{\mathcal 
A^{p+1}})_n=Span\left(
\ov h^T\tilde y^J\ov y^Rf \tilde x^H\ov
x^K+\ov{\mathcal A^{p+1}}\mid
|J|+|H|\leq n\right).
\end{align}

Note that the associated graded vector space is given by 
$$
Gr(\ov{\mathcal
A^p}/\ov{\mathcal A^{p+1}})=\bigoplus\limits_n Gr_n(\ov{\mathcal
A^p}/\ov{\mathcal A^{p+1}})$$
where, setting $K^p_n=(S^n(\n'\oplus 
\n'_-)\otimes\wedge(\n'\oplus\n'_-))\cap K^p$,
\begin{align*} Gr_n(\ov{\mathcal
A^p}/\ov{\mathcal A^{p+1}}) =\mathcal S(K^p_n\otimes \mathcal 
F\otimes\wedge\h_0).
\end{align*}

\begin{rem}\label{forrestricting}
Set $J^p=Span(\ov y^J\tilde y^I \ov h^U
\tilde x^H \ov x^K\mid  \deg(H+K)=j_p)$. Lemma~\ref{Kpisom} implies 
that an element $u$ in the closure
 of $\sum_p J^p$   can be  written  uniquely as a series 
$\sum_{i=0}^\infty u_i$ with $u_i\in J^i$. In
particular $u\in\ov{\Aa^p}$ if and only if $u_i=0$ for $i<p$.
\end{rem}

\subsection{The basic complex}
Let ${\bf d}:\ov{\mathcal A}\to \ov{\mathcal A}$ be the
superbracket operator
${\bf d}(a)=[(G_{\g,\aa})_0,a]$. 
Set $$\ov{\mathcal A}^{inv}=\{a\in\ov{\mathcal A}\mid 
[d_\aa,a]=0\}.$$ 
\begin{lemma}\
\begin{enumerate}
\item $\ov{\mathcal A}^{inv}$ is $\bf d$-stable.
\item ${\bf d}^2=0$ on $\ov{\mathcal A}^{inv}$.
\end{enumerate}
\end{lemma}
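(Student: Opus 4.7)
The plan is as follows. For \textbf{(1)}, the super-Jacobi identity
\[
[d_\aa,\mathbf d(a)]=[[d_\aa,(G_{\g,\aa})_0],a]+[(G_{\g,\aa})_0,[d_\aa,a]]
\]
reduces the claim to proving that $[d_\aa,(G_{\g,\aa})_0]=0$ in $\ov{\mathcal A}$. This is the affine Virasoro-primary property of $G=G_{\g,\aa}/\sqrt{k+g}$ with respect to the Neveu--Schwarz Virasoro $L$ of Remark \ref{primorem}: the bracket $[L_\lambda G]=(T+\tfrac{3}{2}\lambda)G$ passes via \eqref{rep1} to $[L^N_0,G^N_0]=0$ on every twisted module $N=M\otimes F^{\bar\tau}(\bar\p)$. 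Since $d_\aa$ differs from $L^N_0$ only by Sugawara-type operators which, thanks to Lemmas \ref{dzero} and \ref{xixibis}, already commute with $(G_{\g,\aa})^N_0$ (they are scalars on highest weight modules and hence central in their action on $N$), we obtain $[d_\aa,(G_{\g,\aa})^N_0]=0$ on every $N$. Lemma \ref{faithful} then promotes this to an identity in $\ov{\mathcal A}$.

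For \textbf{(2)}, the oddness of $(G_{\g,\aa})_0$ together with super-Jacobi yields
\[
\mathbf d^2(a)=\tfrac{1}{2}\bigl[[(G_{\g,\aa})_0,(G_{\g,\aa})_0],a\bigr]=\bigl[((G_{\g,\aa})_0)^2,a\bigr].
\]
Applying formula \eqref{masterG} of Proposition \ref{quadro} identifies, modulo an additive scalar, $((G_{\g,\aa})_0)^2$ with the difference $C(\g)-C(\aa)$, where $C(\g)=(L^\g)_{(1)}+(k+g)R(d)$ and $C(\aa)=(L^\aa)_{(1)}+(k+g)d_\aa$. The element $C(\g)$ is central in $\ov{\mathcal W^k_{\mathcal F}}$: the Sugawara relation $[\tilde x_r,(L^\g)_{(1)}]=(k+g)r\tilde x_r$ from \eqref{Sugawaraction} cancels exactly against the shift $[\tilde x_r,R(d)]=-r\tilde x_r$ imposed by the product \eqref{prodotto}, and commutation with $\bar x_r$ and with $\mathcal F$ is immediate. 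Hence $[C(\g),a]=0$ for every $a\in\ov{\mathcal A}$. For the $C(\aa)$-term, the analogous Sugawara identities applied to the shifted generators $(\tilde x)_\aa{}_r=\tilde x_r+\theta(x)_r$ from Section \ref{repsup} show that $C(\aa)$ is the quadratic Casimir of the $\widehat L(\aa,\sigma)$-subalgebra of $\ov{\mathcal A}$; combined with the identifications $L(d_\aa-d)=R(d_\aa-d)$ imposed by $\mathcal I$ and with Lemma \ref{gradtoCdiag}, the condition $[d_\aa,a]=0$ is exactly what is needed to force $[C(\aa),a]=0$.

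The main obstacle is this last reduction for $C(\aa)$: unlike $C(\g)$, the operator $C(\aa)$ is not a priori central on all of $\ov{\mathcal A}$, and its centrality relative to the $\widehat L(\aa,\sigma)$-subalgebra needs to be combined with the quotient structure of $\ov{\mathcal A}$. Concretely, one must rewrite $(L^\aa)_{(1)}=\sum_S(L^{\aa_S})_{(1)}$ in terms of the shifted realization of $\aa$, use the normalization \eqref{azionednormalizzata} of $d_\aa$, and then invoke the PBW-type basis from Lemma \ref{Kpisom} to reduce $[C(\aa),a]=0$ to a weight-grading condition matching precisely the hypothesis $a\in\ov{\mathcal A}^{inv}$. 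Once this technical work is in place, the two identities $[C(\g),a]=0$ and $[C(\aa),a]=0$ combine to give $\mathbf d^2(a)=0$.
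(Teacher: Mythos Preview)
Your overall strategy matches the paper's: both parts reduce to properties of $(G_{\g,\aa})_0^2$ via the decomposition in \eqref{gzeroquadro}. But there is a concrete gap in your justification for part~(1), and your sketch for part~(2) does not close the real issue.

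For part~(1), your claim that ``$d_\aa$ differs from $L_0^N$ only by Sugawara-type operators which are scalars on highest weight modules'' is incorrect. Writing out the difference, $d_\aa+L_0=\tfrac{1}{k+g}C(\g)-\tfrac{1}{k+g}\tilde L^\aa_0$ (up to constants). The first piece $C(\g)$ is indeed a scalar on $M$ by Lemma~\ref{xixibis}. But $\tilde L^\aa_0$ is \emph{not} a scalar on $N$: it is the $\aa$-Sugawara operator, and $N$ decomposes into many $\widehat L(\aa,\sigma)$-submodules with different eigenvalues. Neither Lemma~\ref{dzero} nor Lemma~\ref{xixibis} gives $[\tilde L^\aa_0,(G_{\g,\aa})_0]=0$. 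The correct reason---which the paper uses without naming it---is the \emph{coset property}: from \eqref{aG} and \eqref{abarG} one has $[a_\lambda G_{\g,\aa}]=0=[\bar a_\lambda G_{\g,\aa}]$ for every $a\in\aa$, hence $G_{\g,\aa}$ commutes with everything built from $\aa$-currents, in particular with $\tilde L^\aa$. The paper's route is more direct: it first establishes $[\tilde L^\g_0,x]=-(k+g)[d,x]$ for all $x\in\ov{\mathcal A}$ by checking on the generators $\tilde x_r,\bar x_r,f$, then combines this with the formula $(G_{\g,\aa})_0^2=\tilde L^\g_0-\tilde L^\aa_0-(k+g)L^{\bar\p}_0+\text{const}$ and the coset property to obtain $[(G_{\g,\aa})_0,d_\aa]=[(G_{\g,\aa})_0,-\tfrac{1}{k+g}(G_{\g,\aa})_0^2]=0$.

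For part~(2), you correctly identify the obstacle: after showing $C(\g)$ is central one is left with $[C(\aa),a]$, equivalently $[\tilde L^\aa_0,a]$, for $a\in\ov{\mathcal A}^{inv}$. Your proposed resolution---``invoke the PBW-type basis from Lemma~\ref{Kpisom} to reduce $[C(\aa),a]=0$ to a weight-grading condition matching $a\in\ov{\mathcal A}^{inv}$''---is not a proof: $\tilde L^\aa_0$ does not act by the $d_\aa$-grading on the generators $\tilde b_r,\bar b_r$ for $b\in\p$ (one can check $[\tilde L^\aa{}_\lambda\bar b]\ne 0$ for $b\in\p$), so the condition $[d_\aa,a]=0$ alone does not obviously force $[\tilde L^\aa_0,a]=0$. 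The paper's proof at this point is equally compressed (``arguing as above''), so you are in good company; but if you want a complete argument you will need to supply more than what either version provides.
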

\begin{proof}By \eqref{Sugawaraction}, $[\tilde L^\g_0,\tilde 
x_r]=-(k+g)r\tilde x_r$ 
and $[\tilde L^\g_0,\ov x_r]=0$. By the definition of the product in 
$\Aa$ and 
\eqref{Sugawaraexpansion} we see that $[\tilde L^\g_0,f]=0$ for $f$ 
in  $\mathcal F$. On the other hand
in \eqref{prodotto} we set  $[d,\tilde x_r]=r\tilde x_r$, $[d,\ov 
x_r]=0$ and $[d,f]=0$ for  $f$ in  $\mathcal
F$.  Thus bracketing with $d$ and $\tilde L^\g_0$ stabilizes the 
subalgebra of $\ov{\Aa}$ generated by $\tilde
x_r,\ov x_r,f$ and if $x$ is in this subalgebra, then $[\tilde 
L^\g_0,x]=-(k+g)[d,x]$.  By  Lemma~\ref{Kpisom} this
subalgebra is dense in $\ov{\Aa}$, hence $$[\tilde 
L^\g_0,x]=-(k+g)[d,x]$$ for all  $x\in\ov{\Aa}$.

To prove the first statement it is enough to show that 
$(G_{\g,\aa})_0\in \ov{\mathcal A}^{inv}$. 
To check that
$[(G_{\g,\aa})_0,d_\aa]=0$ we recall that $d_\aa=d+D$, so
$[(G_{\g,\aa})_0,d_\aa]=[(G_{\g,\aa})_0,d+L^{\ov\p}_0]=[(G_{\g,\aa})_0,-\frac{1}{k+g}\tilde 
L^\g_0+
L^{\ov\p}_0]=[(G_{\g,\aa})_0,-\frac{1}{k+g}(\tilde L^\g_0-\tilde 
L^\aa_0)+L^{\ov\p}_0]$.
By \eqref{gzeroquadro},
$[(G_{\g,\aa})_0,-\frac{1}{k+g}(\tilde L^\g_0-\tilde 
L^\aa_0)+L^{\ov\p}_0]=[(G_{\g,\aa})_0,-
\frac{1}{k+g}(G_{\g,\aa})^2_0]=0$, 
$(G_{\g,\aa})^2_0$ being even.

To prove the second statement, we first notice that ${\bf 
d}^2(a)=[(G_{\g,\aa})_0^2,a]$, ($G_{\g,\aa})_0$ being odd.
 Arguing as above, we see that, if $a\in \ov{\Aa}^{inv}$, 
$[(G_{\g,\aa})_0^2,a]=-(k+g)[d_\aa,a]=0$.
\end{proof}

By \eqref{ass} we can find a basis of $\p$ homogeneous w.r.t. $\deg$. 
Then \eqref{GG} implies that
$(G_{\g,\aa})_0$ has degree $0$. It follows that 
$[(G_{\g,\aa})_0,\ov{\Aa^p}]\subset
\ov{\Aa^p}$. Thus $$[(G_{\g,\aa})_0,\ov{\mathcal 
A^p}\cap\ov{\Aa}^{inv}]\subset \ov{\mathcal
A^p}\cap\ov{\Aa}^{inv}.
 $$
 Set $\ov{\mathcal A^p_{inv}}=\ov{\mathcal A^p}\cap\ov{\Aa}^{inv}$. 
We can therefore  define a
differential 
$$\ov d_p: \ov{\mathcal A^p_{inv}}/\ov{\mathcal A^{p+1}_{inv}}\to 
\ov{\mathcal A^p_{inv}}/\ov{\mathcal
A^{p+1}_{inv}}
$$ by $$\ov d_p(x+\ov{\mathcal A^{p+1}_{inv}})={\bf 
d}(x)+\ov{\mathcal A^{p+1}_{inv}}.$$
\vskip5pt

By \eqref{ass}, $[d_\aa,\ov{\Aa^p}]\subset \ov{\Aa^p}$, hence we can 
define an action of $d_\aa$ on $\ov{\Aa^p}/\ov{\Aa^{p+1}}$. If 
$x\in\ov{\Aa^p}/\ov{\Aa^{p+1}}$, let us write $d_\aa\cdot x$ for this 
action. 
Using the isomorphism $\mathcal S$ we can lift this action to 
$K^p\otimes \mathcal F_{|C_{diag}}\otimes
\wedge\h_0$. In order to describe explicitly this action, set 
$$\n'_\p=\n\cap\p\oplus\sum_{r>0}t^r\otimes(\g^{\ov
r}\cap\p),\quad\n'_\aa=\n\cap\aa\oplus\sum_{r>0}t^r\otimes(\g^{\ov 
r}\cap\aa).
$$
Analogously define $(\n'_-)_\p$ and $(\n'_-)_\aa$.
Since $[d_\aa,\tilde x_r]=r\tilde x_r$ for $x\in\g^{\ov r}$, 
$[d_\aa,\ov x_r]=r\ov x_r$ for $x\in\g^{\ov r}\cap\p$, $[d_\aa,\ov 
x_r]=0$ for $x\in\g^{\ov r}\cap\aa$, and $[d_\aa,f\ov h^T]=0$, we see 
that $d_\aa$ acts 
on $K^p\otimes \mathcal F_{|C_{diag}}\otimes \wedge\h_0$ as the 
derivation on $S(\n'\oplus\n'_-)$ such that
$d_\aa\cdot(t^r\otimes x)=r(t^r\otimes x)$ and as the even derivation 
on $\wedge(\n'_\p\oplus
        (\n'_-)_\p)$ such that $d_\aa\cdot (t^r\otimes x)=r(t^r\otimes x)$. 
Moreover $d_\aa$ acts trivially on $\wedge(\n'_\aa\oplus
(\n'_-)_\aa)$ and on $\mathcal F_{|C_{diag}}\otimes \wedge \h_0$. 

By \eqref{ass}, we can choose the basis $\{x^i\}$ as  the union of a 
basis of $\n'_\p$ with
a basis of $\n'_\aa$. Likewise, the basis $\{y^i\}$ can be chosen as 
the union of bases
of $(\n'_-)_\p$ and $(\n'_-)_\aa$. With this choice of bases, it is 
clear that the monomials
$y^Ix^J\otimes \w^Hy\w^Kx$ are eigenvectors for the action of 
$d_\aa$. In particular the
action of $d_\aa$ is semisimple.
 We can therefore write $K^p=\oplus_sK^{p,s}$ where
$K^{p,s}$ denotes the $s$-eigenspace of $K^p$ under the action of 
$d_\aa$. Let $(\ov{\Aa^p}/\ov{\Aa^{p+1}})_{inv}$ be the 
$d_\aa$-invariant subspace of $\ov{\Aa^p}/\ov{\Aa^{p+1}}$.  By 
construction $(\ov{\Aa^p}/\ov{\Aa^{p+1}})_{inv}=
\mathcal S(K^{p,0}\otimes
\mathcal F\otimes \wedge \h_0)$. On the other hand 
$\ov{\Aa^p_{inv}}/\ov{\Aa^{p+1}_{inv}}$ embeds in
$(\ov{\Aa^p}/\ov{\Aa^{p+1}})_{inv}$. Moreover, if 
$y^Ix^J\otimes\wedge^Hy\wedge^Kx\otimes f\otimes \wedge^T h\in
K^{p,0}\otimes\mathcal F\otimes
\wedge\h_0$, then, clearly, $\ov y^I\tilde y^Hf\ov h^T\tilde x^J\ov 
x^K$ commutes with $d_\aa$, hence the
embedding of $\ov{\Aa^p_{inv}}/\ov{\Aa^{p+1}_{inv}}$ in 
$(\ov{\Aa^p}/\ov{\Aa^{p+1}})_{inv}$ is onto. In
particular we have an isomorphism
        \begin{equation}\label{Kpzero}
        \mathcal S:K^{p,0}\otimes\mathcal F_{|C_{diag}}\otimes \wedge\h_0\to 
\ov{\Aa^p_{inv}}/\ov{\Aa^{p+1}_{inv}}.
        \end{equation}

Since $d_\aa\cdot S^n(\n'\oplus\n'_-)\subset S^n(\n'\oplus\n'_-)$ we 
can write $K^{p,r}=\oplus_n K^{p,r}_n$, where $K^{p,r}_n=K^{p,r}\cap 
K^p_n$. Set 
$(\ov{\Aa^p_{inv}}/\ov{\Aa^{p+1}_{inv}})_n=(\ov{\Aa^p_{inv}}/\ov{\Aa^{p+1}_{inv}})\cap(\ov{\Aa^p}/\ov{\Aa^{p+1}})_n$, 
so the isomorphism in  \eqref{Kpzero} gives an isomorphism  
$$\oplus_{m\le n}(K^{p,0}_m\otimes\mathcal
F_{|C_{diag}}\otimes\wedge \h_0)\to 
(\ov{\Aa^p_{inv}}/\ov{\Aa^{p+1}_{inv}})_n.$$ Setting
$Gr_n(\ov{\Aa^p_{inv}}/\ov{\Aa^{p+1}_{inv}})= 
(\ov{\Aa^p_{inv}}/\ov{\Aa^{p+1}_{inv}})_n
\slash (\ov{\Aa^p_{inv}}/\ov{\Aa^{p+1}_{inv}})_{n-1}$, the map 
$\mathcal S$ induces an isomorphism
\begin{equation}\label{Kpzeron}
        \mathcal S:K^{p,0}_n\otimes(\mathcal F_{|C_{diag}}\otimes 
\wedge\h_0)\to
Gr_n(\ov{\Aa^p_{inv}}/\ov{\Aa^{p+1}_{inv}}),
        \end{equation}
        and, setting 
$Gr((\ov{\Aa^p_{inv}}/\ov{\Aa^{p+1}_{inv}}))=\oplus_nGr_n(\ov{\Aa^p_{inv}}/\ov{\Aa^{p+1}_{inv}})$,        
an isomorphism
        \begin{equation}\label{iso}
        \mathcal S:K^{p,0}\otimes(\mathcal F_{|C_{diag}}\otimes 
\wedge\h_0)\to
Gr(\ov{\Aa^p_{inv}}/\ov{\Aa^{p+1}_{inv}}).
        \end{equation}
By the decompositions $\n'=\n'_\p\oplus\n'_\aa$, 
$\n'_-=(\n'_-)_\p\oplus(\n'_-)_\aa$, we have
\begin{align*}
        S(\n'\oplus \n'_-)&=S(\n'_\p\oplus (\n'_-)_\p)\otimes 
S(\n'_\aa\oplus (\n'_-)_\aa),\\
        \wedge(\n'\oplus \n'_- )&=\wedge(\n'_\p\oplus (\n'_-)_\p)\otimes 
\wedge(\n'_\aa\oplus
        (\n'_-)_\aa).
\end{align*}

Set
\begin{align*}
&K^p_\p=K^p\cap (S(\n'_\p\oplus (\n'_-)_\p)\otimes 
\w(\n'_\p\oplus
        (\n'_-)_\p)),\\&K^p_\aa=K^p\cap (S(\n'_\aa\oplus (\n'_-)_\aa)\otimes 
\w(\n'_\aa\oplus
        (\n'_-)_\aa)),
        \end{align*}
        and define $K^{p,s}_\aa$, $K^{p,s}_\p$, $(K^{p,s}_n)_\aa$, 
$(K^{p,s}_n)_\p$  analogously.
        Since $d_\aa\cdot K^p_\aa\subset K^p_\aa$ and $d_\aa\cdot 
K^p_\p\subset K^p_\p$, we 
  can write 
  \begin{equation}\label{decompaandp}
  K^{p,s}=\sum_{\substack{r+t=p\\ a+b=s}}(K^{r,a})_\p\otimes 
(K^{t,b})_\aa.
  \end{equation} 

Let $\partial_{\p_-}$, $\partial_{\p_+}$ be  the Koszul differentials 
on $S((\n'_-)_\p)\otimes \w (\n'_-)_\p$ and $S(\n'_\p)\otimes \w 
\n'_\p$ respectively.
Consider the complex $C^u=S((\n'_-)_\p)\otimes \w^u (\n'_-)_\p\otimes 
S((\n'_-)_\aa)$ endowed with the Koszul
differential
$\partial_{\p_-}\otimes I$ (denoted for for shortness 
$\partial_{\p_-}$ in the following) and the complex
$D^v=(S(\n'_\p)\otimes
\w^v
(\n'_\p))\otimes
\w\left((\n'_-)_\aa\oplus\n'_\aa\oplus\h_0\right)\otimes\mathcal 
F_{|C_{diag}}\otimes S((\n')_\aa)$ endowed with
the ``signed" Koszul differential 
$\partial^{sign}_{\p_+}$ defined as $\partial_{\p_+}\otimes 
(-1)^{j}I\otimes I\otimes I\otimes I$
on $(S(\n'_\p)\otimes \w^v (\n'_\p))\otimes
\w^j\left((\n'_-)_\aa\oplus\h_0\right)\otimes\w\n'_\aa\otimes\mathcal 
F_{|C_{diag}}\otimes S((\n')_\aa)$. This endows 
$S(\n'\oplus\n'_-)\otimes \w(\n'\oplus\n'_-)\otimes \mathcal 
F_{|C_{diag}}\otimes \wedge\h_0=C\otimes D$ with the differential 
$\partial_{\p_-}\otimes \partial^{sign}_{\p_+}$. By K\"unneth formula 
this differential 
is exact except in degree zero and $$H_0(C\otimes 
D)=S(\n'_\aa\oplus(\n'_-)_\aa)\otimes
\w(\n'_\aa\oplus(\n'_-)_\aa)\otimes \mathcal F_{|C_{diag}}\otimes 
\wedge\h_0.$$

Since $K^p\otimes \mathcal F_{|C_{diag}}\otimes \wedge \h_0$ and 
$K^{p,s}\otimes \mathcal
 F_{|C_{diag}}\otimes \wedge \h_0$ are  subcomplexes of 
$S(\n'\oplus\n'_-)\otimes \w(\n'\oplus\n'_-)\otimes\mathcal
F_{|C_{diag}}\otimes\w\h_0$, the differential $\partial_{\p_-}\otimes
\partial^{sign}_{\p_+}$ restricts to differentials on $K^p\otimes 
\mathcal
F_{|C_{diag}}\otimes
\wedge \h_0$, $K^{p,s}\otimes \mathcal F_{|C_{diag}}\otimes \wedge 
\h_0$, which we denote by $\partial_p$, $\partial_{p,s}$
respectively. Since
$S(\n'\oplus\n'_-)\otimes
\w(\n'\oplus\n'_-)\otimes\mathcal 
F_{|C_{diag}}\otimes\w\h_0=\oplus_{p,s}(K^{p,s}\otimes \mathcal 
F_{|C_{diag}}\otimes \wedge
\h_0)$, we have that
$\partial_{p,s}$  is exact except in degree zero and, by 
\eqref{decompaandp},
\begin{equation}\label{coomologiazero}H_0(\partial_{p,s})=K^{p,s}_\aa\otimes\mathcal 
F_{|C_{diag}}
\otimes \w\h_0.\end{equation}

\begin{lemma}\label{dizero} We have 
\begin{equation}\label{G0phi}
[(G_{\g,\aa})_0,f]\in \ov{\mathcal A^1}.
\end{equation}
Set $\ov h^{T_j}=\ov h^{t_1}_0\cdots\widehat{\ov h^{t_j}_0}\cdots\ov 
h^{t_k}_0$. Then, if $h^{t_j}\in\h_\p$ for each $j$,
\begin{equation}\label{G0barh}
[(G_{\g,\aa})_0,\ov h^T]=\sum_j (-1)^j(\tilde h_0^{t_j}+
\rho_\si(h^{t_j})) 
\ov h^{T_j} + \ov u.
\end{equation}
with $\ov u\in\ov{\Aa^1}$.
\end{lemma}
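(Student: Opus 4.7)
The plan is to compute both super-commutators $[(G_{\g,\aa})_0,f]$ and $[(G_{\g,\aa})_0,\ov h^T]$ by direct expansion, using the explicit expression for $G_{\g,\aa}$ in Lemma~\ref{gsup}, the product rules \eqref{prodotto}--\eqref{prodottoa} of $\mathcal W^k\otimes\mathcal F$, and the bracket identities \eqref{conxtilde}--\eqref{conxbar} (lifted to $\mathcal W^k$ via Lemma~\ref{fieldsinW}).

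For \eqref{G0phi}, I expand $(G_{\g,\aa})_0$ into its quadratic and cubic mode sums from Lemma~\ref{gsup}. A typical quadratic summand contributes
\[
[\tilde b_i{}_r\,\ov b^i{}_s,\,f] \;=\; \tilde b_i{}_r\,\ov b^i{}_s\,\bigl(f - f_{\alpha_i+r\delta,\,(r+s)\delta_\aa}\bigr),
\]
obtained by pushing $f$ to the right through both factors using \eqref{prodotto} (the two $\h_\aa$-shifts combine into $(r+s)\delta_\aa$). The translate is trivial precisely when $\alpha_i+r\delta=0$, which forces $b_i\in\h_\p$ and $r=0$, in which case $s=0$ too and the contribution vanishes. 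Otherwise the affine root $\alpha_i+r\delta$ is either positive or negative; in the first case $\tilde b_i{}_r\in\mathcal W^+$ has $\deg\ge j_1$ (so $\tilde b_i{}_r\in F^1$), in the second $\ov b^i{}_s\in\mathcal W^+$ has $\deg\ge j_1$ (so $\ov b^i{}_s\in F^1$). Since $\tilde b_i{}_r$ and $\ov b^i{}_s$ commute (by \eqref{tre} plus \eqref{brachkett}), the product can be rearranged to place the $F^1$-factor on the right, placing the whole term into $\mathcal W^kF^1\otimes\mathcal F$, i.e.\ into $\ov{\Aa^1}$. The cubic piece is treated by the same dichotomy: a trivial total weight forces $[b_i,b_j]_\p=0$ (vacuum), otherwise at least one accompanying mode carries positive $\deg\ge j_1$.

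For \eqref{G0barh}, I apply the super-Leibniz rule to ${\bf d}=[(G_{\g,\aa})_0,\cdot]$:
\[
[(G_{\g,\aa})_0,\ov h^{t_1}_0\cdots\ov h^{t_k}_0] \;=\; \sum_{j=1}^{k}(-1)^{j-1}\ov h^{t_1}_0\cdots\ov h^{t_{j-1}}_0\,[(G_{\g,\aa})_0,\ov h^{t_j}_0]\,\ov h^{t_{j+1}}_0\cdots\ov h^{t_k}_0.
\]
Relation \eqref{conxbar} with $x=h^{t_j}\in\h_\p$ and $n=-1/2$ gives $[(G_{\g,\aa})_0,\ov h^{t_j}_0]=\tilde h^{t_j}_0+\gamma(h^{t_j})_0$, where $\gamma(h):=\tfrac12\sum_i\!:\ov{[h,b_i]}_\p\,\ov b^i:$. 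A short Wick computation --- using $[h,h_i]=0$ for $h_i\in\h_\p$ together with $(h',[h,b_i]_\p)=0$ and $(h',b^i)=0$ whenever $b^i$ is a root vector --- shows $[\tilde h{}_\lambda\ov h']=0$ and $[\gamma(h){}_\lambda\ov h']=0$ for $h,h'\in\h_\p$. By \eqref{brachkett} the even operators $\tilde h^{t_j}_0$ and $\gamma(h^{t_j})_0$ therefore commute with every $\ov h^{t_s}_0$ and can be slid to the left, producing $\sum_j(-1)^{j-1}(\tilde h^{t_j}_0+\gamma(h^{t_j})_0)\ov h^{T_j}$. Finally, an argument parallel to the proof of Lemma~\ref{gradtoCdiag} --- expand $\gamma(h)_{(0)}$ via the twisted normal-ordering formula \eqref{tensore}, identify the scalar contribution as $\rho_\sigma(h)$ using the computation $\gamma(h)^{\bar\tau}_{(0)}\cdot 1 = \rho_\sigma(h)\cdot 1$ underlying Proposition~\ref{azioneg0} --- yields $\gamma(h)_0-\rho_\sigma(h)\in\ov{\mathcal W^kF^1\otimes 1}\subset\ov{\Aa^1}$. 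Substituting and absorbing the discrepancies into $\ov u$ gives the claim (the overall sign $(-1)^{j-1}$ versus the statement's $(-1)^j$ being a convention choice).

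The technically delicate point is \eqref{G0phi}: the function $f-f_{\mathrm{trans}}$ is \emph{not} of the form $L(p)-R(p)$ for any polynomial $p\in S(\h_0\times\h_\aa\times\C(d-d_\aa))$ and so is not a priori in the ideal $\mathcal I$. What rescues the argument is the dichotomy above --- nontrivial translate forces one accompanying mode into $F^1$ --- which must be handled in the completion topology, particularly when $(G_{\g,\aa})_0$ is viewed as an infinite mode sum, so that term-by-term reshuffling via the commutation $[\tilde b_i{}_r,\ov b^j{}_s]=0$ and the PBW description of Lemma~\ref{Kpisom} yields a convergent series inside $\ov{\mathcal W^kF^1\otimes\mathcal F}+\mathcal I$.
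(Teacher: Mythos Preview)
Your argument is essentially correct and follows the same route as the paper: expand $(G_{\g,\aa})_0$ into its quadratic and cubic parts via Lemma~\ref{gsup}, handle the commutator with $f$ mode by mode using the degree trichotomy, and handle $\ov h^T$ by the super-Leibniz rule together with \eqref{conxbar} and the identity $\gamma(h)_0\equiv\rho_\sigma(h)\bmod\ov{\Aa^1}$.

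Two minor points where you can sharpen the argument. First, your treatment of the cubic term is more complicated than necessary and slightly misstated. Every factor in the cubic summand lies in $\ov\p$, so by \eqref{prodotto} each one shifts only the \emph{second} coordinate of $f$; the $\h_\aa$-weights of $\ov{[b_i,b_j]_\p},\ov b^i,\ov b^j$ sum to zero and the mode indices sum to zero, so the total shift is identically zero and $[f,\text{cubic}]=0$ exactly. Your ``dichotomy'' is never invoked here; the claim that ``a trivial total weight forces $[b_i,b_j]_\p=0$'' is not what happens (the total shift is trivial for \emph{all} cubic terms). Second, for the quadratic term the paper avoids commuting $\tilde b_i{}_r$ past $\ov b^i{}_s$ by using the Leibniz rule $[f,AB]=[f,A]B+A[f,B]$ directly: whichever factor has positive $\deg$ already sits on the right of one of the two summands, so no rearrangement is needed. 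Your commutation via $[\tilde b_\lambda\ov b']=0$ is also valid, just slightly less direct.

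Your sign $(-1)^{j-1}$ is the one produced by the super-Leibniz rule with $G$ and each $\ov h^{t_j}$ odd; the paper's proof also uses a $+$ in the inductive step that should be a $-$, so the discrepancy with the stated $(-1)^j$ is a harmless convention issue and does not affect the use of the lemma in Corollary~\ref{restrictionKoszul}.
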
 
\begin{proof}
The first relation is proven via a direct computation.\par We write 
$(G_{\g,\aa})_0$ explicitly using \eqref{GG} and \eqref{tensore}.
 The cubic term turns out to be
\begin{align*}
&\sum_{i,j}\left(\sum_{h,k=0}^\infty(\ov{ 
[b_i,b_j]_\p}_{s_i+s_j-h-1}\bar b^i_{-s_i-k-1}\bar 
b^j_{-s_j+h+k+2}-\right.\\&\ov{ [b_i,b_j]_\p}_{s_i+s_j-h-1}\bar 
b^j_{-s_j+h-k+1}\bar b^i_{-s_i+k})
+\sum_{h,k=0}^\infty(\bar b^i_{-s_i-k-1}\bar 
b^j_{-s_j-h+k+1}\ov{[b_i,b_j]_\p}_{s_i+s_j+h}\\&\left.-\bar 
b^j_{-s_j-h-k}\bar b^i_{-s_i+k}\ov{[b_i,b_j]_\p}_{s_i+s_j+h})\right)
+3\sum_{i}s_i\ov{[b_i,b^i]_\p}_0. 
\end{align*}
Hence the bracket of $f$ with the cubic term is $0$. Let us now 
consider the quadratic term. This turns out to be
$$
\sum_i \sum_{h\in\ganz}(\tilde b_i)_{s_i-h}\bar b^i_{-s_i+h}.
$$ 
If $\deg(\bar b^i_{-s_i+h})>0$ then, since $\deg(f)=0$, $\deg([f,\bar 
b^i_{-s_i+h}])>0$. It follows that $[f,(\tilde b_i)_{s_i-h}\bar 
b^i_{-s_i+h}]=[f,(\tilde b_i)_{s_i-h}]\bar b^i_{-s_i+h}+(\tilde 
b_i)_{s_i-h}[f,\bar b^i_{-s_i+h}]\in\ov{\Aa^1}$. If $\deg(\bar 
b^i_{-s_i+h})<0$ then $\deg((\tilde b_i)_{s_i-h})>0$, hence 
$\deg([f,(\tilde b_i)_{s_i-h})])>0$ and $[f,(\tilde b_i)_{s_i-h}\bar 
b^i_{-s_i+h}]=\bar b^i_{-s_i+h}[f,(\tilde b_i)_{s_i-h}]+[f,\bar 
b^i_{-s_i+h}](\tilde b_i)_{s_i-h}\in\ov{\Aa^1}$. Finally, if 
$\deg(\bar b^i_{-s_i+h})=0$, then, since we are assuming that the centralizer of $\h_\aa$ in $\g^{\bar 0}$ is $\h_0$, we have that $(\tilde b_i)_{s_i-h}\in\h_0$ and 
$\bar b^i_{-s_i+h}\in\bar\h_0$. Thus, in this case $[f,(\tilde 
b_i)_{s_i-h}\bar b^i_{-s_i+h}]=0$.

For the second equality we can argue by induction on $|T|$, the 
result being obvious if $|T|=0$. 
To deal with the case  $|T|>0$, recall that for  $x\in\p$, we write
$\gamma(x)=\half\sum_i:\overline{[x,b_i]}_\p\ov b^i:$. Then, by \eqref{conxbar}, we have $$
[(G_{\g,\aa})_0,\ov h^T]=(\tilde h_0^{t_1}+\gamma(h^{t_1})_{0})
(\ov h_0^{t_2}\cdots \ov h_0^{t_k})+(
\ov h_0^{t_1})[(G_{\g,\aa})_0,\ov h_0^{t_2}\cdots \ov h_0^{t_k}],$$ 
so, by the induction hypothesis,
we need only to check that
$\gamma(h)_{(0)}\equiv\rho_{\si}(h)\mod \ov{\Aa^1}$. We have
\begin{align}\label{prima}
\gamma(h)_{0}&\equiv -\half\sum_{i}(\ov b^i)_0
(\overline{[h,b_i]}_\p)_0-\half\sum_i(s_i+\half)([h,b_i],b^i)\\\notag&\equiv
\half(-\sum_{b_i\in\mathfrak n_-\cap\p}( 
[h,b_i],b^i)-\sum_i(s_i+\half)([h,b_i],b^i))\\\notag
&\equiv\half(-\sum_{x_i\in\mathfrak n_-}( 
[h,x_i],x^i)+\sum_{i,0<s_i\le\half}(1-2s_i)([h,x_i],x^i))\\\notag
&\equiv(h,\half(-\sum_{x_i\in\mathfrak n_-} 
[x_i,x^i]+\sum_{i,0<s_i\le\half}(1-2s_i)[x_i,x^i]))\\\notag
&\equiv\rho_\si(h).
\end{align}

\end{proof}
\begin{lemma}\label{deDp} The differential $\ov d_p$ maps 
$(\ov{\mathcal
A^p_{inv}}/\ov{\mathcal A^{p+1}_{inv}})_n$ to  $(\ov{\mathcal
A^p_{inv}}/\ov{\mathcal A^{p+1}_{inv}})_{n+1}$ and the induced 
differential  on $Gr(\ov{\mathcal
A^p_{inv}}/\ov{\mathcal A^{p+1}_{inv}})$ 
is $\partial_{p,0}$ (under the identification
\eqref{iso}).
\end{lemma}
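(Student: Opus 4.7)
The plan is to apply the super-Leibniz rule to $\bar d_p=[(G_{\g,\aa})_0,\cdot]$ on a PBW monomial $\bar h^T\tilde y^J\bar y^R f\tilde x^H\bar x^K$, and to track how each factor contributes to the change in filtration, using the bracket identities \eqref{conxtilde}, \eqref{conxbar}, \eqref{G0phi}, \eqref{G0barh} together with Lemma~\ref{gsup} (which expresses $G_{\g,\aa}$ purely in terms of $\p$-elements via the basis $\{b_i\}$ of $\p$).

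First I would verify that $\bar d_p$ raises the filtration $|J|+|H|$ by at most one. Brackets with $\tilde y^i$ or $\tilde x^h$ yield only $\bar{\bar{\cdot}}$ terms by \eqref{conxtilde}, decreasing $|J|+|H|$ by one. Brackets with $\bar a$ for $a\in\aa$ (including $\bar h^t$ with $h^t\in\h_\aa$) vanish: Lemma~\ref{gsup} shows that $G_{\g,\aa}$ involves only $\p$-elements, so \eqref{tre} and the orthogonality of $\aa$ and $\p$ give $[\bar a{}_\l G_{\g,\aa}]=0$. The bracket $[(G_{\g,\aa})_0,f]$ lies in $\ov{\Aa^1}$ by \eqref{G0phi}; multiplying into the remaining factors and reordering into PBW form lands in $\ov{\Aa^{p+1}}$, hence vanishes modulo $\ov{\Aa^{p+1}}$. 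For $h^t\in\h_\p$, \eqref{G0barh} produces $\sum_j(-1)^j(\tilde h_0^{t_j}+\rho_\si(h^{t_j}))\bar h^{T_j}$ plus a correction in $\ov{\Aa^1}$; since $\tilde h_0=L(h)\equiv R(h)=h$ in $\Aa$ modulo $\mathcal I$, the leading part is a function times $\bar h^{T_j}$, preserving $|J|+|H|$. Finally, the only brackets that actually raise $|J|+|H|$ by one are those with $\bar y^l$ for $y^l\in(\n'_-)_\p$ and $\bar x^k$ for $x^k\in\n'_\p$: by \eqref{conxbar}, the leading term is $\tilde y^l$ (resp.\ $\tilde x^k$), while the secondary term $\tfrac{1}{2}\sum\bar{[\cdot,b_j]}_\p\bar b^j$ leaves $|J|+|H|$ unchanged.

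Second, I would identify the induced map on $Gr_n\to Gr_{n+1}$: only the leading contributions above survive, namely, up to sign, $\bar h^T\tilde y^J(\bar y^R\text{ with $\bar y^{l_r}$ omitted})f\tilde x^H\tilde y^{l_r}\bar x^K$ for $y^{l_r}\in(\n'_-)_\p$ and the analogous terms with $\tilde x^{k_r}$ replacing $\bar x^{k_r}$ for $x^{k_r}\in\n'_\p$. The signs arise from moving $(G_{\g,\aa})_0$ past the preceding odd factors in the super-Leibniz expansion: a sign $(-1)^{r-1}$ for the $r$-th $\bar y$-factor (matching the Koszul sign of $\partial_{\p_-}$), and a sign $(-1)^{|R|+|T|+s-1}$ for the $s$-th $\bar x$-factor; the $(-1)^{|R|+|T|}$ part becomes, upon restricting to $K^{p,0}$ and using the tensor decomposition of \S~\ref{mainfiltration}, precisely the $(-1)^j$ in $\partial^{sign}_{\p_+}$ coming from the intermediate $\w^j((\n'_-)_\aa\oplus\h_0)$ factor. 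Applying the isomorphism $\mathcal S$ of \eqref{iso} then identifies the induced differential on $Gr$ with $\partial_{\p_-}\otimes\partial^{sign}_{\p_+}=\partial_{p,0}$.

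The main obstacle is the careful sign bookkeeping --- showing that the super-Leibniz signs assemble precisely into the Koszul convention (in particular the $(-1)^j$ in $\partial^{sign}_{\p_+}$) --- and the verification that all ``secondary'' contributions (the $\bar{\bar{\cdot}}$ tails from \eqref{conxtilde}--\eqref{conxbar}, the correction $\bar u\in\ov{\Aa^1}$ in \eqref{G0barh}, the $f$-bracket via \eqref{G0phi}, and the commutators produced when restoring PBW order) actually lie in filtration $\le n$ modulo $\ov{\Aa^{p+1}}$, so that they die on the associated graded. Once this is done, the identification with $\partial_{p,0}$ on $K^{p,0}\otimes\mathcal F_{|C_{diag}}\otimes\w\h_0$ follows directly from the leading-term computation.
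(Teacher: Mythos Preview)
Your approach is essentially the same as the paper's: apply the super-Leibniz rule to a PBW monomial, use \eqref{conxtilde}, \eqref{conxbar}, Lemma~\ref{dizero}, and the vanishing $[(G_{\g,\aa})_0,\bar a]=0$ for $a\in\aa$ to isolate the surviving terms, and recognise the result as the Koszul differential $\partial_{p,0}$. The paper organises the ``secondary contributions'' step you flag as the obstacle by introducing an auxiliary bifiltration $\mathcal W^k_{\mathcal F}(p,n)$ (monomials with $\deg(H+K)\ge j_p$ and $|I|+|H|\le n$) and recording two multiplication rules for it; this is exactly the bookkeeping device that makes your ``reordering into PBW form lands in $\ov{\Aa^{p+1}}$ or in filtration $\le n$'' claims precise, and you would need something equivalent.

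One small point: with your ordering $\bar h^T\tilde y^J\bar y^R f\tilde x^H\bar x^K$ the super-Leibniz sign to reach the $r$-th $\bar y$-factor is $(-1)^{|T|+r-1}$, not $(-1)^{r-1}$; the paper avoids this by placing $\bar h^T$ after $\bar y^J$ in the proof. Either ordering works once the signs are matched against the tensor-product convention defining $\partial_{\p_-}\otimes\partial^{sign}_{\p_+}$, but be careful that the $(-1)^j$ in $\partial^{sign}_{\p_+}$ together with the Künneth sign from $C\otimes D$ absorbs all odd factors between $\bar y_\p$ and $\bar x_\p$, not just those in $(\n'_-)_\aa\oplus\h_0$.
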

\begin{proof}Fix $p\in\nat$. It suffices to prove that,  if 
$z+\ov{\Aa^{p+1}}=\mathcal S(z')$ with $z'\in K^{p}_n\otimes \mathcal
F_{|C_{diag}}\otimes \wedge\h_0$, then  
$[(G_{\g,\aa})_0,z]+\ov{\Aa^{p+1}}=\mathcal S(\partial_{p}(z'))+w$ 
with
$w\in(\ov{\mathcal A^p}/\ov{\mathcal A^{p+1}})_n$.  \par 
Note that, if $w\in\mathcal W^k_{\mathcal F}$, we can find 
$u_1\in\mathcal W^k_{\mathcal F}$ and $u_2\in\ov{\mathcal 
W^k_{\mathcal F}F^{p+1}}$ such that $uw=u_1w+u_2$. This is easily 
checked using \eqref{degfezero}. 

For $n\in\nat$, let $\mathcal W^k_{\mathcal F}(p,n)$ be the span of 
monomials $\tilde
y^I\bar y^J\tilde h^Sf\bar h^T\tilde x^H\bar x^K$, with $\deg(H+K)\ge 
j_p$ and $|I|+|H|\le
n$. Set also $\mathcal W^k_{\mathcal F}(n)=\mathcal W^k_{\mathcal 
F}(0,n)$. The ordinary PBW
theorem for $\mathcal W$ combined with \eqref{prodotto} shows that, 
if $u_1\in\mathcal
W^k_{\mathcal F}(m)$ and $\deg(H+K)\ge j_p$, then
$$
u_1(\tilde y^I\bar y^J\tilde h^Sf\bar h^T\tilde x^H\bar 
x^K)\in\mathcal W^k_{\mathcal F}(p,m+|I|+|H|).
$$
Moreover, if $\deg(u_1)=j_{q_1}$, $\deg(H_1+H_2+K_1+K_2)=j_{q_2}$, 
then 
$$
\tilde y^I\bar y^J\tilde h^Sf\bar h^T\tilde x^{H_1}\bar 
x^{K_1}u_1\tilde x^{H_2}\bar x^{K_2}\in\mathcal W^k_{\mathcal 
F}(q_1+q_2,m+|H_1|+|H_2|+|I|).
$$

We now apply the above observations to the computation of 
$$[(G_{\g,\aa})_0,\tilde y^I\ov y^Jf\ov h^T \tilde x^H\ov x^K]$$
with $\deg(H+K)=j_p$ and $|H|+|I|=n$.
Clearly
\begin{align*}
        &[(G_{\g,\aa})_0,\tilde y^I\ov y^J f\ov h^T \tilde x^H \ov x^K]
=
[(G_{\g,\aa})_0,\tilde y^I]\ov y^J f \ov h^T\tilde x^H \ov x^K+\\
&\tilde y^I[(G_{\g,\aa})_0,\ov y^J] f \ov h^T\tilde x^H \ov x^K+
(-1)^{|J|}\tilde y^I\ov y^J[(G_{\g,\aa})_0, f \ov h^T]
\tilde x^H\ov x^K+\\
&+(-1)^{|J|+|T|}\tilde y^I\ov y^J f \ov h^T[(G_{\g,\aa})_0,\tilde 
x^H] \ov x^K+(-1)^{|J|+|T|}\tilde y^I\ov y^J f \ov
h^T\tilde x^H [(G_{\g,\aa})_0,\ov x^K].
        \end{align*}
 By the above observations, applying formula \eqref{conxtilde}, we 
find that
$$[(G_{\g,\aa})_0,\tilde y^I]\ov y^J f \ov h^T\tilde x^H \ov 
x^K\text{ and }\tilde y^I\ov y^J f \ov h^T[(G_{\g,\aa})_0,\tilde x^H] 
\ov x^K\in \mathcal W^k_{\mathcal F}(p,n)+ \ov{\mathcal W^k_{\mathcal 
F}F^{p+1}},$$
 while Lemma~\ref{dizero} implies that
 $$\tilde y^I\ov y^J[(G_{\g,\aa})_0, f \ov h^T]
\tilde x^H\ov x^K\in\mathcal W^k_{\mathcal F}(p,n)+ \ov{\mathcal 
W^k_{\mathcal F}F^{p+1}}.
$$
It follows that
\begin{align*}
        &[(G_{\g,\aa})_0,\tilde y^I\ov y^J\ov y^S f \ov h^T\tilde x^H\ov x^K]
+\ov{\Aa^{p+1}}\equiv
\tilde y^I[(G_{\g,\aa})_0,\ov y^J] f \ov h^T\tilde x^H\ov x^K+\\
&(-1)^{|J|+|T|}\tilde y^I\ov y^J f\ov h^T \tilde x^H
 [(G_{\g,\aa})_0,\ov x^K]+
\ov{\Aa^{p+1}}\mod (\ov{\Aa^p}/\ov{\Aa^{p+1}})_n.
        \end{align*}
Writing explicitly $\ov x^K=\ov x^{n_1}\cdots\ov x^{n_t}$ and $\ov 
y^J=\ov y^{m_1}\cdots\ov y^{m_t}$ we set
$\ov x^{K_i}=\ov x^{n_1}\cdots\widehat{\ov x^{n_i}}\cdots\ov x^{n_t}$ 
and $\ov y^{J_i}=\ov
y^{m_1}\cdots\widehat{\ov y^{n_i}}\cdots\ov y^{m_t}$. The above 
observations imply, by \eqref{conxbar} and the fact that 
$[(G_{\g,\aa})_0,\bar a]=0$ if $a\in\aa$, that
\begin{align*}
        [(G_{\g,\aa})_0&,\tilde y^I\ov y^J f h^T\tilde x^H\ov 
x^K]+\ov{\Aa^{p+1}}
        \equiv\sum_{i:y^{m_i}\in(\n'_-)_\p}(-1)^i\tilde y^I \tilde 
y^{m_i}\ov y^{J_i}f \ov
h^T\tilde x^H \ov x^K\\
        &+\sum_{i:x^{m_i}\in(\n')_\p}(-1)^{|J|+|T|+i}\tilde y^I\ov y^{J} f 
\ov h^T\tilde x^{n_i} \tilde x^H \ov
x^{K_i}+\ov{\Aa^{p+1}}\mod(\ov{\Aa^p}/\ov{\Aa^{p+1}})_n
        \end{align*}
as wished.
        \end{proof}

We also need the following  (possibly known) fact. It can be proved 
either generalizing Lemmas 4.3, 4.4  in \cite{KacD} to the series
case, 
or extending (as we do in Section~\ref{newKoszul}) the standard 
homotopy argument which proves the
exactness of the Koszul complex. Let $\mathcal E(z_1,\ldots,z_n)$ 
denote the algebra of entire functions in $n$
complex variables
$z_1,\ldots,z_n$. Let $\xi$ be an odd variable and set $\xi_i=z_i\xi$.
\begin{lemma}\label{koszulholomorphic} Fix $h_0\in\C^n$. Consider the 
complex 
$$R^p=\mathcal E(z_1,\ldots,z_n)\otimes \w^p(\xi_1,\dots,\xi_n)$$ 
endowed with the Koszul
 differential \begin{equation}\label{kzd}\partial_{h_0}(f\otimes \xi_1\w\cdots\w 
\xi_p)=\sum_{k=1}^{p}(-1)^k
(z_k- z_k(h_0))f\otimes  \xi_1\w\cdots\w \widehat{\xi_k}\w\cdots\w 
\xi_p.\end{equation}
Then $H_j(R^\bullet)=0$ for $j>0$.  Moreover, given $f\in\mathcal 
E(z_1,\ldots,z_n)$, then $f=f(h_0)+\partial_{h_0}(g)$ with $g\in R^1$.
\end{lemma}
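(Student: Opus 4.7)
The plan is to reduce to the case $h_0=0$ by the translation $z_k \mapsto z_k+z_k(h_0)$, which is an algebra automorphism of $\mathcal E(z_1,\dots,z_n)$ intertwining the Koszul differential $\partial_{h_0}$ with the one, call it $\partial_0$, built from the sequence $(z_1,\dots,z_n)$. Throughout I may thus assume $h_0=0$, so that
$$\partial_0(f\otimes\xi_{i_1}\wedge\cdots\wedge\xi_{i_p})=\sum_{k=1}^p(-1)^k z_{i_k}f\otimes\xi_{i_1}\wedge\cdots\widehat{\xi_{i_k}}\cdots\wedge\xi_{i_p}.$$

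The core analytic input is Hadamard's lemma for entire functions, which I would establish first. Given $f\in \mathcal E(z_1,\dots,z_n)$, set
$$g_k(z) \;=\; \int_0^1 \frac{\partial f}{\partial z_k}(tz)\,dt \qquad(k=1,\dots,n).$$
For each $t\in[0,1]$ the integrand is entire in $z$, and on any compact set of $\C^n$ it is bounded uniformly in $t$, so the integral is a locally uniform limit of Riemann sums of entire functions; hence $g_k\in\mathcal E(z_1,\dots,z_n)$. Differentiating $t\mapsto f(tz)$ and applying the fundamental theorem of calculus gives $f(z)-f(0)=\sum_k z_k g_k(z)$. This already settles the second assertion of the lemma: set $g=-\sum_k g_k\otimes\xi_k\in R^1$, and then $\partial_0(g)=f-f(0)$.

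To prove $H_j(R^\bullet)=0$ for $j>0$ I would show, by induction on $n$, that $(z_1,\dots,z_n)$ is a regular sequence in $\mathcal E(z_1,\dots,z_n)$; the exactness of the Koszul complex on a regular sequence in positive degrees is then a standard fact. The case $n=1$ reduces to the observation that $\mathcal E(z_1)$ is an integral domain. For the inductive step, Hadamard's identity specialized to the single variable $z_1$ (with $z_2,\dots,z_n$ as parameters) gives $f(z_1,\dots,z_n)=f(0,z_2,\dots,z_n)+z_1\cdot h(z_1,\dots,z_n)$ with $h\in\mathcal E$; this shows that the evaluation map $f\mapsto f(0,z_2,\dots,z_n)$ induces an isomorphism $\mathcal E(z_1,\dots,z_n)/(z_1)\xrightarrow{\sim}\mathcal E(z_2,\dots,z_n)$. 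By the inductive hypothesis $(z_2,\dots,z_n)$ is a regular sequence in the target, so $(z_1,\dots,z_n)$ is regular in $\mathcal E(z_1,\dots,z_n)$, as required.

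The main subtlety is the very first step: verifying that the Hadamard coefficients $g_k$ are entire rather than merely smooth. Once this is secured by the uniform-on-compacta argument sketched above, everything else is formal Koszul theory. An alternative, closer to what the paper hints at, would be to avoid the regular-sequence machinery and instead build an explicit contracting homotopy $s:R^p\to R^{p+1}$ out of the Hadamard integrals, analogous to the standard polynomial homotopy $\sum_k\xi_k\partial_{z_k}$ divided by the Euler weight; checking convergence of this $s$ on entire functions would again rest on the same locally-uniform estimates.
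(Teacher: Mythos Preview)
Your proof is correct, but takes a different route from the paper's. The paper proceeds exactly along the alternative you sketch in your final paragraph: after reducing to $h_0=0$, it writes $\partial=\sum_i z_i\,\partial/\partial\xi_i$ and introduces the odd derivation $h=\sum_i \xi_i\,\partial/\partial z_i$, so that $h\partial+\partial h=E+\sum_i\xi_i\,\partial/\partial\xi_i$ with $E$ the Euler operator. On a $p$-cycle $f\otimes u$ with $p>0$ this is $(E+p)f\otimes u$, and the paper inverts $E+p$ explicitly on power series by $\varphi=\sum f_{I}/(|I|+p)\,z^I$, checking entireness by term-by-term comparison of absolute values. Your approach instead establishes Hadamard's lemma via the integral $\int_0^1 (\partial f/\partial z_k)(tz)\,dt$, then invokes the standard commutative-algebra fact that the Koszul complex of a regular sequence is acyclic in positive degrees, proving regularity of $(z_1,\dots,z_n)$ inductively from the isomorphism $\mathcal E(z_1,\dots,z_n)/(z_1)\cong\mathcal E(z_2,\dots,z_n)$. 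Your argument is more conceptual and packages the analysis into a single clean step (entireness of the Hadamard coefficients), at the cost of importing the regular-sequence/Koszul machinery; the paper's is more self-contained and hands-on, giving the homotopy explicitly.
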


If $x^j=x_r$ with $x\in\aa^{\ov r}$, then we set $\tilde 
x^j_\aa=(\tilde x)_{\aa r}$ and
 $\theta(x^j)=\theta(x)_r$ ($\theta$ is as in \eqref{thetaref}). For 
a multi-index $J=(j_1,j_2,\dots)$ 
we set $\tilde x^J_\aa=x^{j_1}_\aa x^{j_2}_\aa\dots$. We define 
$y^j_\aa$ and $y^J_\aa$ similarly.
We shall need the following technical lemma, whose proof is in 
Section~\ref{lemmarestrizione}.
\begin{lemma}\label{restriction}

\item\label{restriction2} If $\deg(H+K)=j_p$ and $|J+H|=n$ then
        $$\ov y^I\tilde y^J_\aa f\ov h^T\tilde x^H_\aa\ov x^K=\ov y^I\tilde 
y^Jf\ov h^T\tilde x^H\ov x^K+u$$
        with $u\in \ov{\Aa^p}$ and 
$u+\ov{\Aa^{p+1}}\in\left(\ov{\Aa^p}/\ov{\Aa^{p+1}}\right)_{n-1}$.

\end{lemma}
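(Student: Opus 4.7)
The plan is to expand $\tilde x^H_\aa \tilde y^J_\aa$ using the identity $\tilde x^j_\aa = \tilde x^j + \theta(x^j)$ (and the analogous identity for $y^j$), cf.\ \eqref{thetaref}, and to track the ``cost'' of each $\theta$-insertion in the two filtrations: the $\Aa^{\bullet}$ filtration (degree of positive modes) and the $(\Aa^p/\Aa^{p+1})_{\bullet}$ filtration (count of $\tilde{}$ factors). First I would write
$$
\tilde x^H_\aa\,\tilde y^J_\aa \;=\; \sum_{S\subset H\sqcup J}\Bigl(\prod_{i\in S}\theta(x^{j_i})\Bigr)\Bigl(\prod_{i\notin S}\tilde x^{j_i}\Bigr),
$$
(interpreting each factor in its own location) so that the $S=\emptyset$ term gives $\tilde x^H\tilde y^J$ exactly and all other terms contain at least one $\theta(x^{j_i})$ with $x^{j_i}\in\aa$. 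Any ambiguity in the ordering of non-commuting $\tilde{}$'s can be absorbed into lower-filtration terms by induction, since the commutators in \eqref{brachkett} contribute an element of $\mathcal F$ times a strictly shorter monomial.

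Next I would verify the two required properties for each error term. Since $\theta(x^j)\in F(\ov\p)$ has the same $\deg$ as $\tilde x^j$ (the grading is compatible with the shift $\tilde x\mapsto \tilde x + \theta(x)$, as $\theta$ is built from $\ov{[x,b_i]}\ov b^i$ which has the same $\hat f_\aa$-weight as $x$), the total ``positive degree'' of the monomial after substitution is still $\ge j_p$. However, $\theta(x^j)_r=\tfrac12\sum_l :\overline{[x^j,b_l]}\,\ov b^l:_r$ expanded via \eqref{tensore} contains both creation and annihilation modes. The creation-creation (i.e.\ $\ov\n'\cdot\ov\n'$) part sits directly inside $\mathcal W^+$, hence, combined with the residual $\ov x^K$ of degree $\ge j_p-\deg(x^j)$, lies in the desired $\mathcal W^k_{\mathcal F}(F^p\otimes\mathcal F)$. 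For the mixed and annihilation-annihilation parts, I would commute the offending negative modes to the right past $\tilde x^H$ and $\ov x^K$ using \eqref{brachkett}: each commutator is scalar-valued (in $\mathcal F$ after using the relation \eqref{prodotto}) and strictly reduces the number of $\tilde{}$ or $\ov{}$ factors, landing in $\ov{\mathcal W^k_{\mathcal F}F^{p+1}}+ \mathcal I$, which is harmless for the claim. An entirely analogous argument on the left handles commuting negative modes past $\ov y^I$.

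Finally, I would check the filtration count: each replacement $\tilde x^{j_i}\to \theta(x^{j_i})$ diminishes $|J+H|$ by one (since $\theta(x^{j_i})$ is purely fermionic and contributes no $\tilde{}$ factor), and commutation corrections only decrease the count further. Hence every term in $u$ has $\le n-1$ factors of type $\tilde{}$, giving $u+\ov{\Aa^{p+1}}\in (\ov{\Aa^p}/\ov{\Aa^{p+1}})_{n-1}$.

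The main obstacle is the bookkeeping in the middle step: making sure that the negative-mode parts of $\theta(x^{j_i})$, when transported across $\tilde x^H$, $\ov h^T$, and $\ov x^K$ to be placed against the vacuum side, yield only contributions in $\ov{\Aa^p}$ with the claimed filtration bound. One must rule out, via the compatibility of $\deg$ with \eqref{ass} and the definition of $j_p$ as the minimum of a discrete well-ordered set, that any commutation creates a term of total positive degree strictly below $j_p$; such terms cannot occur precisely because $\theta(x^{j_i})_r$ has degree exactly $\deg(x^{j_i})$ and all commutators of modes in $\mathcal W$ preserve or raise $\deg$.
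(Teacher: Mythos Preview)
Your core idea---expand each $\tilde x^j_\aa$ as $\tilde x^j+\theta(x^j)$ and track the two filtrations---is exactly what the paper does. However, the organization and some of the justifications differ, and one of your concerns is a red herring.

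The paper proceeds by a two-step induction rather than a full binomial expansion. First it proves by induction on $|I|$ that $\tilde x^I_\aa=\tilde x^I+u$ with $u$ of the required shape (this handles the $\tilde x^H_\aa$ factor alone); then it proves the full statement by induction on $|J|$, peeling off one $\tilde y^{j_1}_\aa$ at a time. This avoids ever having to deal with a product of several $\theta$'s simultaneously, which in your approach would be a product of several infinite series and would require a separate convergence argument. The paper's control over the series comes from writing $\theta(x^{i_1})=\sum_{i\ge i_0}\theta_i$ with $\theta_i=\sum k^i_{J,T,K}\ov y^J\ov h^T\ov x^K$, $\deg(K)=j_i$, and then invoking Remark~\ref{forrestricting} to conclude that certain cross-terms vanish below level $p$.

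Your ``main obstacle'' paragraph overcomplicates the commutation issue. The crucial fact you are missing is \eqref{tre}: $[\ov a_\lambda\tilde b]=0$, hence all modes $\ov a_r$ commute with all modes $\tilde b_s$. So the $\ov y^J$ and $\ov x^K$ pieces of $\theta(x^{j_i})$ slide freely past all the $\tilde{}$ factors; no ``scalar-valued commutator'' analysis is needed there. The only nontrivial moves are past $f$ (which produces a translate by \eqref{prodotto}) and past $\ov h^T$, where orthogonality of $\n$ and $\h_0$ kills the Clifford commutator. This is why the paper can assert directly, for the $\tilde y^J_\aa$ step, that $\ov y^I\theta(y^{j_1})\tilde y^{J_0}f\ov h^T\tilde x^H\ov x^K+\ov{\Aa^{p+1}}=\ov y^I\theta_0\tilde y^{J_0}f\ov h^T\tilde x^H\ov x^K+\ov{\Aa^{p+1}}$: the higher $\theta_m$ contribute extra positive-degree $\ov x$ factors that push the term into $\ov{\Aa^{p+1}}$.

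Finally, your phrase ``landing in $\ov{\mathcal W^k_{\mathcal F}F^{p+1}}+\mathcal I$'' is misleading: the error term $u$ is claimed to lie in $\ov{\Aa^p}$, not $\ov{\Aa^{p+1}}$, and your argument needs to establish exactly that. The paper does this by showing that after substitution the positive degree of each resulting monomial is $\deg(H\setminus\{j_i\})+\deg(K')+\deg(K)\ge\deg(H+K)=j_p$, using $\deg(K')\ge\deg(x^{j_i})$.
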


The main use we will make of Lemma~\ref{restriction2} is highlighted 
in the following Corollary.
\begin{cor}\label{restrictionKoszul}
If $a=p\otimes f\otimes \w^T h$ with $p\in (K^{p,0}_{n})_\aa$, 
$f\in(\mathcal F_{|C_{diag}}\otimes\w\h_\p)$, and $\w^T h\in\w\h_\aa$ 
then
$$
\bar d_p(\mathcal S(a))=\mathcal S(p\otimes 
\partial_{-(\rho_\s)_{|\h_\p}} (f)\otimes\w^T h)+\bar d_p(v)+v'
$$
with $v,v'\in(\ov{\mathcal
        A^p_{inv}}/\ov{\mathcal A^{p+1}_{inv}})_{n-1}$. (See \eqref{kzd} Êfor the definition
of $\partial_{-(\rho_\s)_{|\h_\p}}$). 
\end{cor}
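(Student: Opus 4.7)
The strategy is a careful super-Leibniz expansion of $\ov d_p(\mathcal S(a)) = [(G_{\g,\aa})_0, \cdot\,]$ in which the key Koszul-type contribution is isolated.

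First, I normalize the representative of $\mathcal S(a)$. Since $p \in (K^{p,0}_n)_\aa$, write $p = \sum y^I x^H \otimes \w^L y \w^K x$ with all root vectors in $\aa$, so that $\ov y^L \in \w(\n'_-)_\aa$ and $\ov x^K \in \w \n'_\aa$. Decompose $f = g\, \ov h^U$ with $g \in \mathcal F_{|C_{diag}}$ and $\ov h^U \in \w\h_\p$. By Lemma~\ref{restriction2}, one may substitute $\tilde y^J, \tilde x^H$ by their $\aa$-versions $\tilde y^J_\aa, \tilde x^H_\aa$ modulo $(\cdot)_{n-1}$; since $\ov d_p$ preserves the filtration up to one step, the replacement error contributes a term of the form $\ov d_p(v_0)$ with $v_0 \in (\cdot)_{n-1}$.

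Next, I expand $\ov d_p$ on the normalized monomial $\ov y^L \tilde y^J_\aa\, g\, \ov h^U \ov h^T\, \tilde x^H_\aa \ov x^K$ by super-Leibniz. Two groups of contributions vanish identically: (i) for $\ov y^L$ and $\ov x^K$, any $\aa$-root vector $a$ satisfies $[\ov a_\la G_{\g,\aa}] = [\ov a_\la G_\g] - [\ov a_\la G_\aa] = a - a = 0$ by \eqref{abarG} applied in both $V^{k+g,1}(R^{super})$ and its subalgebra $V^{k+g,1}(R^{super}(\aa))$; (ii) for $\ov h^T$ with $h \in \h_\aa$, a direct Wick computation on the expression for $G_{\g,\aa}$ given by Lemma~\ref{gsup} yields $[\ov h_\la G_{\g,\aa}] = 0$, since $(h, \p) = 0$ kills every $\lambda$-bracket with $\ov b_i$, $\ov b^j$, and $\ov{[b_i,b_j]}_\p$. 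The main contribution thus comes from bracketing with $g\, \ov h^U$: \eqref{G0phi} gives $[(G_{\g,\aa})_0, g] \in \ov{\Aa^1}$, and \eqref{G0barh} yields
$$[(G_{\g,\aa})_0, \ov h^U] \equiv \sum_j (-1)^j\bigl(\tilde h^{u_j}_0 + \rho_\s(h^{u_j})\bigr)\, \ov h^{U_j} \pmod{\ov{\Aa^1}}.$$
Since $L(h)-R(h) \in \mathcal I$, the coefficient $\tilde h^{u_j}_0 + \rho_\s(h^{u_j})$ descends in $\Aa$ to multiplication by the coordinate function $h^{u_j} - h^{u_j}(-(\rho_\s)_{|\h_\p})$; together with $\ov h^{U_j} \ov h^T$, this is precisely the Koszul coboundary $\partial_{-(\rho_\s)_{|\h_\p}}(f) \otimes \w^T h$ from Lemma~\ref{koszulholomorphic}, producing the desired main term $\mathcal S(p \otimes \partial_{-(\rho_\s)_{|\h_\p}}(f) \otimes \w^T h)$.

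The principal obstacle is showing that the remaining Leibniz contributions, namely those from $[(G_{\g,\aa})_0, \tilde y^J_\aa]$ and $[(G_{\g,\aa})_0, \tilde x^H_\aa]$, together with the $\ov{\Aa^1}$-errors coming from $[(G_{\g,\aa})_0, g]$ and from $\ov u$ in \eqref{G0barh}, all lie in $\ov d_p((\cdot)_{n-1}) + (\cdot)_{n-1}$. For the bracket terms, \eqref{conxtilde} applied to $a \in \aa$ has its $k$-anomaly contribution $rk(\ov a_\p)^\tau$ vanishing, leaving only insertions of $\widetilde{[a, b_i]}\, \ov b^i$ with $[a, b_i] \in \p$; upon PBW normal ordering and a further application of Lemma~\ref{restriction2}, these insertions produce elements of $\ov{\Aa^p}/\ov{\Aa^{p+1}}$ whose associated graded representatives are exactly $\partial_{p,0}$-images of symmetric-degree $n-1$ monomials, which by \eqref{coomologiazero} are coboundaries; lifting these coboundaries back via $\mathcal S$ yields the required $v \in (\cdot)_{n-1}$ so that the contributions equal $\ov d_p(v)$ modulo $(\cdot)_{n-1}$. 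A parallel filtration-degree bookkeeping, using that the action of an $\ov{\Aa^1}$-element inserted into the interior of the monomial involves at least one nontrivial contraction with $\tilde y$ or $\tilde x$, absorbs the residual $\ov{\Aa^1}$-corrections into $v' \in (\cdot)_{n-1}$.
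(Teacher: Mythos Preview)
Your argument contains a genuine gap: you have overlooked the observation that makes the proof immediate, and your attempted workaround is not correct.

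The key point you miss is that $[(G_{\g,\aa})_0,(\tilde a)_{\aa\,r}]=0$ for every $a\in\aa$. Indeed, for $a\in\aa$ one has $[a_\la G_{\g,\aa}]=[a_\la G_\g]-[a_\la G_\aa]=\la(k+g)\ov a-\la(k+g)\ov a=0$ by \eqref{aG} applied to both $\g$ and $\aa$; you already checked $[\ov a_\la G_{\g,\aa}]=0$. Since $(\tilde a)_\aa=a-\tfrac12\sum_i:\ov{[a,a_i]}\,\ov a_i:$ is built from $a$ and from $\ov c$ with $c\in\aa$ via normal ordering, the right Wick formula gives $[(\tilde a)_\aa{}_\la G_{\g,\aa}]=0$. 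This is precisely the assertion $[(G_{\g,\aa})_0,\ov{\Aa(\aa)}]=0$ recorded just before Proposition~\ref{H(G)graded=H(a)}. Consequently, after the normalization via Lemma~\ref{restriction}, the super-Leibniz expansion has \emph{only} the contribution from $[(G_{\g,\aa})_0,g\,\ov h^U]$; there are no ``remaining Leibniz contributions'' from $\tilde y^J_\aa$ or $\tilde x^H_\aa$ to control. The paper's proof then applies Lemma~\ref{dizero} to this single bracket, obtaining the Koszul term plus an $\ov{\Aa^1}$-error which, sandwiched against $\tilde x^H_\aa\ov x^K$ with $\deg(H+K)=j_p$, lands in $\ov{\Aa^{p+1}}$ and hence vanishes in the quotient; a second application of Lemma~\ref{restriction} converts back and produces $v'$.

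Your attempted treatment of $[(G_{\g,\aa})_0,\tilde y^J_\aa]$ and $[(G_{\g,\aa})_0,\tilde x^H_\aa]$ is also internally inconsistent: formula \eqref{conxtilde} computes $[\tilde a_{(n)},(G_{\g,\aa})_0]$, not $[(\tilde a)_{\aa\,(n)},(G_{\g,\aa})_0]$, so it does not apply after you have passed to the $\aa$-versions. The subsequent claim that the resulting insertions are ``exactly $\partial_{p,0}$-images of symmetric-degree $n-1$ monomials'' is unjustified and would require substantial further argument. Finally, your mechanism for absorbing the $\ov{\Aa^1}$-errors is wrong: they do not disappear via contractions with $\tilde y$ or $\tilde x$, but simply because an $\ov{\Aa^1}$-element of $\deg$ zero multiplied against a monomial with $\deg(H+K)=j_p$ lies in $\ov{\Aa^{p+1}}$.
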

\begin{proof}
Write explicitly $\mathcal S(a)$ as a sum of terms of type $\ov 
y^I\tilde y^Jf\ov h^T\ov h^S\tilde x^H\ov x^K+\ov{\Aa^{p+1}}$ with 
$f\in\mathcal F_{|C_{diag}}$, $\w^T h\in \w\h_\p$, and $\w^S h\in 
\w\h_\aa$.
 Then, by  Lemma~\ref{restriction}, we have that $\ov y^I\tilde 
y^Jf\ov h^T\ov h^S\tilde x^H\ov
x^K+\ov{\Aa^{p+1}}=\ov y^I\tilde y^J_\aa f\ov h^T\tilde x^H_\aa\ov 
x^K+\ov{\Aa^{p+1}}+v$, with $v\in(\ov{\mathcal
        A^p}/\ov{\mathcal A^{p+1}})_{n-1}$. If $x\in\aa$ then  
$[d_\aa,(\tilde x)_{\aa r}]=r(\tilde x)_{\aa r}$, hence we have that 
$[d_\aa,\ov y^I\tilde y^J_\aa f\ov h^T\tilde x^H_\aa\ov x^K]=0$, thus 
$v\in(\ov{\mathcal
        A^p_{inv}}/\ov{\mathcal A^{p+1}_{inv}})_{n-1}$. Hence $\ov 
d_p(\mathcal S(a))$ is a sum of terms of type
$\ov y^I\tilde y^J_\aa [(G_{\g,\aa})_0,f\ov h^T]\ov h^S\tilde 
x^H_\aa\ov x^K+\ov{\Aa^{p+1}}+\ov d_p(v)$. By Lemma~\ref{deDp} we 
obtain that 
$\ov d_p(\mathcal S(a))$ is a sum of terms of type $\ov
y^I\tilde y^J_\aa \partial_{-(\rho_\s)_{|\h_\p}}(f\ov h^T)\ov 
h^S\tilde x_\aa^H\ov x^K+\ov{\Aa^{p+1}}+\ov d_p(v)$.
Applying Lemma~\ref{restriction}  again, we find that $\ov 
d_p(\mathcal S(a))$ is a sum of terms of type $\ov
y^I\tilde y^J \partial_{-(\rho_\s)_{|\h_\p}}(f\ov h^T)\ov h^S\tilde 
x^H\ov x^K+\ov{\Aa^{p+1}}+\ov d_p(v)+v'$, with
$v,v'\in(\ov{\mathcal
        A^p_{inv}}/\ov{\mathcal A^{p+1}_{inv}})_{n-1}$.
\end{proof}
\vskip10pt
        Let $\mathcal F_\aa$  be the set of holomorphic functions on
$\h_\aa^*\oplus\C\d_\aa$ viewed as a subset of $\mathcal F$. Consider 
the
subalgebra $\Aa(\aa)$ of $\ov\Aa$ generated by $\{a_r,\ov a_r\mid
a\in\aa\}\cup\mathcal F_\aa$. Set $\Aa^p(\aa)=\Aa^p\cap\Aa(\aa)$ and
$\ov{\Aa^p_{inv}(\aa)}=\ov{\Aa^p(\aa)}\cap\ov{\Aa}^{inv}$. Notice
that $[(G_{\g,\aa})_0,\ov{\Aa(\aa)}]=0$.

        \begin{prop}\label{H(G)graded=H(a)} The embedding  
$\ov{\Aa^p_{inv}(\aa)}\to \ov{\Aa^p_{inv}}$
        induces an isomorphism 
        \begin{equation}\label{e01}
        \ov{\Aa^p_{inv}(\aa)}\big/\ov{\Aa^{p+1}_{inv}(\aa)}\simeq H(\ov d_p).
\end{equation}
\end{prop}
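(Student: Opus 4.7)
The strategy is to set up a double (Koszul) filtration argument, identifying $H(\ov d_p)$ via a spectral sequence whose $E_2$-page matches the independently computed description of $\ov{\Aa^p_{inv}(\aa)}/\ov{\Aa^{p+1}_{inv}(\aa)}$.

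First I would introduce the natural map
\[
\Phi:\ov{\Aa^p_{inv}(\aa)}\big/\ov{\Aa^{p+1}_{inv}(\aa)}\longrightarrow H(\ov d_p),\qquad x+\ov{\Aa^{p+1}_{inv}(\aa)}\mapsto\bigl[x+\ov{\Aa^{p+1}_{inv}}\bigr],
\]
which is well-defined precisely because $[(G_{\g,\aa})_0,\ov{\Aa(\aa)}]=0$, so every element of $\ov{\Aa^p(\aa)}$ is automatically a $\ov d_p$-cycle, and those in $\ov{\Aa^{p+1}(\aa)}$ die in $\ov{\Aa^{p}_{inv}}/\ov{\Aa^{p+1}_{inv}}$. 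The goal is to show $\Phi$ is bijective.

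To compute $H(\ov d_p)$, I would run the spectral sequence of the filtration $(\ov{\Aa^p_{inv}}/\ov{\Aa^{p+1}_{inv}})_n$ from \eqref{filtr2}. Lemma~\ref{deDp} together with the isomorphism \eqref{iso} identifies the $E_0$-page with $K^{p,0}\otimes\mathcal F_{|C_{diag}}\otimes\wedge\h_0$ and the $d_0$-differential with the Koszul operator $\partial_{p,0}$. By \eqref{coomologiazero} this Koszul complex is exact outside of degree zero, giving
\[
E_1\;=\;K^{p,0}_\aa\otimes\mathcal F_{|C_{diag}}\otimes\wedge\h_0.
\]
The induced $d_1$ on $E_1$ is computed by Corollary~\ref{restrictionKoszul}: writing $\wedge\h_0=\wedge\h_\p\otimes\wedge\h_\aa$, the operator $\ov d_p$ modulo the next filtration step acts on representatives of $\aa$-type as the Koszul differential $\partial_{-(\rho_\s)_{|\h_\p}}$ on the factor $\mathcal F_{|C_{diag}}\otimes\wedge\h_\p$, and trivially on the other tensor factors. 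Applying Lemma~\ref{koszulholomorphic} in the $\h_\p$-direction yields that this complex is exact in positive degree and that its degree-zero cohomology is obtained by setting $\L_{|\h_\p}=-(\rho_\s)_{|\h_\p}$. Combined with the defining linear relation on $C_{diag}$, this evaluation expresses the surviving functions purely in terms of the $\h_\aa^*\oplus\C\d_\aa$ coordinates, so
\[
E_2\;=\;K^{p,0}_\aa\otimes\mathcal F_\aa\otimes\wedge\h_\aa.
\]

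On the other hand, the analog of Lemma~\ref{Kpisom} applied inside the subalgebra $\Aa(\aa)$ (whose generators $\tilde a_r,\ov a_r$ for $a\in\aa$, $f\in\mathcal F_\aa$, and $\ov h$ for $h\in\h_\aa$ are exactly the $\aa$-type generators) together with passage to the $d_\aa$-invariants (as in \eqref{Kpzero}) furnishes an isomorphism
\[
\ov{\Aa^p_{inv}(\aa)}\big/\ov{\Aa^{p+1}_{inv}(\aa)}\;\cong\;K^{p,0}_\aa\otimes\mathcal F_\aa\otimes\wedge\h_\aa,
\]
which is exactly $E_2$. I would then check that under these identifications, $\Phi$ coincides with the edge map of the spectral sequence to $E_2$, so that collapsing at $E_2$ (no room for further differentials, since $E_2$ is concentrated in a single bidegree after the two Koszul reductions) yields the desired bijectivity.

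The main obstacle is the convergence/validity of the spectral sequence argument inside the completion $\ov\Aa$, where the $n$-filtration is unbounded; in practice one has to prove surjectivity and injectivity of $\Phi$ by hand, by successive approximation. Concretely, for surjectivity one takes a $\ov d_p$-cycle $z$ and, working inductively downward in $n$, uses the exactness of $\partial_{p,0}$ and then of $\partial_{-(\rho_\s)_{|\h_\p}}$ (via Lemmas \ref{deDp}, \ref{koszulholomorphic} and Corollary~\ref{restrictionKoszul}) to subtract a $\ov d_p$-coboundary that kills the top $\p$-components and then the top $\wedge\h_\p$-components, eventually landing in $\ov{\Aa^p_{inv}(\aa)}$. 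Injectivity is the dual construction: given $x\in\ov{\Aa^p_{inv}(\aa)}$ with $x+\ov{\Aa^{p+1}_{inv}}=\ov d_p(y)$, one inductively trims $y$ along the same two Koszul reductions so that $x$ ends up in $\ov{\Aa^{p+1}_{inv}(\aa)}$. The hardest part is to ensure that the sequence of corrections converges within the topology given by $\{\ov{\Aa^p_{inv}}\}$; this is where the degree bookkeeping via \eqref{gradoprincipale}--\eqref{defjp} and Remark~\ref{forrestricting} enter, guaranteeing that each correction strictly increases the filtration index $p$ or the Koszul parameter, so that the infinite process defines an element of the completion.
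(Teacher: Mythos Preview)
Your overall strategy is right and coincides with the paper's approach: a two-stage reduction using first the Koszul complex on the $\p$-part of $S\otimes\wedge$ (via Lemma~\ref{deDp} and \eqref{coomologiazero}) and then the holomorphic Koszul complex on $\mathcal F_{|C_{diag}}\otimes\wedge\h_\p$ (via Lemma~\ref{koszulholomorphic} and Corollary~\ref{restrictionKoszul}), carried out by induction on the $n$-filtration. The paper does exactly this, without the spectral-sequence packaging: for surjectivity one peels off the top $n$-layer with $\partial_{p,0}$, then further reduces the surviving $\aa$-piece with $\partial_{-(\rho_\s)_{|\h_\p}}$, and applies induction on $n$; injectivity is the dual descent.

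One point of your spectral-sequence framing is imprecise: what you call $d_1$ is not the $d_1$ of the $n$-filtration. Corollary~\ref{restrictionKoszul} says that on an $\aa$-type representative in $(K^{p,0}_n)_\aa$ the operator $\ov d_p$ acts as $\partial_{-(\rho_\s)_{|\h_\p}}$ and stays in the \emph{same} $n$-degree, with corrections in $(\ov{\Aa^p_{inv}}/\ov{\Aa^{p+1}_{inv}})_{n-1}$; it does not shift $n$ the way a spectral-sequence $d_1$ would. So the two Koszul reductions are not successive pages of one spectral sequence for the $n$-filtration, but rather two separate exactness inputs used at each stage of the same induction on $n$. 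This is cosmetic for the argument, but worth getting straight.

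The substantive correction concerns your last paragraph. There is no convergence issue in this proposition, and the filtration index $p$ does not move. By Lemma~\ref{Kpisom} (see the first part of its proof), every class in $\ov{\Aa^p}/\ov{\Aa^{p+1}}$ is represented by a \emph{finite} sum of monomials with $\deg(H+K)=j_p$, hence lies in some $(\ov{\Aa^p}/\ov{\Aa^{p+1}})_n$. The $n$-filtration is therefore exhaustive and bounded below by $-1$, so the induction on $n$ terminates in finitely many steps; no limiting process or appeal to the topology $\{\ov{\Aa^p_{inv}}\}$ is needed. You have conflated this proposition with the next step (Corollary~\ref{fr}), where one \emph{does} pass to an infinite series in $p$ and convergence in the $\ov{\Aa^p}$-topology is the point. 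Here, keep $p$ fixed and induct on $n$; the paper's argument is a finite, not a transfinite, induction.
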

        \begin{proof}We will  show by induction on $n$ that, if $x\in Ker\, 
\ov d_p\cap (\ov{\mathcal
        A^p_{inv}}/\ov{\mathcal A^{p+1}_{inv}})_n$, then there
 is $a\in \ov{\Aa^p_{inv}(\aa)}\big/\ov{\Aa^{p+1}_{inv}(\aa)}$ and 
$b\in
\ov{\Aa^p_{inv}}\big/\ov{\Aa^{p+1}_{inv}}$ such that  $x=a+\ov 
d_p(b)$.

 We set $(\ov{\mathcal
        A^p}/\ov{\mathcal A^{p+1}})_{-1}=\{0\}$ so the result is obvious for 
$n=-1$. If $n\ge0$ we have that 
        $$
x=\sum_i \mathcal S(u_i)+x',
        $$
with $u_i\in K^{p,0}_n\otimes\mathcal F_{|C_{diag}}\otimes\w\h_0$ and 
$x'\in(\ov{\mathcal
        A^p_{inv}}/\ov{\mathcal A^{p+1}_{inv}})_{n-1}$. 
        By Lemma~\ref{deDp}, $\ov d_p(x)=\sum_i\mathcal 
S(\partial_{p,0}(u_i))+x''$ with $x''\in (\ov{\mathcal
        A^p_{inv}}/\ov{\mathcal A^{p+1}_{inv}})_{n}$ so, since $\ov 
d_p(x)=0$, $\sum_i\mathcal
S(\partial_{p,0}(u_i))\in(\ov{\mathcal
        A^p_{inv}}/\ov{\mathcal A^{p+1}_{inv}})_{n}$. Since
 $\partial_{p,0}(u_i)\in K^{p,0}_{n+1}\otimes\mathcal 
F_{|C_{diag}}\otimes\h_0$, we have that
$\partial_{p,0}(u_i)=0$. By \eqref{coomologiazero}, we have that
$u_i=a_i+\partial_{p,0}(u'_i)$ with $a_i\in 
K^{p,0}_\aa\otimes\mathcal F_{|C_{diag}}\otimes\w\h_0$ and  
$u'_i\in \oplus_{m<n}K^{p,0}_{m}\otimes\mathcal 
F_{|C_{diag}}\otimes\w\h_0$. Set $b'=\sum \mathcal S(u'_i)$.
It follows that 
        $$\ov d_p(b')=\sum \mathcal S(\partial_{p,0}(u'_i))+b''=\sum 
\mathcal S(u_i)+b''-\sum \mathcal S(a_i)$$
        with $b''\in(\ov{\mathcal
        A^p_{inv}}/\ov{\mathcal A^{p+1}_{inv}})_{n-1}$. Hence $x=\sum 
\mathcal S(a_i)+\ov d_p(b')+u$ with
$u\in(\ov{\mathcal
        A^p_{inv}}/\ov{\mathcal A^{p+1}_{inv}})_{n-1}$.

        It follows that, if we write explicitly
        $a_i=\sum_Tp_i\otimes f^i_{T}\otimes\wedge^T h$
with $p_i\in (K^{p,0}_n)_\aa$, $f^i_{T}\in\mathcal 
F_{|C_{diag}}\otimes\w\h_\p$, $\wedge^T h\in\wedge \h_\aa$, then, by 
Corollary~\ref{restrictionKoszul}, 
\begin{align*}
        \ov d_p(\sum \mathcal S(a_i))=\sum \mathcal
S(p_i\otimes\partial_{-(\rho_\s)_{|\h_\p}}(f^i_{T})\otimes\wedge^T
h)\!+\ov d_p(v)+v'
\end{align*}
with $v,v'\in(\ov{\mathcal
        A^p_{inv}}/\ov{\mathcal A^{p+1}_{inv}})_{n-1}$. 
        
        Since $\ov d_p(x)=0$ we see that $\ov d_p(u)=-\ov d_p(\sum \mathcal 
S(a_i))$, so 
        \begin{align*}
        \sum \mathcal
S(p_i\otimes\partial_{-(\rho_\s)_{|\h_\p}}(f^i_{T})\otimes\wedge^T
h)=\ov d_p(-v-u)-v'.
\end{align*}
 Since $-v-u\in(\ov{\mathcal
        A^p_{inv}}/\ov{\mathcal A^{p+1}_{inv}})_{n-1}$, by Lemma~\ref{deDp}, 
$\ov d_p(-v-u)=\sum \mathcal
S(x_i)+u'$ with $x_i\in \sum_{\substack{r+s=p\\ m>0}}K^r_\aa\otimes 
(K^s_\p\cap K^s_m)\otimes\mathcal
F_{|C_{diag}}\otimes\h_0$ and $u'\in(\ov{\mathcal
        A^p}/\ov{\mathcal A^{p+1}})_{n-1}$, thus, comparing terms, we find
        $$
\partial_{-(\rho_\s)_{|\h_\p}}(f^i_{T})=0.
        $$
        Using Lemma~\ref{koszulholomorphic} we can find $g^i_T\in\mathcal 
F_{|C_{diag}}\otimes \wedge \h_\p$ such that 
$(f^i_{T})=F^i_{T}+\partial_{-(\rho_\s)_{|\h_\p}}(g^i_T)$. Here, if 
$f^i_T=\sum_L f^i_{T,L}\otimes\w^L h\in\mathcal 
F_{|C_{diag}}\otimes\w \h_\p$,  $F^i_{T}$ is the holomorphic function 
on $C_{diag}$ defined, for $\l\in \h_\p^*$ and 
$\mu=\ov\mu+\ell\d_\aa\in\h_\aa^*\oplus\C\d_\aa$, by 
$$F^i_T(\l+\ov\mu+\ell\d+\rho_{\s\aa}-(\rho_\s)_{|\h_\aa},\mu)=f^i_{T,0}(\ov\mu+\ell\d+\rho_{\s\aa}-\rho_\s,\mu).       
$$
        
Hence, by Corollary~\ref{restrictionKoszul}, 
$$\sum \mathcal S(a_i)=\sum \mathcal S(p_i\otimes F^i_T\otimes 
\wedge^T h)+\ov d_p(\sum \mathcal
S(p_i\otimes g^i_T \otimes\wedge^T h)+w)+w'
$$ with $w,w'\in (\ov{\mathcal
        A^p_{inv}}/\ov{\mathcal A^{p+1}_{inv}})_{n-1}$. Let $\ov 
F^i_T\in\mathcal F_\aa$  be the
function defined by $\ov 
F^i_T(\mu)=f^i_{T,0}(\ov\mu+\ell\d+\rho_{\s\aa}-\rho_\s,\mu)$. 
Since $(\ov F^i_T)_{|C_{diag}}=F^i_T$ we obtain that $\sum \mathcal 
S(a_i)=\sum \mathcal
S(p_i\otimes \ov F^i_T\otimes \wedge^T h)+\ov d_p(\sum \mathcal 
S(p_i\otimes
g^i_T\otimes\wedge^T h)+w)+w'$. Writing explicitly
$\mathcal S(p_i\otimes \ov F^i_T\otimes \wedge^T h)$  as  a sum of 
terms of type $\ov y^I\tilde y^J\ov F^i_T\ov h^T\tilde x^H\ov
x^K+\ov{\Aa^{p+1}}$ we can apply Lemma~\ref{restriction} and write 
$\mathcal S(p_i\otimes \ov F^i_T\otimes
\wedge^T h)$ as a sum of terms $\ov y^I\tilde y^J_\aa \ov F^i_T\ov 
h^T\tilde x^H_\aa\ov x^K+\ov{\Aa^{p+1}}+y$ with
$y\in(\ov{\mathcal
        A^p_{inv}}/\ov{\mathcal A^{p+1}_{inv}})_{n-1}$. The final outcome is 
that
        $
        \sum \mathcal S(a_i)=a'+\ov d_p(z)+z'
        $
        with $a'\in\ov{\Aa^p_{inv}(\aa)}/\ov{\Aa^{p+1}_{inv}(\aa)}$, 
$z'\in(\ov{\mathcal
        A^p_{inv}}/\ov{\mathcal A^{p+1}_{inv}})_{n-1}$, hence 
        $$
        x=a'+\ov d_p(b'+z)+u+z'.
        $$
        Since $\ov d_p(x)=0$ we see that $\ov d_p(u+z')=0$, so, by the 
induction hypothesis, there is 
$a''\in\ov{\Aa^p_{inv}(\aa)}/\ov{\Aa^{p+1}_{inv}(\aa)}$ and 
$b'''\in(\ov{\mathcal
        A^p_{inv}}/\ov{\mathcal A^{p+1}_{inv}})$ such that $u+z'=a''+\ov 
d_p(b''')$, therefore, setting $a=a'+a''$ and $b=b'+z+b'''$ we have 
that $x=a+\ov d_p(b)$.
        
        We now prove by induction on $n$, that, 
if $a\in(\ov{\Aa^p_{inv}(\aa)}/\ov{\Aa^{p+1}_{inv}(\aa)})\cap Im\,\ov 
d_p$ and
$a\in(\ov{\Aa^p_{inv}}/\ov{\Aa^{p+1}_{inv}})_n$ then $a=0$. First of 
all observe that $\ov{\Aa(\aa)}$ is the closure of the
algebra generated by $(\tilde x)_{\aa r}, \ov x_r$, $f$ with 
$x\in\aa^{\ov r}$ and $f\in\mathcal F_\aa$. By
applying the PBW theorem to $L'(\aa,\s)\otimes Cl(L(\ov\aa,\ov\tau)$ 
we can write $a$ as a sum of terms of
type $\ov y^J\tilde y_\aa^Jf\ov h^T\tilde x_\aa^H\ov 
x^K+\ov{\Aa^{p+1}}$. By Lemma~\ref{restriction} we can
assume that $\deg(K+H)=p$ and $|J+H|\le n$.     Suppose now that $a=\ov 
d_p(u)$.  We will show that
$u\in(\ov{\Aa^p_{inv}}/\ov{\Aa^{p+1}_{inv}})_{n-2}$ thus, by the 
induction hypothesis, $a=0$.
        So assume that $u\in\sum \mathcal S(x_i)+u'$ with $x_i\in 
K^{p,0}_m\otimes\mathcal F_{|C_{diag}}\otimes\w\h_0$ and 
$u'\in(\ov{\Aa^p_{inv}}/\ov{\Aa^{p+1}_{inv}})_{m-1}$. Then, 
$a=\sum\mathcal S(\partial_{p,0}(x_i))+v+\ov d_p(u')$ with 
$v\in(\ov{\Aa^p_{inv}}/\ov{\Aa^{p+1}_{inv}})_{m}$. 
        
        Since, by Lemma~\ref{deDp}, $\partial_{p,0}(x_i)
        \in\sum_{\substack{r+s=p\\m>0}}K^r_\aa\otimes (K^s_\p\cap 
K^p_m)\otimes\mathcal F_{|C_{diag}}\otimes\w\h_0$, and, 
by Lemma ~\ref{restriction}, $a=\sum \mathcal S(a_i)+a'$ with $a_i\in 
(K^{p,0}_n)_\aa\otimes\mathcal
F_{|C_{diag}}\otimes\w\h_0$ and 
$a'\in(\ov{\Aa^p_{inv}}/\ov{\Aa^{p+1}_{inv}})_{n-1}$, by comparing 
terms,
we deduce that, if $m\ge n-1$, then  $\partial_{p,0}(x_i)=0$. 
        
        Since $\partial_{p,0}$ is exact except in degree $0$, we see that 
$x_i=a'_i+\partial_{p,0}(y_i)$ with $a'_i\in 
K^{p,0}_\aa\otimes\mathcal F_{|C_{diag}}\otimes\w\h_0$ and $y_i\in 
K^{p,0}_{m-1}\otimes\mathcal F_{|C_{diag}}\otimes\w\h_0$.
 Setting $y=\sum \mathcal S(y_i)$ we have that $\ov d_p(y)=\sum 
\mathcal S(x_i)-\sum \mathcal S(a'_i)+y'$
with $y'\in (\ov{\Aa^p_{inv}}/\ov{\Aa^{p+1}_{inv}})_{m-1}$, hence 
$u=\ov d_p(y)-y'+\sum \mathcal S(a'_i)+u'$.
Substituting $u$ with $u-\ov d_p(y)$ and $u'$ with $u'-y'$, we can 
assume that 
\begin{equation}\label{substituteu}u=\sum \mathcal S(a'_i)+u',
\end{equation}
 with $a'_i\in
(K^{p,0}_m)_\aa\otimes\mathcal F_{|C_{diag}}\otimes\w\h_0$.
        
Setting $a'_i=\sum_Tp_i\otimes f^i_T\otimes \w^T h^T$ and applying 
Corollary~\ref{restrictionKoszul}, we obtain that
$$\ov d_p(u)=\sum \mathcal S(p_i\otimes 
\partial_{-(\rho_\s)_{|\h_\p}}(f^i_T)\otimes \w^T h)+\ov d_p(v')+v''
$$
with
$v',v''\in(\ov{\Aa^p_{inv}}/\ov{\Aa^{p+1}_{inv}})_{m-1}$. Since $\bar 
d_p(u)=a$, comparing terms we deduce that 
$\partial_{-(\rho_\s)_{|\h_\p}}(f^i_T)\in
(\mathcal F_\aa)_{|C_{diag}}$, this means that it is a function on 
$C_{diag}$ that  does not depend from $\l\in\h_\p$. On the other 
hand, since $\partial_{-(\rho_\s)_{|\h_\p}}(f^i_T)$ is a boundary, it 
is zero when computed in $-(\rho_\s)_{|\h_\p}$ hence 
$\partial_{-(\rho_\s)_{|\h_\p}}(f^i_T)=0$. Arguing as in the first 
part of
the proof, we obtain that $\sum \mathcal S(a'_i)=a'+\ov d_p(z)+z'$ 
with
$a'\in(\ov{\Aa^p_{inv}(\aa)}/\ov{\Aa_{inv}^{p+1}(\aa)})$ and $z'\in
(\ov{\Aa^p_{inv}}/\ov{\Aa^{p+1}_{inv}})_{m-1}$. Since $\ov d_p(a')=0$ 
we find that  $\ov d_p(u-\ov
d_p(z)-a')=a$, hence, in light of \eqref{substituteu}, we can 
substitute $u$ with $z'+u'$ and  assume
$u\in(\ov{\Aa^p_{inv}}/\ov{\Aa^{p+1}_{inv}})_{m-1}$. Repeating the 
argument we can assume $m<n-1$ and
conclude the proof.
        \end{proof} 
We now come to the main reduction.
\begin{cor}\label{fr} The embedding  $\ov{\mathcal 
A_{inv}(\aa)}\to\ov{\mathcal A}^{inv}$  induces an isomorphism
$$
\ov{\mathcal A_{inv}(\aa)}\simeq H({\bf d}) .
$$
\end{cor}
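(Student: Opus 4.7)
The plan is a standard spectral-sequence-degeneration argument applied to the decreasing filtration $\ov{\mathcal A^0_{inv}}\supset\ov{\mathcal A^1_{inv}}\supset\cdots$ on $\ov{\mathcal A}^{inv}$, with Proposition~\ref{H(G)graded=H(a)} playing the role of the $E_1$-page computation. The collapse input is that $[(G_{\g,\aa})_0,\ov{\mathcal A(\aa)}]=0$, which means every element of $\ov{\mathcal A_{inv}(\aa)}$ is a ${\bf d}$-cycle and gives a well-defined induced map $\iota:\ov{\mathcal A_{inv}(\aa)}\to H({\bf d})$.

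For surjectivity, I would start from a cycle $x\in\ov{\mathcal A}^{inv}$ and let $p_0$ be the smallest index with $x\in\ov{\mathcal A^{p_0}_{inv}}$. Since ${\bf d}$ has filtration degree $0$, the class $\bar x\in\ov{\mathcal A^{p_0}_{inv}}/\ov{\mathcal A^{p_0+1}_{inv}}$ satisfies $\bar d_{p_0}(\bar x)=0$. Proposition~\ref{H(G)graded=H(a)} then writes $\bar x=\bar a_0+\bar d_{p_0}(\bar c_0)$ with $\bar a_0\in\ov{\mathcal A^{p_0}_{inv}(\aa)}/\ov{\mathcal A^{p_0+1}_{inv}(\aa)}$. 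Lifting $\bar a_0$ to $a_0\in\ov{\mathcal A^{p_0}_{inv}(\aa)}$ and $\bar c_0$ to $c_0\in\ov{\mathcal A^{p_0}_{inv}}$, the cycle $x-a_0-{\bf d}(c_0)$ lies in $\ov{\mathcal A^{p_0+1}_{inv}}$. Iterating produces sequences $(a_i)$, $(c_i)$ with $a_i\in\ov{\mathcal A^{p_0+i}_{inv}(\aa)}$ and $c_i\in\ov{\mathcal A^{p_0+i}_{inv}}$; their series converge (by Remark~\ref{forrestricting}) in the closed subspaces $\ov{\mathcal A_{inv}(\aa)}$ and $\ov{\mathcal A}^{inv}$ respectively, to elements $a,c$ with $x=a+{\bf d}(c)$.

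For injectivity, suppose $a\in\ov{\mathcal A_{inv}(\aa)}$ and $a={\bf d}(u)$ for some $u\in\ov{\mathcal A}^{inv}$. Assume $a\neq 0$ and let $p$ be minimal with $a\in\ov{\mathcal A^p_{inv}(\aa)}$. I would first reduce to the case $u\in\ov{\mathcal A^p_{inv}}$: if $u\in\ov{\mathcal A^q_{inv}}$ with $q<p$, then $a\in\ov{\mathcal A^{q+1}_{inv}}$ forces $\bar d_q(\bar u)=0$, so by Proposition~\ref{H(G)graded=H(a)} one can replace $u$ by $u-\tilde c_q-{\bf d}(\tilde v_q)$ with $\tilde c_q\in\ov{\mathcal A^q_{inv}(\aa)}$ and $\tilde v_q\in\ov{\mathcal A^q_{inv}}$; this does not change ${\bf d}(u)$ (since $\tilde c_q$ is a cycle and ${\bf d}^2=0$) and lands in $\ov{\mathcal A^{q+1}_{inv}}$. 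Iterating yields $u'\in\ov{\mathcal A^p_{inv}}$ with ${\bf d}(u')=a$. The image of $a$ in $\ov{\mathcal A^p_{inv}(\aa)}/\ov{\mathcal A^{p+1}_{inv}(\aa)}\cong H(\bar d_p)$ is then $\bar d_p(\bar u')$, which is a coboundary and so zero in cohomology; by the injectivity assertion of Proposition~\ref{H(G)graded=H(a)}, $a\in\ov{\mathcal A^{p+1}_{inv}(\aa)}$, contradicting the minimality of $p$.

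The main obstacle will be carefully justifying convergence of the iterative constructions inside the completed topology: that the series $\sum_i a_i$ actually lies in the closed subspace $\ov{\mathcal A_{inv}(\aa)}$, and that the filtration $\{\ov{\mathcal A^p_{inv}}\}$ is separated so that ``push $u$ to arbitrarily higher filtration'' genuinely produces an element (not just a formal limit). Both of these follow from Remark~\ref{forrestricting} together with the fact that $\ov{\mathcal A(\aa)}$ is by definition the closure of $\mathcal A(\aa)$; once granted, what remains is the standard $E_1$-collapse argument, the collapse being forced by the vanishing of ${\bf d}$ on $\ov{\mathcal A_{inv}(\aa)}$.
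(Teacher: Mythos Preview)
Your proposal is correct and follows essentially the same route as the paper. The surjectivity argument is identical: iteratively peel off a piece in $\ov{\mathcal A_{inv}(\aa)}$ and a $\mathbf d$-boundary using Proposition~\ref{H(G)graded=H(a)}, then sum the resulting convergent series. For injectivity the paper argues by showing $a\in\ov{\mathcal A^p_{inv}}$ for every $p$ (hence $a=0$ by separation), rather than fixing a minimal $p$ and deriving a contradiction, but the two are equivalent; in fact you are more explicit than the paper about the intermediate step of adjusting $u$ into higher filtration, which the paper hides inside the phrase ``repeating this argument.''
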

\begin{proof}
Fix $x\in \ov{\mathcal A}^{inv}$ such that $\mathbf d(x)=0$. Then 
$\ov d_0(x+\ov{\Aa_{inv}^1})=0$,
 so, by \eqref{e01}, $x+\ov{\Aa_{inv}^1}=a^0+\ov{\Aa_{inv}^1}+\ov 
d_0(v^0+\ov{\Aa_{inv}^1})$. That is $x=a^0+\mathbf
d(v^0)+u^1$ with $a^0\in\ov{\Aa_{inv}(\aa)}$, $v^0\in\ov{\Aa}^{inv}$, 
and $u^1\in\ov{\Aa^1_{inv}}$.
Since $\mathbf d(a^0)=0$ we see that $\mathbf d(u^1)=0$, hence, 
arguing as above,
 we can write $u^1=a^1+\mathbf d(v^1)+u^2$,  with 
$a^1\in\ov{\Aa^1_{inv}(\aa)}$,
$v^1\in\ov{\Aa^1_{inv}}$, and $u^2\in\ov{\Aa^2_{inv}}$. Substituting 
we find
that
$x=a^0+a^1+\mathbf d(v^0+v^1)+u^2$.  An obvious induction shows then 
that for any $n$ we can find
$a^n,v^n,u^{n+1}$ such that  $a^n\in\ov{\Aa_{inv}^n(\aa)}$, 
$v^n\in\ov{\Aa^n_{inv}}$, and
$u^{n+1}\in\ov{\Aa^{n+1}_{inv}}$ and 
$$
x=\sum_{i=0}^na^i+\mathbf d(\sum_{i=0}^nv_i)+u^{n+1}.
$$
Since $\lim_ia^i=\lim_iv^i=\lim_iu^i=0$, setting $a=\sum_{i=0}^\infty 
a^i$ and $v=\sum_{i=0}^\infty v^i$, we have
$$
x=a+\mathbf d(v).
$$

Suppose now that $a\in\ov{\Aa_{inv}(\aa)}$ is such that $a\in 
Im\,\mathbf d$. Then $a+\ov{\Aa_{inv}^1}\in
Im\,\ov d_0$, hence, by \eqref{e01}, $a\in \ov{\Aa_{inv}^1}$.  
Repeating this argument we find that
$a\in\ov{\Aa^p_{inv}}$ for any $p$, thus $a=0$. 
\end{proof}
\subsection{The final step}\label{quarta}
We need to recall the main results of \cite{Kacpnas}.
Let
$\mathcal L\subset \widehat\h^*_0$ be the degeneracy locus of the 
Shapovalov form.
Let $\mathfrak F$  be the set of complex-valued functions
on $\widehat \h^*_0\setminus \mathcal L$. In \cite{Kacpnas}Ê it is 
proven that there exists a natural algebra
structure on  
$U_{\mathfrak F}=U(\widehat L(\g,\si))\otimes_{U(\widehat 
\h_0)}\mathfrak
F$ and on a  completion 
 $\widehat U_{\mathfrak F}$. Roughly speaking, $\widehat U_{\mathfrak 
F}$ consists of suitable series
$\sum_{I,J} y^I\varphi_{I,J}x^J$, with $y^I, x^J$ monomials in 
negative  and positive root vectors
with respect to a triangular decomposition of $\widehat L(\g,\sigma)$ 
and $\varphi_{I,J}\in\mathfrak F$. Recall that 
$C_\g$ denotes the Tits cone of $\widehat L(\g,\sigma)$ (see \eqref{tits}).
\begin{prop}\label{Kaccentro}Let $Z_{\mathfrak F}$ denote the center 
of $\widehat U_{\mathfrak F}$.
\begin{enumerate} 
\item Given $\varphi\in\mathfrak F$ there exists a unique element 
$z_\varphi=\sum_{I,J}y^I\varphi_{I,J}x^J\in
Z_{\mathfrak F}$ such that $\varphi_{0,0}=\varphi$.
\item The ``Harish-Chandra" map $H:Z_{\mathfrak F}\to \mathfrak F, 
H(z_\varphi)=\varphi$ 
is an algebra isomorphism.
\item\label{HC3} Let $\varphi\in\mathfrak F$ a function which can be 
extended to an holomorphic 
function on $-\rhat_\si+C_\g$.
Let $\varphi_{-\rhat_\si}$ be the holomorphic function on $C_\g$ 
defined by
$\varphi_{-\rhat_\si}(\l)=\varphi(\l-\rhat_\si)$. If 
$\varphi_{-\rhat_\si}$ is $\Wa$-invariant, then $z_\varphi$ can
be extended to an holomorphic element (i.e.
$z_\varphi=\sum_{I,J}y^I\varphi_{I,J}x^J$ with all
 the $\varphi_{I,J}$ holomorphic on $-\rhat_\si+C_\g$).
\end{enumerate}
\end{prop}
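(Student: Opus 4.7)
The plan is to build $z_\varphi$ as a formal series $\sum_{I,J} y^I \varphi_{I,J} x^J$ in $\widehat U_{\mathfrak F}$ whose coefficients are forced by the centrality requirement, starting from the prescribed constant term $\varphi_{0,0}=\varphi$. First I fix a PBW-compatible basis adapted to a triangular decomposition of $\widehat L(\g,\si)$. Commutation with $\ha_0$ forces each surviving term $y^I\varphi_{I,J}x^J$ to have $\ha_0$-weight zero, which pins down the $\mathfrak F$-translation that must be applied when moving a Cartan element past $\varphi_{I,J}$. Commutation with each real positive root vector $e_\a$ then yields, term by term via the commutation rules of $\widehat U_{\mathfrak F}$, a recursion expressing $\varphi_{I,J}$ (for $|J|>0$) as a translate of some $\varphi_{I',J'}$ with $|J'|<|J|$, divided by a Shapovalov-type linear factor $(\l+\rhat_\si,\be)-n$; these divisions are legitimate in $\mathfrak F$ precisely because $\mathcal L$ is the common vanishing locus of such factors. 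Inducting on the height of $\sum I_i\a_i$ determines all $\varphi_{I,J}$ uniquely and simultaneously produces the required series, the consistency of the recursion under different orderings of the simple generators reducing to the Serre relations; the parallel argument with $e_{-\a}$ establishes centrality on the opposite side. This proves (1).

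For (2), the injectivity of $H$ is immediate from uniqueness in (1), and surjectivity is its existence part. That $H$ is an algebra map is checked by tracking the $(0,0)$ coefficient of $z_{\varphi_1}z_{\varphi_2}$: in the PBW expansion of the product, the only contribution to the constant slot arises from the $(0,0)$ coefficient of each factor, and a direct computation shows no spurious translation occurs there, so $H(z_{\varphi_1}z_{\varphi_2})=\varphi_1\varphi_2$.

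Part (3) is the real difficulty. The recursive formulas built in (1) introduce, for each $\varphi_{I,J}$, a priori poles along the Kac--Kazhdan hyperplanes $H_{\be,n}=\{\l:(\l+\rhat_\si,\be)=n\}$ for real positive affine roots $\be$, and one must show these apparent poles cancel on $-\rhat_\si+C_\g$ whenever $\varphi_{-\rhat_\si}$ is holomorphic and $\Wa$-invariant. My strategy is twofold: first, on the generic locus of $-\rhat_\si+C_\g$ where the Verma module $M(\l)$ is irreducible, $z_\varphi$ acts on $M(\l)$ by the scalar $\varphi(\l+\rhat_\si)$, so the coefficients $\varphi_{I,J}$ are determined globally by $\varphi$ on this dense open set; second, the $\Wa$-invariance of $\varphi_{-\rhat_\si}$ matches the orbit structure of the hyperplane arrangement, and comparing the recursion at a point $\l$ near $H_{\be,n}$ with the recursion at its reflected image $s_{\be,n}\l$ shows that the simple-pole residues of $\varphi_{I,J}$ and of its reflected partner are opposite, so they cancel. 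A codimension-one Hartogs-type extension across each $H_{\be,n}$ then provides the global holomorphic extension on $-\rhat_\si+C_\g$. The main obstacle is controlling these cancellations uniformly across the locally finite arrangement of reflection hyperplanes meeting the Tits cone, which is exactly the technical analysis carried out in \cite{Kacpnas} and which I would import rather than redo from scratch.
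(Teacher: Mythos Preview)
The paper does not actually prove this proposition: it is introduced with the sentence ``We need to recall the main results of \cite{Kacpnas}'' and is simply quoted from that reference. So there is no proof in the paper to compare your proposal against.

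Your sketch is a plausible outline of how the argument in \cite{Kacpnas} goes --- recursive determination of the $\varphi_{I,J}$ from centrality, with denominators given by Shapovalov-type factors, and then a cancellation/Hartogs argument for part (3) under the $\Wa$-invariance hypothesis --- and you yourself acknowledge at the end that the hard analytic step is imported from \cite{Kacpnas}. That is essentially what the paper does too, only the paper imports the \emph{entire} proposition rather than just part (3). If your goal was to supply a self-contained proof, you have not: parts (1) and (2) are sketched at a level where the actual work (consistency of the recursion, well-definedness of the series in the completion $\widehat U_{\mathfrak F}$, the precise form of the Shapovalov denominators in the twisted affine setting) is asserted rather than carried out, and part (3) is explicitly deferred. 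If your goal was simply to indicate why the cited result is believable, your outline is reasonable, but then the honest thing is to do what the paper does and just cite \cite{Kacpnas}.
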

\vskip10pt

Let $\mathfrak
F_{hol}$ be the subalgebra of $\mathfrak F$ of the functions that can 
be extended to holomorphic functions
on $-\rhat_\s+C_\g$. Set
$U_{\mathfrak F_{hol}}=U(\widehat 
L(\g,\s))\otimes_{S(\ha_0)}\mathfrak F_{hol}\subset U_{\mathfrak F}$. 
Note that
the embedding 
$$
U(L'(\g,\s))\otimes\mathfrak F_{hol}\subset U(\widehat 
L(\g,\s))\otimes\mathfrak
F_{hol}
$$
induces an algebra isomorphism
\begin{equation}\label{prime=hat}
U( L'(\g,\s))\otimes_{S(\h'_0)}\mathfrak F_{hol}\simeq U_{\mathfrak
F_{hol}}.
\end{equation}

As in Section \ref{mainfiltration}, if we choose a basis $\{x^i\}$ of 
$\n'$ and a basis $\{y^i\}$ of $\n'_-$, 
and 
 $I$ is a multi-index, we use the notation $x^I$, $y^I$ to denote the 
corresponding monomials. Assume that $x^i$ and
$y^i$ are  root vectors of $\widehat L(\g,\s)$ and let $\gamma_i$ be 
the root corresponding to $x^i$.
Clearly we can assume that $-\gamma_i$ is the root corresponding to 
$y^i$. If $I=(i_1,i_2,\dots)$, let
$ht(I)=\sum_ji_jht(\gamma_j)$. Consider the subalgebra $\widehat 
U^0_{\mathfrak F_{hol}}$ of $\widehat U_{\mathfrak
F}$ whose elements are series
$\sum y^I\phi_{I,J}x^J$ with $\phi_{I,J}\in\mathfrak F_{hol}$ and 
$\sum i_h \gamma_h=\sum j_h\gamma_h $.

If $\phi \in\mathfrak F_{hol}$, let $f_\phi\in\mathcal F$ be defined 
by
\begin{equation}\label{ffi}
f_\phi(\l,\mu)=\phi(k\L_0+\l).\end{equation}
 Then the mapping $x\otimes \phi\mapsto\tilde xf_\phi$ defines an 
algebra map from
$U(L'(\g,\s))\otimes \mathfrak F_{hol}$ to $\ov{\Aa}$. Note that the 
map pushes down to define, thanks to the
isomorphism \eqref{prime=hat}, a map
\begin{equation}\label{fromhttodeg}
\Omega: U_{\mathfrak F_{hol}}\to \ov\Aa.
\end{equation}

We claim that $\Omega$ can be extended to  a map 
$$
\Omega:\widehat U_{\mathfrak F_{hol}}^0\to \ov\Aa.
$$
Indeed, recall from the beginning of  \S{} \ref{basicsetup} that $\a_i(\hat f_\aa)>0$ for 
all simple roots $\a_i$. Set $C=\inf
(\a_i(\hat f_\aa))$. Then, if $\l=\sum_in_i\a_i,\,n_i\in\nat$,
we have  $\sum_in_i\a_i(\hat f_\aa)\geq C\sum_i n_i$. Thus 
$\deg(\tilde x^J)\geq C\,ht(J)$, hence the series
$\sum \tilde y^I\phi_{I,J} \tilde x^J$ converges in 
$\overline{\mathcal A}$.\par
Recall that given a level $k$ highest weight module  $M$ for $\widehat 
L(\g,\sigma)$, then an action of 
$\widehat U_{\mathfrak F}$ on $M$ is defined in \cite{Kacpnas}. 
Moreover
$M\otimes F^{\ov\tau}(\ov\g)$ decomposes as a direct sum of weight 
spaces  both for $\ha_0$ and $\ha_\aa$. Since the
actions of
$\ha_0$ and $\ha_\aa$ commute, we can write
$$ 
M\otimes 
F^{\ov\tau}(\ov\g)=\bigoplus_{(\l,\mu)\in\ha^*_0\times\ha_\aa^*}(M\otimes 
F^{\ov\tau}(\ov\g))_{(\l,\mu)}.
$$
Then, if $(M\otimes
F^{\ov\tau}(\ov\g))_{(\l,\mu)}\ne\{0\}$, we have that
$\l=k\L_0+\ov \l+x\d$ and $\mu=\sum_S(k+g-g_S)\L^S_0+\ov \mu+y\d_\aa$ 
with $\ov\mu\in\h_\aa^*$. We can therefore define an action of 
$\mathcal F$ on $M\otimes
F^{\ov\tau}(\ov\g)$ by setting $f\cdot 
v=f(\ov\l+x\d,\ov\mu+y\d_\aa)v$ for $v\in (M\otimes
F^{\ov\tau}(\ov\g))_{(\l,\mu)}$. We can then extend this action to 
$\ov\Aa$. Observe that for any $v\in M$,
 $w\in F^{\ov\tau}(\ov\p)$ and $z\in \widehat U_{\mathfrak F_{hol}}^0$
\begin{equation}\label{omega}\Omega(z)(v\otimes w)=(zv)\otimes 
w.\end{equation}
\vskip10pt
We finally come to the main result of this section.

\begin{proof}[Proof of Theorem~\ref{Vogan1}] Let $f$ be as in the 
statement and let  $\phi$
 be the holomorphic function on $-\rhat_\s+C_\g$ defined by 
$\phi(\l)=f(\l+\rhat_\s)$.
Extend $\phi$ to an element of $\mathfrak F_{hol}$. By part \ref{HC3} 
of
Proposition~\ref{Kaccentro} we get the existence of  a central 
element $z_\phi$ of $\widehat
U_{\mathfrak F}$ such that $z_\phi\in\widehat U_{\mathfrak 
F_{hol}}^0$ and $z_\phi\cdot
v=f(\L+\rhat_\s)v$ for any $v\in M$.  Moreover,
$z_\phi=\phi+\sum_{I\ne0,J\ne0}y^I\phi_{I,J}x^J$. It follows that 
$\Omega(z_\phi)=f_\phi+x$
(cf. \eqref{ffi}) with $x\in\ov{\mathcal A^1_{inv}}$. Recall that we are viewing $\mathcal F_\aa$ as a subset of $\mathcal F$.  Let $\Phi\in\mathcal F_\aa$ be 
the function defined in
 $\mu=\ov\mu+x\d_\aa$, by 
 $$
 \Phi(\mu)=\phi(k\L_0+\ov\mu+x\d+\rho_{\s\aa}-\rho_\s).
 $$

By Lemma~\ref{koszulholomorphic}, there is $f'\in \mathcal 
F_{|C_{diag}}\otimes \w \h_\p$ such that 
$(f_\phi)_{|C_{diag}}=f_0+\partial_{-(\rho_\s)_{|\h_\p}}(f')$ where 
$f_0$ is the function on $C_{diag}$ defined, for $\l\in\h_\p^*$, 
$\mu\in\h_\aa^*$, by  
$$
f_0(\l+\mu+\rho_{\s\aa}-(\rho_\s)_{|\h_\aa}+x\d,\mu+x\d_\aa)=f_\phi(\mu+\rho_{\s\aa}-\rho_\s+x\d,\mu+x\d_\aa)=\Phi(\mu+x\d_\aa).
$$ 
Hence $f_0=\Phi_{|C_{diag}}$.

According to Lemma~\ref{dizero}, in $\ov{\Aa^0}/\ov{\Aa^1}$,  
$\mathcal S(\partial_{-(\rho_\s)_{|\h_\p}}(f'))=\bar d_0(\mathcal 
S(f'))$ hence we can write $f_\phi+\ov{\Aa^1_{inv}}=f_0+\ov{\Aa^1}+ 
\bar d_0(\mathcal S(f'))$. 
Thus 
$f_\phi+\ov{\Aa^1_{inv}}=\Phi+\ov{\Aa^1_{inv}}+\ov d_0(w)$ 
with $w\in\ov{\Aa^0_{inv}}/\ov{\Aa^1_{inv}}$.
 Since $\Omega(z_\phi)+\ov{\Aa^1_{inv}}=f_\phi+\ov{\Aa^1_{inv}}$, we 
have that  $\Omega(z_\phi)=\Phi+\mathbf d(z)+z_1$ with 
$z_1\in\ov{\Aa^1_{inv}}$. Since $z_\phi$ is central, 
$[(G_{\g,\aa})_0,\Omega(z_\phi)]=0$, hence $\mathbf 
d(\Omega(z_\phi)-\Phi-\mathbf d(z))=0$.
It follows that
$\mathbf d(z_1)=0$. According to Corollary~\ref{fr}, there is 
$a\in\ov{\Aa^1_{inv}(\aa)}$ such that $z_1=a+\mathbf d(y)$,
 hence,
setting $u=y+z$
$$
\Omega(z_\phi)=\Phi+a+\mathbf d(u).
$$
Suppose now that a highest weight $\widehat L(\sa,\si)$-module $Y$ of
highest weight
$\mu=\sum_S(k+g-g_S)\L^S_0+\ov \mu+y\d_\aa$
 occurs in the Dirac cohomology $H((G_{\g,\sa})_0^{N'})$ of
$N'=M\otimes  F^{\ov\tau}(\ov\p)$ and that $v$ is an highest vector 
for $Y$. Since
$a\in\ov{\Aa^1_{inv}(\aa)}$, by Lemma~\ref{restriction}, $a\cdot 
v=0$, hence, by \eqref{omega} 
$$
\Omega(z_\phi) v=f(\L+\rhat_\s)v=\Phi(\mu)v.
$$
hence 
$f(\L+\rhat_\si)=\Phi(\mu)=\phi(k\L_0+\ov\mu+y\d+\rho_{\s\aa}-\rho_\s)$. 
Finally observe that 
 $\phi(k\L_0+\ov\mu+y\d+\rho_{\s\aa}-\rho_\s)=f((k+g)\L_0+\ov
\mu+y\d+\rho_{\s\aa})=f((\varphi_\aa^*)^{-1}(\mu+\rhat_{\aa\s}))=f_{|\ha_\aa^*}(\mu+\rhat_{\aa\s})$.
\end{proof}

\section{Proofs of technical results}
\subsection{Evaluation of 
$[{G_\g}_\l G_\g]$} \label{Gquadro}Let $G_\g\in V^{k+g,1}(R^{super})$ 
be the affine Dirac operator
defined in \eqref{G}; write for shortness $G$ instead of $G_\g$. We 
want to
calculate $[G_\l G]$  by using expression 
\eqref{diractilde}.   We assume that $\g$ is reductive. We let 
$\g=\sum_S \g_S$ be the eigenspace
decomposition of $\g$ with respect to the Casimir operator $Cas$ of 
$\g$ and let $2g_S$ be the
eigenvalue relative to $\g_S$.

We proceed in steps. First of all we fix  an orthonormal  basis 
$\{x_i^S\}$ of $\g_S$ and choose
$\{x_i\}=\cup_S \{x_i^S\}$ as  orthonormal basis of $\g$.   If $a\in 
\g_S$,\begin{equation}\label{f1}[\tilde 
a_\la G]=\sum_i :\widetilde{[a,x_i]}\overline x_i:+\la:(k+g- 
g_S)\overline 
a:,\end{equation}and\begin{equation}\label{f2}[G_\la\tilde 
a]=-\sum_i  :\widetilde{[a,x_i]}\overline
x_i:+\la:(k+g-g_S)\overline a:+:(k+g- g_S)T(\overline 
a):,\end{equation}\begin{align}\notag[G_\la :\tilde 
a\overline a:]&=-\sum_i :\widetilde{[a,x_i]}\overline x_i\overline 
a:+:\tilde a 
a:\\\label{f3}&+:(k+g-g_S)T(\overline a)\overline  
a:+\frac{\la^2}{2}(k+g-g_S)(a,a).\end{align} Formula
\eqref{f1} is  checked directly whereas \eqref{f2} follows from 
\eqref{f1} by the  sesquilinearity of the
$\l$-bracket. Finally, \eqref{f3}   follows from 
\eqref{f2}  using the Wick formula 
\eqref{Wick}.
\begin{equation}\label{f4}[\overline a_\la:\tilde b\overline  
b:]=(a,b)\tilde
b\end{equation}\begin{equation}\label{f5}[:\tilde b\overline  
b:_\la\overline a]=(a,b)\tilde 
b\end{equation}\begin{equation}\label{f6}[:\tilde a\overline 
a:_\la:\overline  b\overline c:]=(a,b):\tilde
a\overline c:-(a,c):\tilde a\overline  
b:\end{equation}\begin{align}\label{f7}[:\tilde a\overline
a:_\la:\overline  b\overline c\overline d:]&=(a,b):\tilde a\overline 
c\overline d:- (a,c):\overline b\tilde
a\overline d:+(a,d):\overline b\overline c\tilde  
a:\\&\label{f7'}=(a,b):\tilde a\overline c\overline
d:-(a,c):\tilde a\overline  b\overline d:+(a,d):\tilde a\overline 
b\overline c:\end{align} In checking  all the
formulas one uses the Wick formula \eqref{Wick}. Moreover, 
\eqref{f4}    follows from\eqref{tre},  
\eqref{f5}   follows from  \eqref{f4}   by  sesquilinearity. Formula 
\eqref{f6}   is checked similarly. Finally
one proves 
\eqref{f7}  combining   \eqref{f5}     and\eqref{f6}.   The equality 
between   
\eqref{f7}   and    \eqref{f7'} follows from \ref{qa} and 
relations $:\tilde  a\overline b:=:\overline b\tilde a:$
and $:\overline a\tilde b\overline  c:=:\tilde b\overline a\overline 
c:$. From \eqref{f7'} it follows also that 
\begin{equation}\label{f8}
\sum_{h,k}[:\tilde a\overline a:_\la:\overline{[  x_h,x_k]}\overline 
x_h\overline x_k:]=-3\sum_h:\tilde 
a\overline{[a,x_h]}\overline x_h:
\end{equation} In turn \eqref{f3}  and   
\eqref{f8}  imply
\begin{align}\notag
\sum_i[G_\la :\tilde x_i\overline  x_i:]&=\sum_{i,j} 
:\widetilde{[x_i,x_j]}\overline x_i\overline 
x_j:+\sum_{i,S}:(k+g-g_S)T(\overline x^S_i)\overline  
x^S_i:\\&+\sum_{i}:\tilde x_i\overline
x_i:+\sum_S\frac{\la^2}{2}(k+g- g_S)\dim\g_S.
\end{align}
\begin{equation}
\sum_{i,h,k}[:\tilde  x_i\overline x_i:_\la :\overline{ 
[x_h,x_k]}\overline x_h\overline x_k:]=- 3\sum_{i,k}
:\tilde x_i\overline{[x_i,x_k]}\overline x_k:.
\end{equation} We  start now computing 
$\sum_{i,j,h,k}[:\overline{[x_i,x_j]}\overline  x_i\overline
x_j:_\la:\overline{[x_h,x_k]}\overline x_h\overline x_k:]$.  We  have
\begin{equation}
\sum_{i,j}[\overline a_\la :\overline{[x_i,x_j]}\overline  
x_i\overline
x_j:]=-3\sum_i:\overline{[a,x_i]}\overline  x_i:.
\end{equation}

\begin{equation}
\sum_{i,j}[:\overline{[x_i,x_j]}\overline  x_i\overline 
x_j:_\la\overline a
]=-3\sum_i:\overline{[a,x_i]}\overline  x_i:.
\end{equation}

\begin{equation}
\sum_i[\overline  a_\la:\overline{[b,x_i]}\overline  
x_i:]=2\overline{[a,b]}.
\end{equation}

\begin{equation}\label{thetalambdabar}
\sum_i[:\overline{[a,x_ i]}\overline x_i:_\la\overline  
b]=2\overline{[a,b]}.
\end{equation} Hence
\begin{align}\notag &\sum_{i,j,h,k}[:
\overline{[x_i,x_j]}\overline x_i\overline  
x_j:_\la:\overline{[x_h,x_k]}\overline x_h\overline x_k:]\\
\notag &=- 3\sum_{i,h,k}[::\overline{[[x_h,x_k],x_i]}\overline 
x_i:\overline x_h\overline 
x_k:-\sum_{i,j,h,k}:\overline{[x_h,x_k]}[:\overline{[x_i,x_j]}\overline  
x_i\overline x_j:_\la :\overline
x_h\overline x_k:]\\
\notag&- 3\sum_{i,h,k}\int_0^\la[:\overline{[[x_h,x_k],x_i]}\overline 
x_i:_\mu: 
\overline x_h\overline x_k:]d\mu\\
\notag&=- 3\sum_{i,h,k}::\overline{[[x_h,x_k],x_i]}\overline 
x_i:\overline x_h\overline 
x_k:+3\sum_{i,h,k}:\overline{[x_h,x_k]}:\overline{[x_h,x_i]}\overline  
x_i:\overline x_k:\\
\notag&- 3\sum_{i,h,k}:\overline{[x_h,x_k]}\overline 
x_h\overline{[x_k,x_i]}\overline  x_i:\\ &-
\sum_{i,j,h,k}\int_0^\la:\overline{[x_h,x_k]}[[:\overline{[x_i,x_j]}\overline  
x_i\overline x_j:_\la\overline
x_h]_\mu\overline x_k]:d\mu\\
\notag &- 6\sum_{h,k}\int_0^\la[:\overline{[[x_h,x_k],x_h]}\overline 
x_k:d\mu-
6\sum_{h,k}\int_0^\la[:\overline x_h\overline{[[x_h,x_k],x_k]}:d\mu\\
\label{?}  
&-3\sum_{i,h,k}\int_0^\la\int_0^\mu[[:\overline{[[x_h,x_k],x_i]}\overline  
x_i:_\mu \overline
x_h]_\nu\overline x_k:]d\nu d\mu
\end{align} Using the  relation 
$\sum_{h}[x_h,x_k]\cdot[x_h,x_j]=\sum_{h}[[x_j,x_h],x_k]\cdot x_h$ 
for 
any bilinear product $\cdot$, we can rewrite \eqref{?} as 
\begin{align*} &- 3\sum_{i,h,k}::\overline{[[x_h,x_k],x_i]}\overline 
x_i:\overline x_h\overline 
x_k:+3\sum_{i,h,k}:\overline{[[x_i,x_h],x_k]}:\overline x_h\overline  
x_i:\overline
x_k:\\&-3\sum_{i,h,k}:\overline{[x_h,[x_i,x_k]]}\overline  
x_h\overline x_k\overline 
x_i:+6\sum_{i,h,k}\int_0^\la:\overline{[x_h,x_k]}\overline{[x_h,x_k]}:d\mu\\ 
&
-6\sum_{h,k}\int_0^\la:\overline{[x_k,x_h]}\overline{[x_k,x_h]}:d\mu-
6\sum_{h,k}\int_0^\la:\overline{[x_k,x_h]}\overline{[x_h,x_k]}:d\mu\\&+12\frac{\l 
^2}{2}\sum_S g_S\dim\g_S
\end{align*} Since $:\overline a\overline a:=0$ (by 
\eqref{qc}) we get
\begin{align*} &- 3\sum_{i,h,k}::\overline{[[x_h,x_k],x_i]}\overline 
x_i:\overline x_h\overline 
x_k:+3\sum_{i,h,k}:\overline{[[x_i,x_h],x_k]}:\overline x_h\overline  
x_i:\overline
x_k:\\&-3\sum_{i,h,k}:\overline{[x_h,[x_i,x_k]]}\overline  
x_h\overline x_k\overline
x_i:+12\frac{\la^2}{2}\sum_S  g_Sdim\g_S.
\end{align*}

By \eqref{qc} and \eqref{qa} we  
find$$\sum_{i,h,k}::\overline{[[x_h,x_k],x_i]}\overline x_i:\overline 
x_h\overline x_k:=\sum_{i,h,k}:\overline{[[x_h,x_k],x_i]}\overline 
x_i\overline  x_h\overline
x_k:\!\!-8\sum_{S,i}g_S:T(\overline x^S_i)\overline  x^S_i:
$$ hence
$$
\sum_{i,h,k}:\overline{[[x_i,x_h],x_k]}:\overline  x_h\overline 
x_i:\overline x_k:=-
\sum_{i,h,k}:\overline{[[x_i,x_h],x_k]}:\overline x_i\overline 
x_h:\overline  x_k:,
$$ and  therefore
\begin{align*}&\sum_{i,h,k}:\overline{[[x_i,x_h],x_k]}:\overline  
x_h\overline x_i:\overline x_k:=\\&-
\sum_{i,h,k}:\overline{[[x_i,x_h],x_k]}\overline x_i\overline 
x_h\overline 
x_k:+4\sum_{S,i}g_S:T(\overline x^S_i)\overline x^S_i:
\end{align*} Upon  substituting we find
\begin{align*} &- 3\sum_{i,h,k}:\overline{[[x_h,x_k],x_i]}\overline 
x_i\overline x_h\overline 
x_k:-3\sum_{i,h,k}:\overline{[[x_i,x_h],x_k]}\overline x_i\overline  
x_h\overline
x_k:\\&-3\sum_{i,h,k}:\overline{[x_h,[x_i,x_k]]}\overline  
x_h\overline x_k\overline
x_i:+36\sum_{i,S}g_S:T(\overline x^S_i)\overline  
x^S_i:+12\frac{\la^2}{2}\sum_Sg_Sdim\g_S
\end{align*} By the Jacobi identity we  find
$$
\sum_{i,j,h,k}[:\overline{[x_i,x_j]}\overline x_i\overline  
x_j:_\la:\overline{[x_h,x_k]}\overline x_h\overline
x_k:]\!=\!  36\sum_{S,i}g_S:T(\overline x^S_i)\overline  
x^S_i\!:\!+\la^2\sum_S\frac{2g_S}{3}dim\g_S.
$$ Hence
\begin{align*} [G_\la  G]&=\sum_i[G_\la :\tilde x_i\overline 
x_i:]-\frac{1}{6}\sum_{i,j,h}[G_\la 
:\overline{[x_i,x_j]}\overline x_i\overline x_j:]\\&=\sum_i[G_\la 
:\tilde  x_i\overline
x_i:]-\frac{1}{6}\sum_{i,h,k}[:\tilde x_i\overline x_i:_\la  
:\overline{[x_h,x_k]}\overline x_h\overline 
x_k:]\\&+\frac{1}{36}\sum_{i,j,h,k}[:\overline{[x_i,x_j]}\overline 
x_i\overline 
x_j:_\la:\overline{[x_h,x_k]}\overline x_h\overline 
x_k:],\end{align*}
which in turn gives
\begin{align*}
[G_\la  G]&=\sum_{i,j}  :\widetilde{[x_i,x_j]}\overline
x_i\overline x_j:+\sum_{S,i}:(k+g- g_S)T(\overline x^S_i)\overline 
x^S_i:\\&+\sum_i:\tilde 
x_ix_i:+\frac{\la^2}{2}\sum_S(k+g-g_S)\dim\g_S\\&+\frac{1}{2}\sum_{i,k}  
:\tilde
x_i\overline{[x_i,x_k]}\overline  x_k:\\&+\sum_{S,i}g_S:T(\overline 
x^S_i)\overline 
x^S_i:+\frac{\la^2}{2}\sum_S\frac{g_S}{3}\dim\g_S
\end{align*} or
\begin{align*} [G_\la G]&=\sum_{i,j} :\widetilde{[x_i,x_j]}\overline 
x_i\overline  x_j:+(k+g)\sum_i:T(\overline
x_i)\overline x_i:\\&+\sum_i:\tilde x_i\tilde  x_i:+\sum_{i,k} 
:\tilde x_i\overline{[x_i,x_k]}\overline 
x_k:\\&+\frac{\la^2}{2}\sum_S(k+g-
\frac{2g_S}{3})\dim\g_S
\end{align*} Since
$$
\sum_{i,j}  :\widetilde{[x_i,x_j]}\overline x_i\overline 
x_j:=-\sum_{i,j}  :\tilde{x_i}\overline
{[x_i,x_j]}\overline x_j:
$$ we finally  get
\begin{prop}\label{GrelGrel}
\begin{equation}[G_\la G]=\sum_i:\tilde x_i\tilde  
x_i:+(k+g)\sum_i:T(\overline x_i)\overline
x_i:+\frac{\la^2}{2}\sum_S(k+g-
\frac{2g_S}{3})\dim\g_S.
\end{equation}
\end{prop}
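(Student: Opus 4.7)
The strategy I would pursue is to exploit the ``tilded'' form \eqref{diractilde} of the affine Dirac operator, namely
$$G_\g = Q - \tfrac{1}{6}C, \qquad Q = \sum_i :\tilde x_i \ov x^i:, \qquad C = \sum_{i,j}:\ov{[x_i,x_j]}\ov x^i\ov x^j:,$$
and compute $[G_\l G]$ as the sum of the three cross-brackets $[Q_\l Q]$, $-\tfrac{1}{3}[Q_\l C]$, and $\tfrac{1}{36}[C_\l C]$ (using skewsymmetry to combine the mixed pieces). The decisive input is Proposition \ref{isomorfi}, which provides the decoupling relations $[\bar a_\l \tilde b]=0$ and $[\tilde a_\l \tilde b]=\widetilde{[a,b]}+\l \tilde K(a,b)$; these say that $\tilde x$ and $\bar y$ live in commuting vertex subalgebras in the sense of $\l$-brackets, so the cross-brackets reduce to explicit fermionic computations plus one Sugawara-type term.

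First I would assemble a short toolbox of auxiliary $\l$-brackets: $[\tilde a_\l G]$ and $[\bar a_\l G]$ for $a$ lying in a $Cas$-eigenspace $\g_S$ (these give $\sum_i :\widetilde{[a,x_i]}\bar x^i: + \l(k+g-g_S)\bar a$ and a dual formula via sesquilinearity), then $[\bar a_\l :\tilde b\bar b:]$, $[:\tilde a\bar a:_\l :\bar b\bar c:]$, $[:\tilde a\bar a:_\l :\bar b\bar c\bar d:]$, etc., each obtained by one application of Wick's formula \eqref{Wick}. Feeding these into the computation of $[Q_\l Q]$ yields the Sugawara $\sum_i :\tilde x_i\tilde x^i:$ plus the fermionic Sugawara $(k+g)\sum_i :T(\bar x^i)\bar x^i:$ (with the $g_S$-corrections absorbed by the tilded form), plus a $\l^2$-central term $\tfrac{1}{2}\sum_S (k+g-g_S)\dim \g_S$. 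The mixed bracket $[Q_\l C]$ produces a non-normal-ordered cubic $\sum :\tilde x_i \ov{[x_i,x_k]}\bar x^k:$, which will be cancelled against part of the cubic-cubic piece.

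The crux is $[C_\l C]$. Here Wick's formula gives three kinds of terms: a genuine triple-nested cubic $\sum :\ov{[[x_h,x_k],x_i]}\bar x^i\bar x^h\bar x^k:$, a quadratic correction coming from the single contractions (which uses the identity $\sum_h [x_h,x_k]\otimes[x_h,x_j]=\sum_h [[x_j,x_h],x_k]\otimes x_h$), and a $\l^2$-central correction coming from double contractions. By the Jacobi identity, the symmetrization of the triple cubic over the three positions of the commutator vanishes, so after using quasi-associativity \eqref{qa} and quasi-commutativity \eqref{qc} to move everything into a canonical normal ordering, the cubic part collapses. The double-contraction term evaluates to $\sum_i[[a,x_i^S],x^i_S]=2g_S a$ applied suitably, producing precisely the $-\tfrac{2g_S}{3}$ correction in the final central charge.

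The main obstacle will be the bookkeeping of normal-ordering anomalies from \eqref{qc}--\eqref{qa}: several times one has to rewrite $::ab:c:$ as $:a:bc::$ plus integral correction terms of the form $:(\!\int_0^T a\,d\l)[b_\l c]:$, and these contribute extra $g_S$-multiples of $:T(\bar x_i^S)\bar x_S^i:$ which must be tracked carefully against the $(k+g)$ coefficient supplied by the quadratic piece. Once these corrections are added correctly, the final $(k+g-\tfrac{2g_S}{3})\dim\g_S$ coefficient emerges, and the mixed $[Q_\l C]$ cubic term cancels the residual cubic piece from $[C_\l C]$, leaving only the three terms asserted in the statement.
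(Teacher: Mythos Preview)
Your proposal is correct and follows essentially the same approach as the paper's proof in \S\ref{Gquadro}: both use the tilded expression \eqref{diractilde}, build the same toolbox of auxiliary $\lambda$-brackets via Wick's formula, and handle the cubic-cubic term by reducing the triple commutator to zero via the Jacobi identity while tracking the normal-ordering corrections from \eqref{qc}--\eqref{qa}. The only organizational difference is that the paper computes $[G_\lambda Q]$ directly (via formulas analogous to your $[\tilde a_\lambda G]$, $[\bar a_\lambda G]$) rather than splitting it as $[Q_\lambda Q]-\tfrac{1}{6}[C_\lambda Q]$, but the substance is identical.
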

\subsection{$(G_{\g,\aa})_0$ is selfadjoint}
Assume that $\g$ is semisimple. If $\{\a_i\}$ is the set of simple 
roots of $\widehat L(\g,\si)$, let $\h_\R$  
be the real span of $\{\frac{h_{\ov\a_i}}{(\ov 
\a_i,\ov\a_i)}\}\subset\h_0$. Set $\mathfrak t_0=i\h_\R$ and
let $\k$ be a compact form of $\g$. By Exercise 8 of Chapter VI of 
\cite{Helgason1}, we can assume that $\k$
is $\sigma$-invariant and that $\mathfrak t_0\subset \k$. Denote by 
$conj$  the conjugation in $\g$ w.r.t.
$\k$: $conj(x+iy)=x-iy$, $x,y\in\k$, and let $\omega_0$ be the 
antilinear antiautomorphism of $\g$ defined by
$\omega_0(x)=-conj(x)$.  
 Extend $\omega_0$ to an antiautomorphism of $\widehat
L(\g,\si)$ by 
$$\omega_0(x_{(r)})=(\omega_0(x))_{(-r)},\quad \omega_0(K)=K,\quad 
\omega_0(d)=d.$$
If $\a_i$ is a simple root of  $\widehat
L(\g,\si)$, with $\a_i=s_i\d+\ov\a_i$, choose $x_i\in 
\g^{s_i}\cap\g_{\ov\a_i}$ in  such a way that 
$[x_i,\omega_0(x_i)]=\frac{2h_{\ov\a_i}}{(\ov\a_i,\ov\a_i)}$. Set 
$e_i=t^{s_i}\otimes x_i,\,f_i=\omega_0(e_i)$, so that 
$[e_i,f_i]=\a_i^\vee=\frac{2h_{\ov\a_i}}{(\ov 
\a_i,\ov\a_i)}+\frac{2s_i}{(\ov \a_i,\ov\a_i)}K$. With this choice of
Chevalley generators for  $\widehat
L(\g,\si)$, $\omega_0$ is the unique antilinear antiautomorphism of  
$\widehat
L(\g,\si)$ which exchanges the $e_i$ with the $f_i$ and which is the 
identity when restricted to $\ha_\R=span_\R(\a_i^\vee)$. Recall that 
a $\widehat L(\g,\si)$-module $M$ is called unitarizable if there is 
a positive definite hermitian
form $H(\cdot,\cdot)$ on $M$ such that $H(x\cdot 
v,w)=H(v,\omega_0(x)\cdot w)$. Theorem~11.7 of
\cite{Kac} asserts that $L(\L)$ is unitarizable if and only if  $\L$ 
is dominant integral.

Recall that we are assuming that $g\in\R^+$ hence  $(\cdot,\cdot)$ is 
negative definite on $\k$. Thus
we can define  a positive definite hermitian form $H(\cdot,\cdot)$ 
on  $\g$ by the formula 
\begin{equation} H(x,y)=(x,\omega_0(y)).
\label{contra}
\end{equation}   Define  a positive definite hermitian form $\ov 
H(\cdot ,\cdot )$ on  $ L(\ov\g,\tau)$ by 
$$\ov H(x_{(r)},y_{(s)})=\d_{r,s}H(x,y).$$  We can extend this form 
on the Clifford module
$F^{\ov\tau}(\ov \g)$ by setting
$$
H(x_{1}\cdots x_{n}\cdot 1,y_{1}\cdots y_{m}\cdot 1 
)=\d_{n,m}\det(H(x_i,y_j))
$$
where it is again positive definite. A standard calculation
shows that, if $\ov x\in\ov\g$ and $v,w\in F^{\ov\tau}(\ov\g)$
$$\ov H( x_{(r-\half)}v,w)=\ov H( v,(\omega_0(x))_{(-r-\half)}w).$$

Suppose that $M$ is a unitarizable $\widehat L(\g,\si)$-module. On 
$N=M\otimes F^{\ov \tau}(\g)$ we can
consider the form $\{\cdot ,\cdot \}=H(\cdot ,\cdot )\otimes\ov 
H(\cdot,\cdot)$. Straightforward
calculations show that for  $v,w\in M\otimes F^{\ov\tau}(\ov\g)$
\begin{align*}\{\sum_i(:\tilde x_i\ov x_i:)^{\ov\tau}_{(\half)}v,w\}&=
\{v,\sum_i(:\widetilde{\omega_0(x_i)}\ov{\omega_0(x_i)}:)^{\ov\tau}_{(\half)}\cdot 
w\}\\ &=
\{v, \sum_i(:\tilde x_i\ov x_i:)^{\ov\tau}_{(\half)}\cdot w\},
\end{align*} and similarly for the cubic term of $(G_\g)^N_0$. In 
particular, if $\L$ is dominant integral, then 
$(G_\g)^N_0$ is self-adjoint with respect to a Hermitian positive 
definite form on $L(\L)\otimes F^{\ov\tau}(\ov\g)$ . For the general 
case of $G_{\g,\sa}$ it
suffices to apply the argument to $(G_\sa)^N_0$ acting on 
$N=(L(\L)\otimes F^{\ov\tau}(\ov\p))\otimes
F^{\ov\tau}(\ov\sa)$ and to note that 
$L(\L)\otimes F^{\ov\tau}(\ov\p)$ is unitarizable since, as shown in \S~\ref{applications}, 
$F^{\ov\tau}(\ov\p)$ is the restriction to $\widehat L(\sa,\si)$ of a 
unitarizable representation of
$\widehat L(so(\p),Ad(\si))$. Summing up
\begin{prop}\label{unitarity}  The action of $(G_{\g,\sa})^{N'}_0$ on 
${N'}=L(\L)\otimes F^{\ov\tau}(\ov\p)$ is
self-adjoint with respect to the positive definite Hermitian form  
$\{\ ,\ \}$. Consequently,
 $Ker(G_{\g,\aa})^{N'}_0=Ker((G_{\g,\aa})^{N'}_0)^2$.\end{prop}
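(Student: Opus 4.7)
\medskip

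\noindent\textbf{Proof proposal for Proposition \ref{unitarity}.}
The plan is to assemble self-adjointness from the two constituent ingredients $(G_\g)^N_0$ and $(G_\aa)^N_0$, and then to derive the equality of kernels by a standard positivity argument. First I would set the hermitian framework by choosing a compact form $\k$ of $\g$ that is both $\sigma$-invariant and $\omega_0$-compatible, and observing that since $\aa$ is reductive, $\sigma$-invariant, and since the bilinear form is nondegenerate on $\aa$, we may arrange that $\omega_0(\aa)=\aa$; the orthogonal decomposition $\g=\aa\oplus\p$ is then also $\omega_0$-stable, so $\omega_0(\p)=\p$. Consequently $H(\cdot,\cdot)$ from \eqref{contra} restricts to positive definite hermitian forms on $\aa$ and on $\p$, and passes to positive definite hermitian forms $\ov H$ on $F^{\bar\tau}(\bar\aa)$ and $F^{\bar\tau}(\bar\p)$ via the formula given in the excerpt.

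Next I would establish the adjointness of modes. For $x\in\g^{\bar s}$ and a unitarizable level $k$ module $M$, the defining relation of $\omega_0$ on $\widehat L(\g,\si)$ gives $H(x_{(r)}v,w)=H(v,(\omega_0(x))_{(-r)}w)$, and analogously $\ov H(\bar x_{(r-1/2)}v,w)=\ov H(v,\overline{\omega_0(x)}_{(-r-1/2)}w)$. Using Proposition \ref{isomorfi} and \eqref{fortau}, one sees that $\omega_0(\tilde x)=\widetilde{\omega_0(x)}$ as elements of $V^{k+g,1}(R^{super})$, so the same adjoint relation holds for the modes $\tilde x_r$. With these conventions in place, I would then verify self-adjointness of $(G_\g)^N_0$ by breaking it, via \eqref{diractilde}, into its quadratic piece $\sum_i :\tilde x_i\bar x^i:^{\bar\tau}_{(1/2)}$ and its cubic piece $-\tfrac{1}{6}\sum_{i,j}:\overline{[x_i,x_j]}\bar x^i\bar x^j:^{\bar\tau}_{(1/2)}$, expanding each via \eqref{tensore}, and taking adjoints term by term. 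The routine bookkeeping reduces in each case to the invariance of the tensors $\sum_i x_i\otimes x^i$ and $\sum_{i,j}[x_i,x_j]\otimes x^i\otimes x^j$ under $\omega_0\otimes\omega_0$ (and $\omega_0^{\otimes 3}$), which holds because the bilinear form is $\omega_0$-invariant.

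For $(G_\aa)^N_0$ the same argument applies: viewing $N=(L(\L)\otimes F^{\bar\tau}(\bar\p))\otimes F^{\bar\tau}(\bar\aa)$ and recalling from \S\ref{applications} that $F^{\bar\tau}(\bar\p)$ is an irreducible summand of the unitarizable module $F^{\bar\tau}(\ov{\text{spin}})$ for $\widehat L(so(\p),Ad(\si))$, we see that $L(\L)\otimes F^{\bar\tau}(\bar\p)$ carries a positive definite hermitian form with respect to which the action of $\widehat L(\aa,\si)$ satisfies the adjointness relation of $\omega_0$ restricted to $\aa$. Repeating the argument of the previous paragraph on this factor yields that $(G_\aa)^N_0$ is self-adjoint; hence so is $(G_{\g,\aa})^N_0=(G_\g)^N_0-(G_\aa)^N_0$. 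Since the restriction of $\{\,,\,\}$ to the $\widehat L(\aa,\si)$-invariant factor $L(\L)\otimes F^{\bar\tau}(\bar\p)$ remains positive definite and $(G_{\g,\aa})^{N'}_0\otimes I_{F^{\bar\tau}(\bar\aa)}=(G_{\g,\aa})^N_0$, self-adjointness descends to $N'$.

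Finally, for the kernel statement: if $T$ is a self-adjoint operator on a pre-Hilbert space and $T^2v=0$, then $0=\{T^2v,v\}=\{Tv,Tv\}$, which by positive definiteness forces $Tv=0$. Applied to $T=(G_{\g,\aa})^{N'}_0$ on $L(\L)\otimes F^{\bar\tau}(\bar\p)$ this gives $\operatorname{Ker}((G_{\g,\aa})^{N'}_0)^2\subseteq \operatorname{Ker}(G_{\g,\aa})^{N'}_0$, while the reverse inclusion is trivial. The principal obstacle I anticipate is verifying that $\omega_0$ can indeed be chosen to stabilize $\aa$ (and hence $\p$) simultaneously with the $\sigma$-equivariance; this is the kind of compact-form adjustment for which the Iwasawa-type argument of \cite[Ch.~VI, Ex.~8]{Helgason1} should suffice, but it must be checked in the present twisted setting where $\si$ need not have finite order.
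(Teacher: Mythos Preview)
Your proposal is correct and follows essentially the same route as the paper: set up the hermitian form via a $\sigma$-invariant compact form and the antilinear antiinvolution $\omega_0$, check self-adjointness of $(G_\g)^N_0$ term by term on its quadratic and cubic pieces, then handle $(G_\aa)^N_0$ by viewing $N=(L(\L)\otimes F^{\bar\tau}(\bar\p))\otimes F^{\bar\tau}(\bar\aa)$ and invoking the unitarizability of $F^{\bar\tau}(\bar\p)$ from \S\ref{applications}. Your explicit flag about arranging $\omega_0(\aa)=\aa$ is a point the paper leaves implicit, but otherwise the arguments coincide.
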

\vskip5pt 
\subsection{Proof of Lemma \ref{faithful}}\label{lemma85}
 If $\L\in-\rhat_\si+C_\g$ is a weight of level $k$, let $M(\L)$ be 
the Verma module of
highest weight $\L$.
  Let $(\cdot,\cdot)_\L$ be the Shapovalov form on $M(\L)$ and 
$\omega$ the Chevalley involution of
$\widehat L(\g,\si)$.  Choose $\rhat\in\ha_0^*$ such that 
$\rhat(\a^\vee_i)=1$ for all $i$.  We assume
furthermore that $2(\L+\rhat+\gamma,\a)\ne n(\a,\a)$ for all $n>0$, 
$\gamma$ in the root lattice of
$\widehat L(\g,\si)$, and $\a\in\Dap$. With this assumption we have 
that $(\cdot,\cdot)_\L$ is
nondegenerate  (see e.g. \cite[Lemma 1]{Kacpnas}).  It follows that, 
for each
$\mu=\sum n_i\a_i$ with $n_i\in\nat$ there is a basis $\{\bf y^i_\mu\}$ of
$U(\n'_-)_{-\mu}$ such that
$\{{\bf y^i_\mu} v_\L\}$ is an orthonormal  basis of 
$M(\L)_{\L-\mu}$. 
Here 
and in the following we use
boldface letters to make clear that we are dealing with polynomials 
and multi-indexes.
Set $\bf x^i_\mu=\omega(\bf
y^i_\mu)$. Note that ${\bf x^i_\mu \bf y^j_\nu}\cdot v_\L\in 
\bigoplus\limits_{ht(\eta)= ht(\nu)-ht(\mu)}M(\L)_{\L-\eta}$. In 
particular, if $ht(\mu)=ht(\nu)$, we have
that
${\bf x^i_\mu \bf y^j_\nu}\cdot v_\L\in \C v_\L$. Since $({\bf 
x^i_\mu \bf y^j_\nu}\cdot v_\L,v_\L)_\L=({\bf
y^j_\nu}\cdot v_\L,{\bf y^i_\mu}\cdot v_\L)_\L=\d_{\nu,\mu}\d_{i,j}$, 
we see that 
\begin{equation}\label{commM} {\bf x^i_\mu \bf y^j_\nu}\cdot
v_\L=\d_{\nu,\mu}\d_{i,j}v_\L.\end{equation}

Fix a basis $\{h_i\}$ of $\h_0$ such that $(h_i,h_{n-j+1})=\d_{ij}$ 
and set
 $$\h_0^+=span(h_i\mid i\le \lfloor 
\frac{n}{2}\rfloor),\quad\h_0^-=span(h_i\mid i\ge \lceil
\frac{n}{2}\rceil+1).$$ Note that $\h_0=\h_0^+\oplus\h_0^-\oplus E$ 
with $dim E\le 1$.
 Set $\ov\g^+=\ov\h_0^+\oplus\ov\n'$ and 
$\ov\g^-=\ov\h_0^-\oplus\ov\n'_-$.  If $s\in\nat$, set
$Cl(\ov\g^\pm)_s=(\ov\g^\pm)^s\subset Cl(\ov\g)$. Note that 
$Cl(\ov\g^+)_sCl(\ov\g^-)_r\cdot 1\in
\oplus_{t\le r-s}Cl(\ov\g^-)_t\cdot 1$.  Moreover it is easy to 
construct a basis $\{{\u}^{\i s}\}$ of
$Cl(\ov\g^+)_s$ and a basis 
$\{\bv^{{\bf j}s}\}$ of $Cl(\ov\g^-)_s$ such that
\begin{equation}\label{commF}
\u^{\i s}\bv^{\j s}\cdot1=\d_{ij}.\end{equation}

Suppose that $u\in \ov{\mathcal W^k\otimes 1}$ is such that  
$$
 u\cdot v=0\quad\text{for any $v\in M(\L)\otimes F^{\ov 
\tau}(\ov\p)$}.
$$ Write $u=\lim \pi(u_n)$. Fix $p>0$.  Since $\pi(u_n)$ is a Cauchy 
sequence we can find
$M$ such that, for any $n\ge M$, $\pi(u_M-u_n)\in\mathcal W^kF^p$. 
Suppose that $\deg({\bf
x^j}_\nu\otimes {\bf u^{s}}^t)<p$. Choose $n\ge M$ big enough so that 
$u_n\cdot({\bf y^j_\nu} \cdot v_\L\otimes {\bf v^{s}}^t\cdot 1)=0$. 
Then $u_M\cdot ({\bf y^j_\nu}\cdot v_\L\otimes
{\bf v^{s}}^t\cdot 1)=(u_n-(u_n-u_M))\cdot ({\bf y^j_\nu }\cdot 
v_\L\otimes {\bf v^{s}}^t\cdot 1)=0$. 
Hence, for any $\j,{\bf s},t,\nu$ such that $\deg({\bf
x^j}_\nu\otimes {\bf u^{s}}^t)<p$,
\begin{equation}\label{perp}
u_M\cdot ({\bf y^j_\nu} \cdot v_\L\otimes
{\bf v^{s}}^t\cdot 1)=0.
\end{equation}

Using the PBW theorem we can write
\begin{equation*}\label{sum}u_M=\sum_{j,\nu,t,s,\epsilon}c_{\j,\nu,{\bf 
t},s,\epsilon} w^\epsilon ({\bf
x^j_\nu}\otimes
\u^{{\bf t}s}),\end{equation*} with
$c_{\j,\nu,{\bf t},s,\epsilon}\in U(\n'_-\oplus\h'_0)\otimes 
Cl(\bar\g^-)$. Here $ w^\epsilon$ occurs only
if $\dim E=1$ and, in such a case, $E=\C w$ and $\epsilon\in\{0,1\}$.

 We now show by induction on $q$ that  $ c_{\j,\nu,{\bf 
t},s,\epsilon}w^\epsilon\in \mathcal W(K-k)$ for
all
${\bf j},\nu,{\bf t},s,\epsilon$ such that $ht(\nu)+s=q$ and 
$\deg({\bf x^j_\nu}\otimes
\u^{{\bf t}s})<p$. This concludes the proof since it implies that 
$\pi(u_M)\in\mathcal W^kF^p$, hence
$\pi(u_n)\in\mathcal W^kF^p$ for $n\ge M$. 

If $ht(\nu)=s=0$,  then  $\pi(u_M)\cdot v_\L\otimes 1=u_M\cdot 
v_\L\otimes 1=0$. Hence
$\sum_{\epsilon}c_{1,0,1,0,\epsilon} w^\epsilon\cdot v_\L\otimes 
1=0$. Write explicitly
$c_{1,0,1,0,\epsilon}=\sum_{\i,\mu,{\bf t},n}({\bf y^\i_\mu}\otimes
\bv^{{\bf t} n})p_{\i,\mu,{\bf t},n}$, with $p_{\i,\mu,{\bf t},n}\in 
U(\h'_0)$.  Since $U(\n'_-)\otimes 
Cl(\ov\g^-\oplus N)$ acts freely on
$v_\L\otimes 1$ (and $U(\h'_0)$ commutes with $w^\epsilon$), we see 
that 
$p_{\i,\mu,{\bf t},n}(k\L_0+\ov\L)=0$ for any $\i,\mu,{\bf t},n$. 
Since $\ov\L$ can be chosen in  a
dense subset of $\h_0$, we see that $p_{\i,\mu,{\bf 
t},n}(k\L_0+\ov\L)=0$ for any $\ov\L$, thus
$K-k$ divides $p_{\i,\mu,{\bf t},n}$.

Fix now $\nu_0$ and $s_0$ with $ht(\nu_0)+s_0=q>0$ and such that 
$\deg({\bf x^l_{\nu_0}}\otimes
\u^{{\bf m} s_0})<p$. Applying \eqref{perp} we get
that 
$$
\sum_{\substack{\j,\nu,s,{\bf t},\epsilon\\ ht(\nu)+s\ge 
q}}c_{j,\nu,s,t,\epsilon}w^\epsilon
  ({\bf x^j_\nu}\otimes  \u^{{\bf t}s})\cdot ({\bf y^l_{\nu_0}}\cdot 
v_\L\otimes \bv^{{\bf m} s_0}\cdot1)=0.
$$ Indeed,  by the induction hypothesis, in the above sum appear only 
coefficients with $ht(\nu)+s\ge q$.
Since ${\bf x^j_\nu y^l_{\nu_0}}\cdot v_\L=0$ if $ht(\nu)>ht(\nu_0)$ 
and $\u^{{\bf t}s}\bv^{{\bf m} s_0}\cdot1=0$
if
$s>s_0$, we can write
$$
\sum_{\substack{\j,\nu,s,{\bf t},\epsilon\\ ht(\nu)=ht(\nu_0),s=s_0}} 
c_{\j,\nu,s,{\bf t},\epsilon}w^\epsilon ({\bf
x^j_\nu}\otimes \u^{{\bf t}s})\cdot ({\bf y^l_{\nu_0}}\cdot 
v_\L\otimes \bv^{{\bf m} s_0}\cdot 1)=0.
$$ Using \eqref{commM}, \eqref{commF}, we get
$$
\sum_{\epsilon}c_{{\bf l},\nu_0,{\bf m},s_0,\epsilon}w^\epsilon \cdot 
(v_\L\otimes1)=0.
$$ Arguing as above we deduce that $c_{{\bf l},\nu_0,{\bf 
m},s_0,\epsilon}\in\mathcal W(K-k)$.

\subsection{Proof of Lemma~\ref{fieldsinW}}\label{lemma86}
We may assume that $a=:T^{i_1}(x^1)\cdots T^{i_h}(x^h):$ with 
$x^i\in\{\tilde
x,\ov x\mid x\in\g_\a^{\ov r},r\in \R,\a\in\h_0^*\}$.  Set 
$N(a,r)=cr+\deg(a)$ and
$U(a,r)=\ov{\mathcal W^kF^{N(a,r)}}$. We will prove by induction on 
$h$ that
  for each $r$ there is $a_r\in U(a,r)$ such that $a_r\cdot 
v=a_r^N\cdot v$.
  
  If $h=1$, we set 
\begin{equation}\label{baseind} T^{i}(\tilde 
x)_r=(-1)^ii!\binom{r+i}{i}t^r\otimes x,\quad T^{i}(\ov
x)_r=(-1)^ii!\binom{r+i-\half}{i}t^{r-\half}\otimes \ov x.
\end{equation}
  
   If $h>1$ we can assume that $a=:T^{i_1}(x^1)b:$ with 
$b=:T^{i_2}(x^2)\cdots T^{i_h}(x^h):$. \par\noindent
  By Wick formula, we have that 
$T^{i_1}(x^1)_{(j)}b=\sum:T^{j_1}(y^1)\cdots T^{j_v}(y^v):$ with 
$v<h$,
so, by the induction hypothesis, we can define
\begin{equation}\label{equa} (T^{i_1}(x^1)_{(j)}b)_r=\sum 
:T^{j_1}(y^1)\cdots T^{j_v}(y^v):_r.
\end{equation}

Choose  $m\in\ov r_{x^1}-\D_{x^1}$ and set
\begin{align*}  a_r&=-\sum_{j\ge
1}\binom{m+\D_{x^1}+i_1-1}{j}(T^{i_1}(x^1)_{(j-1)}b)_r\\&+\sum_{j=0}^\infty
T^{i_1}(x^1)_{m-j-1}b_{r-m+j+1}+p(x^1,b)b_{r-m-j}T^{i_1}(x^1)_{m+j}.
\end{align*}

By \eqref{baseind}
$$ b_{r-m-j}T^{i_1}(x^1)_{m+j}\in U(x^1,m+j)
$$ and, by the induction hypothesis,
$$ T^{i_1}(x^1)_{m-j-1}b_{r-m+j+1}\in U(b,r-m+j+1),
$$ hence the series 
$$
\sum_{j=0}^\infty 
T^{i_1}(x^1)_{m-j-1}b_{r-m+j+1}+p(x^1,b)b_{r-m-j}T^{i_1}(x^1)_{m+j}
$$ converges. Therefore the definition of $a_r$ makes sense and,  by 
\eqref{tensore}, $a_r\cdot
v=a^N_r\cdot v$ for any $N$, $v\in N$.

To conclude the induction step we need to check that $a_r\in U(a,r)$. 
Recall that we wrote 
$T^{i_1}(x^1)_{(j)}b=\sum_{v<h}:T^{j_1}(y^1)\cdots T^{j_v}(y^v):$, 
hence we can assume that
$\deg(:T^{j_1}(y^1)\cdots T^{j_v}(y^v):)=\deg(a)$. By the induction 
hypothesis it follows that 
$(T^{i_1}(x^1)_{(j)}b)_r\in U(a,r)$.

It follows from \eqref{rep1} and Lemma~\ref{faithful}, that 
\begin{equation}\label{2:36} [T^{i_1}(x^1)_s,b_{r-s}]=\sum_{j\ge
0}\binom{s+\D_{x^1}+i_1-1}{j}(T^{i_1}(x^1)_{(j)}b)_r.
\end{equation} The induction hypothesis and \eqref{2:36} now imply 
that
$$T^{i_1}(x^1)_sb_{r-s}\in U(a,r),\quad b_{r-s}T^{i_1}(x^1)_s\in 
U(a,r)$$ for, if
$\deg(b)+c(r-s)<\deg(a)+cr$, then $\deg(x^1)+cs>0$ and
$$ 
T^{i_1}(x^1)_sb_{r-s}=b_{r-s}T^{i_1}(x^1)_s+p(x^1,b)[T^{i_1}(x^1)_s,b_{r-s}]
$$ and, if $\deg(x^1)+cs<\deg(a)+cr$, then
$$ 
b_{r-s}T^{i_1}(x^1)_s=T^{i_1}(x^1)_sb_{r-s}+p(x^1,b)[b_{r-s},T^{i_1}(x^1)_s].
$$ This concludes the induction step.

Finally \eqref{degrepres} follows easily by induction on $h$ and the 
explicit formula for $a_r$.
\subsection{Proof of Lemma~\ref{koszulholomorphic}}\label{newKoszul} 
We may clearly assume that $h_0=0$. Set for shortness
$\partial=\partial_{0}$.
 Note that $\partial =\sum_{i=1}^nz_i\frac{\partial}{\partial \xi_i}$.
Define $h=\sum_{i=1}^n\xi_i\frac{\partial}{\partial z_i}$, and note 
that both $\partial$ and $h$ are 
odd derivations. Hence we have that 
$$h \partial + \partial h = 
\sum_{i,j=1}^n[z_i\frac{\partial}{\partial \xi_i}, 
\xi_j\frac{\partial}{\partial 
z_j}]=\sum_{i=1}^nz_i\frac{\partial}{\partial z_i}+ 
\sum_{i=1}^n\xi_i\frac{\partial}{\partial \xi_i}.$$
Denote by $E=\sum_{i=1}^nz_i\frac{\partial}{\partial z_i}$ the Euler 
operator and take a 
$p$-cycle $f\otimes u\in R_p$ with $p>0$; we  have 
$(h \partial + \partial h)(f\otimes u)=(E(f)+pf)\otimes u$.
Define $\varphi=\sum 
\frac{f_{i_1,\ldots,i_n}}{i_1+\cdots+i_n+p}z_1^{i_1}\cdots z_n^{i_n}$.
 Then $\varphi$ is holomorphic in any polydisk $|z_i|\leq r_i$, since
 $$\sum  \frac{|f_{i_1,\ldots,i_n}|}{i_1+\cdots+i_n+p} 
r_1^{i_1}\cdots r_n^{i_n} \leq
 \sum |f_{i_1,\ldots,i_n} |r_1^{i_1}\cdots r_n^{i_n}.$$
 Set $M=\sum_{i=1}^n
z_i\mathcal E(z_1,\ldots,z_n)+\sum_{p>0}\mathcal E(z_1,\ldots,z_n)\otimes 
\w^p(\xi_1,\ldots,\xi_n)$.  Then 
$E+p\,I:M\to M$
is bijective, and $M$ is preserved by the Koszul operator.
 Moreover $\partial$ commutes with $\partial h+h\partial$. Therefore  
$\varphi\otimes u$ is a cycle if
$f\otimes u$ is, 
 hence 
 $$f=(E+p\,I)(\varphi\otimes u)=(h \partial + \partial h) 
(\varphi\otimes u)=
 \partial (h (\varphi\otimes u))$$
 as required.

\subsection{Proof of Lemma~\ref{restriction}}\label{lemmarestrizione}
We first show that  if $\deg(I)=j_p$ and $|I|=n$ then 
        \begin{equation}\label{aaaa}\tilde x^I_\aa=\tilde x^I+u\end{equation}
        with $u\in \ov{\Aa^p}$ and 
$u+\ov{\Aa^{p+1}}\in\left(\ov{\Aa^p}/\ov{\Aa^{p+1}}\right)_{n-1}$.\par
We will show by induction on $|I|$ that $\tilde x^I_\aa=\tilde x^I+u$ 
with $u$ a series $\sum_{i=p}^\infty u_i$
where $u_i=\sum c^i_{J,T,H,K}\ov y^J\ov h^T\tilde x^H\ov x^K$ and 
$c^i_{J,T,H,K}\in\C$, $\deg(H+K)=j_i$, $|H|<n$. 

If $|I|=0$ the result is obvious. Suppose $|I|=n>0$. Then 
$$
\tilde x^I_\aa=\tilde x^{i_1}_\aa\tilde x^{I_0}_\aa=\tilde 
x^{i_1}\tilde x^{I_0}_\aa+\theta( x^{i_1})\tilde x^{I_0}_\aa,
$$
with $|I_0|=n-1$. If $\deg(I_0)=j_{p'}$, applying the induction 
hypothesis, we have
$$
\tilde x^I_\aa=\tilde x^{I}+\tilde x^{i_1}u'+\theta( x^{i_1})\tilde 
x^{I_0}+\theta( x^{i_1})u',
$$
where $u'$ is a series $\sum_{i=p'}^\infty u'_i$ with $u'_i= \sum 
c'{}^i_{J,T,H,K}\ov y^J\ov h^T\tilde x^H\ov x^K$, $deg(H+K)=j_i$, and 
$|H|<n-1$.

Let $u=\tilde x^{h_1}u'+\theta(x^{i_1})\tilde 
x^{I_0}+\theta(x^{i_1})u'$. Clearly $\tilde x^{i_1}u'_i=\sum 
d{}^i_{J,T,H,K}\ov y^J\ov h^T\tilde x^H\ov x^K$ with 
$\deg(H+K)=j_i+\deg(x^{i_1})\ge p'+\deg(x^{i_1})=j_p$ and, by PBW 
theorem, $|H|<n$. Since $\deg(\theta(x^{i_1}))=\deg(\tilde x^{i_1})$, 
by the explicit expression \eqref{theta(x)} for $\theta(x^{i_1})$, we 
see that $\theta(x^{i_1})=\sum_{i=i_0}^\infty \theta_i$ where 
$j_{i_0}=\deg(x^{i_1})$ and
$\theta_i=\sum k^i_{J,T,K}\ov y^J\ov h^T \ov x^K$ with 
$k^i_{J,T,K}\in\C$, $\deg(K)=j_{i}$. 

Hence 
$\theta( x^{i_1})\tilde x^{I_0}=\sum_{i=i_0}^\infty \theta_i\tilde 
x^{I_0}$ and 
$\theta_i\tilde x^{I_0}=\sum k^i_{J,T,K}\ov y^J\ov h^T\tilde x^{I_0} 
\ov x^K$. Moreover $\deg(I_0+K)=\deg(I_0+i)\ge 
\deg(I_0)+\deg(x^{i_1})=j_p$ and $|I_0|<n$. Finally 
$\theta(x^{i_1})u'=\sum_{i=p'}^\infty\theta(x^{i_1})u'_i$. Note that 
$\theta(x^{i_1})u'_i=\sum_{t=i}^\infty u''_{it}$ with $u''_{it}=\sum 
c^{it}_{J,T,H,K}\ov y^I\ov h^T\tilde x^H\ov x^K$ with $\deg(H+K)=j_t$ 
and $|H|<n$. Since $\deg((\tilde x)_{\aa r})=\deg(\tilde x_r)$ we 
have that $\deg(\tilde x_\aa^{I_0})=\deg(\tilde x^{I_0})$, hence 
$\deg(u')=\deg(I_0)$. It follows that 
$\deg(\theta(x^{i_1})u'_i)=j_p$. 
This implies, by Remark \ref{forrestricting}, that 
$\sum_{i=p'}^tu''_{it}=0$ for $t<p$. This concludes the induction
step and proves \eqref{aaaa}.\par
We prove the statement of the Lemma by induction on $|J|$. If $|J|=0$ 
the statement is easily obtained from
the previous step. If
$|J|>0$ then 
$$
\ov y^I\tilde y^J_\aa f\ov h^T\tilde x^H_\aa\ov x^K=\ov y^I\tilde 
y^{j_1}\tilde y^{J_0}_\aa f\ov h^T\tilde x^H_\aa\ov x^K+\ov y^I 
\theta( y^{j_1})y^{J_0}_\aa f\ov h^T\tilde x^H_\aa\ov x^K.
$$
Applying the induction hypothesis we can write 
$$
\ov y^I\tilde y^J_\aa f\ov h^T\tilde x^H_\aa\ov x^K=\ov y^I\tilde 
y^{J} f\ov h^T\tilde x^H\ov x^K+\ov y^I\tilde y^{j_1}u'+\ov y^I 
\theta( y^{j_1})\tilde y^{J_0}f\ov h^T\tilde x^H\ov x^K+\ov y^I 
\theta( y^{j_1})u'.
$$
with $u'\in\ov{\Aa^p}$ and 
$u'+\ov{\Aa^{p+1}}\in\left(\ov{\Aa^p}/\ov{\Aa^{p+1}}\right)_{n-2}$.

 Set $u=\ov y^I\tilde y^{j_1}u'+\ov y^I \theta( y^{j_1})\tilde 
y^{J_0}f\ov h^T\tilde x^H\ov x^K+\ov y^I \theta( y^{j_1})u'$. Clearly 
$u\in\ov{\Aa^p}$.
 Write $u'=\sum \ov y^{I'}\tilde y^{J'} f_{I',J',T',H',K'}\ov 
h^{T'}\tilde x^{H'}\ov x^{K'}+u''$ with $\deg(H'+K')=j_p$, 
$|J'+H'|<n-1$, and $u''\in\ov{\Aa^{p+1}}$.
 
 By PBW theorem 
\begin{align*}
         \ov y^I\tilde y^{j_1}u'+\ov{\Aa^{p+1}}&=\sum \ov y^Iy^{I'}\tilde 
y^{j_1}\tilde y^{J'} f_{I',J',T',H',K'}\ov h^{T'}\tilde x^{H'}\ov 
x^{K'}+\ov{\Aa^{p+1}}\\
         &=\sum \ov y^{I''}\tilde y^{J''} f_{I'',J'',T',H',K'}\ov 
h^{T'}\tilde x^{H'}\ov x^{K'}+\ov{\Aa^{p+1}}
\end{align*}
with $|J''+H'|<n$. 

Writing, as above, $\theta(y^{j_1})=\sum_{i=0}^\infty \theta_i$ with 
$\theta_i=\sum k^i_{J,T,K}\ov y^J\ov h^T \ov x^K$ with 
$k^i_{J,T,K}\in\C$, $\deg(K)=j_i$ and using the commutation relations 
\eqref{prodotto} and \eqref{prodottoa}, we see that
$$
\ov y^I \theta( y^{j_1})\tilde y^{J_0}f\ov h^T\tilde x^H\ov 
x^K+\ov{\Aa^{p+1}}=\ov y^I \theta_0\tilde y^{J_0}f\ov h^T\tilde 
x^H\ov x^K+\ov{\Aa^{p+1}}
$$
hence
$$
\ov y^I \theta( y^{j_1})\tilde y^{J_0}f\ov h^T\tilde x^H\ov 
x^K+\ov{\Aa^{p+1}}=\sum \ov y^{I'}\tilde y^{J_0} f_{I',T'}\ov 
h^{T'}\tilde x^{H}\ov x^{K}+\ov{\Aa^{p+1}}
$$
and, clearly, $|J_0+H|<n$.

The same argument shows that 
$$
\ov y^I \theta( y^{j_1})u'+\ov{\Aa^{p+1}}=\sum \ov y^{I'}\tilde 
y^{J'}f_{I',J',T',H',K'}\ov h^{T'}\tilde x^{H'}\ov 
x^{K'}+\ov{\Aa^{p+1}}$$
with $|J'+H'|<n$ as desired.

\providecommand{\bysame}{\leavevmode\hbox to3em{\hrulefill}\thinspace}
\providecommand{\MR}{\relax\ifhmode\unskip\space\fi MR }
\providecommand{\MRhref}[2]{%
  \href{http://www.ams.org/mathscinet-getitem?mr=#1}{#2}
}
\providecommand{\href}[2]{#2}

\footnotesize{

\noindent{\bf V.K.}: Department of Mathematics, Rm 2-178, MIT, 77 
Mass. Ave, Cambridge, MA 02139;\\
{\tt kac@math.mit.edu}

\noindent{\bf P.MF.}: Politecnico di Milano, Polo regionale di Como, 
Via Valleggio 11, 22100 Como,
ITALY;\\ {\tt pierluigi.moseneder@polimi.it}

\noindent{\bf P.P.}: Dipartimento di Matematica, Universit\`a di Roma 
``La Sapienza", P.le A. Moro 2,
00185, Roma , ITALY;\\ {\tt papi@mat.uniroma1.it} }

\end{document}